\newtheorem{Theorem}{Theorem}[section] 
\newtheorem{Definition}{Definition}[section] 
\newtheorem{Proposition}{Proposition}[section] 
\newtheorem{Lemma}{Lemma}[section] 
\newtheorem{Corollary}{Corollary}[section] 
\newtheorem{Remark}{Remark}[section] 
\newcommand{\rec}[1]{{(\ref{#1})}}
\def \R{\mathbb{R}} 
\def \C{\mathbb{C}}
\def \Z{\mathbb{Z}} 
\def\11{1\!\!1}
\newcommand{\de}{\partial}
\newcommand{\ve}{\varepsilon}
\newcommand{\weak}{ \stackrel{*}{\rightharpoonup}}
\newcommand{\lapfr}{(-\Delta)^\frac{1}{2}}
\newenvironment{proof}{\noindent {\bf Proof.}}{\hfill$\square$\medskip}
\DeclareMathOperator{\sign}{sign}
\DeclareMathOperator{\loc}{loc}
\DeclareMathOperator*{\dist}{dist}
\DeclareMathOperator{\Jac}{Jac}
\def\Intm
\def\XXint#1#2#3{{\setbox0=\hbox{$#1{#2#3}{\int}$ }
\vcenter{\hbox{$#2#3$ }}\kern-.6\wd0}}
\begin{document}
\title{Blow-up analysis of a nonlocal Liouville-type equation}
\author{ Francesca Da Lio\thanks{Department of Mathematics, ETH Z\"urich, R\"amistrasse 101, 8092 Z\"urich, Switzerland.}  \and Luca Martinazzi\thanks{Department of Mathematics and Computer Science, Universit\"at Basel, Spiegelgasse 1, 4051 Basel, Switzerland. Supported by the Swiss National Foundation.} \and   Tristan Riviere$^*$   }

\maketitle

\begin{abstract}
In this paper we perform a blow-up and quantization analysis of  the following nonlocal Liouville-type  equation
\begin{equation}
  (-\Delta)^\frac12 u= \kappa e^u-1~\mbox{in $S^1\,,$}\label{eqs}\end{equation}
  where   $(-\Delta)^\frac{1}{2}$ stands for the fractional
 Laplacian and $\kappa$  is a bounded function. We interpret equation
 \rec{eqs} as the prescribed curvature equation to  a curve in conformal parametrization. We also establish a relation between equation \rec{eqs} and the analogous equation in $\R$
\begin{equation}
 (-\Delta)^\frac12 u =Ke^{u}\quad \text{in }\R\,,\label{eqr}
\end{equation}
with $K$ bounded on $\R$.
\end{abstract}

 \tableofcontents 
 \section{Introduction}
 The equation  prescribing the scalar curvature within a 
conformal class of manifolds is an old problem which has stimulated a lot of works in geometry and analysis. In dimension $n=2$ it is the so-called Liouville equation. More precisely  if $(\Sigma, g_0)$  is a smooth, closed Riemann surface with  Gauss curvature $K_{g_0}$, an easy computation shows that a function $K(x)$  is the Gauss curvature for some {metric $g=e^{2u}g_0$ conformally equivalent} to the  metric $g_0$  with $u \colon\Sigma \to\R, $ if and only if there exists a solution $u = u(x)$ of
\begin{equation}\label{ld2}-\Delta_{g_0} u=K e^{2u}-K_{{g_0}},~~\text{on } \Sigma\end{equation}
where $\Delta_{g_0}$ is the Laplace Beltrami operator on $(\Sigma, g_0)$\,, (see e.g. \cite{ch} for more details).
\par
In particular when $\Sigma =\R$ or $\Sigma=S^2$ equation \rec{ld2} reads respectively
\begin{equation}\label{ld2r} -\Delta u=K e^{2u} ,~~\text{on }\R^2\end{equation}and
\begin{equation}\label{ld2s}-\Delta_{S^2} u=K e^{2u}-1,~~\text{on } S^2\,.\end{equation}\par
Both equations \rec{ld2r} and \rec{ld2s} have been largely studied in the literature. Here 
we would like to recall the famous blow-up result by Br\'ezis and Merle in \cite{BM} concerning Equation \rec{ld2r}\,.

\begin{Theorem}[Thm 3, \cite{BM}]\label{thbm}
Assume {$(u_k)\subset L^1(\Omega)$, $\Omega$ open subset of $\R^2$}, is a sequence of solutions to \rec{ld2r} satisfying for some $1<p\le  \infty$, $K_k\ge 0$,
$\|K_k\|_{L^p}\le C_1\,,$ and $\|e^{u_k}\|_{L^{p'}}\le C_2\,.$ Then up to subsequences the following alternatives hold:
either $(u_k)$ is bounded in $L^{\infty}_{\loc}(\Omega)$, or $u_k(x)\to -\infty$  uniformly on compact subsets of $\Omega$, or
there is a finite nonempty set $B=\{a_1,\ldots,a_N\}\subset\Omega$  (blow-up set) such that
$u_k(x)\to -\infty$ on compact subsets of $\Omega\setminus B$. In addition in this last case
$K_k e^{2u_k}$ converges in the sense of measure on $\Omega$ to $\sum_{i=1}^N\alpha_i\delta_{a_i},$ with 
$\alpha_i\ge{\frac{2\pi}{{p^{\prime}}} }\,.$
\end{Theorem}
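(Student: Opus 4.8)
The plan is to carry out the concentration--compactness analysis of Br\'ezis and Merle. Since equation \rec{ld2r} is local, everything reduces to estimates on small balls, and the loss of compactness will be detected through the measures $f_k:=K_ke^{2u_k}\ge 0$. By the hypotheses $e^{2u_k}$ is bounded in $L^{p'}(\Omega)$, hence, by H\"older's inequality, $f_k$ is bounded in $L^1(\Omega)$, so up to a subsequence $f_k\weak\mu$ for a finite nonnegative Radon measure $\mu$ on $\Omega$. The one nontrivial analytic input I would use is the sharp Br\'ezis--Merle estimate, proved by truncation of level sets in the spirit of Moser--Trudinger: if $-\Delta w=f$ in a ball $B$ with $w=0$ on $\partial B$, then $\int_B\exp\!\big((4\pi-\delta)|w|/\|f\|_{L^1(B)}\big)\le C(\delta)(\diam B)^2$ for every $\delta\in(0,4\pi)$. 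Apart from that I would only need interior elliptic regularity and the Harnack inequality for (sub/super)harmonic functions.

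The core of the argument is an $\varepsilon$-regularity statement: if $\limsup_k\int_{B_{2R}(x_0)}f_k<2\pi/p'$, then $u_k^+$ is bounded in $L^\infty(B_{R/2}(x_0))$. To prove it I would split $u_k=w_k+h_k$ on $B_{2R}(x_0)$ with $-\Delta w_k=f_k$ and $w_k=0$ on the boundary (hence $w_k\ge 0$, as $f_k\ge 0$) and $h_k$ harmonic. Since $\|f_k\|_{L^1}<2\pi/p'$ for large $k$, the sharp estimate provides an exponent $q>p'$ with $e^{2w_k}$ bounded in $L^q(B_{2R})$; since $e^{2h_k}\le e^{2u_k}$ is bounded in $L^1$ and $e^{2h_k}$ is subharmonic, the sub-mean-value property bounds $h_k$ from above on $B_R$; hence on $B_R$ one has $f_k=K_ke^{2h_k}e^{2w_k}\lesssim K_ke^{2w_k}$, which is bounded in $L^s$ with $1/s=1/p+1/q<1$, and elliptic regularity then makes $w_k$ bounded in $W^{2,s}_{\loc}\hookrightarrow C^0$, so $u_k\le C$ on $B_{R/2}$. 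Two consequences: the set $S:=\{x:\mu(\{x\})\ge 2\pi/p'\}$ is finite (with at most $\tfrac{p'}{2\pi}\mu(\Omega)$ points), and $u_k^+$ is bounded in $L^\infty_{\loc}(\Omega\setminus S)$, since any $x_0\notin S$ admits $R>0$ with $\mu(\overline{B_{2R}(x_0)})<2\pi/p'$.

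On the connected open set $\Omega':=\Omega\setminus S$ one has $f_k\lesssim K_k$ bounded in $L^p_{\loc}$, so splitting $u_k=w_k+h_k$ on balls gives $w_k$ bounded in $W^{2,p}\hookrightarrow C^0$ and $h_k$ harmonic with $\sup h_k$ locally bounded; applying Harnack to the nonnegative harmonic functions $C-h_k$, chaining along $\Omega'$, and using interior elliptic estimates gives, along a subsequence, the dichotomy: either $u_k\to u_\infty$ in $C^0_{\loc}(\Omega')$, or $u_k\to-\infty$ uniformly on compacts of $\Omega'$. If $S=\emptyset$ this is the first or the second alternative. If $S=\{a_1,\dots,a_N\}\ne\emptyset$ I would exclude the convergent case: the Green representation of $u_k=w_k+h_k$ on a small ball $B_\rho(a_i)$ (with $\partial B_\rho(a_i)\subset\Omega'$, so $h_k$ is bounded) gives $w_k\to w\ge\beta_i\,G_{B_\rho(a_i)}(\cdot,a_i)=-\tfrac{\beta_i}{2\pi}\log|\cdot-a_i|+O(1)$, whence $u_\infty(x)\ge-\tfrac{\beta_i}{2\pi}\log|x-a_i|-C$ near $a_i$ with $\beta_i=\mu(\{a_i\})\ge 2\pi/p'$; thus $e^{2u_\infty}\ge c\,|x-a_i|^{-\beta_i/\pi}$ is \emph{not} in $L^{p'}$ near $a_i$ (as $\beta_i p'/\pi\ge 2$), while Fatou's lemma applied to $u_k\to u_\infty$ a.e.\ forces $\|e^{2u_\infty}\|_{L^{p'}(\Omega)}\le C_2<\infty$ --- a contradiction. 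So $u_k\to-\infty$ on compacts of $\Omega\setminus S$; then $f_k\to 0$ in $L^1_{\loc}(\Omega\setminus S)$, hence $\mu=\sum_{i=1}^N\alpha_i\delta_{a_i}$ with $\alpha_i\ge 2\pi/p'$, which is the third alternative with $B=S$.

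The step I expect to be hardest is the $\varepsilon$-regularity with the \emph{sharp} threshold $2\pi/p'$: this is exactly where the $4\pi$-constant of the Br\'ezis--Merle inequality has to be matched against the $L^p$-bound on $K_k$ through the bootstrap exponent $1/s=1/p+1/q$, and where controlling the harmonic part from above (via $w_k\ge 0$, subharmonicity of $e^{2h_k}$, and the $L^{p'}$-bound on $e^{2u_k}$) must be done carefully. The usual conceptual subtlety --- ruling out that $u_k$ stays bounded on $\Omega\setminus S$ while concentrating on $S$ --- is by contrast cheap here, since it follows immediately from the incompatibility of a logarithmic singularity of weight $\ge 1/p'$ with the assumed $L^{p'}$ exponential integrability.
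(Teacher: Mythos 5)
This theorem is not proved in the paper at all: it is quoted verbatim (as background) from Br\'ezis and Merle, so there is no ``paper proof'' to compare against, and your proposal is in essence a correct reconstruction of the original Br\'ezis--Merle argument for their Theorem~3, transposed to the normalization $-\Delta u=Ke^{2u}$. All the ingredients are the right ones and are assembled in the right order: weak-$*$ compactness of the nonnegative measures $K_ke^{2u_k}\,dx$; the sharp exponential estimate for $-\Delta w=f$, $w=0$ on the boundary (their Theorem~1, with the constant $4\pi$); the $\ve$-regularity at points of mass $<2\pi/p'$, obtained by the splitting $u_k=w_k+h_k$, the bound on $h_k$ from above via subharmonicity of $e^{2h_k}$ and the $L^{p'}$ bound, and the bootstrap through $1/s=1/p+1/q<1$; the Harnack dichotomy away from the finite singular set; and the exclusion of the ``bounded'' alternative at a concentration point by comparing the logarithmic lower bound $u_\infty\ge -\frac{\beta_i}{2\pi}\log|x-a_i|-C$, $\beta_i\ge 2\pi/p'$, with the $L^{p'}$ integrability of $e^{2u_\infty}$ furnished by Fatou. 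The matching of constants is correct: with $v=2u$, $V=2K$ one is exactly in the Br\'ezis--Merle setting, their threshold $4\pi/p'$ for $V_ke^{v_k}$ becoming $2\pi/p'$ for $K_ke^{2u_k}$.

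Two small remarks, both concerning the statement as transcribed in the paper rather than your argument. First, you read the hypothesis as a bound on $\|e^{2u_k}\|_{L^{p'}}$; this is indeed the only reading under which $K_ke^{2u_k}$ is bounded in $L^1$ and the conclusion $\alpha_i\ge 2\pi/p'$ comes out, so your (tacit) correction of the displayed hypothesis $\|e^{u_k}\|_{L^{p'}}\le C_2$ is the intended one, but you should say so explicitly. Second, your Harnack chaining uses that $\Omega\setminus S$ is connected; this requires $\Omega$ to be a domain (as in the original Br\'ezis--Merle setting), not merely an open set, otherwise the trichotomy must be stated component by component. With these caveats made explicit, the proof is complete.
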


The purpose of this work is to investigate an analogue  prescribed
 curvature problem  in dimension $1$\,. Even if this is a classical  problem, it has never been studied
so far (up to our knowledge) from the point of view of conformal geometry.
 In the case for instance of a planar Jordan curve (namely a continuous closed and simple curve) there is the possibility to parametrize it through the trace of the Riemann mapping between the disk $D^2$ and the simply connected domain enclosed by the curve. The equation corresponding to such a parametrization reads as follow
\begin{equation}\label{modelequbis}
  (-\Delta)^\frac12 \lambda= \kappa e^{\lambda}-1~\mbox{in $S^1$}\end{equation}
  where $ e^{\lambda} d\theta$ and $\kappa e^{\lambda} d\theta$ are respectively  the length form and the curvature {density} of the curve in this parametrization. {The definition and relevant properties of the operator $(-\Delta)^\frac 12$ will be given in the appendix.}\par
  
  In this paper we are going to study the space of solutions to Equation \rec{modelequbis} and show that, modulus the action of M\"obius transformations of the disk, there is correspondence between these solutions and the boundary traces of conformal immersions of the disk. This property can be seen as a sort of 
    generalized  Riemann Mapping Theorem.\par
  This permits us to perform a complete blow-up analysis of equation \rec{modelequbis} in the spirit of Theorem \ref{thbm},  even if we do not get exactly the same dichotomy. More precisely our first main
  result is  the following theorem.
\begin{Theorem}\label{mainth}
Let $(\lambda_k)\subset L^1(S^1,\R)$ be a sequence with
\begin{equation}\label{boundlength}
L_k:=\|e^{\lambda_k}\|_{ L^1(S^1)}\le \bar L
\end{equation}
satisfying  
\begin{equation} \label{liouvfrack}
(-\Delta)^\frac12 \lambda_k=\kappa_k e^{\lambda_k}-1\quad \text{in }S^1,
\end{equation}
where $\kappa_k\in L^\infty(S^1,\R)$ satisfies
\begin{equation}\label{boundcurv}
\|\kappa_k\|_{L^{\infty}(S^1)}\le \bar \kappa\,.
\end{equation}
Then up to subsequence we have  $\kappa_k e^{ \lambda_k} \rightharpoonup \mu $ weakly  in $ W^{1,p}_{\loc}(S^1\setminus B)$ for every $p<\infty$, where $\mu$ is a  Radon measure, $B:=\{a_1,\ldots, a_N\}$ is a (possibly empty) subset of $S^1$ and
$\kappa_k\stackrel{*}{\rightharpoonup}\kappa_{\infty}$ in $L^{\infty}(S^1)\,.$ Set $\bar\lambda_k:=\frac{1}{2\pi}\int_{S^1}\lambda_kd\theta$. Then one of the following alternatives holds:
\par
i)  $\bar \lambda_k\to -\infty$ as $k\to\infty$, $N=1$ and $\mu=2\pi\delta_{a_1}$. In this case
$$v_k:=\lambda_k-\bar\lambda_k \rightharpoonup v_\infty \quad \text{ in }W^{1,p}_{\loc}(S^1\setminus\{a_1\})\text{ for every }p<\infty,$$
where  $v_{\infty}(e^{i\theta})= -\log(2(1-\cos(\theta-\theta_1)))$ for $a_1=e^{i\theta_1}$, solving
\begin{equation} \label{liouvlimit2}
(-\Delta)^\frac12 v_\infty=-1+2\pi \delta_{a_1}\quad \text{in }S^1\,.
\end{equation}\par
ii)  $\bar \lambda_k\to -\infty$ as $k\to\infty$, $N=2$ and $\mu =\pi(\delta_{a_1}+\delta_{a_2})$. In this case
$$v_k:=\lambda_k-\bar\lambda_k \rightharpoonup v_\infty \quad \text{ in }W^{1,p}_{\loc}(S^1\setminus\{a_1,a_2\})\text{ for every }p<\infty,$$
where
$$v_{\infty}(e^{i\theta})= -\frac12\log(2(1-\cos(\theta-\theta_1)))-\frac12\log(2(1-\cos(\theta-\theta_2))), \quad a_1=e^{i\theta_1},\;a_2=e^{i\theta_2}$$
solves
\begin{equation} \label{liouvlimit3}
(-\Delta)^\frac12 v_\infty=-1+\pi \delta_{a_1}+\pi\delta_{a_2}\quad \text{in }S^1\,.
\end{equation}
\par 
iii) $|\bar\lambda_k|\le C$ and $\mu=\kappa_\infty e^{\lambda_\infty}+\pi(\delta_{a_1}+\dots+\delta_{a_N})$ for some $\lambda_\infty\in W^{1,p}_{\loc}(S^1\setminus B)$, with $\lambda_\infty,e^{\lambda_\infty}\in L^1(S^1)$ and 
\begin{equation} \label{liouvlimit}
(-\Delta)^\frac12 \lambda_\infty=\kappa_{\infty} e^{\lambda_\infty}-1+\sum_{i=1}^{N}\pi \delta_{a_i}\quad \text{in }S^1\,.
\end{equation}
\end{Theorem}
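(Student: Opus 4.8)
Integrating \rec{liouvfrack} over $S^1$ and using that $(-\Delta)^\frac12$ annihilates constants and has zero mean, one gets the mass identity $\int_{S^1}\kappa_k e^{\lambda_k}\,d\theta=2\pi$ for every $k$; combined with \rec{boundcurv} this yields $\|\kappa_k e^{\lambda_k}\|_{L^1(S^1)}\le\bar\kappa\bar L$, while Jensen's inequality applied to \rec{boundlength} gives $e^{\bar\lambda_k}\le\frac{1}{2\pi}L_k\le\frac{\bar L}{2\pi}$, so $\bar\lambda_k$ is bounded from above. Passing to a subsequence we may assume $\kappa_k e^{\lambda_k}\,d\theta\rightharpoonup\mu$ and $|\kappa_k| e^{\lambda_k}\,d\theta\rightharpoonup\nu$ as Radon measures on $S^1$, and $\kappa_k\weak\kappa_\infty$ in $L^\infty(S^1)$. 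Define the blow-up set $B:=\{x\in S^1:\ \nu(\{x\})\ge\pi\}$; since $\nu(S^1)\le\bar\kappa\bar L$, the set $B=\{a_1,\dots,a_N\}$ is finite with $N\le\bar\kappa\bar L/\pi$.

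Next comes an $\varepsilon$-regularity step. If $x_0\notin B$ there is $r>0$ with $\nu(\overline{B_r(x_0)})<\pi$, hence $\limsup_k\int_{B_r(x_0)}|\kappa_k|e^{\lambda_k}\,d\theta<\pi$. Splitting $\lambda_k-\bar\lambda_k$ into the contributions of $(\kappa_k e^{\lambda_k}-1)\mathbf 1_{B_r(x_0)}$ and of $(\kappa_k e^{\lambda_k}-1)\mathbf 1_{S^1\setminus B_r(x_0)}$ and applying the Br\'ezis--Merle/Moser--Trudinger type estimate for $(-\Delta)^\frac12$ on $S^1$ (established in the appendix), one obtains that $e^{q(\lambda_k-\bar\lambda_k)}$ is bounded in $L^1(B_{r/2}(x_0))$ for some $q>1$. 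Feeding this into \rec{liouvfrack} and bootstrapping with the elliptic estimates for $(-\Delta)^\frac12$ shows that $v_k:=\lambda_k-\bar\lambda_k$ is bounded in $W^{1,p}_{\loc}(S^1\setminus B)$ for every $p<\infty$, so along a subsequence $v_k\rightharpoonup v_\infty$ in $W^{1,p}_{\loc}(S^1\setminus B)$ and $v_k\to v_\infty$ in $C^0_{\loc}(S^1\setminus B)$. Since $\bar\lambda_k$ is bounded above, along a further subsequence either $|\bar\lambda_k|\le C$ or $\bar\lambda_k\to-\infty$. In the first case $\lambda_k\rightharpoonup\lambda_\infty:=v_\infty+\lim_k\bar\lambda_k$ in $W^{1,p}_{\loc}(S^1\setminus B)$, Fatou gives $\lambda_\infty,e^{\lambda_\infty}\in L^1(S^1)$, and $\kappa_k e^{\lambda_k}\to\kappa_\infty e^{\lambda_\infty}$ in $C^0_{\loc}(S^1\setminus B)$, so $\mu=\kappa_\infty e^{\lambda_\infty}+\sum_{i=1}^N m_i\delta_{a_i}$ and \rec{liouvlimit} holds with the atoms $m_i\delta_{a_i}$. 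In the second case $\lambda_k\to-\infty$ locally uniformly on $S^1\setminus B$, so $\kappa_k e^{\lambda_k}\to0$ there, $\mu=\sum_{i=1}^N m_i\delta_{a_i}$ with $\sum_i m_i=2\pi$ by the mass identity, and $(-\Delta)^\frac12 v_\infty=-1+\sum_i m_i\delta_{a_i}$ with $\int_{S^1}v_\infty\,d\theta=0$.

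The core of the proof is the exact quantization $m_i=\pi$ (with the single exception $m_1=2\pi$ when $N=1$ and $\bar\lambda_k\to-\infty$). Around each $a_i$ one selects a concentration scale $\varepsilon_k\to0$ and points $x_k\to a_i$ realizing the maximum of $\lambda_k$ near $a_i$, rescales $\eta_k(x):=\lambda_k(x_k+\varepsilon_k x)-\lambda_k(x_k)$, and shows (using the $\varepsilon$-regularity above $\varepsilon_k$ and elliptic estimates) that $\eta_k\to\eta_\infty$ in $C^1_{\loc}(\R)$, where $\eta_\infty$ solves $(-\Delta)^\frac12\eta_\infty=\kappa_\infty(a_i)e^{\eta_\infty}$ on $\R$ with $\int_\R e^{\eta_\infty}<\infty$; in particular $\kappa_\infty(a_i)>0$ and the classification of such finite-mass entire solutions — obtained through the correspondence with \rec{eqr} set up earlier in the paper — fixes the mass of the bubble. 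A ``no neck energy'' estimate on the dyadic annuli between the bubble scale $\varepsilon_k$ and a fixed radius then shows that no mass escapes into the necks, so that $m_i$ equals exactly the bubble mass. Combining with $\sum_i m_i=2\pi$ in the case $\bar\lambda_k\to-\infty$ forces $N=1$, $m_1=2\pi$ (case i), which is the M\"obius degeneration of the disk, or $N=2$, $m_1=m_2=\pi$ (case ii); in case iii every $m_i=\pi$. Finally $v_\infty$ is the unique mean-zero solution of $(-\Delta)^\frac12 v_\infty=-1+\sum_i m_i\delta_{a_i}$, and writing it via the Green's function $G(\theta,\varphi)=-\frac1{2\pi}\log\big(2(1-\cos(\theta-\varphi))\big)$ of $(-\Delta)^\frac12$ on $S^1$, together with the classical identity $\int_{S^1}\log\big(2(1-\cos\theta)\big)\,d\theta=0$ ensuring the normalization, gives the explicit formulas in (i) and (ii).

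I expect the last step to be the main obstacle: producing a clean classification of the finite-mass entire solutions of $(-\Delta)^\frac12 u=Ke^u$ on $\R$ (more delicate than its two-dimensional local counterpart because of nonlocality), establishing the neck-energy vanishing via a Pohozaev-type identity and decay estimates for the half-Laplacian, and correctly identifying the M\"obius-type bubble responsible for the exceptional value $m_1=2\pi$. The remaining steps — the mass identity, Jensen's bound, the weak compactness, and the $\varepsilon$-regularity once the appendix estimate is granted — are comparatively routine.
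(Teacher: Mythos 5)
Your preliminary steps are sound and essentially coincide with the paper's: the mass identity $\int_{S^1}\kappa_k e^{\lambda_k}\,d\theta=2\pi$, the Jensen upper bound on $\bar\lambda_k$, the extraction of weak limits, the finiteness of $B$, and the $\varepsilon$-regularity/bootstrap away from $B$ are exactly what Theorem \ref{MT4}, Lemma \ref{eLemma} and Theorem \ref{convergencephi} provide, and the trichotomy according to $|\bar\lambda_k|\le C$ or $\bar\lambda_k\to-\infty$ is also how the paper organizes the proof.

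The core quantization step, however, is where your proposal breaks down, for two concrete reasons. First, the hypotheses give only $\|\kappa_k\|_{L^\infty}\le\bar\kappa$ and weak-$*$ convergence: there is no pointwise or $C^0$ convergence of $\kappa_k$, no sign condition, and $\kappa_\infty(a_i)$ is not even a well-defined number, so the rescaled functions $\eta_k$ need not converge to a solution of any autonomous equation $(-\Delta)^\frac12\eta_\infty=\kappa_\infty(a_i)e^{\eta_\infty}$, and the classification step you invoke is unavailable (the paper stresses precisely that, unlike \cite{LS}, no convergence of the curvatures is assumed). Second, even granting that $\kappa_k\to$ a positive constant near $a_i$, the finite-mass entire solutions of $(-\Delta)^\frac12 u=e^u$ on $\R$ carry total mass exactly $2\pi$ (Theorem \ref{trm1} and Lemma \ref{lemmapoho} of the paper), so a bubbling-plus-no-neck argument would quantize the concentrated mass by $2\pi$ at each point, contradicting the asserted value $\pi$ in cases (ii) and (iii): the quantum $\pi$ is not the mass of an interior bubble but the turning angle at a \emph{pinched point} of the limiting curve. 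The paper's actual mechanism is geometric: via Theorem \ref{propcostrlambdaintr} each $\lambda_k$ is $\log|\Phi_k'|$ for a holomorphic immersion whose boundary curve has curvature $\le\bar\kappa$ and length $\le\bar L$; in the degenerate case the renormalized maps converge to a Schwarz--Christoffel map and the angle $\alpha$ at the pinched point is forced to equal $\pi$ by total-curvature arguments along short geodesics for the induced distance $D_k$ (Proposition \ref{PropDK}, Proposition \ref{limconst}, Lemmas \ref{closed} and \ref{closed2}); in case (iii) the coefficient $c_j$ at each $a_j$ is then computed by composing $\Phi_k$ with M\"obius maps $f_k$ concentrating at $a_j$ (Corollary \ref{cormob}) so as to reduce to that degenerate picture. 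Without an argument of this type (or a genuinely different replacement for the classification-of-bubbles step that produces $\pi$ rather than $2\pi$ and tolerates non-convergent, sign-changing $\kappa_k$), the central claim of the theorem is not proved by your outline.
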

  We would like to stress that  we obtain  a \emph{quantization-type} result, namely the curvature concentrating at each blow-up point is precisely $\pi$, without any assumption on the sign of the curvature (this hypothesis is crucial  in \cite{BM}) and on the convergence of the  $\kappa_k$\,. Actually several works on equations \eqref{ld2r} and \eqref{ld2s} have extended the result of Br\'ezis and Merle showing that, under the crucial assumption that the prescribed curvatures $K_k$ converge in $C^0$, the amount of curvature concentrating at each point is a multiple of $4\pi$, i.e. a multiple of the total Gaussian curvature of $S^2$, see e.g. \cite{LS} (Also higher-dimensional extensions were studied under the same strong assumptions of convergence of $K_k$ in $C^0$ or even $C^1$, see e.g. \cite{DR}, \cite{mal} and \cite{mar3}.)
   In \cite{BM} the functions $K_k$ can belong to $L^p(\R)$, with $1<p\le +\infty\,.$ We believe that
in the case of the nonlocal Liouville equation \rec{modelequbis}   the quantization result by $\pi$ does
not hold once we replace $\kappa\in L^{\infty}$ by $\kappa\in L^p\,$ with $1<p<+\infty$.  
\par
   The fact that we are able to prove a quantization result only under the minimal (and geometrically meaningful) bounds \eqref{boundlength}-\eqref{boundcurv} is   better understandable through the geometric interpretation that we  give to equation \eqref{liouvfrack}.
Indeed given a solution $\lambda$  to the equation \rec{modelequbis}, with $\kappa\in L^{\infty}(S^1)$, the function $e^{\lambda}$ provides
a ``conformal" parametrization of a closed curve $\gamma\colon S^1\to\C$ in normal parametrization and  whose curvature at the point $\gamma(z)$ is exactly
$\kappa(z)$\,.\par

Precisely let us define:

\begin{Definition} A function $\Phi\in C^1(\bar D^2,\C)$ is called a holomorphic immersion if $\Phi$ is holomorphic in $D^2$ and $\Phi'(z):=\de_z\Phi(z)\ne 0$ for every $z\in \bar D^2$. \par
A curve $\gamma\in C^1(S^1,\C)$ is said to be in normal parametrization if $|\dot\gamma|\equiv const$ and in conformal parametrization if there exists a holomorphic immersion $\Phi\in C^1(\bar D^2,\C)$  with 
 $\Phi|_{S^1}=\gamma$.
\end{Definition}

Then we have the following characterization:
\begin{Theorem}\label{propcostrlambdaintr} 
A function  $\lambda\in  L^1(S^1,\C)$ with $L:=\|e^{\lambda}\|_{L^1(S^1)}<\infty$  satisfies 
\begin{equation} \label{liouvfracn1}
(-\Delta)^\frac12 \lambda=\kappa e^{\lambda}-1\quad \text{in }S^1
\end{equation}
for some function $\kappa\colon S^1\to \R$, $\kappa\in L^{\infty}(S^1)$, if and only if there exists  a closed curve 
$\gamma\in W^{2,\infty}(S^1,\C)$, (see  Fig.\ref{fig4} and Fig.\ref{fig5},  Fig.\ref{fig:selfint},) with   $|\dot\gamma|\equiv\frac{L}{2\pi}$, a holomorphic  immersion $\Phi\colon \bar D^2\to \C$ a diffeomorphism $\sigma\colon S^1\to S^1$    ,  such that $\Phi\circ\sigma(z)=\gamma(z)$ for all $z\in S^1\,,$
 \begin{equation}\label{cond1}
|\Phi'  (z)|=e^{\lambda(z)},~~z\in S^1\,,
\end{equation}
and the curvature of $\Phi(S^1)$ is $\kappa$\,.
  While $\Phi$ uniquely determines $\lambda$ via \eqref{cond1}, $\lambda$ determines $\Phi$ up to a rotation and a translation.
Moreover it holds
\begin{equation}\label{lambdaPhi}
|\Phi'(z)|=e^{\tilde\lambda(z)},\quad z\in \bar D^2,
\end{equation}
where $\tilde\lambda:D^2\to \R$ is the harmonic extension of $\lambda$. ~~\hfill$\Box$
\end{Theorem}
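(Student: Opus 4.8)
The plan is to derive the equivalence from two elementary identities, one about the operator $(-\Delta)^\frac12$ and one about plane curves, and then to settle the boundary regularity that makes everything literal. Recall from the appendix that if $\tilde\lambda$ is the harmonic extension of $\lambda\in L^1(S^1)$ to $D^2$, then $(-\Delta)^\frac12\lambda=\partial_\nu\tilde\lambda$ on $S^1$ (in the weak sense), $\nu$ the outward unit normal. On the other hand, if $\Phi$ is a holomorphic immersion of $\bar D^2$, then $\Phi'$ is holomorphic and nowhere vanishing on the simply connected set $D^2$, hence it has a holomorphic logarithm $h:=\log\Phi'$, continuous up to $\bar D^2$; therefore $\tilde\lambda:=\operatorname{Re}h=\log|\Phi'|$ is harmonic in $D^2$ and continuous on $\bar D^2$, i.e. it is the harmonic extension of the boundary trace $\lambda$ of $\log|\Phi'|$. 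This already gives \eqref{lambdaPhi}, and forces $\lambda$ to be real valued, consistently with \eqref{cond1}. Differentiating $\tilde\lambda$ radially and using the Cauchy--Riemann equations, $\partial_\nu\tilde\lambda(e^{i\theta})=\operatorname{Re}\big(h'(e^{i\theta})e^{i\theta}\big)=\operatorname{Re}\big(\tfrac{\Phi''}{\Phi'}(e^{i\theta})\,e^{i\theta}\big)$. The second identity is the curvature formula: for the boundary curve $\beta(\theta):=\Phi(e^{i\theta})$ one computes $\dot\beta=ie^{i\theta}\Phi'(e^{i\theta})$ and $\ddot\beta=-e^{i\theta}\big(\Phi'(e^{i\theta})+e^{i\theta}\Phi''(e^{i\theta})\big)$, so $\operatorname{Im}(\overline{\dot\beta}\,\ddot\beta)=|\Phi'|^2+\operatorname{Re}(\overline{\Phi'}\,\Phi''\,e^{i\theta})$; since the signed curvature of $\Phi(S^1)$ at $\Phi(e^{i\theta})$ is $\kappa=\operatorname{Im}(\overline{\dot\beta}\,\ddot\beta)/|\dot\beta|^3$ and $|\dot\beta|=|\Phi'(e^{i\theta})|$, dividing by $|\Phi'|^2$ yields $\operatorname{Re}\big(\tfrac{\Phi''}{\Phi'}(e^{i\theta})e^{i\theta}\big)=\kappa\big(\Phi(e^{i\theta})\big)\,|\Phi'(e^{i\theta})|-1$.

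For the ``if'' direction these identities combine at once: given a holomorphic immersion $\Phi$ with $|\Phi'|=e^\lambda$ on $S^1$ and curvature $\kappa$ along $\Phi(S^1)$ we obtain $(-\Delta)^\frac12\lambda=\partial_\nu\tilde\lambda=\kappa e^\lambda-1$. The curve $\gamma$ is then the constant-speed reparametrization $\gamma=\Phi\circ\sigma$ of $\theta\mapsto\Phi(e^{i\theta})$; the reparametrization map $\sigma$ is a $C^1$ diffeomorphism of $S^1$ because $|\Phi'|\in C^0(\bar D^2)$ is bounded away from $0$ and $\infty$, and $\gamma\in W^{2,\infty}(S^1,\C)$ because it has constant speed $L/2\pi$ (note $L=\int_{S^1}e^\lambda=\int_{S^1}|\Phi'|$ is the length of $\Phi(S^1)$) and $L^\infty$ curvature $\kappa$, so $\ddot\gamma=i\kappa\tfrac{L}{2\pi}\dot\gamma\in L^\infty$.

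For the converse I would reverse the construction. Given a solution $\lambda$ of \eqref{liouvfracn1}, let $\tilde\lambda$ be its harmonic extension, $\tilde\mu$ the harmonic conjugate of $\tilde\lambda$ in $D^2$ normalized by $\tilde\mu(0)=0$, $h:=\tilde\lambda+i\tilde\mu$, and set $\Phi(z):=\int_0^z e^{h(w)}\,dw$. Then $\Phi$ is holomorphic with $\Phi'=e^h$ nowhere zero and $|\Phi'|=e^{\tilde\lambda}$ on $\bar D^2$, giving \eqref{lambdaPhi} and, on $S^1$, \eqref{cond1}. Running the curvature identity backwards, the curvature of $\Phi(S^1)$ at $\Phi(e^{i\theta})$ equals $\big(\operatorname{Re}(h'(e^{i\theta})e^{i\theta})+1\big)e^{-\lambda}=\big(\partial_\nu\tilde\lambda+1\big)e^{-\lambda}=\big((-\Delta)^\frac12\lambda+1\big)e^{-\lambda}=\kappa$, so $\Phi$ has the prescribed curvature, and $\gamma,\sigma$ are obtained exactly as above. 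For uniqueness, if $\Phi_1,\Phi_2$ are two admissible immersions then $\log|\Phi_1'|=\log|\Phi_2'|=\lambda$ on $S^1$, hence $|\Phi_1'|=|\Phi_2'|$ in $D^2$ by uniqueness of the harmonic extension; then $\log(\Phi_1'/\Phi_2')$ is holomorphic in $D^2$ with vanishing real part, hence a purely imaginary constant $ic$, so $\Phi_1=e^{ic}\Phi_2+b$ for some $b\in\C$, i.e. $\Phi$ is determined up to a rotation and a translation (and $\sigma$ is then unchanged), while $\Phi$ trivially determines $\lambda$ via \eqref{cond1}.

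The step I expect to be the main obstacle is the regularity up to the boundary: to speak of $\Phi\in C^1(\bar D^2)$ with $\Phi'\ne0$ on $\bar D^2$ (so that $\log\Phi'$ is continuous on $\bar D^2$), and to legitimize the curvature computation on $S^1$ (which formally involves $\Phi''$ on the boundary), one must upgrade the a priori data $\lambda\in L^1$, $e^\lambda\in L^1$, $\kappa\in L^\infty$ to, say, $\lambda\in C^{0,\alpha}(S^1)$ with $e^{\pm\lambda}\in L^\infty$. This is a bootstrap for $(-\Delta)^\frac12$ on $S^1$: one first improves the integrability of $e^\lambda$ by a Br\'ezis--Merle / Moser--Trudinger type argument for the half-Laplacian, of the kind developed elsewhere in the paper, then plugs it back into \eqref{liouvfracn1} to obtain $(-\Delta)^\frac12\lambda\in L^q$ for every $q<\infty$, hence $\lambda\in C^{0,\alpha}(S^1)$ for every $\alpha<1$, and finally $e^{\pm\lambda}\in L^\infty$. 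With this regularity in hand the mapping properties recalled in the appendix make $\Phi$, $\sigma$ and the curvature identity rigorous (the boundary identity being obtained, e.g., on the circles $|z|=r$ and passed to the limit $r\uparrow1$), and $\gamma\in W^{2,\infty}$ precisely because its curvature is the bounded function $\kappa$.
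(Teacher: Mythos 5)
Your argument is correct and essentially the paper's own: the converse construction $\Phi'=e^{\tilde\lambda+i\tilde\rho}$ via the harmonic conjugate, the uniqueness argument through the unimodular holomorphic ratio of two candidate immersions, and the boundary-regularity bootstrap you sketch (local smallness of $\int|\kappa|e^{\lambda}$ plus the Br\'ezis--Merle/Moser--Trudinger estimate of Theorem \ref{MT4}, yielding $\lambda\in W^{1,p}(S^1)$ for all $p<\infty$) are precisely what the paper does, the last step being compressed there into ``by regularity theory''. The only cosmetic difference is that you express the key identity through $(-\Delta)^\frac12\lambda=\partial_\nu\tilde\lambda$ and the formula $\kappa=\operatorname{Im}\big(\overline{\dot\beta}\,\ddot\beta\big)/|\dot\beta|^3$, whereas the paper uses $(-\Delta)^\frac12\lambda=\partial_\theta\mathcal{H}(\lambda)$ and the arc-length computation of $\langle i\tau,\dot\tau\rangle$ --- the same computation, the two formulations being related by the Cauchy--Riemann equations.
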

\psfrag{Curva}{\bf Jordan Curve}
\psfrag{D}{$D^2$}
\psfrag{C}{\bf conformal}
\psfrag{phi}{$\Phi_k$}
\psfrag{gamma}{$\gamma_k$}
\psfrag{n}{\bf normal}
\psfrag{s}{$\sigma_k$}
\psfrag{S}{$S^1$}
\psfrag{diff}{\bf diffeomorphism}
\begin{figure}
\begin{center}
\includegraphics[width=6cm]{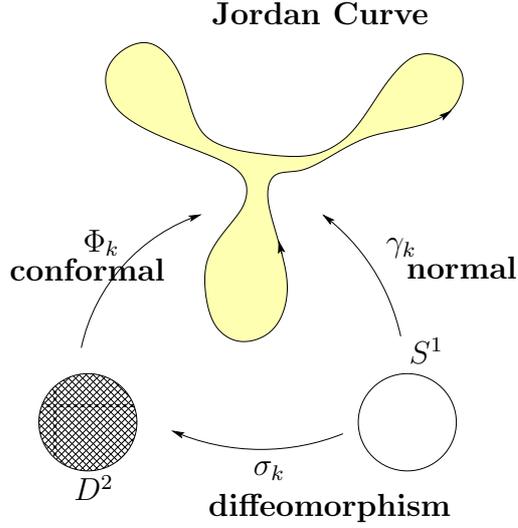}
 
\end{center}
\caption{A Jordan curve}\label{fig4}
\end{figure}

 \psfrag{D}{$D^2$}
\psfrag{C}{\bf conformal}
\psfrag{phi}{$\Phi_k$}
\psfrag{gamma}{$\gamma_k$}
\psfrag{n}{\bf normal parametrization}
\psfrag{s}{$\sigma_k$}
\psfrag{S}{$S^1$}
\psfrag{diff}{\bf diffeomorphism}
\begin{figure}
\begin{center}
\includegraphics[width=6cm]{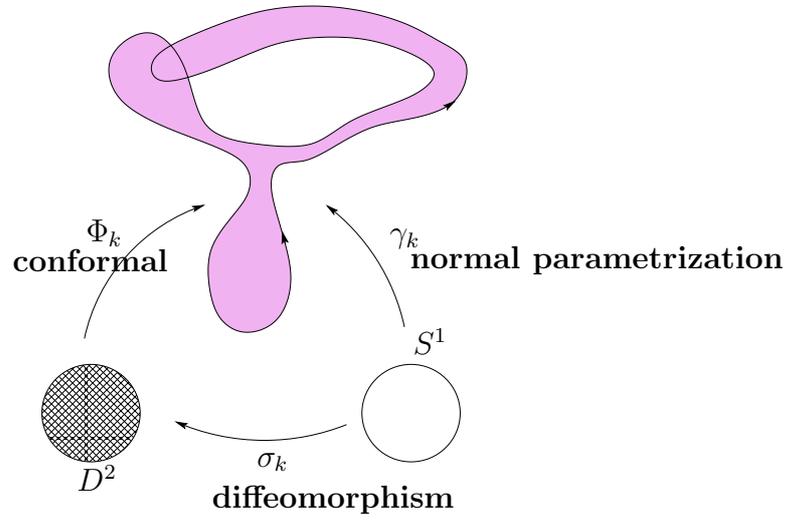}
\end{center}
\caption{A curve with self-intersections of degree $1$}\label{fig5}
\end{figure}

Theorem \ref{propcostrlambdaintr}  allows us to interpret and re-formulate Theorem \ref{mainth} from   the point of view of the behavior of the sequences of the curves $\gamma_k$ (in normal parametrization) and of the immersions $\Phi_k$ corresponding to
a sequence of solutions to \rec{liouvfrack}, see Fig. \ref{fig6}, \ref{fig7}.

 \psfrag{D}{$D^2$}
\psfrag{C}{\bf conformal}
\psfrag{phi}{$\Phi_\infty$}
\psfrag{gamma}{$\gamma_\infty$}
\psfrag{n}{\bf normal parametrization}
\psfrag{S}{$S^1$}
\begin{figure}
\begin{center}
\includegraphics[width=6cm]{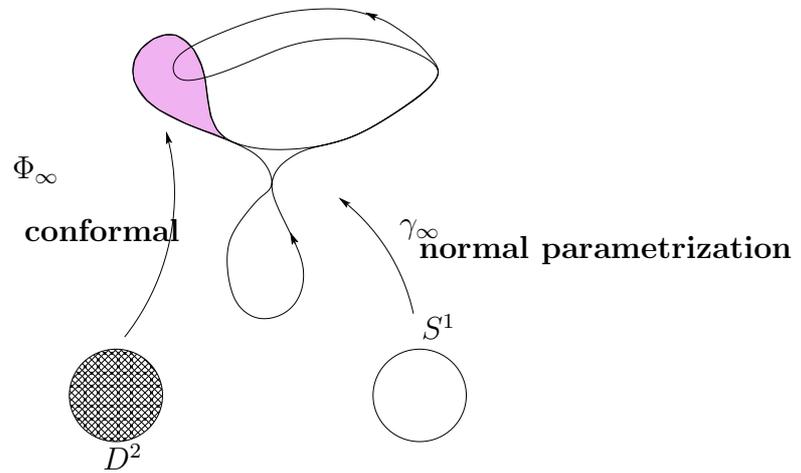}
\end{center}
\caption{Asymptotics of curves }\label{fig6}
 \end{figure}

\psfrag{D}{$D^2$}
\psfrag{C}{\bf conformal}
\psfrag{phi}{$\!\!\!\!\!\!\!\!\!\!\!\!\!\!\!\!\!\!\!\!\!\!\!\!\!\!\!\!\tilde\Phi_\infty=\lim_{k\to+\infty}\Phi_k\circ f_k^{j}$}
\psfrag{gamma}{$\gamma_\infty$}
\psfrag{n}{\bf normal parametrization}
\psfrag{S}{$S^1$}
\begin{figure}
\begin{center}
\includegraphics[width=8cm]{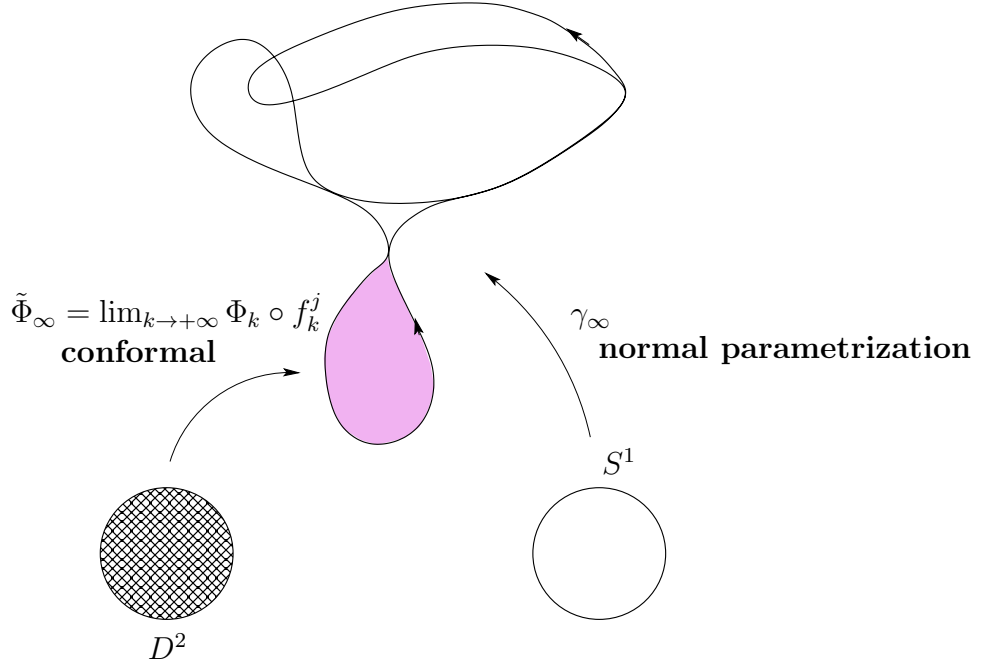}
\end{center}
\caption{Asymptotics of curves up to M\"obius Transformations}\label{fig7}

\end{figure}

 \begin{Theorem}\label{mainth2}
Let a sequence $(\lambda_k)\subset  L^1(S^1,\R)$ satisfy \eqref{boundlength}-\eqref{liouvfrack}-\eqref{boundcurv}, 
and let $\Phi_k\colon\bar D^2\to\C$ be a holomorphic immersion satisfying \eqref{cond1}, and $\sigma_k$, $\gamma_k$ with $\gamma_k=\Phi_k\circ\sigma_k$ be as given by Theorem \ref{propcostrlambdaintr}.  Then, up to extracting a subsequence, there exists an at most countable  family $J$ {such that for every $j\in J$ there exist a sequence of} M\"obius transformations $f_k^j:\bar D^2\to\bar D^2$ and a finite set finitely many points $B_j=\{a^j_1,\ldots a^j_{N_j}\}\subset S^1\,$ such that
$$\gamma_k\rightharpoonup \gamma_{\infty}~~\mbox{in $W^{2,p}(S^1)$\,,}\quad \tilde\Phi_k^j:= \Phi_k\circ f_k^j \rightharpoonup \tilde\Phi_{\infty}^j~~\mbox{in $W_{\loc}^{2,p}(\bar D^2\setminus B_j)$}\,.$$
 where  $p<\infty$, $\tilde\Phi_{\infty}^j\colon \bar D^2\setminus B_j\to \C$ are holomorphic  immersions satisfying
 \begin{equation}\label{current}
  (\gamma_{\infty})_{*}[S^1]=\sum_{j\in J} (\tilde\Phi_{\infty}^j)_{*}[S^1\setminus B_j]\,,
\end{equation}
where for every $\phi\colon S^1\to\C$ and for every differential form $\omega$ on $\C$ 
$$\langle \phi_*[S^1],\omega\rangle:=\int_{S^1}\phi^*\omega\,.$$
If $\lambda_k^j:=\log|(\tilde\Phi_k^j)'|_{S^1}|$, then up to a subsequence
$\lambda_k^j \rightharpoonup \lambda^j_{\infty}$ in $W^{1,p}_{\loc}(S^1\setminus B_j)$,
where 
$$
 (-\Delta)^\frac12 \lambda^j_{\infty}=\kappa_{\infty}^je^{v_{\infty}}-1-\sum_{i=1}^{N_j}\pi \delta_{a_i^j}\, ,$$
 and  $\kappa_k\circ f_k^j \stackrel{*}{\rightharpoonup}  \kappa^j_\infty$ in $L^\infty(S^1,\R)$ as $k\to +\infty\,.$ ~~\hfill$\Box$
\end{Theorem}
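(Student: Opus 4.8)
The plan is to push the analytic dichotomy of Theorem \ref{mainth} through the geometric dictionary of Theorem \ref{propcostrlambdaintr}, the linchpin being the Möbius covariance of equation \eqref{liouvfrack}, and then to organize the blow-up data of $\lambda_k$ into a bubble tree for the immersions $\Phi_k$. I would begin with the curves, which are the easy part: since $\gamma_k$ is in normal parametrization, $|\dot\gamma_k|\equiv L_k/(2\pi)\le\bar L/(2\pi)$, and writing $\dot\gamma_k=\tfrac{L_k}{2\pi}e^{i\psi_k}$ together with the fact that the curvature of $\gamma_k$ at $\gamma_k(z)$ equals that of $\Phi_k(S^1)$ there, namely $\kappa_k(\sigma_k(z))$, gives $|\ddot\gamma_k|=\bigl(\tfrac{L_k}{2\pi}\bigr)^2|\kappa_k\circ\sigma_k|\le\bigl(\tfrac{\bar L}{2\pi}\bigr)^2\bar\kappa$. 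Hence $(\gamma_k)$ is bounded in $W^{2,\infty}(S^1,\C)$ and, after a subsequence, $\gamma_k\rightharpoonup\gamma_\infty$ in $W^{2,p}(S^1)$ for every $p<\infty$ (and in $C^1$).

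Next I would record the covariance: for $f\in\mathrm{Aut}(D^2)$ one has $(-\Delta)^{1/2}(u\circ f)=|f'|\bigl((-\Delta)^{1/2}u\bigr)\circ f$ on $S^1$ (the harmonic extension of $u\circ f$ is $\hat u\circ f$ and a conformal map rescales the outward normal derivative by $|f'|$; cf.\ the appendix), while Theorem \ref{propcostrlambdaintr} applied to the immersion $\Phi=f$, whose image $S^1$ has curvature $\equiv1$, gives $(-\Delta)^{1/2}\log|f'|=|f'|-1$. Combining, if $(\lambda,\kappa,\Phi)$ is a solution datum then so is $\bigl(\lambda\circ f+\log|f'|,\ \kappa\circ f,\ \Phi\circ f\bigr)$, with $\|e^{\lambda\circ f+\log|f'|}\|_{L^1}=\|e^\lambda\|_{L^1}$ and $\|\kappa\circ f\|_{L^\infty}=\|\kappa\|_{L^\infty}$; thus \eqref{boundlength}-\eqref{boundcurv} are preserved and Theorem \ref{mainth} applies to every rescaled sequence $\lambda_k\circ f_k^j+\log|(f_k^j)'|$. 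The rescaled immersion $\tilde\Phi_k^j=\Phi_k\circ f_k^j$ is again a holomorphic immersion.

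The core is the bubble extraction, an induction over generations of rescalings. Apply Theorem \ref{mainth} to $\lambda_k$. In case iii), $|\bar\lambda_k|\le C$ forces $\lambda_k$ bounded in $L^1(S^1)$ (Jensen plus the average bound), so after normalizing $\Phi_k$ by the rotation/translation freedom of Theorem \ref{propcostrlambdaintr} (say $\Phi_k(z_0)=0$, $\arg\Phi_k'(z_0)$ fixed, $z_0\in S^1\setminus B$), the harmonic extension $\tilde\lambda_k$ of $\lambda_k$ converges locally uniformly in $D^2$, and up to $S^1\setminus B$ by the $W^{1,p}_{\loc}(S^1\setminus B)$-convergence of $\lambda_k$; since $\Phi_k'=e^{\tilde\lambda_k+i\theta_k}$ with $\theta_k$ the (normalized) harmonic conjugate, $\Phi_k'\to(\tilde\Phi_\infty)'=e^{\tilde\lambda_\infty+i\theta_\infty}\neq0$ locally uniformly on $\bar D^2\setminus B$, whence $\Phi_k\rightharpoonup\tilde\Phi_\infty$ in $W^{2,p}_{\loc}(\bar D^2\setminus B)$; record this in $J$ with $f_k^j=\mathrm{id}$, $B_j=B$. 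In cases i)–ii), $\bar\lambda_k\to-\infty$ gives $e^{\lambda_k}\to0$ in $L^1_{\loc}(S^1\setminus B)$, the immersion collapses to a point off $B$, and no macroscopic bubble is recorded; the mass concentrated at each $a_i$ must instead be recovered by zooming. In all cases, at each blow-up point $a_i$ of the current sequence I would introduce Möbius transformations $f_k^{(i)}=\phi_{p_k^{(i)}}$ with $p_k^{(i)}\to a_i$ at the rate prescribed by the explicit profiles of Theorem \ref{mainth} (roughly $1-|p_k^{(i)}|\sim e^{\bar\lambda_k}$ in cases i)–ii)), tuned so that $\lambda_k^{(i)}:=\lambda_k\circ f_k^{(i)}+\log|(f_k^{(i)})'|$ has bounded averages; re-apply Theorem \ref{mainth} to $\lambda_k^{(i)}$ (iterating the choice of $f_k^{(i)}$ if it is again of type i)–ii)), obtaining a new bubble $\tilde\Phi_\infty^j=\lim_k\Phi_k\circ f_k^{(i)}$, its finite blow-up set $B_j$, the limits $\lambda_k^j\rightharpoonup\lambda_\infty^j$ in $W^{1,p}_{\loc}(S^1\setminus B_j)$ and $\kappa_k\circ f_k^j\weak\kappa_\infty^j$ in $L^\infty$, and, passing to the limit in the covariant form of \eqref{liouvfrack} and identifying (via Theorem \ref{mainth}) the mass concentrating at $a_i^j$ as exactly $\pi\delta_{a_i^j}$, the equation $(-\Delta)^{1/2}\lambda_\infty^j=\kappa_\infty^j e^{\lambda_\infty^j}-1-\sum_{i=1}^{N_j}\pi\delta_{a_i^j}$, the negative $\pi$-masses booking the length and curvature that has escaped into the next generation at $a_i^j$. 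Iterating over these new points produces the at most countable family $J$.

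Finally, for \eqref{current}: since $\sigma_k$ is an orientation-preserving diffeomorphism of $S^1$, $(\Phi_k)_*[S^1]=(\gamma_k)_*[S^1]\to(\gamma_\infty)_*[S^1]$ by the first step. Fixing a smooth $1$-form $\omega$ and partitioning $S^1$, in the domain of each bubble, into the arcs converging to the bubbles and the remaining ``neck'' arcs $A_k$, one has $\bigl|\int_{A_k}(\tilde\Phi_k^j)^*\omega\bigr|\le\|\omega\|_\infty\int_{A_k}e^{\lambda_k^j}$, which tends to $0$ provided no length is lost in the necks, i.e. $L_\infty=\sum_{j\in J}\int_{S^1}e^{\lambda_\infty^j}$; granting this, $\langle(\gamma_\infty)_*[S^1],\omega\rangle=\sum_{j\in J}\langle(\tilde\Phi_\infty^j)_*[S^1\setminus B_j],\omega\rangle$, which is \eqref{current}. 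I expect the main obstacle to be precisely this no-neck analysis, i.e. the quantization of length and curvature along the degenerating annular regions joining consecutive bubbles: one must show that the rescalings $f_k^{(i)}$ can be arranged so that, between two consecutive bubbles, $\kappa_k e^{\lambda_k}$ asymptotically carries no mass beyond the $\pi$ already accounted for at the blow-up point. This is exactly the place where the \emph{sharp} value $\pi$ in Theorem \ref{mainth}, rather than the mere lower bound of the Brézis–Merle Theorem \ref{thbm}, is indispensable, and both the closing of the induction and the equality (not merely an inequality) in \eqref{current} rest on it.
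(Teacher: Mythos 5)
Your overall scaffolding (the $W^{2,\infty}$ bound on $\gamma_k$, the M\"obius covariance of \eqref{liouvfrack} as in Corollary \ref{cormob}, repeated application of Theorem \ref{mainth} to rescaled data, and Theorem \ref{mainth} part iii for the limiting equation on each bubble) is consistent with the paper, but your route to \eqref{current} has a genuine gap. You reduce the equality of currents to the statement that no length is lost in the neck regions, i.e.\ $L_\infty=\sum_{j\in J}\int_{S^1}e^{\lambda^j_\infty}d\theta$, and you present the ``no-neck'' analysis as the missing but presumably provable step. This statement is false in general: the paper stresses (end of the Introduction) that in this one-dimensional problem the length is \emph{not} quantized, and one can produce examples with $\sum_{j}\int_{S^1}e^{\lambda^j_\infty}d\theta<\liminf_k\int_{S^1}e^{\lambda_k}d\theta$. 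The sharp curvature constant $\pi$ does not rescue this either; curvature quantization and length quantization are different phenomena here, and only the former holds. So the estimate $\bigl|\int_{A_k}(\tilde\Phi_k^j)^*\omega\bigr|\le\|\omega\|_\infty\int_{A_k}e^{\lambda_k^j}$ cannot be closed the way you propose: in the degenerating annular regions the limit curve may carry positive length.

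The identity \eqref{current} holds for a different reason, which is the heart of the paper's argument and is absent from your proposal: the part of $\gamma_\infty$ not captured by the bubbles is parametrized by the set $\mathcal{P}$ of pinched points (Definition \ref{pinched}), and on $\mathcal{P}$ the curve is traversed twice with opposite orientations. Concretely, the duality map $\tau:\mathcal{P}\to\mathcal{P}$ satisfies $\gamma_\infty\circ\tau=\gamma_\infty$, $\dot\gamma_\infty\circ\tau=-\dot\gamma_\infty$ (this uses Proposition \ref{limconst} and Lemma \ref{uniquepinched}, i.e.\ the angle $\pi$ at pinched points), and has approximate derivative $-1$ at density points of $\mathcal{P}$; a change of variables then gives $(\gamma_\infty)_*[\mathcal{P}]=0$ even though $\gamma_\infty$ may have positive length on $\mathcal{P}$. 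The bubbles themselves are not extracted, as you suggest, by zooming at the blow-up points of $\lambda_k$ with rates tuned to $e^{\bar\lambda_k}$ (a step you leave heuristic), but are indexed by the equivalence classes of $S^1\setminus\mathcal{P}$ for the relation of Definition \ref{equiv}, built from the degenerating intrinsic distance $d_k$; for each class the M\"obius maps $f_k^j$ are fixed by a three-point normalization and Proposition \ref{eqclbis} gives both the convergence to a holomorphic immersion and $(\gamma_\infty)_*[\mathcal{A}_j]=(\tilde\Phi^j_\infty)_*[S^1\setminus B_j]$. Without this pinched-point/cancellation mechanism (or an equivalent substitute), your argument proves at best \eqref{current} under an extra length-quantization hypothesis that fails in general.
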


Theorem \ref{mainth2} says that it is always possible, up to  sequences of M\"obius transformations, to recover all the connected components enclosed by the limiting curve $\gamma_{\infty}$  (see in particular
\rec{current})\,.  Moreover these components    are separated by what we called  {\em pinched points}, (see Definition \ref{pinched}), namely (roughly speaking)  couple of  points $p\ne p'\in S^1$ such that $\gamma_{\infty}(p)=\gamma_\infty(p')$.  The 
angle between the tangent vectors in these couples of  points is shown to be necessarily $\pi$\,.\par
\par
 We finally prove a link between the equation \rec{modelequbis} and the analogous nonlocal equation in $\R\,.$ Precisely if $u\in L_\frac12(\R)$ (see \eqref{L12}), $e^u\in L^1(\R)$ and $u$ satisfies
\begin{equation}\label{liou5int}
(-\Delta)^\frac{1}{2}u= Ke^u\quad \text{in }\R
\end{equation}
for some $K\in L^\infty(\R)$, then $\lambda(z):=u(\Pi(z))-\log(1+\sin z)$ ($\Pi\colon S^1\setminus \{-i\}\to \R$ is the stereographic projection) satisfies
\begin{equation}\label{liou6}
(-\Delta)^\frac{1}{2}\lambda=K\circ\Pi\, e^{\lambda}-1 +\left(2\pi -\|\lapfr u\|_{L^1}\right)\delta_{-i}\quad \text{in }S^1.
\end{equation}
Owing to this correspondence from Theorem \ref{mainth} (see also Proposition \ref{propcostrlambdabis} below) we can deduce the following {compactness} result in  $\R\,.$
\begin{Theorem}\label{trm2} Let $u_k\in L_\frac12(\R)$ be a sequence of solutions to 
$$(-\Delta)^\frac{1}{2}u_k=K_ke^{u_k}\quad \text{in } \R{}$$
 with $\|K_k\|_{L^\infty}\le C$ and $\|e^{u_k}\|_{L^{1}}\le C \,. $
Then \par
1. Up to subsequence we have  $K_ke^{u_k} \rightharpoonup \mu $ weakly  in $ W^{1,p}_{\loc}(\R \setminus  B)$ for every $p<\infty$, where $\mu$ is a finite  Radon measure in $\R,$  $B:=\{a_1,\ldots, a_N\}$ is a (possibly empty) subset of $\R$ and
$K_k\stackrel{*}{\rightharpoonup}  K_{\infty}$ in $L^{\infty}(\R)\,.$ Moreover the following alternatives holds:
\par
i) $\mu|_{\R\setminus B}=K_\infty e^{u_\infty} $ for some $u_\infty\in W^{1,p}_{\loc}(\R\setminus B)$ satisfying.

 \begin{equation} \label{liouvlimit}
(-\Delta)^\frac12 u_\infty=K_{\infty} e^{u_\infty}+\sum_{i=1}^{N}\pi  \delta_{a_i}\quad \text{in }\R\,.
\end{equation}

ii) $\mu|_{\R\setminus B} \equiv 0$, $N\le 2$ and    $u_k\to -\infty$ locally uniformly in $\R\setminus B$. ~~\hfill$\Box$
\end{Theorem}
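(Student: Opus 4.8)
The plan is to transplant the problem from $\R$ to $S^1$ via stereographic projection and then apply Theorem \ref{mainth} as a black box, finally pulling the conclusions back. First I would set $\lambda_k(z) := u_k(\Pi(z)) - \log(1+\sin\theta)$ for $z = e^{i\theta}$, where $\Pi\colon S^1\setminus\{-i\}\to\R$ is the stereographic projection, so that by the stated correspondence \eqref{liou6} each $\lambda_k$ solves
\begin{equation*}
(-\Delta)^\frac12\lambda_k = K_k\circ\Pi\, e^{\lambda_k} - 1 + \bigl(2\pi - \|(-\Delta)^\frac12 u_k\|_{L^1(\R)}\bigr)\delta_{-i}\quad\text{in }S^1.
\end{equation*}
The hypotheses $\|K_k\|_{L^\infty}\le C$ and $\|e^{u_k}\|_{L^1(\R)}\le C$ translate, using the conformal factor of $\Pi$, into $\|\kappa_k\|_{L^\infty(S^1)}\le\bar\kappa$ and $\|e^{\lambda_k}\|_{L^1(S^1)}\le\bar L$ with $\kappa_k := K_k\circ\Pi$, since $e^{\lambda_k}\,d\theta = e^{u_k(\Pi(z))}\,dx$ up to the bounded Jacobian factor. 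The one genuinely new point at this step is that the Dirac mass at $-i$ must be handled: its coefficient $c_k := 2\pi - \|(-\Delta)^\frac12 u_k\|_{L^1(\R)}$ need not vanish. However, testing the equation \eqref{liou6} against the constant $1$ on $S^1$ (where $(-\Delta)^{1/2}$ annihilates constants) gives $\int_{S^1}\kappa_k e^{\lambda_k}\,d\theta = 2\pi + c_k$, so $c_k = \int_{S^1}\kappa_k e^{\lambda_k}\,d\theta - 2\pi$ is bounded by $\bar\kappa\bar L + 2\pi$; thus up to a subsequence $c_k\to c_\infty$ and $\delta_{-i}$ can simply be carried along as a fixed bounded measure, either absorbed into the RHS as an additional concentration point or shown to merge with the blow-up set $B$.

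Next I would apply Theorem \ref{mainth} to the sequence $(\lambda_k)$ — strictly, to a version of Theorem \ref{mainth} allowing a fixed extra Dirac mass on the RHS, which is either a trivial extension or already covered by the proof machinery behind it — to obtain, up to subsequence, the blow-up set $B' = \{a_1,\dots,a_N\}\cup\{\text{possibly }-i\}$ in $S^1$, the weak-$*$ limit $\kappa_\infty^{S^1}$ of $\kappa_k$, the limiting measure, and the trichotomy (i)-(iii). Setting $B := \Pi(B'\setminus\{-i\})\subset\R$, and noting $\Pi$ is a smooth diffeomorphism $S^1\setminus\{-i\}\to\R$, I can transport all the $W^{1,p}_{\loc}$ convergences and the weak-$*$ convergence $K_k\weak K_\infty$ back to $\R\setminus B$. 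The measure statement $K_k e^{u_k}\rightharpoonup\mu$ weakly in $W^{1,p}_{\loc}(\R\setminus B)$ follows from the corresponding statement for $\kappa_k e^{\lambda_k}$ on $S^1\setminus B'$, and finiteness of $\mu$ on $\R$ comes from the uniform $L^1$ bound on $K_k e^{u_k}$ together with the fact that $\mu$ has no mass at infinity — this last point requires a small argument: one must check that no concentration of curvature escapes to $x=\infty$, equivalently that the point $-i$ does not carry leftover absolutely continuous mass, which follows because $e^{\lambda_k}$ near $-i$ is controlled by the finite length $L$ and the behavior of the conformal factor.

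For the dichotomy between conclusion (i) and (ii), I would match the three cases of Theorem \ref{mainth}. Case (iii) of Theorem \ref{mainth} — where $|\bar\lambda_k|\le C$ and $\lambda_\infty,e^{\lambda_\infty}\in L^1(S^1)$ — gives a genuine limiting solution on $S^1$ that transplants to $u_\infty\in W^{1,p}_{\loc}(\R\setminus B)$ solving \eqref{liouvlimit}; here the $-1$ on the $S^1$ side disappears upon transplantation (it is exactly absorbed by the $-\log(1+\sin\theta)$ shift, which is the content of the correspondence \eqref{liou6} read backwards), and the $\pi$-mass at each $a_i$ survives, yielding conclusion (i). Cases (i) and (ii) of Theorem \ref{mainth} — where $\bar\lambda_k\to-\infty$, $N\le 2$, and the whole measure is $2\pi\delta_{a_1}$ or $\pi(\delta_{a_1}+\delta_{a_2})$ with no absolutely continuous part — give $\mu|_{\R\setminus B}\equiv 0$, at most two blow-up points, and $\lambda_k - \bar\lambda_k$ bounded in $W^{1,p}_{\loc}$ while $\bar\lambda_k\to-\infty$, hence $\lambda_k\to-\infty$ locally uniformly on $S^1\setminus B'$, which transplants to $u_k\to-\infty$ locally uniformly on $\R\setminus B$; that is conclusion (ii). The main obstacle I anticipate is precisely the bookkeeping around the point $-i$ and infinity: ensuring that the extra Dirac mass with coefficient $c_k$ is correctly incorporated into (or excluded from) the blow-up set, and that no curvature mass is lost at infinity when passing between the compact picture on $S^1$ and the non-compact picture on $\R$. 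Everything else is a change of variables once Theorem \ref{mainth} is invoked.
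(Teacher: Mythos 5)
Your overall strategy is the same as the paper's: Theorem \ref{trm2} is indeed deduced by transplanting the equation to $S^1$ via the stereographic projection (Proposition \ref{proppull2}, equation \eqref{liou6}) and then running the blow-up analysis of Theorem \ref{mainth}. The bookkeeping you describe for the hypotheses is fine (in fact $e^{\lambda_k}\,d\theta=e^{u_k}\,dx$ and $\kappa_k=K_k\circ\Pi$ exactly, so \eqref{boundlength}--\eqref{boundcurv} transfer with no Jacobian loss), and the boundedness of $c_k=2\pi-\|\lapfr u_k\|_{L^1(\R)}$ is immediate from $\|\lapfr u_k\|_{L^1}\le\|K_k\|_{L^\infty}\|e^{u_k}\|_{L^1}$; note however a sign slip in your mass balance, since testing \eqref{liou6} with the constant $1$ gives $\int_{S^1}\kappa_k e^{\lambda_k}\,d\theta=2\pi-c_k$, not $2\pi+c_k$.

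The genuine gap is the step you dismiss as ``a trivial extension or already covered by the proof machinery'': the transplanted functions do \emph{not} satisfy \eqref{liouvfrack}, but the equation with the extra term $c_k\delta_{-i}$, so Theorem \ref{mainth} cannot be invoked as a black box. Geometrically (Proposition \ref{propcostrlambdabis}) the associated curves have curvature $\kappa_k$ only on $S^1\setminus\{-i\}$ and carry a concentrated turning angle $c_k$ at the image of $-i$; hence the hypothesis that powers the whole quantization machinery of Section \ref{sec:comps1} --- the uniform pointwise curvature bound $\bar\kappa$ --- fails. That bound is used essentially, not incidentally: in Proposition \ref{PropDK} (geodesics realising $D_k$ have curvature $\le\|\kappa_k\|_{L^\infty}$), in the contradiction arguments of Proposition \ref{limconst} (short curves with total curvature at least $\pi$, resp.\ $\pi/2$, via Lemmas \ref{closed} and \ref{closed2}), and in Lemma \ref{uniquepinched} (the lower bound $|\mathrm{arc}(p,p')|\ge C/\bar\kappa$ and the stretching normalisation $\pi/(2\bar\kappa)$). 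A geodesic or a blow-up point interacting with the corner at $-i$ can absorb an angle $|c_k|$ ``for free'', so each of these steps needs to be reworked (and the possibility that $-i$ itself is a concentration point, or that mass escapes to $x=\infty$, must be handled there, not only by the soft remark you give). This is exactly the point the authors flag after the statement of Theorem \ref{trm2}: the case $\mu=0$ ``requires only some more work due to the fact that the curvature of the corresponding curves is not anymore uniformly bounded in $S^1$''. Your proposal identifies the right reduction but leaves precisely this part unproved.
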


{Notice that we do not assume that $K_k$ is positive, or that it converges in $C^0_{\loc}(\R)$}. The quantization analysis  in the case $\mu=0$ is analogous to that of Theorem \ref{mainth}, it requires only some more 
 work due to the fact that the curvature of the corresponding curves is not anymore uniformly bounded in $S^1$\,.   
\par
In particular we can  deduce the following
\begin{Corollary}
Under the hypotheses of Theorem \ref{trm2} if $K_k\ge 0$, and
$$\int_{\R}K_ke^{u_k}dx\le 2\pi,$$
then either $N=1$ and $u_k\to -\infty$ locally uniformly $\R\setminus\{a_1\}$ or $N=0$ and $u_k\rightharpoonup u_{\infty}$
in $W^{1,p}(\R)$ as $k\to +\infty$ where
$u_\infty$ solves
\begin{equation} 
(-\Delta)^\frac12 u_\infty=K_{\infty} e^{u_\infty}\,.
\end{equation}
\end{Corollary}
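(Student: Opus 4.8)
The plan is to derive the Corollary as a direct specialization of Theorem~\ref{trm2} under the extra hypotheses $K_k\ge 0$ and $\int_\R K_k e^{u_k}\,dx\le 2\pi$. First I would apply Theorem~\ref{trm2} to extract a subsequence along which $K_k e^{u_k}\rightharpoonup\mu$ in $W^{1,p}_{\loc}(\R\setminus B)$, $K_k\weak K_\infty$ in $L^\infty(\R)$, and one of the two alternatives (i) or (ii) holds. The key observation is that the total mass of the concentration part is controlled: since $K_k\ge 0$, passing to the limit in $\int_\R K_k e^{u_k}\,dx\le 2\pi$ shows that $\mu(\R)\le 2\pi$ (lower semicontinuity of mass under weak-* convergence of nonnegative measures, together with the fact that the $W^{1,p}_{\loc}$-convergence away from $B$ identifies $\mu$ with $K_\infty e^{u_\infty}$ on $\R\setminus B$ in case (i), and the atoms $\pi\delta_{a_i}$ each carry mass $\pi$).

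Next I would treat the two alternatives separately. In alternative (i), the limiting equation reads $(-\Delta)^\frac12 u_\infty=K_\infty e^{u_\infty}+\sum_{i=1}^N\pi\delta_{a_i}$, and the bound $\mu(\R)\le 2\pi$ forces $\int_\R K_\infty e^{u_\infty}\,dx+N\pi\le 2\pi$. Since $K_\infty\ge 0$ (a weak-* limit of nonnegative functions is nonnegative) we get $e^{u_\infty}\in L^1$ with nonnegative integrand, so $N\pi\le 2\pi$, i.e. $N\le 2$; but I would argue $N=0$ in this case, because if $N\ge 1$ the point $a_i$ is a genuine blow-up point where $u_k\to-\infty$ locally, which is incompatible with (i) producing a finite limit $u_\infty$ that already accounts for all the mass on $\R\setminus B$ unless $B$ is in fact irrelevant — more precisely, if $N\ge1$ then near $a_i$ the sequence concentrates, hence $u_k\to-\infty$ there, placing us effectively in the regime of alternative (ii) on a neighborhood of $a_i$; the cleanest route is to note that if $N\ge 1$ the scenario is the one described in the $N=1$ clause of the Corollary (and $N=2$ with $\int K_\infty e^{u_\infty}=0$ would force $K_\infty e^{u_\infty}\equiv 0$, reducing to alternative (ii)). So (i) with the mass bound gives exactly $N=0$ and $u_k\rightharpoonup u_\infty$ in $W^{1,p}(\R)$ solving $(-\Delta)^\frac12 u_\infty=K_\infty e^{u_\infty}$.

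In alternative (ii), Theorem~\ref{trm2} already gives $\mu|_{\R\setminus B}\equiv 0$, $N\le 2$, and $u_k\to-\infty$ locally uniformly on $\R\setminus B$. Here I would use the mass bound again to rule out $N=2$: each blow-up point in case (ii) carries (by the quantization statement underlying Theorem~\ref{trm2}, analogous to alternatives i) and ii) of Theorem~\ref{mainth}) a concentration of exactly $\pi$ of curvature — more carefully, the localized blow-up near each $a_i$ corresponds to a bubble carrying mass $\ge\pi$, so $N=2$ would require total mass $\ge 2\pi$ concentrated, which combined with $K_k\ge 0$ and the hypothesis $\int_\R K_k e^{u_k}\le 2\pi$ is borderline but, strictly, the geometric picture (two pinched points with angle $\pi$ forming a degenerate closed curve) forces the total to exceed $2\pi$ or leaves no room for a nontrivial limit; thus generically $N=1$. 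This yields the stated dichotomy: either $N=1$ with $u_k\to-\infty$ locally uniformly on $\R\setminus\{a_1\}$, or $N=0$.

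The main obstacle I anticipate is the careful bookkeeping at the threshold value $2\pi$: showing that $N=2$ (and $N\ge 1$ within alternative (i)) is genuinely excluded rather than merely borderline requires invoking the precise $\pi$-quantization at each blow-up point from Theorem~\ref{trm2} and checking that the inequality $\int_\R K_k e^{u_k}\le 2\pi$ leaves strictly insufficient mass for two bubbles plus any residual positive part, or that the limiting configuration with $N=2$ and vanishing residual is precisely the case $u_k\to-\infty$ everywhere, which we then absorb into alternative (ii) with $N\le 2$ — one should double-check whether the Corollary as stated intends $N=2$ to be genuinely impossible or simply subsumed. Everything else is a routine application of Theorem~\ref{trm2} together with the nonnegativity of $K_k$ passing to the weak-* limit and lower semicontinuity of mass.
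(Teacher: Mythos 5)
Your overall strategy (specialize Theorem \ref{trm2}, use $K_k\ge 0$ to pass the mass bound to the limit) is the natural starting point, but the decisive exclusions are not actually proved, and one case is not addressed at all. First, in alternative (i) your reduction rests on the assertion that a blow-up point $a_i$ forces $u_k\to-\infty$ near $a_i$; this is not true in general: case (i) of Theorem \ref{trm2} (and case iii) of Theorem \ref{mainth}, its $S^1$ model) precisely describes concentration points coexisting with a finite limit $u_\infty\in W^{1,p}_{\loc}(\R\setminus B)$, so the step ``if $N\ge 1$ we are in the $N=1$ clause of the Corollary'' does not follow, and your parenthetical reduction of $N=2$ with $\int K_\infty e^{u_\infty}=0$ to alternative (ii) is likewise unjustified. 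Second, in alternative (ii) the exclusion of $N=2$ cannot come from mass counting: two atoms of mass $\pi$ total exactly $2\pi$, which is perfectly compatible with $\int_\R K_k e^{u_k}\,dx\le 2\pi$; you acknowledge this is ``borderline'' and then appeal to a generic geometric picture, which is not a proof. Third, you never treat alternative (ii) with $N=0$ (all mass drifting to infinity, $u_k\to-\infty$ locally uniformly on all of $\R$ with empty blow-up set); since the Corollary asserts that $N=0$ entails weak convergence to a solution, this configuration must be explicitly excluded or accounted for, and it is exactly the case in which concentration at the point $-i$ in the circle picture has to be analyzed. (The asserted global convergence in $W^{1,p}(\R)$, rather than $W^{1,p}_{\loc}(\R\setminus B)$, is also left unjustified, though this is minor by comparison.)

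What is missing is the input that the hypotheses give beyond total mass. Setting $\alpha_k:=\int_\R K_k e^{u_k}\,dx\le 2\pi$, the transplanted equation of Proposition \ref{proppull2} reads $(-\Delta)^\frac12\lambda_k=\kappa_k e^{\lambda_k}-1+(2\pi-\alpha_k)\delta_{-i}$ with $\kappa_k=K_k\circ\Pi\ge 0$ and $2\pi-\alpha_k\ge 0$: the associated curves are locally convex, with a convex corner at the image of $-i$ and total turning $2\pi$. It is this sign information, fed into the $\pi$-quantization and the pinched-point analysis of Section \ref{sec:comps1} (Proposition \ref{limconst}, Lemma \ref{uniquepinched}), that eliminates the remaining configurations; lower semicontinuity of the mass is inconclusive exactly at the threshold $2\pi$, as you yourself observe. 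As written, the proposal establishes only $N\le 2$ together with the bookkeeping of the atoms, so the Corollary is not proved.
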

 It would be interesting to compare Theorems \ref{mainth} and \ref{mainth2} to the blow-up analysis    obtained recently by Mondino and the third author in \cite{MR} in the case 
of  sequences of weak   conformal immersions from   $S^2$ into $\R^m$.
In \cite{MR} the authors
study the possible limit of the Liouville equation
\begin{equation}\label{ld2bis}
-\Delta_{g_0} u=K e^{2u}-1,~~\text{on } S^2
\end{equation}
satisfied by the conformal factor
of the  immersion $\Phi$ $ (g_{\Phi} =e^{2u}g_0)$ under  the assumption that  
the second fundamental  form   is   bounded in $L^2$. Also in their case a sort of bubbling phenomenon occurs and   the choice of different sequences of M\"obius trasformations of $S^2$ permits to detect all the limiting enclosed currents.
However the $2$-dimensional blow-up analysis differs substantially from the $1$-dimensional case: in the $2$-dimensional case the area is {quantized}, namely there is no production of area in  the neck region between the
different bubbles, whereas in the $1$-dimensional case the {quantization} of the length does not hold. Precisely
in \cite{MR} the authors show that
$$
\sum_{\text{``Bubbles"}}\int_{S^2}e^{2u_{\infty}}dv=\liminf_{k\to+\infty} \int_{S^2}e^{2u_k}dv,
$$
whereas in the present situation one can produce examples such that
$$
\sum_{\text{``Bubbles"}}\int_{S^1}e^{\lambda_{\infty}}d\theta<\liminf_{k\to +\infty} \int_{S^1}e^{\lambda_k}d\theta\,.
$$

  We insists on the fact that ``conformal" parametrizations of planar curves are relevant in different applications. For instance it should be one  of the main tools of the Willmore Plateau problem, of  the analysis of the renormalizing area of surfaces in hyperbolic space $\mathcal{H}^2\,$ and of the free-boundaries.
 In particular for the latter the first author has observed in 
 \cite{DL} that  there is a one to one correspondence between free boundaries and $1/2$-harmonic maps and here we show  that the holomorphic immersion $\phi$  for which $e^{u(z)}=|\frac{\partial}{\partial\theta}\phi(z)|,$ $z\in S^1$, is a $1/2$-harmonic map into $\phi(S^1)\,.$\par

\medskip

An interesting consequence of Theorem \ref{propcostrlambdaintr} is a proof of the classification of the solutions to the non-local equation
\begin{equation}\label{liou}
(-\Delta)^\frac{1}{2} u = e^{u}\quad\text{in }\R,
\end{equation}
under the integrability condition 
\begin{equation}\label{area}
L:=\int_{\R{}}e^u dx<\infty.
\end{equation}
Equation \ref{liou} is a special case of the problem
\begin{equation}\label{lioun}
(-\Delta)^\frac{n}{2} u= (n-1)! e^{nu}\quad\text{in }\R^n, \quad V:=\int_{\R^n} e^{nu}dx<\infty,
\end{equation}
which has been studied by several authors in the last decades (see e.g. \cite{CL}, \cite{CY2}, \cite{lin1}, \cite{JMMX} and \cite{mar1}). 
Geometrically if $u$ solves \eqref{lioun} and $n\ge 2$, then the metric $e^{2u}|dx|^2$ on $\R^n$ has constant $Q$-curvature $(n-1)!$ and volume $V$, see e.g. \cite{cha}. All the above mentioned works rely on the application of a moving-plane technique, in order to show that  under certain growth conditions at infinity (needed only when $n\ge 3$) the solutions to \eqref{lioun} have the form
\begin{equation}\label{specsol}
u_{\mu,x_0}(x):=\log\bigg(\frac{2\mu}{1+\mu^2|x-x_0|^2}\bigg), \quad x\in \R^n,
\end{equation}
for some $\mu>0$ and $x_0\in\R^n$. 
For the case $n=1$, instead of using the moving plane technique, we will use the stereographic projection to transform \eqref{liou} into Equation \eqref{liouvfracn1}, and use the geometric interpretation of the latter (Theorem \ref{propcostrlambdaintr}) to compute all its solutions (Corollary \ref{lemmaS1} below). This will yield
\begin{Theorem}\label{trm1} Every function $u\in L_{\frac{1}{2}}(\R)$ solving to \eqref{liou}-\eqref{area} is of the form \eqref{specsol} for some $\mu>0$ and $x_0\in\R{}$.
\end{Theorem}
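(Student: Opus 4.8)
The plan is to transport the problem from $\mathbb R$ to $S^1$ via the stereographic projection and then invoke the geometric characterization of Theorem \ref{propcostrlambdaintr}. First I would set $\Pi\colon S^1\setminus\{-i\}\to\mathbb R$ the stereographic projection and define, as in \eqref{liou6} with $K\equiv 1$, the function $\lambda(z):=u(\Pi(z))-\log(1+\sin\theta)$ on $S^1$ (writing $z=e^{i\theta}$). The integrability $\int_{\mathbb R}e^u\,dx<\infty$ together with the conformal change of the stereographic projection gives $e^\lambda\in L^1(S^1)$ with $\int_{S^1}e^\lambda\,d\theta = \int_{\mathbb R}e^u\,dx = L$, and $\lambda\in L^1(S^1)$. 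By the transformation rule \eqref{liou6}, $\lambda$ solves
\begin{equation*}
(-\Delta)^{\frac12}\lambda = e^\lambda - 1 + \Big(2\pi - \|(-\Delta)^{\frac12}u\|_{L^1(\mathbb R)}\Big)\delta_{-i}\quad\text{in }S^1.
\end{equation*}
Since $(-\Delta)^{1/2}u = e^u\ge 0$, we have $\|(-\Delta)^{1/2}u\|_{L^1(\mathbb R)} = \int_{\mathbb R}e^u\,dx = L$. Thus the Dirac mass at $-i$ has coefficient $2\pi - L$. The first substantive point is to show $L = 2\pi$, so that this defect term vanishes and $\lambda$ is a genuine solution of \eqref{liouvfracn1} with $\kappa\equiv 1$; I expect this to follow by testing the equation on $S^1$ against the constant function $1$: integrating gives $0 = \int_{S^1}(e^\lambda-1)\,d\theta + (2\pi - L) = L - 2\pi + 2\pi - L = 0$, which is automatically satisfied, so instead one must argue that a nontrivial Dirac mass would force $\lambda$ to behave like $c\log|z+i|$ near $-i$ with the wrong sign for $e^\lambda$ to be integrable, or directly that $e^u$ cannot decay fast enough at infinity unless the total mass is exactly $2\pi$ — this is the known value $\int_{\mathbb R}e^{u_{\mu,x_0}}\,dx = 2\pi$ for the model solutions, and the defect-free case is the only one consistent with $e^\lambda\in L^1(S^1)$ and no concentration.

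Granting that $\lambda$ solves \eqref{liouvfracn1} on $S^1$ with $\kappa\equiv1$ and $L=\|e^\lambda\|_{L^1}<\infty$, I would apply Theorem \ref{propcostrlambdaintr}: there exist a holomorphic immersion $\Phi\in C^1(\bar D^2,\mathbb C)$, a diffeomorphism $\sigma\colon S^1\to S^1$, and a closed curve $\gamma=\Phi\circ\sigma\in W^{2,\infty}(S^1,\mathbb C)$ with $|\dot\gamma|\equiv L/2\pi$, such that $|\Phi'(z)|=e^{\lambda(z)}$ on $S^1$ and the curvature of $\Phi(S^1)$ equals $\kappa\equiv 1$. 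A planar curve with constant curvature $1$ is an arc of a circle of radius $1$; being closed it is a full circle of radius $1$, traversed (possibly with multiplicity) so that its total length is $L$, forcing $L = 2\pi m$ for a positive integer $m$. The length constraint $\int_{S^1}e^\lambda\,d\theta = L$ together with the fact that $\Phi$ is an immersion of the disk (degree-one boundary behavior / the curve $\Phi(S^1)$ bounds the image $\Phi(D^2)$) rules out $m\ge 2$, so $m=1$, $L=2\pi$, and $\Phi$ is, up to rotation and translation, a biholomorphism of $D^2$ onto the unit disk — i.e. a Möbius transformation $\Phi(z)=a\frac{z-b}{1-\bar b z}+c$ with $|a|=1$, $|b|<1$, composed with a rotation/translation, but normalized so its image is a unit disk this means $\Phi$ is itself a Möbius automorphism of $D^2$ followed by an isometry of $\mathbb C$. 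Then $e^{\lambda(z)} = |\Phi'(z)|$ on $S^1$, and a direct computation of $|\Phi'|$ for a disk automorphism gives $\lambda(z)=\log\frac{1-|b|^2}{|1-\bar b z|^2}$, which is exactly the pullback under $\Pi$ of $\log\frac{2\mu}{1+\mu^2|x-x_0|^2}$ for suitable $\mu>0$, $x_0\in\mathbb R$ (the Möbius parameter $b$ corresponding to $(\mu,x_0)$, with the stereographic correction $\log(1+\sin\theta)$ accounting for the shift to the form \eqref{specsol}). Undoing the substitution $u(\Pi(z)) = \lambda(z) + \log(1+\sin\theta)$ yields $u = u_{\mu,x_0}$.

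The main obstacle I anticipate is the bookkeeping around the point $-i$: one must show rigorously that no Dirac mass appears there, equivalently that $u$ has no "defect" at infinity. The clean way is probably to run the argument without prejudging $L$: Theorem \ref{propcostrlambdaintr} as stated requires $\lambda$ to solve \eqref{liouvfracn1} exactly (no Dirac term), so one first needs a lemma that, under $e^\lambda\in L^1(S^1)$, the coefficient $2\pi-L$ must vanish — this should come from analyzing the local behavior of $\lambda$ near $-i$: the Dirac term contributes $\frac{2\pi-L}{2\pi}$ times the fundamental solution $-\log|z+i|^2$ (or its $S^1$ analogue $-\log(2(1-\cos(\theta-\theta_0)))$), so $e^\lambda$ behaves like $|z+i|^{-(2\pi-L)/\pi}$ near $-i$; integrability forces $2\pi - L < \pi$, and a matching lower-order analysis (or the geometric constraint that $\gamma$ closes up smoothly, i.e. $\gamma\in W^{2,\infty}$, which already presupposes the defect-free equation) pins it to $0$. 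Alternatively, and perhaps more robustly, one invokes the compactness Theorem \ref{trm2} or its Corollary with the single constant solution as a "sequence": but the most economical route is the direct local regularity estimate near $-i$ combined with the observation that $\int_{S^1}e^\lambda = \int_{\mathbb R}e^u = \|(-\Delta)^{1/2}u\|_{L^1(\mathbb R)}$, which makes the defect coefficient $2\pi - L$ and simultaneously forces, via the circle-of-curvature-$1$ picture above, $L\in 2\pi\mathbb Z_{>0}$ — the two together with the immersion (degree one) property give $L=2\pi$ and hence no defect, closing the loop.
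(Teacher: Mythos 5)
Your overall strategy (stereographic transfer to $S^1$, then the geometric characterization of Theorem \ref{propcostrlambdaintr} to identify $\Phi$ as a M\"obius map and compute $u$) is the paper's route, and the second half of your argument — once $\lambda$ solves \eqref{liouvfracn1} with $\kappa\equiv 1$ — is exactly Corollary \ref{lemmaS1} plus the explicit computation, and is fine. The genuine gap is the step you yourself flag: proving that the Dirac coefficient $2\pi-L$ vanishes, i.e.\ $\int_\R e^u\,dx=2\pi$. None of the mechanisms you sketch closes it. Testing with $\varphi\equiv 1$ is vacuous, as you note. The local analysis near $-i$ only gives the one-sided bound $2\pi-L<\pi$; worse, if $L>2\pi$ the coefficient is \emph{negative}, $e^\lambda$ then vanishes like $|z+i|^{(L-2\pi)/\pi}$ at $-i$, and integrability imposes no constraint at all, so ``integrability plus no concentration'' cannot pin $L=2\pi$. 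Finally, the ``circle of curvature $1$, hence $L\in 2\pi\mathbb Z$, hence degree one'' argument is circular in your setup: Theorem \ref{propcostrlambdaintr} requires $\lambda$ to solve the defect-free equation, which is precisely what is to be proved — a circularity you acknowledge but do not resolve.

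For comparison, the paper closes this step in two ways. The main proof establishes the representation formula $u(x)=\frac1\pi\int_\R\log\frac{1+|y|}{|x-y|}e^{u(y)}dy+C_0$ (Propositions \ref{trmrappr} and \ref{trmreg}) and then runs a Pohozaev-type identity (Lemma \ref{lemmapoho}, after Xu): multiplying $x u'(x)$ by $e^u$ and integrating yields $\alpha=\alpha^2/2\pi$, hence $\alpha=2\pi$, killing the Dirac mass before any geometry is invoked. The alternative proof makes your geometric idea rigorous, but this requires real work: Proposition \ref{propcostrlambdabis} extends the curve construction to the equation \emph{with} a Dirac mass, including the delicate verification that $\Phi$ extends continuously across $-i$; the boundary curve is then a circle covered with some degree $n$, and it is Burckel's Lemma \ref{lemmabla} (Blaschke-product classification of holomorphic maps $\bar D^2\to\bar D^2$ with $\Phi(S^1)\subset S^1$), combined with $\Phi'\ne0$ in $D^2$, that forces $n=1$ and hence $c=0$ — the case $n\ge 2$ corresponds to a negative defect and cannot be excluded by length or integrability considerations alone. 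If you want to complete your proposal, you must supply one of these two ingredients (a Pohozaev identity, or the singular extension of Theorem \ref{propcostrlambdaintr} together with the Blaschke argument).
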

We also remark that by changing the sign of the nonlinearity in \eqref{liou} the problem has no solutions. More precisely:

\begin{Proposition}\label{nonex} Given a function $K\in L^\infty(\R{})$ with $K\le 0$, the equation
$$(-\Delta)^\frac{1}{2}u=K e^u\quad \text{in }\R{}$$
has no solution satisfying \eqref{area}.
\end{Proposition}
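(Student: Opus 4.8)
The plan is to integrate Equation \eqref{liou} against a suitable test function and exploit the sign condition $K\le 0$ to get a contradiction with \eqref{area}. First I would recall that for $u\in L_{\frac12}(\R)$ with $e^u\in L^1(\R)$, the quantity $\|(-\Delta)^{\frac12}u\|_{L^1}$ is controlled: since $(-\Delta)^{\frac12}u=Ke^u$ pointwise a.e. and $|K|\le \|K\|_{L^\infty}$, we have $(-\Delta)^{\frac12}u=Ke^u\in L^1(\R)$ with $\|(-\Delta)^{\frac12}u\|_{L^1}\le \|K\|_{L^\infty}\|e^u\|_{L^1}<\infty$. The key structural fact I would use (to be taken from the appendix / from Theorem \ref{propcostrlambdaintr}-type analysis, or proved by a direct computation with the fundamental solution $\frac{1}{2\pi}\log\frac1{|x|}$ of $(-\Delta)^{\frac12}$ on $\R$) is the representation formula: up to an affine function, $u(x)=\frac{1}{2\pi}\int_\R \log\frac{1}{|x-y|}\,(-\Delta)^{\frac12}u(y)\,dy + c$, and more importantly the asymptotic behaviour $u(x)=-\frac{\alpha}{2\pi}\log|x|+o(\log|x|)$ as $|x|\to\infty$, where $\alpha:=\int_\R (-\Delta)^{\frac12}u\,dx=\int_\R Ke^u\,dx$. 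Since $K\le 0$, we get $\alpha\le 0$, hence $u(x)\ge -\frac{\alpha}{2\pi}\log|x|+o(\log|x|)\to +\infty$ or at best $u$ does not decay, so $e^u$ cannot be integrable at infinity — contradicting \eqref{area}, unless $\alpha=0$.

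The case $\alpha=0$ needs a separate argument. If $\int_\R Ke^u\,dx=0$ and $K\le 0$ while $e^u>0$ everywhere, then $K\equiv 0$ a.e.\ on $\R$, so $(-\Delta)^{\frac12}u=0$ in $\R$. Combined with $u\in L_{\frac12}(\R)$, this forces $u$ to be constant (a tempered $\frac12$-harmonic function with the growth restriction coming from $L_{\frac12}$ is affine, and the $L_{\frac12}$ growth condition together with... actually $\frac12$-harmonic functions on $\R$ that lie in $L_{\frac12}$ are constant by a Liouville-type argument for the fractional Laplacian). But a nonzero constant $u\equiv c$ gives $e^u\equiv e^c\notin L^1(\R)$, again contradicting \eqref{area}. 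Hence in all cases we reach a contradiction, so no solution exists.

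Alternatively — and this is probably the cleaner route to write down — I would argue by the maximum-principle/concavity structure directly. Suppose $u$ solves the equation with $K\le 0$; then $(-\Delta)^{\frac12}u=Ke^u\le 0$ in $\R$, i.e.\ $u$ is \emph{subharmonic} with respect to $(-\Delta)^{\frac12}$ (more precisely $-(-\Delta)^{\frac12}u\ge0$ means $u$ is $\frac12$-superharmonic... I must be careful with the sign convention: $(-\Delta)^{\frac12}u\le 0$ means $u$ is $\frac12$-\emph{subharmonic}). A $\frac12$-subharmonic function in $L_{\frac12}(\R)$ which is not identically $-\infty$ satisfies the sub-mean-value inequality against the Poisson kernel, and one shows such a function is bounded below on $\R$ only in trivial cases; quantitatively, the Riesz potential representation gives $u(x)= \frac1{2\pi}\int_\R\log\frac{1}{|x-y|}(-K(y))e^{u(y)}dy\cdot(-1)+\dots$, and the logarithmic kernel forces growth incompatible with $e^u\in L^1$.

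The main obstacle I expect is making the asymptotic expansion $u(x)=-\frac{\alpha}{2\pi}\log|x|+O(1)$ rigorous under only the hypotheses $u\in L_{\frac12}(\R)$, $e^u\in L^1(\R)$, $K\in L^\infty$: one must justify the representation formula (subtracting off a possible affine part, controlling the behaviour near and far from the support of $Ke^u$), and then rule out the degenerate case $\alpha=0$ via the Liouville property for $\frac12$-harmonic functions in $L_{\frac12}(\R)$. These are exactly the kind of potential-theoretic facts that should already be available in the appendix of this paper (properties of $(-\Delta)^{\frac12}$ and the class $L_{\frac12}$), so I would invoke them rather than reprove them; the rest is the short sign argument above. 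The whole proof is therefore quite short once the analytic toolbox from the appendix is in place.
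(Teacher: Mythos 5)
Your overall strategy (representation formula plus the sign of $\alpha=\int_\R Ke^u\,dx$) can be made to work, but as written it has a genuine gap at its central step: the asymptotic lower bound $u(x)\ge -\tfrac{\alpha}{\pi}\log|x|+o(\log|x|)$ (note the fundamental solution on $\R$ is $\tfrac1\pi\log\tfrac1{|x|}$, not $\tfrac1{2\pi}\log\tfrac1{|x|}$). You defer this to ``potential-theoretic facts available in the appendix'', but no such asymptotic expansion appears in the paper; the appendix only contains the definitions and equivalences for $(-\Delta)^{1/2}$, and Section 5 gives the representation formula (Lemma \ref{green}, Lemma \ref{lemmaliou}, Proposition \ref{trmrappr}) but no expansion at infinity. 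Moreover, for a general right-hand side $f\in L^1(\R)$ the \emph{pointwise} expansion of $\mathcal{I}[f]$ is false: the local part $\int_{|x-y|\le 1}\log\tfrac1{|x-y|}|f(y)|\,dy$ need not tend to $0$ along every sequence $|x|\to\infty$ (think of small masses concentrated on tiny intervals going to infinity). In your setting one can rescue the argument using the sign $g:=-Ke^u\ge 0$: splitting $u=C_0+\tfrac1\pi\int\log\tfrac{|x-y|}{1+|y|}g\,dy$ into the region $|y|\le|x|/2$ (contribution bounded below by a constant), the tail $|y|\ge|x|/2$, $|x-y|\ge1$ (contribution $\ge -o(1)\cdot\log|x|$ since the tail mass of $g$ vanishes), and the local singular part, which must then be handled either by an averaged Jensen argument on unit intervals (using $\int_{I_n}\int_{|x-y|\le1}\log\tfrac1{|x-y|}g\,dy\,dx\to0$ by Fubini) or by a Brezis--Merle type estimate as in Theorem \ref{lemmaBM}; only after this does $\sum_n\int_{I_n}e^u\,dx=\infty$ follow, contradicting \eqref{area}. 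None of this is in your proposal, so the key inequality is currently unsupported. (Your treatment of the degenerate case $\alpha=0$ via $Ke^u\equiv0$ and Lemma \ref{lemmaliou} is fine; in fact $u\equiv C_0$ follows directly from the representation formula.)

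For comparison, the paper's proof avoids all asymptotics: it is exactly the route you gesture at in your ``alternative'' paragraph but do not carry out. From $(-\Delta)^{1/2}u\le 0$ one gets the sub-mean-value inequality $u(0)\le u*\gamma_\lambda(0)$ for every $\lambda>0$ (Proposition \ref{meanv}), and then Jensen's inequality against $d\mu_\lambda(x)=\gamma_\lambda(-x)\,dx$ gives $\int_\R e^u\,d\mu_\lambda\ge e^{u(0)}$; since $\gamma_\lambda\le C/\lambda$, this yields $\int_\R e^u\,dx\ge \tfrac{\lambda}{C}e^{u(0)}\to\infty$, contradicting \eqref{area}. The decisive idea there is not that a $1/2$-subharmonic function is ``bounded below only in trivial cases'' (your sketch, which also muddles the sub/superharmonic convention), but the quantitative Jensen step against the dilating kernels, which makes the proof a few lines long and free of any representation formula, Liouville theorem, or case distinction on $\alpha$.
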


The proof of Proposition \ref{nonex} is a simple application of the maximum principle for the operator $(-\Delta)^\frac12$, but it is worth remarking that for $n\ge 4$ even solutions to Problem \eqref{lioun} with $(n-1)!$ replaced by $-(n-1)!$ (or any negative constant) do exist, as shown in \cite{mar2}.

\medskip

 The paper is organized as follows. In the Section 2 we introduce the nonlocal Liouville equation \rec{modelequbis}  in
 $S^1$ and we explain its geometric interpretation. In Section 3 we perform the blow-up and quantization analysis of the equation \rec{modelequbis} and in particular we prove Theorems \ref{mainth} and \ref{mainth2}\,. Section 4 is devoted to the description of the relation between the equations
  \rec{modelequbis}  and \rec{liou5int}\,. Finally in Section 5 we prove Theorem \ref{trm1} and Proposition \ref{nonex}.

\par
\medskip

{\bf Notations.} Given $x,y\in \R^N$ we denote by $\langle x,y\rangle$ the scalar product of $x,y$.
Let $h:\Omega\subset\C\to \R$, and $\gamma\colon S^1\to \C$ a curve. We denote
by $\int_{\gamma}h(z)|dz|$ or by $\int_{\gamma}h(z)d\theta$ the line integral of $h$ along $\gamma\,.$\par
  
\section{Nonlocal Liouville equation in $S^1$}
 In this section we study the following nonlocal Liouville type equation 
 on $S^1$
$$(-\Delta)^\frac12 u= \kappa e^u-1~\mbox{in $S^1$}$$
  where $u\in L^1(S^1)$,   $(-\Delta)^\frac{1}{2}u$ stands for the fractional
 Laplacian and $\kappa\colon S^1\to \R$  is a bounded function. In the Appendix \ref{fraclaps1} we recall the definition and some properties of the fractional Laplacian in  $S^1\,.$\par
  
 \subsection{Geometric Interpretation of the Liouville equation in $S^1$}
The first key step in our analysis is the geometric interpretation of the equation
 \rec{modelequbis}\,.
Roughly speaking such an equation prescribes the curvature of a closed curve in conformal parametrization.

It is easy to verify that for  $\phi\in L^1(S^1)$ we have
\begin{equation}\label{lapHilb}
(-\Delta)^\frac12 \phi(\theta)=\sum_{n\in\mathbb{Z}}|n|\hat\phi(n)e^{in\theta}={\cal{H}}\left(\frac{\partial \phi}{\partial\theta}\right)=\frac{\de \mathcal{H}(\phi)}{\de\theta},
\end{equation}
where ${\cal{H}}$ is the Hilbert Transform  on $S^1$ defined by
$${\cal{H}}(f)(\theta) :=\sum_{n\in \Z} -i \sign (n)\hat{f}(n)e^{in\theta},\quad f\in\mathcal{D}'(S^1) \,.$$
\par
We recall that the Hilbert transform has the following property, a proof of which can be found in \cite[Chapter III]{Kat}.

\begin{Lemma}\label{LemmaHilbert}  The Hilbert transform $\cal{H}$ is  bounded from $L^p(S^1)$ into itself, for $1<p<+\infty$, and it is of weak type $(1,1)\,.$ A function  $f:=u +i v$ with $u,v\in L^1(S^1,\R)$ can be extended to a holomorphic function in $D^2$ if and only if $v=\mathcal{H}(u)+a$ for some $a\in \mathbb{C}$ .
\end{Lemma}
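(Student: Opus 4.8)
The $L^p$-boundedness for $1<p<\infty$ and the weak $(1,1)$ estimate are the classical Riesz and Kolmogorov theorems for the conjugate function on the circle; I would not reprove them but simply cite \cite[Chapter III]{Kat}, since the whole lemma is attributed there. So the only content that needs an argument here is the holomorphic extension characterization: a function $f=u+iv$ with $u,v\in L^1(S^1,\R)$ extends to a holomorphic function on $D^2$ if and only if $v=\mathcal H(u)+a$ for some $a\in\C$ (necessarily $a\in\R$ once we insist $u,v$ are real-valued, but stating it with $a\in\C$ does no harm).

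The plan is to work on the Fourier side, using the convention $\widehat f(n)=\frac{1}{2\pi}\int_{S^1}f(\theta)e^{-in\theta}\,d\theta$ for $f\in\mathcal D'(S^1)$. First I would recall the standard fact that a distribution $f$ on $S^1$ is the boundary trace (in the sense of $\mathcal D'$, or equivalently of radial limits of the Poisson extension) of a holomorphic function on $D^2$ if and only if $\widehat f(n)=0$ for all $n<0$; the Poisson extension $P[f](re^{i\theta})=\sum_n \widehat f(n)r^{|n|}e^{in\theta}$ is then $\sum_{n\ge 0}\widehat f(n)r^ne^{in\theta}$, a power series in $z=re^{i\theta}$, hence holomorphic, and conversely the Taylor coefficients of a holomorphic function on $D^2$ whose boundary trace lies in $L^1$ recover the nonnegative Fourier modes while the negative ones vanish. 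Then I would simply compute: for $f=u+iv$,
\begin{equation*}
\widehat f(n)=\widehat u(n)+i\,\widehat v(n),\qquad \widehat{\mathcal H(u)}(n)=-i\,\sign(n)\,\widehat u(n),
\end{equation*}
and since $u,v$ are real we have $\widehat u(-n)=\overline{\widehat u(n)}$ and likewise for $v$.

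For the ``if'' direction, suppose $v=\mathcal H(u)+a$. Then $\widehat v(0)=a$ (so in fact $a$ is real since $v$ is) and for $n<0$, $\widehat v(n)=-i\sign(n)\widehat u(n)=i\,\widehat u(n)$, whence $\widehat f(n)=\widehat u(n)+i(i\,\widehat u(n))=0$; by the fact recalled above, $f$ extends holomorphically to $D^2$. For the ``only if'' direction, suppose $f$ extends holomorphically, so $\widehat f(n)=0$ for all $n<0$, i.e. $\widehat u(n)=-i\,\widehat v(n)$ for $n<0$. Taking conjugates and using reality, $\widehat u(n)=i\,\widehat v(n)$ for $n>0$. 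Set $g:=v-\mathcal H(u)$; then for $n>0$, $\widehat g(n)=\widehat v(n)-(-i)\widehat u(n)=\widehat v(n)+i(i\widehat v(n))=0$, and by reality $\widehat g(n)=0$ for $n<0$ as well, so $g$ has only a zeroth Fourier mode, i.e. $g\equiv a:=\widehat g(0)$ is constant, giving $v=\mathcal H(u)+a$. A small point to address is the regularity bookkeeping: $u\in L^1$ only guarantees $\mathcal H(u)$ is of weak type $(1,1)$, so a priori $\mathcal H(u)\notin L^1$; but all the manipulations above are at the level of Fourier coefficients of $L^1$ (hence tempered) distributions, and the conclusion $v=\mathcal H(u)+a$ is an identity of distributions, with the left side in $L^1$ — this forces $\mathcal H(u)\in L^1$ in this particular case and is consistent. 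The main (and essentially only) obstacle is being careful that the equivalence ``holomorphic extension $\iff$ vanishing negative Fourier modes'' is invoked in the correct generality for $L^1$ traces; once that is pinned down, the rest is the elementary Fourier computation just sketched.
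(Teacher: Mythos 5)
Your proposal is correct, and there is essentially nothing in the paper to compare it against: the paper does not prove Lemma \ref{LemmaHilbert} but simply refers to \cite[Chapter III]{Kat}, exactly as you do for the $L^p$-boundedness and weak $(1,1)$ parts. Your Fourier-multiplier computation for the extension characterization (the negative modes of $u+iv$ vanish if and only if $v=\mathcal{H}(u)+\mathrm{const}$, using $\widehat{\mathcal H u}(n)=-i\sign(n)\hat u(n)$ and reality of $u,v$) is the standard argument, and your care in reading ``holomorphic extension'' as ``the Poisson extension of $f$ is holomorphic'' is precisely the interpretation under which the equivalence holds and under which the lemma is applied in the paper.
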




\noindent {\bf Proof of Theorem \ref{propcostrlambdaintr}.} 
1. Let $\Phi\in C^1( \bar D^2, \C)$ be a holomorphic immersion with. Set $\lambda:=\log |\Phi'|_{S^1}|$.
Since $\Phi'\colon D^2\to \C\setminus\{0\}$ is  holomorphic, it  holds
$\Phi'|_{S^1}=e^{\lambda+i\rho+i\theta_0}$, for some $\theta_0\in[0,2\pi)$ where $\rho:=\mathcal{H}(\lambda)$ is the Hilbert transform of $\lambda$. Indeed by Lemma \ref{LemmaHilbert} the function $f:=\lambda+i\rho$ has a holomorphic extension $\tilde f$ to $D^2$, hence $e^{\tilde f}$ is holomorphic in $D^2$ and $e^{\tilde f}|_{S^1}=e^f=e^{\lambda+i\rho}$. But $|e^f|=e^{\lambda}=|\Phi'|_{S^1}|$, so that by Lemma \ref{lemmahconst}  we have $\Phi'/e^{\tilde f}=e^{i\theta_0}$ for some constant $\theta_0$. Up to a rotation of $\Phi$ we can assume that $\theta_0=0$. Up to such a rotation and a translation $\Phi$ is determined by $\lambda$.

\begin{equation}\label{deflambda}
\frac{\partial \Phi (z)}{\partial\theta}(z)= ie^{\lambda(z)+i\rho(z)+i\theta}\,.
\end{equation}
Now let 
$$s(\theta):=\int_0^\theta \left|\frac{\partial \Phi ( e^{i\theta'})}{\partial\theta'}\right| d\theta^{\prime}.$$
 We have  $s\colon[0,2\pi]\to [0,L],$ where $L=\|\frac{\partial \Phi}{\partial\theta} \|_{L^1(S^1)}$ is the length of the curve $\Phi(S^1)$, and up to a scaling we will assume that $L=2\pi$.
Let $ \theta:=s^{-1}\colon [0,2\pi]\to [0,2\pi]$. One can easily also see that $\theta\in C^1([0,2\pi],[0,2\pi])$\,. Then using \eqref{deflambda}  and that
$$\dot s(\theta)=|\Phi'(e^{i\theta})|=e^{\lambda(e^{i\theta})}>0,\quad   \dot \theta(s)=e^{-\lambda(e^{i\theta(s)})}$$
we compute
$$\tau(s): =\frac{d}{d\theta} \Phi(e^{i\sigma(s)})=\Phi'(e^{i\theta(s)})ie^{i\theta(s)}\dot \theta(s)=\frac{\partial \Phi}{\partial\theta}(e^{i\theta(s)}) e^{-\lambda(e^{i\theta(s)})}\,.$$
Notice that $|\tau|\equiv 1$, i.e. the curve $\gamma\colon e^{is}\mapsto \Phi(e^{i\theta(s)})$ is parametrized by arc-lenght, and $\tau$ is its unit tangent vector. Using \eqref{lapHilb}, \eqref{deflambda}  and identifying $s$ with $e^{is}$, 
the curvature of $\gamma$  
\begin{equation*}
\begin{split}
\kappa(s)=\langle i\tau(s), \dot{\tau}(s)\rangle
&=\langle i \tau(s)\frac{d }{d\theta}\left(ie^{i\rho(e^{i\theta(s)})+i\theta(s)}\right)\rangle\\
&=\left(\frac{d\rho (e^{i\theta(s)})}{d\theta} +1\right)\dot\theta(s) \\
&= \left((-\Delta)^\frac12 \lambda (e^{i\theta(s)})+1\right)\ e^{-\lambda(e^{i\theta(s)})},
\end{split}
\end{equation*}
i.e. $\lambda$ satisfies \eqref{liouvfracn1} with $\kappa(e^{i\theta}):=i\tau(s(\theta))\cdot \dot{\tau}(s(\theta))$\,.   {Since $|\kappa(s)|=|\ddot\gamma(e^{is})|\in L^\infty(S^1)$ we also have $\gamma\in W^{2,\infty}(S^1,\C)$.}

\medskip
2. Conversely, let us assume that $\lambda\in L^1(S^1)$ with $e^{\lambda}\in L^1(S^1)$ weakly satisfies \eqref{liouvfracn1} for some $\kappa\in L^\infty(S^1)$. By regularity theory
$\lambda\in W^{1,p}(S^1) $ for any $p<\infty$.
We set $\rho:=\mathcal{H}(\lambda)$. Let $\phi\in W^{1,p}(\bar D^2,\C) $ be the holomorphic extension of  function $e^{\lambda+i\rho}\in W^{1,p}(S^1)$ and set
\begin{equation}\label{defphi}
\Phi(z):=\int_{\Sigma_{0,z}}\phi(w) dw, ~z\in\bar D^2
\end{equation} 
where $\Sigma_{0,z}$ is any path in $\bar D^2$ connecting $0$ and $z\,.$
Then $\Phi\in W^{2,p}(\bar D^2,\C) $ satisfies \eqref{deflambda}. 
 {From part 1 we see that  $\kappa$ is  the curvature of the curve $\Phi(S^1)$ in normal parametrization.}

Let $\hat\Phi\colon\bar D^2\to \C$ be another holomorphic immersion such that $|\hat\Phi'(z)|=e^{\lambda(z),}$, $z\in S^1\,.$  We claim that
\begin{equation}\label{phiphi1}
\Phi= e^{i\theta_0}\hat \Phi+ a\quad \text{in }\bar D^2,\quad\text{for some }\theta_0\in \R,\,a\in\C\,.
\end{equation}
Indeed the function 
$h:=\frac{\Phi'}{\hat \Phi'}$ never vanishes in $\bar D^2$ and
satisfies
$$|h(z)|=\frac{|\Phi'(z)|}{|\hat\Phi'(z)|} =\frac{e^{\lambda(z)}}{e^{\lambda(z)}}=1,\quad z\in S^1.$$
It follows from Lemma \ref{lemmahconst}  that $h$ is a constant of modulus $1$, say $h\equiv e^{i\theta_0}$, and \eqref{phiphi1} follows at once.
\hfill~$\Box$
\begin{Remark}
In Theorem \ref{propcostrlambdaintr},  we cannot expect that $\Phi$ is a biholomorphism from $\bar D^2$ onto $\Phi(\bar D^2)$. For instance the function $\Phi(z):=e^{az}$ for any $a>0$  is an immersion and  $\Phi({S^1)}$ has self-intersections whenever $a\ge \pi$, as easily seen by writing
$$\Phi(e^{i\theta})=e^{a\cos\theta}(\cos(a \sin\theta)+i\sin(a \sin\theta)),$$
see Figure \ref{fig:selfint}.

\begin{figure}\label{f:curve}
\begin{center}
\includegraphics[width=7cm,height=7cm]{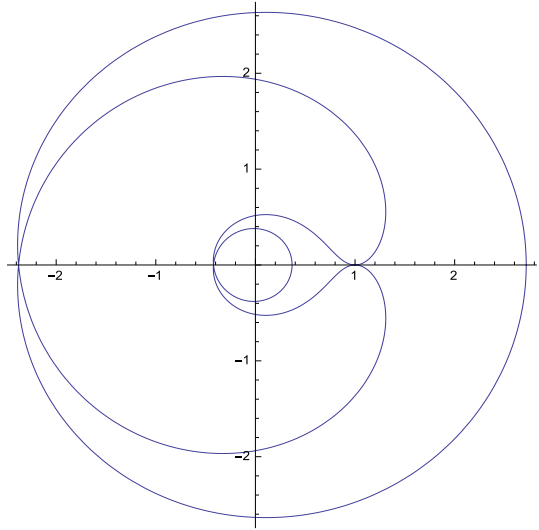}
\end{center}
\caption{\small Plot of the curve $e^{\cos\theta}(\cos(2\pi \sin\theta)+i\sin(2\pi \sin\theta))$, $\theta\in [0,2\pi]$. It has the same kind of self-intersections as the curve $\Phi(e^{i\theta})=e^{2\pi e^{i\theta}}$, whose plot is difficult to inspect, since $|\Phi(z)|$ oscillates between $e^{2\pi}$ and $e^{-2\pi}$.}
\label{fig:selfint}
\end{figure}\par
\end{Remark}

\begin{Corollary}\label{lemmaS1}
All functions $\lambda\in L^1(S^1)$ with $e^\lambda\in L^1(S^1)$ solutions to 
\begin{equation}\label{eqlambda2}
(-\Delta)^\frac12\lambda=C_0 e^{\lambda}-1\quad\quad\mbox{ on } S^1,
\end{equation}
where $C_0$ is an arbitrary positive constant, are given by
\begin{equation}\label{solutions}
\lambda(\theta)=\log\left( \left|\frac{\partial}{\partial\theta}  \frac{z-a_1}{1-\bar a_1 z}\right|\right) -\log C_0
\end{equation}
for some $a_1$ in $D^2$.
\end{Corollary}

\noindent{\bf Proof.} Up to the translation $\tilde \lambda=\lambda+\log C_0$ we can assume $C_0=1$. By Theorem \ref{propcostrlambdaintr} the function $\lambda$ determines a holomorphic immersion $\Phi\in C^1(\bar D^2, \C)$, such that $\Phi(S^1)$ is  curve  of curvature $1$, hence up to a translation $\Phi(S^1)\subset S^1$, and therefore it is M\"obius transformation of the disk.
From \eqref{cond1}  we infer that $\lambda=\log\left(\left|\Phi'|_{S^1}\right|\right)$, and we conclude.~\hfill$\Box$

\begin{Corollary}\label{lemmaS1}
All functions $\lambda\in L^1(S^1)$ with $e^\lambda\in L^1(S^1)$ solutions to 
\begin{equation}\label{eqlambda2}
(-\Delta)^\frac12\lambda=C_0 e^{\lambda}-1\quad\quad\mbox{ on } S^1,
\end{equation}
where $C_0$ is an arbitrary positive constant, are given by
\begin{equation}\label{solutions}
\lambda(\theta)=\log\left( \left|\frac{\partial}{\partial\theta}  \frac{z-a_1}{1-\bar a_1 z}\right|\right) -\log C_0
\end{equation}
for some $a_1$ in $D^2$.
\end{Corollary}

\begin{proof} Up to the translation $\tilde \lambda=\lambda+\log C_0$ we can assume $C_0=1$. By Theorem \ref{propcostrlambdaintr} the function $\lambda$ determines a holomorphic immersion $\Phi\in C^1(\bar D^2, \C)$, such that $\Phi(S^1)$ is  curve  of curvature $1$, hence up to a translation $\Phi(S^1)\subset S^1$, and therefore it is M\"obius transformation of the disk.
From \eqref{cond1}  we infer that $\lambda=\log\left(\left|\Phi'|_{S^1}\right|\right)$, and we conclude.

\end{proof}

The following corollary is an easy consequence of Theorem \ref{propcostrlambdaintr}.

\begin{Corollary}\label{cormob}
Let $\Phi$, $\lambda$ and $\kappa$ be as in Theorem \ref{propcostrlambdaintr}, and let $f: \bar D^2\to \bar D^2$ be a M\"obius diffeomorphism. Set $\tilde\Phi:=\Phi\circ f$, $\tilde \lambda:= \log|\tilde\Phi'|_{S^1}|$ and $\tilde \kappa:= \kappa\circ f|_{S^1}$. Then
$$\tilde\lambda =\lambda \circ f|_{S^1} +\log(|f'|_{S^1}|)$$
and
$$(-\Delta)^\frac12 \tilde\lambda=\tilde \kappa e^{\tilde\lambda}-1.$$
\end{Corollary}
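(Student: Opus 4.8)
The plan is to reduce everything to Theorem \ref{propcostrlambdaintr} together with the elementary fact that the composition of a holomorphic immersion with a M\"obius diffeomorphism of the disk is again a holomorphic immersion. First I would check that $\tilde\Phi=\Phi\circ f$ is a holomorphic immersion of $\bar D^2$: it is holomorphic as a composition of holomorphic maps, and since $f$ is a M\"obius transformation of the disk it restricts to a diffeomorphism of $\bar D^2$ with $f'(z)\ne 0$ everywhere on $\bar D^2$ (the zeros of the derivative of a M\"obius map of $\C$ lie outside $\bar D^2$), so by the chain rule $\tilde\Phi'(z)=\Phi'(f(z))f'(z)\ne 0$ on $\bar D^2$. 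Taking moduli on $S^1$ and then logarithms gives
\[
\tilde\lambda(z)=\log|\tilde\Phi'(z)|=\log|\Phi'(f(z))|+\log|f'(z)|=\lambda(f(z))+\log|f'|_{S^1}(z)|,\qquad z\in S^1,
\]
which is the first claimed identity. Here I use that $f$ maps $S^1$ to $S^1$, so that $f(z)\in S^1$ and $\lambda\circ f|_{S^1}$ makes sense, and that $\lambda=\log|\Phi'|_{S^1}|$ from \eqref{cond1}.

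Next I would invoke Theorem \ref{propcostrlambdaintr} in the "only if" direction applied to $\tilde\Phi$. Since $\tilde\Phi\in C^1(\bar D^2,\C)$ is a holomorphic immersion, part 1 of that theorem shows that $\tilde\lambda:=\log|\tilde\Phi'|_{S^1}|$ satisfies
\[
(-\Delta)^{\frac12}\tilde\lambda=\hat\kappa\, e^{\tilde\lambda}-1\quad\text{in }S^1,
\]
where $\hat\kappa$ is the curvature of the curve $\tilde\Phi(S^1)$ in its normal (arc-length) parametrization. It remains to identify $\hat\kappa$ with $\tilde\kappa=\kappa\circ f|_{S^1}$. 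But $\tilde\Phi(S^1)=\Phi(f(S^1))=\Phi(S^1)$ as sets (indeed as oriented curves up to reparametrization, since $f|_{S^1}$ is an orientation-preserving diffeomorphism of $S^1$), so the two curves have the same image and the same normal parametrization, hence the same curvature as a function of the point on the curve. Tracking this through the reparametrization, the curvature at the point $\tilde\Phi(z)=\Phi(f(z))$ equals $\kappa$ evaluated at the parameter $f(z)$, i.e. $\hat\kappa(z)=\kappa(f(z))=\tilde\kappa(z)$. This yields the second identity.

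The only genuinely delicate point — and the one I would write out carefully — is the curvature identification $\hat\kappa=\kappa\circ f|_{S^1}$: one must be sure that curvature is intrinsic to the unparametrized oriented curve (which it is, being defined via the arc-length parametrization in Theorem \ref{propcostrlambdaintr}) and that the orientation is preserved under $f|_{S^1}$, so that no sign is introduced. Everything else is a routine application of the chain rule and of Theorem \ref{propcostrlambdaintr}; in particular no separate regularity argument is needed since $\tilde\Phi$ is manifestly a $C^1$ holomorphic immersion and the hypotheses of Theorem \ref{propcostrlambdaintr} (namely $\tilde\lambda\in L^1(S^1)$, $e^{\tilde\lambda}\in L^1(S^1)$) are automatic.
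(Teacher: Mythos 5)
Your proof is correct and follows exactly the route the paper intends (the paper states the corollary as an easy consequence of Theorem \ref{propcostrlambdaintr} without writing details): chain rule for the identity $\tilde\lambda=\lambda\circ f|_{S^1}+\log(|f'|_{S^1}|)$, then the ``immersion $\Rightarrow$ equation'' direction of Theorem \ref{propcostrlambdaintr} applied to $\tilde\Phi=\Phi\circ f$, identifying the curvature via the orientation-preserving reparametrization $f|_{S^1}$, so that $\hat\kappa=\kappa\circ f|_{S^1}=\tilde\kappa$. Your care about orientation and about curvature being attached to the parametrized (arc-length) curve rather than to points of a possibly self-intersecting image is exactly the right thing to check, and the hypotheses of the theorem are indeed automatic since $f'\ne 0$ on $\bar D^2$.
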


 \begin{Remark}
 One can give an analogous geometric characterization also for an equation of the type
 \begin{equation}\label{equn}
 (-\Delta)^\frac12\lambda=\kappa e^\lambda-n~~~\mbox{in $S^1$}\,,\end{equation}
 with $n>1$\,. In this case there is a correspondence between
 the solutions of (\ref{equn}) and holomorphic functions 
   $\Phi \colon D^2\to \C$ of the form
 $\Phi(z)=\Psi(z)h(z)$ where
  $\Psi$ is  Blaschke product
$$\Psi(z):=\prod_{k=1}^{n-1} \frac{z-a_k}{1-\bar a_k z},\quad a_1,\dots, a_{n-1}\in D^2, $$
and $h'(z)\ne 0$ for every $z\in \bar D^2\,.$
In this case $n-1=i\Psi \cdot \frac{\de \Psi}{\de\theta}=\deg(\Psi)$ \,.\end{Remark}\par
\medskip
Next  we show that 
  the existence of a holomorphic immersion of the disk $\bar D^2\,,$     is equivalent to the existence of a positive diffeomorphism  of   the disc $\bar D^2$. Such a result can be seen as a sort of generalization Riemann Mapping Theorem in the case of closed  curves which are not necessarily
 injectives.
 We premise the following Lemma  giving a better regularity up to the boundary of a holomorphic immersion $u\colon D^2 \to \C$ under the assumption that the curve $u|_{S^1}$ has a $W^{2,\infty}$ constant speed parametrization.
 
 \begin{Lemma}\label{regbd}
Let $u \in C^0( \bar D^2, \C)$ be holomorphic in $D^2$ with $\partial_z u\ne 0$ in $D^2$ and suppose there is $\gamma\in W^{2,\infty}(S^1,\C)$ with $|\dot \gamma|\equiv const$ and a homeomorphism $\sigma:S^1\to S^1$ such that $\gamma=u \circ \sigma$. Then
  $u\in W^{2,p}(\bar D^2,\C)$ for every $p<+\infty$ and $\partial_z u(z)\ne 0$ for all $z\in S^1$ \,.
\end{Lemma}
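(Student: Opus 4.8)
The plan is to reduce everything to the boundary, upgrade the a priori weak regularity of $u|_{S^1}$ by exploiting holomorphy, and then bootstrap. There is nothing to prove in the interior: $u$ is holomorphic, hence real--analytic, on $D^2$, with $\partial_z u\neq 0$ there; by compactness of $S^1$ it suffices to prove that $u\in W^{2,p}$ on a relative neighbourhood of each $z_0\in S^1$ for every $p<\infty$, and to check separately that $\partial_z u(z_0)\neq 0$. (Once $u\in W^{2,p}(\bar D^2)$ for some $p>2$, Morrey's embedding makes $\partial_z u$ continuous on $\bar D^2$, so the last assertion is meaningful, and it will come out of the final step.)

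The first substantial step is to show that $u|_{S^1}$ is an absolutely continuous curve and that $\partial_z u$ lies in the Hardy space $H^1$. Since $\gamma\in W^{2,\infty}(S^1)$ has constant speed it is rectifiable, and $\sigma$ is a homeomorphism, so $\theta\mapsto u(e^{i\theta})=\gamma(\sigma^{-1}(e^{i\theta}))$ is continuous and of bounded variation. Writing $u(z)=\sum_{n\geq 0}a_nz^n$, the distributional derivative of $u|_{S^1}$ is a finite complex measure whose Fourier coefficients of index $\leq 0$ vanish; by the F.\ and M.\ Riesz theorem this measure is absolutely continuous. Hence $u|_{S^1}$ is absolutely continuous, $\frac{\partial}{\partial\theta}u(e^{i\theta})=ie^{i\theta}\partial_z u(e^{i\theta})$ a.e.\ in the sense of nontangential boundary values, this boundary function lies in $L^1(S^1)$, and since $\partial_z u$ is the Cauchy integral of the above measure it belongs to the Smirnov class $N^+$; an $N^+$ function with $L^1$ boundary values lies in $H^1$. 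Thus $\partial_z u\in H^1(D^2)$, $|\partial_z u(e^{i\theta})|=\bigl|\frac{\partial}{\partial\theta}u(e^{i\theta})\bigr|$ a.e., and $\lambda:=\log|\partial_z u|_{S^1}|\in L^1(S^1)$ (an $H^1$ function does not vanish on a set of positive measure).

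The second step upgrades this to the $W^{2,p}$ level using that $\gamma\in W^{2,\infty}$. Fix $z_0\in S^1$ and set $w_0=u(z_0)$. Because $\gamma$ has constant non-zero speed and is of class $C^{1,1}$, a short subarc of $\Gamma:=u(S^1)$ through $w_0$ is an embedded $C^{1,1}$ arc $A$. Using the local injectivity of $u|_{S^1}$ (a reparametrisation of the $C^1$ immersion $\gamma$) together with the information from the first step, one checks that $u$ is a homeomorphism of a one-sided relative neighbourhood $U$ of $z_0$ onto a neighbourhood of $w_0$ bounded by $A$; let $\Psi$ be a Riemann map of that neighbourhood onto a half--disk taking $A$ to a segment $J\subset\R$. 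Since $A$ is $C^{1,1}$, the Kellogg--Warschawski boundary regularity theorem (in its $C^{1,1}$ form) gives $\Psi,\Psi^{-1}\in W^{2,q}$ up to $J$ for all $q<\infty$, with non-vanishing derivative on $J$; meanwhile $F:=\Psi\circ u$ is holomorphic on $U\cap D^2$, continuous up to $U\cap S^1$ and real on $U\cap S^1$, so by Schwarz reflection $F$ extends holomorphically, hence is $C^\infty$, across $U\cap S^1$. Therefore $u=\Psi^{-1}\circ F$ is $W^{2,p}$ near $z_0$ for every $p<\infty$ with $\partial_z u=(\Psi^{-1})'(F)\,F'\neq 0$ at $z_0$. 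Covering $S^1$ by finitely many such neighbourhoods and adding the interior regularity gives $u\in W^{2,p}(\bar D^2,\C)$ for all $p<\infty$ and $\partial_z u\neq 0$ on $S^1$. (Alternatively, staying closer to the viewpoint of this paper, one can run the computation of part 1 of Theorem \ref{propcostrlambdaintr} at the regularity afforded by the first step to obtain $(-\Delta)^{1/2}\lambda=\kappa e^{\lambda}-1$ on $S^1$ with $\kappa$ the curvature of $\Gamma$, which is in $L^\infty$ because $\gamma\in W^{2,\infty}$; the regularity theory for this nonlocal Liouville equation then yields $\lambda\in W^{1,p}(S^1)$ for all $p<\infty$, hence $\partial_z u|_{S^1}=e^{\lambda+i\mathcal H(\lambda)+i\theta_0}\in W^{1,p}(S^1)$, whose holomorphic extension lies in $W^{1,p}(\bar D^2)$; and $\lambda\in W^{1,p}(S^1)\subset C^0(S^1)$ forces $|\partial_z u|=e^{\lambda}>0$ on $S^1$.)

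The main obstacle is precisely that $\sigma$ is assumed only to be a homeomorphism, so no regularity of $u|_{S^1}$ is available a priori and the $W^{2,\infty}$ regularity of $\gamma$ cannot be transported directly to $u$: one must use holomorphy twice, first through the F.\ and M.\ Riesz theorem to reach absolute continuity and $H^1$, and then through classical conformal boundary regularity (or the Liouville structure). The sharpest technical point hidden inside the second step is to rule out a singular inner factor of $\partial_z u$ --- equivalently, to exclude a singular part in the derivative of $u|_{S^1}$ --- which is exactly where the $C^1$ (indeed $C^{1,1}$) character of the image curve $\Gamma$ is used; in the local-straightening approach this is automatic once $u$ is known to be injective on a one-sided neighbourhood of $z_0$, and establishing that injectivity is itself slightly delicate because $u$ need not be globally injective, as the example in the Remark above shows.
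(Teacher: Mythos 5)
Your Step 1 (F.\ and M.\ Riesz, hence $u|_{S^1}$ absolutely continuous and $\partial_z u\in H^1$) is correct, but the heart of your argument is the claim in Step 2 that $u$ maps a one-sided neighbourhood $U$ of $z_0$ in $\bar D^2$ \emph{homeomorphically} onto a one-sided neighbourhood of $w_0$ bounded by the arc $A$, and this is asserted (``one checks'', ``slightly delicate'') but never proved. This is not a routine verification: it is essentially the whole content of the lemma. Since $u|_{S^1}=\gamma\circ\sigma^{-1}$ need not be globally injective, other strands of $\Gamma=u(S^1)$ may pass through, or accumulate at, $w_0$, so you cannot argue that $A$ is a free one-sided boundary arc of $u(D^2)$; and a local argument-principle count on the half-disk $B(z_0,\delta)\cap D^2$ only gives a nonnegative degree, not degree $\le 1$. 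Equivalently, in your ``alternative'' route the missing point is the absence of a singular inner factor in $\partial_z u$: note that Step 1 alone cannot exclude it, since e.g.\ $\partial_z u(z)=\exp\bigl(-\tfrac{1+z}{1-z}\bigr)$ is a zero-free $H^\infty$ function whose primitive is Lipschitz on $\bar D^2$ with absolutely continuous boundary values; what must rule it out is the $W^{2,\infty}$ (bounded curvature) hypothesis on $\gamma$, and your proposal only invokes that hypothesis (through the $C^{1,1}$ regularity of $A$ and Kellogg--Warschawski) \emph{after} the injectivity/one-sidedness has been assumed. So the argument is circular at its key step, and both the Riemann-map/Schwarz-reflection route and the Liouville-equation route remain incomplete without an independent proof of that claim.

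For comparison, the paper's proof is designed precisely to avoid any injectivity statement: near $z_0$ it writes the image curve locally as a $C^{1,1}$ graph $\eta=\varphi(\xi)$, sets $\hat u_2:=u_2-\varphi(u_1)$, and uses the Cauchy--Riemann equations to show that $\hat u_2$ solves a divergence-form, uniformly elliptic equation with bounded coefficients and vanishing Dirichlet data on the boundary arc; boundary $W^{2,p}$ elliptic estimates then give the regularity, and Hopf's lemma (using only the sign $\hat u_2\ge 0$, not injectivity) gives $\partial_r u(z_0)\ne 0$, hence $\partial_z u\ne 0$ on $S^1$. If you want to keep your complex-analytic route, you must supply a genuine proof of the local one-sidedness/injectivity (or directly exclude the singular inner factor using the curvature bound); as it stands this is a gap, not a gloss.
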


\noindent  {\bf Proof.} 
Let $z_0\in S^1$. Since $\dot \gamma(z_0)\ne 0$, we can find some $\rho>0$ such that $\gamma(S^1\cap B(z_0,\rho))$ coincides up to a rotation with a piece of the graph of a function $\varphi\in C^{1,\alpha}(\R)$ with $\varphi'(u_1(x_0))=0$. We may also assume that $u=u_1+iu_2$ takes value into the set $\{(\xi,\eta)\in\R^2:~\eta\ge \varphi(\xi)\}\,.$ Define
 $$
 \hat u=\hat u_1+i\hat u_2,\quad\hat u_1:=u_1,~\hat u_2:=u_2-\varphi(u_1)\,.$$
{\bf Claim:} $\hat u_2$ satisfies
\begin{equation}\label{equ2}
\left\{\begin{array}{cc}
 \partial_{x_i}(a_{ij}\partial_{x_j}\hat u_2)=0,&~~\mbox{in $B(x_0,\rho)\cap  D^2$}\,\\ \hat u_2=0,&
 ~~\mbox{in $B(x_0,\rho)\cap  S^1$}\end{array}\right.
 \end{equation}
where the matrix
 \begin{equation}\label{matrixA}
(a_{ij})= \begin{pmatrix}
 1-\frac{1}{1+(\varphi^{\prime})^2(u_1)}&\frac{\varphi^{\prime}(u_1)}{1+(\varphi^{\prime})^2(u_1)}\\[5mm]
- \frac{\varphi^{\prime}(u_1)}{1+(\varphi^{\prime})^2(u_1)}& 1- \frac{1}{1+(\varphi^{\prime})^2(u_1)}\end{pmatrix}\end{equation}
is in $L^{\infty}(\bar D^2)$ and 
uniformly elliptic\,.\par
\noindent {\bf Proof of the claim:}
We can write $u=\hat u+i\varphi(u_1)$.
Since by hypothesis  $\partial_{\bar z} u(z)=0$, for all $z\in D^2$, the following  estimates hold
\begin{eqnarray*}
\partial_{\bar z}u_1&=&-i\partial_{\bar z}u_2\nonumber\\
\partial_{\bar z} \hat u(z)&=&-i\varphi^{\prime}(u_1)\partial_{\bar z}u_1=-\varphi^{\prime}(u_1)\partial_{\bar z}u_2\nonumber\\
\partial_{\bar z}u_1+i\partial_{\bar z} \hat u_2(z)&=&-i\varphi'(u_1)\partial_{\bar z}u_1\nonumber\\
\partial_{\bar z}u_1&=&-\frac{i}{1+i\varphi^{\prime}(u_1)}\partial_{\bar z} \hat u_2(z)\nonumber\\
\partial_{\bar z}\hat u&=&-\frac{\varphi^{\prime}(u_1)}{1+i\varphi^{\prime}(u_1)}\partial_{\bar z} \hat u_2(z)\,.\nonumber
\end{eqnarray*}
Therefore
\begin{equation}\label{lapu2}
\Delta \hat u_2=4\Im(\partial_z\partial_{\bar z} \hat u )=-4\Im\left[\partial_{z}\left[\frac{\varphi^{\prime}(u_1)}{1+i\varphi^{\prime}(u_1)}\partial_{\bar z} \hat u_2(z)\right]\right].
\end{equation}
Writing 
\[
\frac{\varphi^{\prime}(u_1)}{1+i\varphi^{\prime}(u_1)} \partial_{\bar z} \hat u_2(z)=\frac{\varphi^{\prime}(u_1)}{1+(\varphi^{\prime})^2(u_1)}
\frac{\partial_{x_1}\hat u_2+\varphi'(u_1)\partial_{x_2}\hat u_2 +i(\partial_{x_2}\hat u_2-\varphi'(u_1)\partial_{x_1}\hat u_2)}{2},
\]
we compute the right hand side of
\rec{lapu2} and get
\[\Delta \hat u_2=-\Im\left[(\partial_{x_1}-i\partial_{x_2})\frac{\varphi^{\prime}(u_1)}{1+(\varphi^{\prime})^2(u_1)}
[(\partial_{x_1}\hat u_2+\varphi^{\prime}(u_1)\partial_{x_2}\hat u_2)+i(\partial_{x_2}\hat u_2-\varphi^{\prime}(u_1)\partial_{x_1}\hat u_2)]\right]\,.
\]
Therefore
$\hat u_2$ satisfies \eqref{equ2}-\rec{matrixA} and the claim is proven\,.\par

Elliptic estimates imply that $\hat u_2\in W^{2,p}(\bar {B}(z_0,r/4)\cap \bar D^2)\,,$ for every $p<+\infty\,,$ in particular it is in $C^{1,\alpha}(\bar {B}(z_0,r/4)\cap \bar D^2)$ for every $\alpha\in (0,1)\,.$
Now since $\hat u_2\ge 0$ in $\bar D^2$ and $\hat u_2(z_0)=0$, Hopf's Lemma yields that
$\partial_r\hat u_2(z_0)\ne 0$. Since $u=\hat u+i\varphi(u_1)$,
it follows that
 $$\partial_r u(z_0)=\de_r\hat u_1(z_0)+i\de_r\hat u_2(z_0)+i\underbrace{\varphi'(u_1(z_0))}_{=0}\de_r \hat u_1(z_0)\ne 0,$$
and since $z_0\in S^1$ was arbitrary, we conclude that $\de_r u\ne 0$ everywhere on $S^1$. Then since $u$ is conformal up to the boundary 
we also have $\partial_{z} u\ne 0$ on $S^1$. \hfill$\Box$
 \par
 \medskip
 We introduce the following set
   \begin{eqnarray*}
   \mathcal{T}&:=&\{\gamma\colon S^1\to \C, ~\gamma\in W^{2,\infty},~|\dot\gamma|\equiv const,\\   &&~~~\mbox{such that there is   $\Psi\in C^1(\bar D^2,\C)$,~ $\det(\mathrm{Jac}(\Psi(z)))>0$,  $z\in D^2$,} \\
   &&~~~\mbox{$(\Psi\circ\sigma)(z)=\gamma(z)$, $z\in S^1$ for some diffeomorphism $
 \sigma\colon S^1\to S^1$}\}.
   \end{eqnarray*}


\begin{Theorem}[Generalized Riemann Mapping Theorem ] \label{GRMT}
 A curve $\gamma\in \mathcal{T} $ if and only if  
there exists a holomorphic immersion  $\Phi\colon \bar D^2\to\C$ and a diffeomorphism $\sigma: S^1\to S^1$  such that  $\Phi \circ \sigma=\gamma$\,.\end{Theorem}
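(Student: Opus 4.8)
The statement is an equivalence. One direction is trivial: if there is a holomorphic immersion $\Phi\colon\bar D^2\to\C$ and a diffeomorphism $\sigma\colon S^1\to S^1$ with $\Phi\circ\sigma=\gamma$, then $\Phi$ itself serves as the required $\Psi$ in the definition of $\mathcal{T}$, since a holomorphic immersion satisfies $\det(\mathrm{Jac}(\Phi(z)))=|\Phi'(z)|^2>0$ on $D^2$; moreover $\gamma=\Phi\circ\sigma\in W^{2,\infty}$ with $|\dot\gamma|\equiv const$ is part of the hypothesis, so $\gamma\in\mathcal{T}$. The substance is the forward direction: given $\gamma\in\mathcal{T}$, i.e. a $W^{2,\infty}$ constant-speed curve that bounds a (merely smooth, positively oriented) immersion $\Psi\colon\bar D^2\to\C$ via some diffeomorphism $\sigma$, produce a \emph{holomorphic} immersion with the same boundary curve up to reparametrization.

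\textbf{Main steps.} First I would reduce to the case where $\gamma$ is \emph{already} the boundary parametrization, i.e. replace $\sigma$ by the identity by precomposing $\Psi$ with a diffeomorphism of $\bar D^2$ extending $\sigma$ (such an extension exists and preserves orientation); so without loss of generality $\Psi|_{S^1}=\gamma$, $\Psi\in C^1(\bar D^2,\C)$, $\det\mathrm{Jac}(\Psi)>0$ in $D^2$. The key idea is then to uniformize the pulled-back conformal structure: the immersion $\Psi$ induces on $\bar D^2$ the smooth metric $g:=\Psi^*g_{\mathrm{eucl}}$, which is a Riemannian metric conformal to a complex structure $J_g$ on the disk. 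By the measurable Riemann mapping theorem / uniformization for the disk with boundary (the Beltrami equation $\partial_{\bar z}w=\mu\,\partial_z w$ with $\mu$ the Beltrami coefficient of $J_g$, $\|\mu\|_\infty<1$ since $g$ is a genuine metric and $\bar D^2$ compact), there is a diffeomorphism $h\colon\bar D^2\to\bar D^2$ that is conformal from $(\bar D^2, J_g)$ to $(\bar D^2, J_{\mathrm{std}})$; equivalently $\Psi\circ h^{-1}$ is conformal (holomorphic or antiholomorphic) in the standard structure, and holomorphic after possibly composing with $z\mapsto\bar z$ to fix orientation. Set $\Phi:=\Psi\circ h^{-1}$; then $\Phi$ is holomorphic in $D^2$, $\Phi'\ne 0$ in $D^2$ since $\Psi$ is an immersion and $h$ a diffeomorphism, and $\Phi\circ(h|_{S^1})=\Psi|_{S^1}=\gamma$, so $\sigma_{\mathrm{new}}:=(h|_{S^1})^{-1}$ is the required diffeomorphism and $\Phi\circ\sigma_{\mathrm{new}}=\gamma$.

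\textbf{Boundary regularity --- the main obstacle.} The genuinely delicate point is that the construction so far only gives $\Phi$ holomorphic \emph{in the open disk}, with a priori no control up to $S^1$; we need $\Phi\in C^1(\bar D^2,\C)$ and $\Phi'\ne 0$ on all of $S^1$, i.e. a \emph{holomorphic immersion} in the sense of the paper. This is precisely where Lemma \ref{regbd} enters: it upgrades a holomorphic function $u\in C^0(\bar D^2,\C)$ with $\partial_z u\ne 0$ in $D^2$, whose boundary curve admits a $W^{2,\infty}$ constant-speed parametrization, to $u\in W^{2,p}(\bar D^2,\C)$ for all $p<\infty$ (hence $C^{1,\alpha}$ up to the boundary) with $\partial_z u\ne 0$ on $S^1$. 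So the plan is: check that $\Phi=\Psi\circ h^{-1}$ extends continuously to $\bar D^2$ (this follows because $\Psi$ is continuous on $\bar D^2$ and $h$, solving a uniformly elliptic Beltrami equation with smooth coefficients away from the boundary and bounded measurable coefficients globally, extends to a homeomorphism of $\bar D^2$ --- a Carathéodory-type extension argument, or simply because $h^{-1}$ is the restriction of a homeomorphism of $\bar D^2$), and that its boundary curve is $\gamma\circ\sigma_{\mathrm{new}}^{-1}$, which is a reparametrization of $\gamma$, hence has image admitting the $W^{2,\infty}$ constant-speed parametrization $\gamma$. Then Lemma \ref{regbd} applies verbatim and yields $\Phi\in C^1(\bar D^2,\C)$ with $\Phi'\ne 0$ on $\bar D^2$, i.e. a holomorphic immersion. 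The subtle part in this last paragraph is justifying the continuous (homeomorphic) extension of the uniformizing map $h$ to $\bar D^2$; I would handle it either by quoting standard boundary regularity for quasiconformal maps of the disk (prime-end / Carathéodory theory, which gives a homeomorphic extension since $\partial D^2$ is a Jordan curve) or, more self-containedly, by noting that $h$ can be realized as a diffeomorphism of $\bar D^2$ by smoothing $\mu$ near the boundary first and absorbing the error, using that $g$ is already smooth on the closed disk.

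\hfill$\square$
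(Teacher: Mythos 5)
Your proposal is correct and follows essentially the same route as the paper: uniformize the pulled-back metric by solving the Beltrami equation (the paper extends $\mu$ by zero, solves globally for a quasiconformal homeomorphism $\xi$ of the plane, and then composes with a Riemann map of $D^2$ onto the Jordan domain $\xi(D^2)$ — which is exactly your disk-to-disk map $h$ packaged in two steps, with the boundary continuity you worry about handled via Carath\'eodory), and then invoke Lemma \ref{regbd} to get $W^{2,p}$ regularity and $\partial_z\Phi\ne 0$ up to $S^1$. The only slip is cosmetic: since $\Phi\circ(h|_{S^1})=\Psi|_{S^1}=\gamma$, the required reparametrization is $\sigma_{\mathrm{new}}=h|_{S^1}$ itself, not its inverse.
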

{\bf Proof.} \par
1. Suppose that there exists a holomorphic immersion  $\Phi\colon \bar D^2\to\C$  and a diffeomorphism $\sigma: S^1\to S^1$ such that  $\Phi\circ \sigma=\gamma$. 
The one can take $\Psi=\Phi$. 
Therefore $\gamma\in \mathcal{T}$.
\par
2. Conversely let $\Psi\in C^{1 }(\bar D^2,\C)$,
$\Psi|_{S^1}= \gamma$ with $\det(\Jac(\Psi))>0$ in $D^2\,.$\par
2i) Consider the pull back of the Euclidean metric $g$ on $\R^2$ by $\Psi$:
$$h_{ij}:=\langle\partial_{x_i}\Psi,\partial_{x_j}\Psi\rangle\,.$$
Since
$\det(\mathrm{Jac}(\psi))>0$ we have
$$c^{-1}\delta_{ij}\le (h_{ij})\le c\delta_{ij}\,.$$
We can write
\begin{equation}\label{metric}
h=h_{11}dx^2+2h_{12}dxdy+h_{22}dy^2\,.\end{equation}
Setting $z=x+iy$ on can write $h$  in the form
$$
h=\nu|dz+\mu d\bar z|^2$$
where $\nu$ is a positive continuous function on $U$ and $\mu$ is a complex-valued   continuous  function with
$\|\mu\|_{L^{\infty}(\bar D^2)}<1$ on $U$\,. Actually $\nu$ and $\mu$ are given by
\begin{eqnarray*}
\nu&=&\frac{1}{4}\left(h_{11}+h_{22}+2\sqrt{h_{11}h_{22}-h_{12}^2}\right),\\
\mu&=&\frac{h_{11}-h_{22}+2i h_{12}}
{h_{11}+h_{22}+2\sqrt{h_{11}h_{22}-h_{12}^2}}.
\end{eqnarray*}
Moreover $\Psi$  solves the following equation
\begin{equation}\label{beltrami}
\frac{\partial_{\bar w}\Psi(w)}{\partial_w\Psi(w)}=\mu(w)\,,~~\mbox{in $D^2$.}
\end{equation}
The function $\mu$ is the so-called Beltrami coefficient associated to the metric $h$\,.
 Now we extend $\mu$ by $0$ outside $\bar D^2$ (we still denote this  extension by $\mu$).  Then there exists  a unique homeomorphism  $\xi\colon\bar\C\to\bar \C$ (here $\bar C=\C\cup\{\infty\}\simeq S^2$) which satisfies in distributional sense 
 $$
 \partial_{\bar z}\xi=\mu(z)\,\partial_z \xi,~~\mbox{in $\C$}
 $$
 and the following normalization conditions  $$
 \xi(0)=0,~\xi(1)=1, ~\xi(\infty)=\infty\,.$$ Moreover $\xi\in W^{1,p}_{\loc}(\C)$ for some $p> 2,$ $\partial_z \xi\ne 0\,,$ a.e in  $\C\,.$ The function $\xi $ is called a quasiconformal map with dilation coefficient $\mu$, (see e.g. Theorem   4.30 in \cite{IT}).\par Since $\xi$ is a homemorphism, 
  $\xi(S^1)$ is  a Jordan curve

 2ii) Consider now $\tilde\Psi:=\Psi\circ \xi^{-1}\colon \xi(\bar D^2)\to\C$\,.
From \cite[Proposition 4.13]{IT} it follows that the complex dilatation of $\tilde \Psi$ is $0$ in $\xi(D^2)$, therefore $\partial_{\bar z} \tilde\Psi=0$ and $\tilde\Psi$ is holomorphic in
 $\xi(D^2)\,,$ see \cite[Lemma 4.6]{IT}.

   2iii) Now we apply the Riemann Mapping Thereorem: there exists $u$ 
biholomorphic map from $D^2$ onto $\xi(D^2)\,.$ In particular $\partial_z u\ne 0$ in $D^2\,.$
 Take $\Phi:=\Psi\circ\xi^{-1}\circ u\,.$  
We observe that $\det(\Jac(\Psi))>0$ implies
$\partial_z \Psi\ne 0 $ in $\bar D^2$. 
Therefore  it holds 

 \begin{eqnarray*}
 \partial_z\Phi&=&\partial_{w}(\Psi\circ\xi^{-1})\partial_{z} u+
 \partial_{\bar w}(\Psi\circ\xi^{-1})\partial_{z} \bar {u}\\
 &=&\partial_{w}(\Psi\circ\xi^{-1})\partial_{z} u+
  \partial_{\bar w}(\Psi\circ\xi^{-1})\overline{\partial_{\bar z}  {u}}\\
  &=& \partial_{w}(\Psi\circ\xi^{-1})\partial_{z} u\,.
  \end{eqnarray*}
  We observe that $\Phi$ is  {holomorphic} in $D^2$  {because it is the composition of two holomorphic maps} and $\partial_z \Phi\ne 0$ in $  D^2$\,. From Lemma \ref{regbd} it follows that $\partial_z \Phi\ne 0$ in $ \bar D^2$ and we conclude the proof of Theorem \ref{GRMT}\,.~\hfill $\Box$


 \par\medskip
From the next Lemma we can deduce that   if $\gamma\in \mathcal{T}$ then the winding number (or equivalently the degree) of $\gamma$ is $1$\,.
This is a consequence of the following lemma.

\begin{Lemma}\label{la:degree}
Let $\Phi\in W^{2,p}(\bar D^2,\C)$, for some $1<p\le +\infty$  be a holomorphic function such that $\partial_z\Phi\ne 0$ in $\bar D^2$. Then
\begin{equation}\label{index}
\deg\Phi= \frac{1}{2\pi}\int_0^{2\pi}\frac{\langle i\partial_{\theta}\Phi,\partial^2_{\theta}\Phi\rangle}{|\partial_{\theta}\Phi|^2}d\theta=1+\frac{1}{2\pi i}\int_{S^1}\frac{f^{\prime}(z)}{f(z)} dz =1\,,
\end{equation}
  where $f(z)=\Phi^{\prime}(z)$\,. 
  \end{Lemma}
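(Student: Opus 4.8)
The plan is to prove Lemma \ref{la:degree} in two parts: first establish the integral identity for the degree, then show both integrals equal $1$. Recall that $\Phi\in W^{2,p}(\bar D^2,\C)$ is holomorphic with $\Phi'=f$ nonvanishing on $\bar D^2$; in particular $f$ is holomorphic in $D^2$, continuous and zero-free on $\bar D^2$.

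For the first equality, I would unwind the geometric meaning of $\deg\Phi$. Since $\partial_\theta\Phi\ne 0$ on $S^1$ (because $\partial_\theta\Phi(e^{i\theta}) = f(e^{i\theta})\,ie^{i\theta}$ and $f$ is nonvanishing), the map $\theta\mapsto \partial_\theta\Phi(e^{i\theta})/|\partial_\theta\Phi(e^{i\theta})|\in S^1$ is well-defined and $C^0$, and $\deg\Phi$ (the winding number of the boundary curve, equivalently the rotation index of its tangent) is the degree of this Gauss map. Writing $\partial_\theta\Phi = |\partial_\theta\Phi| e^{i\psi(\theta)}$ for a continuous lift $\psi$, one has $\deg\Phi = \frac{1}{2\pi}\int_0^{2\pi}\psi'(\theta)\,d\theta$, and a direct computation gives $\psi'(\theta) = \Im\big(\partial_\theta^2\Phi/\partial_\theta\Phi\big) = \langle i\partial_\theta\Phi,\partial_\theta^2\Phi\rangle/|\partial_\theta\Phi|^2$, which is the first integral. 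Next, using $\partial_\theta\Phi(e^{i\theta}) = i e^{i\theta} f(e^{i\theta})$, so that $\partial_\theta^2\Phi = -e^{i\theta}f - e^{2i\theta}f' \cdot (\text{up to factors from }\partial_\theta)$ — more precisely $\partial_\theta^2\Phi = ie^{i\theta}\partial_\theta(e^{i\theta}f(e^{i\theta})) = ie^{i\theta}(ie^{i\theta}f + ie^{i\theta}\cdot e^{i\theta}f') $ — I would compute $\partial_\theta^2\Phi/\partial_\theta\Phi = i(1 + e^{i\theta}f'(e^{i\theta})/f(e^{i\theta}))$, whence $\Im(\partial_\theta^2\Phi/\partial_\theta\Phi) = 1 + \Re(e^{i\theta}f'/f)$. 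Integrating and converting $\int_0^{2\pi}\Re(e^{i\theta}f'(e^{i\theta})/f(e^{i\theta}))\,d\theta$ into the contour integral $\frac{1}{i}\int_{S^1}\frac{f'(z)}{f(z)}\,dz$ (via $dz = ie^{i\theta}d\theta$ on $S^1$ and noting the imaginary part of the contour integral's integrand integrates to zero by a symmetry/closed-form argument, or simply that $\int_{S^1}f'/f\,dz$ is already real and equals $2\pi i\cdot(\text{number of zeros of }f\text{ in }D^2)=0$) yields the middle expression $1 + \frac{1}{2\pi i}\int_{S^1}\frac{f'(z)}{f(z)}\,dz$.

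For the final equality to $1$: by the argument principle, $\frac{1}{2\pi i}\int_{S^1}\frac{f'(z)}{f(z)}\,dz$ counts the number of zeros of $f$ inside $D^2$ (with multiplicity), which is $0$ since $f=\Phi'\ne 0$ everywhere on $\bar D^2$. Hence the whole expression equals $1 + 0 = 1$. One mild technical point is justifying the argument principle when $f$ is only known to be holomorphic in $D^2$ and continuous (indeed $W^{1,p}$, hence $C^{0,\alpha}$ for $p>2$, or handled by approximation for general $p>1$) up to $S^1$ with no zeros on $\bar D^2$; this is standard, e.g. by applying the argument principle on circles of radius $r<1$ and letting $r\to 1$, using the uniform bound on $f'/f$ coming from $f$ being bounded away from $0$ and $f'\in L^p$.

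The main obstacle I anticipate is purely bookkeeping: carefully carrying out the $\partial_\theta$ differentiations of $\Phi(e^{i\theta})$ and tracking the powers of $i$ and $e^{i\theta}$ so that the real/imaginary parts land correctly, and cleanly separating the real contour integral $\int_{S^1}f'/f\,dz$ from the spurious imaginary terms that arise when one writes $\Re(e^{i\theta}f'/f)$ in terms of $dz$. There is no deep difficulty — the substance is the argument principle plus the nonvanishing of $\Phi'$ — but the identity chaining three equal quantities requires the $\partial_\theta$-versus-$\partial_z$ translation to be done without sign errors. A secondary point worth a sentence is the regularity needed to make sense of $\partial_\theta^2\Phi$ pointwise a.e.\ and of the boundary integrals, which is exactly what the hypothesis $\Phi\in W^{2,p}(\bar D^2,\C)$ (guaranteeing $\Phi|_{S^1}\in W^{2,p}(S^1)$) is there to supply.
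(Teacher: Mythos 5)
Your proof is correct, and it reaches the same two pillars as the paper — the interpretation of $\deg\Phi$ as the winding of the unit tangent $\partial_\theta\Phi/|\partial_\theta\Phi|$, and the argument principle applied to the zero-free function $f=\Phi'$ — but by a genuinely shorter computational route. You work directly on $S^1$ with the chain rule, $\partial_\theta\Phi(e^{i\theta})=ie^{i\theta}f(e^{i\theta})$, and obtain pointwise $\partial_\theta^2\Phi/\partial_\theta\Phi=i\bigl(1+e^{i\theta}f'/f\bigr)$, so that taking imaginary parts gives $\psi'(\theta)=1+\Re\bigl(e^{i\theta}f'/f\bigr)$ and the identity \eqref{index} follows by integrating and invoking $\frac{1}{2\pi i}\int_{S^1}f'/f\,dz=0$ (which, being zero, is in particular real, so the $\Re$ causes no loss). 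The paper instead expresses $f$ through the polar-coordinate formula $f=\frac{e^{-i\theta}}{2}\bigl(\partial_r\Phi-\frac{i}{r}\partial_\theta\Phi\bigr)$, uses the Cauchy--Riemann relation $\partial_r\Phi=-\frac{i}{r}\partial_\theta\Phi$ to eliminate radial derivatives, and needs a separate integration-by-parts step to convert $\int_0^{2\pi}\frac{\langle i\partial_\theta\Phi,\partial^2_\theta\Phi\rangle}{|\partial_\theta\Phi|^2}d\theta$ into $-i\int_0^{2\pi}\frac{\partial^2_\theta\Phi}{\partial_\theta\Phi}d\theta$; your pointwise imaginary-part computation replaces that integral identity and is cleaner, at the cost of nothing. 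Your closing remarks on justifying the argument principle by working on circles of radius $r<1$ and on the regularity needed to make sense of $\partial_\theta^2\Phi$ on $S^1$ are appropriate and no weaker than what the paper itself assumes. One small slip: your displayed intermediate expression $\partial_\theta^2\Phi=ie^{i\theta}\partial_\theta(e^{i\theta}f(e^{i\theta}))$ carries a spurious factor $e^{i\theta}$ (it should be $\partial_\theta^2\Phi=i\,\partial_\theta\bigl(e^{i\theta}f(e^{i\theta})\bigr)=-e^{i\theta}f-e^{2i\theta}f'$), but the ratio you then state, and everything downstream of it, is correct.
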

  {\bf Proof.} We recall that
  $$
  \Phi^{\prime}(z)=\frac{e^{-i\theta}}{2}\left(\frac{\partial\Phi}{\partial r}-\frac{i}{r}\frac{\partial\Phi}{\partial \theta}\right)=:f(z)\,.
  $$
  Since $\Phi$ is holomorphic we have 
  \begin{equation}\label{linkpartder}
  \frac{\partial\Phi}{\partial r}=-\frac{i}{r}\frac{\partial\Phi}{\partial \theta}\,.\end{equation}
  
  Hence
  \begin{eqnarray}\label{estzeri}
  \int_{S^1}
  \frac{f^{\prime}(z)}{f(z)} dz&=&
  \int_{S^1} \frac{\frac{e^{-i\theta}}{2}(\frac{\partial}{\partial r}-\frac{i}{r}\frac{\partial}{\partial \theta})\frac{e^{-i\theta}}{2}(\frac{\partial\Phi}{\partial r}-\frac{i}{r}\frac{\partial\Phi}{\partial \theta})}{\frac{e^{-i\theta}}{2}(\frac{\partial\Phi}{\partial r}-\frac{i}{r}\frac{\partial\Phi}{\partial \theta})}\,dz\\
  &\underbrace{=}_{\text{by }\rec{linkpartder}}&
    \int_{S^1}\frac{(\frac{\partial}{\partial r}-\frac{i}{r}\frac{\partial}{\partial \theta})(-\frac{i}{r}e^{-i\theta}\frac{\partial\Phi}{\partial\theta})}{ (\frac{\partial\Phi}{\partial r}-\frac{i}{r}\frac{\partial\Phi}{\partial \theta})}\,dz\nonumber
       \\&=&\int_{S^1}e^{-i\theta}\frac{\frac{2i}{r^2}\frac{\partial\Phi}{\partial \theta}-\frac{i}{r}\frac{\partial^2\Phi}{\partial r\partial\theta}-\frac{1}{r^2}\frac{\partial^2\Phi}{\partial^2\theta}}{\frac{-2i}{r}\frac{\partial\Phi}{\partial\theta}}\,dz\nonumber
\\
  &\underbrace{=}_{r=1 \text{ on } S^1}&       -\int_{S^1}e^{-i\theta}\ dz+\int_{S^1}e^{-i\theta}
       \frac{\frac{\partial^2\Phi}{\partial r\partial\theta}}{-2i\frac{\partial\Phi}{\partial\theta}}\,dz
       \int_{S^1}e^{-i\theta}\frac{\frac{\partial^2\Phi}{\partial^2\theta}}{-2i\frac{\partial\Phi}{\partial\theta}}\,dz\nonumber
\\
&=&
       -2\pi i-\frac{i}{2}\int_0^{2\pi}\frac{\frac{\partial^2\Phi}{\partial r\partial\theta}}{\frac{\partial\Phi}{\partial\theta}}\,d\theta-\frac{1}{2}\int_0^{2\pi}\
       \frac{\frac{\partial^2\Phi}{\partial \theta\partial\theta}}{\frac{\partial\Phi}{\partial\theta}}\,d\theta\nonumber
\\  &\underbrace{=}_{\text{by }\rec{linkpartder}}&        -2\pi i-\int_0^{2\pi}\
        \frac{\frac{\partial^2\Phi}{\partial \theta\partial\theta}}{\frac{\partial\Phi}{\partial\theta}}\,d\theta\,.\nonumber
\end{eqnarray}
 On the other hand
 we have
 \begin{eqnarray}\label{estindex1}
 \int_0^{2\pi}\frac{\langle i\partial_{\theta}\Phi,\partial^2_{\theta}\Phi\rangle}{|\partial_{\theta}\Phi|^2}d\theta&=&
 \frac{1}{2}\int_0^{2\pi}\frac{-i\overline{\partial_{\theta}\Phi}\partial^2_{\theta^2}\Phi}{
 \overline{\partial_{\theta}\Phi}{\partial_{\theta}\Phi}}\,d\theta+
  \frac{1}{2}\int_0^{2\pi}\frac{i{\partial_{\theta}\Phi}\overline{\partial^2_{\theta^2}\Phi}}{
 \overline{\partial_{\theta}\Phi}{\partial_{\theta}\Phi}}\,d\theta\,.
 \end{eqnarray}
 We observe that
 \begin{eqnarray}\label{estindex2}
 \frac{1}{2}\int_0^{2\pi}\frac{i{\partial_{\theta}\Phi}\overline{\partial^2_{\theta^2}\Phi}}{
 \overline{\partial_{\theta}\Phi}{\partial_{\theta}\Phi}}\,d\theta&=&
- \frac{i}{2}\int_0^{2\pi}\overline{\partial_{\theta}\Phi}\frac{\partial^2_{\theta^2}\Phi}{|\partial_{\theta}\Phi|^2}d\theta\\
&-&  \frac{i}{2}\int_0^{2\pi}{|\partial_{\theta}\Phi|^2}\partial_\theta\left({|\partial_{\theta}\Phi|^{-2}}\right)\,d\theta \nonumber
\\
&=&
- \frac{i}{2}\int_0^{2\pi}\frac{\partial^2_{\theta^2}\Phi}{\partial_{\theta}\Phi}d\theta\,.\nonumber
\end{eqnarray}
It follows that
 \begin{eqnarray}\label{estindex3}
 \int_0^{2\pi}\frac{\langle i\partial_{\theta}\Phi,\partial^2_{\theta}\Phi\rangle}{|\partial_{\theta}\Phi|^2}d\theta&=&-i\int_0^{2\pi}\frac{\partial^2_{\theta^2}\Phi}{\partial_{\theta}\Phi}d\theta\,.
 \end{eqnarray}
 By combining the estimates \rec{estzeri},\rec{estindex1},\rec{estindex2},\rec{estindex3} we get
  \begin{eqnarray*}
  \int_{S^1}
  \frac{1}{2\pi i}\frac{f^{\prime}(z)}{f(z)} dz&=&-1-  \frac{1}{2\pi i}\int_0^{2\pi}\frac{\partial^2_{\theta^2}\Phi}{\partial_{\theta}\Phi}d\theta\\
  &=&-1+\frac{1}{2\pi} \int_0^{2\pi}\frac{\langle i\partial_{\theta}\Phi,\partial^2_{\theta}\Phi\rangle}{|\partial_{\theta}\Phi|^2}d\theta\,.
  \end{eqnarray*}
  We conclude the proof.\hfill $\Box$
\begin{Remark}Lemma \ref{la:degree} can also be obtained as a corollary of Theorem \ref{propcostrlambdaintr}. Indeed $\deg\Phi|_{S^1}=\frac{1}{2\pi}\int_{S^1}\kappa |\Phi'|d\theta$, but since $(-\Delta)^\frac12 \lambda =\kappa e^\lambda-1$, integrating we get $\int_{S^1}\kappa e^{\lambda}d\theta =2\pi$. 
\end{Remark}

  \subsection{Connection with half-harmonic maps}

In this subsection we show an interesting connection between the solutions of \rec{modelequbis} and the half-harmonic maps into a given curve $\Gamma\,.$\par

Let $\tilde\phi=\Phi\in C^1(\bar D^2, \C)$ be the map given by Theorem \ref{GRMT} and set $\phi:=\Phi|_{S^1}$. Then $\Phi$ is conformal up to the boundary, i.e. $\frac{\partial \phi}{\partial\theta}\cdot \frac{\partial\tilde \phi}{\partial r}=0$ on $S^1$. Since $ \frac{\partial\tilde \phi}{\partial r}\Big|_{r=1}= \lapfr\phi$, we deduce 
 \begin{equation}\label{halfharmonic}
\lapfr \phi\perp T_{\phi}\Gamma\,,~\mbox{i.e.}~
\frac{\partial \phi}{\partial\theta}\cdot (-\Delta)^{1/2} \phi=0~~\mbox{on ${\cal{D}}^{\prime}(S^1)$\,.}\end{equation}
Equation (\ref{halfharmonic}) says that  $\phi$ is a $1/2$-harmonic map into $\Gamma$ (see \cite{DLR1}). 

We would like to recall a characterization of $1/2$-harmonic maps 
 of $S^1$ into submanifolds of $\R^n$, which has been already observed in \cite{DL}\,.

\begin{Theorem}[\cite{DLR}]\label{mindisk} 
 Let    $u\in H^\frac12 (S^1,{\cal{N}})$, where ${\cal{N}}$ is a $k$-dimensional smooth submanifold of $\R^m$ without boundary . Then $u$ is a weak  $1/2$-harmonic map i.e.
$\lapfr u \perp T_u\cal{N},$
 if and only if its harmonic extension $\tilde u \in W^{1,2}(D^2 , \R^m)$ is conformal,  {in which case}
\begin{equation}\label{freeb}
\partial_r\tilde u \perp T_u\cal{N}\quad \text{in }{\cal{D}}^{\prime}(S^1).
\end{equation}  
\end{Theorem}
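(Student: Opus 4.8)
The plan is to read Theorem~\ref{mindisk} as a statement about the Hopf differential of the harmonic extension $\tilde u$. First I would record two elementary facts: if $u=\sum_n\hat u(n)e^{in\theta}$ then $\tilde u(re^{i\theta})=\sum_n\hat u(n)r^{|n|}e^{in\theta}$, so $\partial_r\tilde u|_{S^1}=\sum_n|n|\hat u(n)e^{in\theta}=\lapfr u$ by \eqref{lapHilb}, while $\partial_\theta\tilde u|_{S^1}=\partial_\theta u$. Hence ``$u$ is weakly $1/2$-harmonic'' means exactly ``$\partial_r\tilde u\perp T_u\mathcal N$ on $S^1$'', which is \eqref{freeb}; and because $\partial_\theta u$ is a section of $T_u\mathcal N$, it forces in particular $\partial_r\tilde u\cdot\partial_\theta u=0$ on $S^1$ (the two conditions being equivalent when $\mathcal N$ is a curve, as for the $\Gamma$ relevant here).

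Next I would introduce $h:=\partial_z\tilde u\cdot\partial_z\tilde u$, where $\cdot$ denotes the $\C$-bilinear extension of the Euclidean inner product on $\R^m$. Since $\tilde u$ is harmonic, each component of $\partial_z\tilde u$ is holomorphic on $D^2$, so $h$ is holomorphic on $D^2$, and $h\equiv 0$ is by definition the conformality of $\tilde u$. Using $\partial_z=\tfrac{e^{-i\theta}}2\bigl(\partial_r-\tfrac ir\partial_\theta\bigr)$ and restricting to $S^1$ gives, after multiplying by $z^2=e^{2i\theta}$,
\[
4\,z^2h(z)\big|_{S^1}=|\partial_r\tilde u|^2-|\partial_\theta u|^2-2i\,\partial_r\tilde u\cdot\partial_\theta u .
\]
Set $H(z):=z^2h(z)$, holomorphic on $D^2$ with $H(0)=0$; its boundary values are real precisely when $\partial_r\tilde u\cdot\partial_\theta u=0$ on $S^1$, and then $\Re H|_{S^1}=\tfrac14\bigl(|\partial_r\tilde u|^2-|\partial_\theta u|^2\bigr)$.

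For ``$1/2$-harmonic $\Rightarrow$ conformal'': the hypothesis gives $\partial_r\tilde u\cdot\partial_\theta u=0$ on $S^1$, so $\Im H$ is harmonic on $D^2$ with vanishing boundary trace, hence $\Im H\equiv0$; then $H$ is a real constant, and since $H(0)=0$ we get $H\equiv0$, i.e. $h\equiv0$ and $\tilde u$ is conformal. (One can instead pin the constant by integrating $4\Re(z^2h)=|\partial_r\tilde u|^2-|\partial_\theta u|^2$ over $S^1$, which vanishes by Parseval.) Conversely, if $\tilde u$ is conformal then $h\equiv0$, so on $S^1$ one has both $|\partial_r\tilde u|=|\partial_\theta u|$ and $\partial_r\tilde u\cdot\partial_\theta u=0$; since $T_u\mathcal N$ is spanned by $\partial_\theta u$ in the situation at hand, the second relation reads $\partial_r\tilde u\perp T_u\mathcal N$, i.e. $\lapfr u\perp T_u\mathcal N$, which is \eqref{freeb} and says $u$ is $1/2$-harmonic.

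The main obstacle is the low regularity: for $u$ merely in $H^{1/2}(S^1)$ one only controls $\lapfr u,\partial_\theta u\in H^{-1/2}(S^1)$ and $h\in L^1(D^2)$, so the boundary products above and the phrase ``$H$ has real boundary values'' must be justified. I would handle this either by first invoking the $\varepsilon$-regularity theory for $1/2$-harmonic maps from \cite{DLR1} (or \cite{DL}), after which $u$ is smooth, $H$ is continuous up to $S^1$, and the maximum principle applies directly; or by using that the Hopf differential of a $W^{1,2}$ harmonic map lies in a Hardy-type space, so that $H$ has honest boundary traces and the constancy argument runs verbatim. Once this functional-analytic point is settled, the Hopf-differential mechanism above is the whole proof.
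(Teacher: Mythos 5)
Your argument is correct, but it follows a genuinely different route from the paper. The paper proves the forward implication variationally: using the smoothness of weak $1/2$-harmonic maps from \cite{DLR1}, it tests the equation with $Y=d\tilde u\cdot\tilde X$ for vector fields $\tilde X$ tangent to $S^1$, integrates by parts to show that the Dirichlet energy of $\tilde u$ is stationary under all such inner variations of $\bar D^2$, and then quotes Proposition \ref{propRiv} (Prop.~II.2 of \cite{Riv}) to conclude conformality; the converse is handled exactly as you do, by reading $\partial_r\tilde u|_{r=1}=\lapfr u$ together with \eqref{freeb}. You instead work directly with the Hopf differential $H(z)=z^2\,\partial_z\tilde u\cdot\partial_z\tilde u$, using that $\Im H$ vanishes on $S^1$ precisely when $\lapfr u\cdot\partial_\theta u=0$, and concluding $H\equiv 0$ from harmonicity of $\Im H$, zero boundary trace, and $H(0)=0$ (or via the Parseval identity $\|\partial_r\tilde u\|_{L^2(S^1)}=\|\partial_\theta u\|_{L^2(S^1)}$). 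In effect you reprove, in this special boundary situation, the content of Proposition \ref{propRiv} rather than citing it, which makes your proof more self-contained and elementary; the paper's route is shorter given the citation and phrases the mechanism in the language of stationarity, which is the form in which it recurs elsewhere in the literature. Both proofs rest on the same regularity input ($u\in C^\infty$ for weak $1/2$-harmonic maps, from \cite{DLR1}) to make the boundary quantities classical, so your first option for handling the low regularity is exactly what the paper does; the Hardy-space alternative you mention is only sketched and would need more care. Finally, your observation that the implication ``conformal $\Rightarrow$ $1/2$-harmonic'' uses either \eqref{freeb} or the one-dimensionality of $\mathcal N$ (since conformality only yields $\partial_r\tilde u\cdot\partial_\theta u=0$) is accurate and matches the paper, whose converse explicitly assumes both conformality and \eqref{freeb}.
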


\begin{proof}
Let  $u\in H^\frac12 (S^1,{\cal{N}})$
be a weak  $1/2$-harmonic map and 
 let $\tilde u\in W^{1,2}(D,\R^m)$ be the harmonic extension of $u$\,. Then it holds
 $$E(u):=\int_{S^1}|(-\Delta)^\frac14 u|^2 |dz| =\int_{D^2}|\nabla\tilde u|^2|dz|\,.$$
 {\bf Claim: } For every  $\tilde X\in C^{\infty}(\bar D^2, {\R^2})$ such that $  \tilde X(z)\cdot z =0$ for $z\in S^1$ it holds
 \begin{equation}\label{critictildeu}
 \left(\frac{d}{dt}\int_{D^2}|\nabla\tilde u(z+t\tilde X(z))|^2 |dz|\right)\bigg|_{t=0}=0\,.
 \end{equation}
 {\bf Proof of the Claim.}\par
 
 It has been proved in \cite{DLR1} that if $u$ is $1/2$-harmonic, then
$u\in C^{\infty}(S^1)$, in particular $u$ satisfies
\begin{equation}\label{critic}
\left(\frac{d}{dt}\int_{S^1}|(-\Delta)^\frac14 u(z+tX(z))|^2 |dz|\right)\bigg|_{t=0}=0\,.
\end{equation}
 for every  $  X\in C^{\infty}(S^1)\,.$\par
 Let $\tilde X\in C^{\infty}(\bar D^2, {\R^2})$ such that $\tilde X(z)\cdot z=0$  for $z\in S^1$.
 We observe that
 for all $z\in S^1$, $Y:=d\tilde u\cdot \tilde X=du\cdot \tilde X\in T_u\cal{N}$ and
 \begin{eqnarray*} 
  \left(\frac{d}{dt}\int_{D^2}|\nabla\tilde u(z+t\tilde X(z))|^2 |dz|\right)\bigg|_{t=0}&=&\int_{D^2}\nabla\tilde u \cdot\nabla Y |dz|\\
&=&\int_{S^1} \partial_r\tilde u\cdot Y|dz|\\
&=&-\int_{S^1}(-\Delta)^\frac12 u\cdot Y|dz|=0\,,
\end{eqnarray*}
 where the last equality follows from (\ref{critic})\,.\par
 From  Proposition \ref{propRiv} below and the regularity of $\tilde u$ up to the boundary it follows that $\tilde u$ is also conformal in $\bar D^2$  i.e.
$$ |\partial_{x_1} \tilde u|=|\partial_{x_2} \tilde u|,\quad 
  \partial_{x_1} \tilde u\cdot \partial_{x_2} \tilde u  =0$$
Conversely, suppose that the harmonic extension $\tilde u$  of $u$
is  conformal and satisfies \eqref{freeb}\,.
 Since $ \partial_r \tilde u=-(-\Delta)^\frac12 u$  we deduce that 
  $u$ is $1/2$-harmonic\,.
\end{proof}

\begin{Proposition}[Prop. II.2 in \cite{Riv}]\label{propRiv}
Let $\tilde u$ be a map in $W^{1,2}(D^2,\R^m)$  satisfying
$$
\left(\frac{d}{dt}\int_{D^2}|\nabla\tilde u_t|^2 |dz|\right)\bigg|_{{t=0}}=0,\,\quad u_t(x):=u(x+tX(x))$$
for every $X\in C^{\infty}(\bar D^2,\R^2)$ such that $\langle X(x),x\rangle=0$ for $x\in S^1$.
Then $\tilde u$ is conformal in $D^2$\,.
\end{Proposition}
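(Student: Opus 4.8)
The plan is to reduce the hypothesis to a statement about the Hopf differential of $\tilde u$ and then to annihilate all of its Taylor coefficients by testing against explicit tangential vector fields on $\bar D^2$.

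First I would record the first inner variation. Since $X$ is tangent to $S^1$, its flow $(\phi_t)$ is a family of diffeomorphisms of $\bar D^2$ onto itself, so the competitor $\tilde u_t:=\tilde u\circ\phi_t$ is well defined on $\bar D^2$ and agrees to first order in $t$ with $x\mapsto\tilde u(x+tX(x))$; hence the variation depends on $X$ only, and a change of variables gives
$$
\left.\frac{d}{dt}\right|_{t=0}\!\!\int_{D^2}|\nabla\tilde u_t|^2\,dx=\int_{D^2}\Big(2\sum_{i,j}\langle\partial_i\tilde u,\partial_j\tilde u\rangle\,\partial_iX^j-|\nabla\tilde u|^2\diver X\Big)dx=\int_{D^2}\sum_{i,j}T_{ij}\,\partial_iX^j\,dx,
$$
where $T_{ij}:=2\langle\partial_i\tilde u,\partial_j\tilde u\rangle-\delta_{ij}|\nabla\tilde u|^2$ is the (symmetric and trace-free) stress–energy tensor. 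Writing $Z:=X^1+iX^2$ and using $T_{11}-iT_{12}=4h$ with $h:=\langle\partial_z\tilde u,\partial_z\tilde u\rangle=\sum_k(\partial_z\tilde u^k)^2$ (the bracket being the $\C$-bilinear extension of the Euclidean inner product), the integrand becomes $8\,\mathrm{Re}\big(h\,\partial_{\bar z}Z\big)$. Since $\partial_z\tilde u^k\in L^2(D^2)$ we have $h\in L^1(D^2)$, and the hypothesis reads
$$
\mathrm{Re}\int_{D^2}h\,\partial_{\bar z}Z\,dx=0\qquad\text{for every }Z\in C^\infty(\bar D^2,\C)\text{ with }\langle(\mathrm{Re}\,Z,\mathrm{Im}\,Z),z\rangle=0\text{ on }S^1.
$$

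Next I would exploit this in two steps. Testing with $Z\in C_c^\infty(D^2,\C)$, and also with $iZ$, gives $\int_{D^2}h\,\partial_{\bar z}Z\,dx=0$ for all such $Z$, so $\partial_{\bar z}h=0$ in $\mathcal D'(D^2)$; by Weyl's lemma $h$ is (a.e. equal to) a holomorphic function on $D^2$, say $h(z)=\sum_{m\ge0}a_mz^m$. Then, for $n\ge1$, I would use the tangential fields $Z_n^{(1)}(z):=iz\,\mathrm{Re}(z^n)$ and $Z_n^{(2)}(z):=iz\,\mathrm{Im}(z^n)$; these are polynomials in $z,\bar z$, hence smooth on $\bar D^2$, and on $S^1$ one checks directly that $\langle(\mathrm{Re}\,Z_n^{(j)},\mathrm{Im}\,Z_n^{(j)}),z\rangle=0$. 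Since $\partial_{\bar z}Z_n^{(1)}=\tfrac{in}{2}z\bar z^{\,n-1}$ and $\partial_{\bar z}Z_n^{(2)}=-\tfrac n2z\bar z^{\,n-1}$, the two identities produced by the displayed hypothesis combine to give $\int_{D^2}h(z)\,z\bar z^{\,n-1}\,dx=0$ for every $n\ge1$. As $|h\,z\bar z^{\,n-1}|\le|h|\in L^1(D^2)$, Fubini applies, and inserting the expansion $h(re^{i\theta})=\sum_ma_mr^me^{im\theta}$ (uniformly convergent on each circle $|z|=r<1$) yields $\int_{D^2}h\,z\bar z^{\,n-1}dx=\frac{\pi}{n}\,a_{n-2}$ for $n\ge2$. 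Hence $a_m=0$ for all $m\ge0$, i.e. $h\equiv0$, which is exactly $|\partial_{x_1}\tilde u|=|\partial_{x_2}\tilde u|$ and $\langle\partial_{x_1}\tilde u,\partial_{x_2}\tilde u\rangle=0$ a.e.: $\tilde u$ is conformal in $D^2$.

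The delicate point is the low regularity. With only $\tilde u\in W^{1,2}(D^2)$ the Hopf differential is merely $L^1$, so it has no boundary trace and one cannot directly argue via a boundary condition for $h$ on $S^1$ together with the maximum principle. The device above circumvents this by extracting each Taylor coefficient $a_m$ from the purely interior integral $\int_{D^2}h\,z\bar z^{\,m+1}dx$ — a regularized version of the would-be boundary relation $z^2h(z)\in\R$ on $S^1$ — which is legitimate precisely because $h\,z^m\in L^1(D^2)$. A minor additional point, to be settled at the outset, is the meaning of the competitor $\tilde u_t$ near $S^1$: this is why one uses the flow of $X$ rather than the literal map $x\mapsto x+tX(x)$, which would push $S^1$ slightly outside $\bar D^2$.
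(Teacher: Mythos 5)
Your proof is correct: the inner-variation computation, the identification $T_{11}-iT_{12}=4h$ with the Hopf differential, the interior variations giving $\partial_{\bar z}h=0$, and the tangential fields $iz\,\mathrm{Re}(z^n)$, $iz\,\mathrm{Im}(z^n)$ annihilating every Taylor coefficient of the $L^1$ holomorphic function $h$ all check out, and you rightly interpret the competitor via the flow of $X$ since $x+tX(x)$ leaves $\bar D^2$. Note that the paper does not prove this proposition at all — it cites Prop.~II.2 of \cite{Riv} — and your argument is essentially the standard proof given there (stationarity $\Rightarrow$ holomorphic Hopf differential, boundary-tangential test fields $\Rightarrow$ $h\equiv 0$), so nothing further is needed.
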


 \medskip
 In the case of $1/2$-harmonic maps $u\colon S^1\to S^1$ we deduce from Theorem \ref{mindisk} the following 
\begin{Corollary}
Let  $u\in H^\frac12 (S^1, S^1)$ with $deg(u)=1$. Then $u$ is a weak  $1/2$-harmonic map if and only if its harmonic extension $\tilde u\colon \bar D^2 \to \bar D^2$
 is a M\"obius map, namely it has the form
  $$
  \tilde u(z)=e^{i\theta_0}\frac{z-a}{1-\bar a z},$$
  for some $|a|<1$ and $\theta_0\in [0,2\pi)\,.$
\end{Corollary}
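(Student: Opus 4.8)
The plan is to deduce this directly from Theorem \ref{mindisk}. First I would invoke that theorem with $\mathcal{N}=S^1$, $m=2$: a map $u\in H^\frac12(S^1,S^1)$ is a weak $1/2$-harmonic map if and only if its harmonic extension $\tilde u\in W^{1,2}(D^2,\R^2)$ is conformal (and then $\partial_r\tilde u\perp T_uS^1$). So the content to be proven is that, under the degree hypothesis $\deg(u)=1$, a conformal harmonic extension $\tilde u$ of a map $u\colon S^1\to S^1$ is exactly a M\"obius transformation of the disk.

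For the ``if'' direction I would simply observe that a M\"obius map $z\mapsto e^{i\theta_0}\frac{z-a}{1-\bar az}$ is holomorphic on $\bar D^2$, hence harmonic and conformal in $D^2$, maps $S^1$ to $S^1$, has degree $1$ on the boundary, and is its own harmonic extension; so by Theorem \ref{mindisk} the boundary restriction is $1/2$-harmonic. For the ``only if'' direction I would argue as follows. Since $u\in H^\frac12(S^1,S^1)$ is $1/2$-harmonic it is smooth (as recalled in the proof of Theorem \ref{mindisk}), so $\tilde u\in C^\infty(\bar D^2,\R^2)$; since $u$ maps into $S^1$ and $\deg(u)=1$, the image of $u$ winds once around, and $\tilde u$ is a conformal (hence orientation-preserving, since $|\partial_{x_1}\tilde u|=|\partial_{x_2}\tilde u|$ and the two are orthogonal) harmonic map. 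A harmonic map $\tilde u=(\tilde u_1,\tilde u_2)$ which is also conformal is either holomorphic or anti-holomorphic in $D^2$; orientation-preservation rules out the anti-holomorphic case unless $\tilde u$ is constant, which is excluded since $u$ is non-constant on $S^1$. Thus $\tilde u$ is holomorphic on $\bar D^2$, maps $D^2$ into $\bar D^2$ (by the maximum modulus principle, since $|\tilde u|=1$ on $S^1$) and $S^1$ to $S^1$ with degree $1$; a standard argument (count zeros via the argument principle: a holomorphic self-map of $D^2$ with $|\tilde u|=1$ on $S^1$ and boundary degree $1$ is a finite Blaschke product of degree $1$) forces $\tilde u(z)=e^{i\theta_0}\frac{z-a}{1-\bar az}$ for some $|a|<1$, $\theta_0\in[0,2\pi)$.

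Alternatively, and perhaps more in the spirit of the paper, I could route the ``only if'' direction through Theorem \ref{propcostrlambdaintr} and Corollary \ref{lemmaS1}: the conformal harmonic extension $\tilde u$ is a holomorphic immersion of $\bar D^2$ (the immersion property up to the boundary coming from Lemma \ref{regbd}, using the smoothness of $u$), and setting $\lambda:=\log|\tilde u'|_{S^1}|$ one checks that $\lambda$ solves $(-\Delta)^\frac12\lambda=\kappa e^\lambda-1$ with $\kappa\equiv 1$ (the curvature of $S^1$), so Corollary \ref{lemmaS1} identifies $\tilde u$ as a M\"obius transformation up to rotation and translation; the constraint $\tilde u(S^1)\subset S^1$ fixes the translation to be trivial, yielding the stated normal form.

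The main obstacle I anticipate is the bookkeeping around orientation and the degree hypothesis: one must be careful that conformality of a real harmonic map gives holomorphy \emph{or} anti-holomorphy, and it is precisely the assumption $\deg(u)=1>0$ (together with conformality forcing $\det\mathrm{Jac}\,\tilde u\ge 0$) that selects the holomorphic branch and prevents $\tilde u$ from being a Blaschke product of higher degree or an anti-M\"obius map. Everything else is routine once Theorem \ref{mindisk} and the classification of degree-one inner functions (finite Blaschke products) are in hand.
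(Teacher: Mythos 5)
Your proposal is correct and follows essentially the route the paper intends: the corollary is stated there without a separate proof, as an immediate consequence of Theorem \ref{mindisk} together with the holomorphic-or-anti-holomorphic dichotomy for conformal harmonic maps and the Blaschke-product classification of Lemma \ref{lemmabla} (degree one yielding a M\"obius map), which is exactly your argument. The only imprecision is the parenthetical suggestion that conformality alone forces $\det\mathrm{Jac}\,\tilde u\ge 0$ (it does not), but this is harmless since, as you yourself note, the hypothesis $\deg(u)=1>0$ excludes the anti-holomorphic branch, whose boundary degree would be $\le 0$.
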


\section{Compactness of the Liouville equation in $S^1$}\label{sec:comps1}
  
In this section we analyse the asymptotics of solutions to the equation \rec{modelequbis}\,.

\subsection{The $\varepsilon$-regularity lemma and first compactness result.}

A key point in the proof of Theorem \ref{mainth} is an {\em $\varepsilon$-regularity Lemma}, asserting roughly speaking that if the $L^1$ norm in conformal parametrization of the curvature ($\kappa_k e^{\lambda_k}$) is small (less than $\pi$) in a neighborhood of a point, then $\lambda_k-C_k$ is uniformly bounded in the same neighborhood, for some constant $C_k$. This result (Lemma \ref{eLemma}) depends on Theorem \ref{MT4} below. \par

\begin{Lemma}[Fundamental solution of $(-\Delta)^\frac12$ on $S^1$]\label{lemmafund4} The function
$$G(\theta):=-\frac{1}{2\pi}\log(2(1-\cos (\theta)))$$
belongs to $ BMO(S^1)$,
can be decomposed as
\begin{equation}\label{Gdec}
G(\theta)=\frac{1}{\pi}\log\frac{\pi}{|\theta|} +H(\theta),\quad \theta\in [-\pi,\pi]\sim S^1, \quad \text{with }H\in C^0(S^1),
\end{equation}
and satisfies
\begin{equation}\label{eqF}
(-\Delta)^\frac12 G=\delta_1-\frac{1}{2\pi} \quad \text{in }S^1,\quad \int_{S^1}G(\theta)d\theta=0,
\end{equation}
and for every function $u\in L^1(S^1)$ with $(-\Delta)^\frac12 u\in L^1(S^1)$ one has
\begin{equation}\label{rapr}
u-\bar u = G*(-\Delta)^\frac{1}{2}u:=\int_{S^1} G(\cdot -\theta)(-\Delta)^\frac{1}{2}u(\theta)d\theta, \quad \text{for almost every }t\in S^1.
\end{equation}
\end{Lemma}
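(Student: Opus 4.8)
\textbf{Proof plan for Lemma \ref{lemmafund4}.}

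The plan is to verify each claimed property of $G$ in turn, relying on the Fourier-side characterization \eqref{lapHilb} of $(-\Delta)^\frac12$ on $S^1$. First I would compute the Fourier coefficients of $G$: using the classical expansion $-\log(2(1-\cos\theta)) = -\log|1-e^{i\theta}|^2 = \sum_{n\neq 0}\frac{1}{|n|}e^{in\theta}$ (valid in the distributional sense, and following from $\log(1-w) = -\sum_{n\geq 1}w^n/n$ applied at $w=e^{i\theta}$ and its conjugate), one gets $\hat G(n) = \frac{1}{2\pi|n|}$ for $n\neq 0$ and $\hat G(0)=0$. The vanishing of $\hat G(0)$ is exactly $\int_{S^1}G\,d\theta=0$. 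Then \eqref{lapHilb} gives $(-\Delta)^\frac12 G = \sum_{n\neq 0}|n|\hat G(n)e^{in\theta} = \frac{1}{2\pi}\sum_{n\neq 0}e^{in\theta} = \delta_1 - \frac{1}{2\pi}$, since $\sum_{n\in\Z}e^{in\theta} = 2\pi\delta_1$ as distributions on $S^1$ (Dirac mass at $\theta=0$, i.e. at the point $1\in S^1$). This establishes the first part of \eqref{eqF}.

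For the decomposition \eqref{Gdec} and the $BMO$ claim, I would write $H(\theta) := G(\theta) - \frac{1}{\pi}\log\frac{\pi}{|\theta|} = -\frac{1}{2\pi}\log(2(1-\cos\theta)) + \frac{1}{\pi}\log|\theta| - \frac{1}{\pi}\log\pi$, and observe that $2(1-\cos\theta) = 4\sin^2(\theta/2) = \theta^2\bigl(\frac{\sin(\theta/2)}{\theta/2}\bigr)^2$, so that $-\frac{1}{2\pi}\log(2(1-\cos\theta)) + \frac{1}{\pi}\log|\theta| = -\frac{1}{\pi}\log\bigl|\frac{\sin(\theta/2)}{\theta/2}\bigr|$, which extends to a continuous (indeed real-analytic) function on $[-\pi,\pi]$ because $\frac{\sin(\theta/2)}{\theta/2}\to 1$ as $\theta\to 0$ and is nonvanishing on $(-\pi,\pi)$; at $\theta=\pm\pi$ it equals $-\frac1\pi\log(2/\pi)\neq-\infty$. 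Hence $H\in C^0(S^1)$. Since $\theta\mapsto\log\frac{1}{|\theta|}$ is a standard example of a $BMO$ function on an interval and $C^0(S^1)\subset L^\infty(S^1)\subset BMO(S^1)$, it follows that $G\in BMO(S^1)$.

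Finally, for the representation formula \eqref{rapr}: given $u\in L^1(S^1)$ with $f:=(-\Delta)^\frac12 u\in L^1(S^1)$, note that necessarily $\hat f(0) = \widehat{(-\Delta)^\frac12 u}(0) = 0$ by \eqref{lapHilb}. Then $G*f$ has Fourier coefficients $\widehat{G*f}(n) = 2\pi\hat G(n)\hat f(n) = \frac{1}{|n|}\hat f(n) = \hat u(n)$ for $n\neq 0$ (using $\hat f(n)=|n|\hat u(n)$), and $\widehat{G*f}(0) = 2\pi\hat G(0)\hat f(0) = 0 = \widehat{u-\bar u}(0)$; since two $L^1$ functions with the same Fourier coefficients agree a.e., we get $u-\bar u = G*f$ a.e. I would make a brief remark that $G*f\in L^1$ is well-defined since $G\in L^1(S^1)$ (it is locally integrable with only a logarithmic singularity) and $f\in L^1$, so Young's inequality applies. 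The main obstacle, such as it is, is purely bookkeeping: justifying the distributional identities $\sum_{n\neq 0}\frac{1}{|n|}e^{in\theta} = -\log(2(1-\cos\theta))$ and $\sum_{n}e^{in\theta}=2\pi\delta_1$ rigorously (e.g. via Abel summation / convergence in $\mathcal D'(S^1)$) and checking that the formal Fourier manipulations are legitimate for merely $L^1$ data — but none of this is deep, and the continuity of $H$ is an elementary limit computation.
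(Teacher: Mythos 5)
Your proposal is correct, and on the two substantive identities it takes a genuinely different route from the paper. For \eqref{eqF} the paper does not argue on the Fourier side at all: it deduces the identity from Lemma \ref{lemmaG}, i.e.\ from $(-\Delta)^{1/2}\log(1+\sin\theta)=1-2\pi\delta_{-i}$, which is itself proved later via the stereographic projection and the explicit solution $\log\frac{2}{1+x^2}$ of the Liouville equation on $\R$ (note $1-\cos\theta=1+\sin(\theta-\pi/2)$, so \eqref{eqF} follows by rotation); your direct expansion $G=\frac{1}{2\pi}\sum_{n\ne 0}|n|^{-1}e^{in\theta}$ combined with \eqref{lapHilb} yields the same conclusion, and gives $\int_{S^1}G\,d\theta=0$ as the vanishing of $\hat G(0)$, without invoking the $\R$--$S^1$ correspondence, so it is more self-contained at this point of the paper. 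For \eqref{rapr} the paper first proves the formula for smooth $u$ by pairing $u$ with $(-\Delta)^{1/2}G=\delta_1-\frac{1}{2\pi}$ and then treats general $u$ by mollification, using \eqref{Gdec} and Fubini to pass to the limit in the convolution; you instead match Fourier coefficients, using $\hat f(n)=|n|\hat u(n)$ (which does follow from the distributional definition tested against $e^{-in\theta}$) together with Young's inequality and the uniqueness theorem for Fourier coefficients of $L^1$ functions --- an equally rigorous and arguably shorter argument, whose only cost is quoting that uniqueness theorem in place of an explicit approximation. The decomposition \eqref{Gdec} and the $BMO$ statement you handle exactly as the paper does, by expanding $1-\cos\theta$ near $0$ (your value $-\frac{1}{\pi}\log 2$ for $H$ at $\theta=\pm\pi$ is the correct one; the paper's $-\frac{1}{2\pi}\log 2$ is a harmless slip). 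The items you flagged as bookkeeping --- Abel summation for the series of $G$ and $\sum_{n\in\Z}e^{in\theta}=2\pi\delta_1$ in $\mathcal{D}'(S^1)$ --- are indeed standard and pose no obstacle.
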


\noindent {\bf Proof.}  Identity \eqref{eqF} follows at once from Lemma \ref{lemmaG}. That $G\in BMO(S^1)$ follows from parametrizing $S^1 =[-\pi,\pi]/\{\pi\sim-\pi\}$, writing $1-\cos(\theta)=\frac{\theta^2}{2}+O(\theta^4)$ as $\theta\to 0$ and therefore
$$G(\theta)=-\frac{1}{2\pi}(\log(\theta^2/2) +\log(1+O(\theta^2)))$$
as $\theta\to 0$. Similarly \eqref{Gdec} follows from the explicit expression of $G$, since
$$H(\theta):=G(\theta)-\frac{1}{\pi}\log\frac{\pi}{|\theta|}= C+\log(1+O(\theta)^2)\to C\quad \text{as }\theta\to 0,$$
and $H(\theta)\to -\frac{1}{2\pi}\log 2$ as $|\theta|\to \pi$, so that $H\in C^0(S^1)$.

To prove  \eqref{rapr} for $u\in C^\infty$ we write
$$u(0)-\bar u=\left\langle \delta_1-\frac{1}{2\pi},u\right\rangle =\langle (-\Delta)^\frac12 G, u\rangle:= \int_{S^1} G(\theta)(-\Delta)^\frac{1}{2}u(\theta)d\theta,$$
and translating one gets \eqref{rapr} also for $t\ne 0$. For a general function $u\in H^{1,1}_\Delta(S^1)$ take a sequence $(u_k)\subset C^\infty(S^1)$ with
$$u_k\to u,\quad (-\Delta)^\frac12 u_k \to (-\Delta)^\frac12 u\quad \text{in }L^1(S^1),$$
which can be easily obtained by convolution. 
Then
$$u \overset{L^1(S^1)}\longleftarrow u_k=\int_{S^1} G(\cdot -\theta)(-\Delta)^su_k(\theta)dy\overset{L^1(S^1)}\longrightarrow \int_{S^1} G(\cdot -\theta)(-\Delta)^s(\theta)d\theta,$$
the convergence on the right following from \eqref{Gdec}, and Fubini's theorem:
\[
\begin{split}
\int_{S^1} &\left|\int_{S^1} G(t-\theta)\left[(-\Delta)^su_k(\theta)- (-\Delta)^su(\theta)\right]d\theta \right|dt\\
&\le \|G\|_{L^1(S^1)} \|(-\Delta)^su_k- (-\Delta)^su\|_{L^1({S^1})}\to 0
\end{split}
\]
as $k\to \infty$. Since the convergence in $L^1$ implies the a.e. convergence (up to a subsequence), \eqref{rapr} follows.
The last claim follows at once from the explicit expression of $G$.~\hfill$\Box$
 \par
 \medskip
The following Theorem is   a generalization of Theorem I in \cite{BM} and it is crucial to prove Lemma \ref{eLemma}\,.\par
\begin{Theorem}\label{MT4} There exist constants $C_1,C_2>0$ such that for any $\ve\in (0,\pi)$ one has
\begin{equation}\label{stimaMT4}
C_1\le  \sup_{u=G*f\,:\, \|f\|_{L^1(S^1)}\le 1}\ve \int_{S^1}e^{(\pi-\ve)|u|}  d\theta \le C_2,
\end{equation}
and in particular
\begin{equation}\label{stimaMT4bis}
C_1\le  \sup_{\stackrel{u\in L^1 (S^1):\, \|(-\Delta)^{1/2} u-\alpha\|_{L^1(S^1)}\le 1 }{\text{for some }\alpha\in\R}}\ve \int_{S^1}e^{(\pi-\ve)|u-\bar u|}  d\theta \le C_2.
\end{equation}
\end{Theorem}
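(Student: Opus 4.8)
The statement is a Moser–Trudinger type estimate adapted to the fundamental solution $G$ of $(-\Delta)^{1/2}$ on $S^1$, in the spirit of Brézis–Merle's Theorem~I. The plan is to reduce \eqref{stimaMT4bis} to \eqref{stimaMT4} using Lemma~\ref{lemmafund4}: if $\|(-\Delta)^{1/2}u-\alpha\|_{L^1(S^1)}\le 1$ then, since $G$ has zero average, $G*\alpha=0$, so $u-\bar u=G*f$ with $f=(-\Delta)^{1/2}u-\alpha$ and $\|f\|_{L^1}\le 1$; hence the two suprema coincide and it suffices to prove \eqref{stimaMT4}. For the lower bound in \eqref{stimaMT4}, I would simply test with $f$ a narrow bump (or a Dirac-type approximation) near $\theta=0$, so that $u=G*f$ behaves like $\frac1\pi\log\frac{1}{|\theta|}$ near the origin by \eqref{Gdec}; then $e^{(\pi-\ve)|u|}\sim |\theta|^{-(1-\ve/\pi)}$, whose integral near $0$ is comparable to $\frac{C}{\ve}$, giving $\ve\int_{S^1}e^{(\pi-\ve)|u|}\,d\theta\ge C_1$.

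For the upper bound — which is the heart of the matter — I would follow the classical layer-cake / rearrangement argument. Write $u=G*f$ and decompose $G=\frac1\pi\log\frac{\pi}{|\theta|}+H$ with $H\in C^0(S^1)$ (so $\|H\|_\infty\le C$); the contribution of $H*f$ to $e^{(\pi-\ve)|u|}$ is harmless since $|H*f|\le \|H\|_\infty\|f\|_{L^1}\le C$, contributing only a bounded multiplicative constant. Thus it reduces to bounding $\ve\int_{S^1}e^{(\pi-\ve)|v|}d\theta$ where $v(\theta)=\frac1\pi\int_{S^1}\log\frac{\pi}{|\theta-\theta'|}f(\theta')d\theta'$. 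By the triangle inequality and convexity (Jensen applied to the probability measure $|f|\,d\theta'/\|f\|_{L^1}$, using $\|f\|_{L^1}\le 1$) one gets a pointwise bound reducing to the extremal case $f=\pm\delta_{\theta_0}$, i.e. to estimating $\ve\int_{S^1}\left(\frac{\pi}{|\theta|}\right)^{(\pi-\ve)/\pi}d\theta$. Explicitly, for $0<s<1$,
$$
\ve\int_{-\pi}^{\pi}\left(\frac{\pi}{|\theta|}\right)^{1-\ve/\pi}d\theta = 2\ve\,\pi^{1-\ve/\pi}\int_0^{\pi}\theta^{-1+\ve/\pi}d\theta = 2\ve\,\pi^{1-\ve/\pi}\cdot\frac{\pi^{\ve/\pi}}{\ve/\pi} = 2\pi^2,
$$
which is bounded uniformly in $\ve\in(0,\pi)$; multiplying back the bounded factor $e^{(\pi-\ve)\|H\|_\infty}$ gives $C_2$.

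The one technical subtlety is that one cannot literally plug in $f=\delta_{\theta_0}$ inside an $L^1$-normalized Jensen inequality when $e^{(\pi-\ve)|\cdot|}$ is being integrated; instead I would argue as in Brézis–Merle via the distribution function: set $\phi(\theta):=\int_{S^1}\log\frac{\pi}{|\theta-\theta'|}\,d\nu(\theta')$ for $\nu$ a probability measure, and show $|\{\theta: \phi(\theta)>t\}|\le C\pi e^{-\pi t}$ for all $t$, uniformly in $\nu$, by splitting the integral over $\{|\theta-\theta'|<\delta\}$ and its complement and optimizing $\delta=\delta(t)$; then integrate $\ve\int e^{(\pi-\ve)|v|}d\theta = \ve(\pi-\ve)\int_0^\infty e^{(\pi-\ve)t}|\{|v|>t\}|\,dt \le C\ve(\pi-\ve)\int_0^\infty e^{(\pi-\ve)t}e^{-\pi t}dt = C\ve(\pi-\ve)\cdot\frac1\ve = C(\pi-\ve)$, which is the desired bound. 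The main obstacle is thus establishing the uniform (in $\nu$ and in $\ve$) exponential decay of the distribution function of the logarithmic potential $\phi$; once that weak-type estimate is in hand, the layer-cake integration and the reduction \eqref{stimaMT4}$\Rightarrow$\eqref{stimaMT4bis} are routine. \hfill$\square$
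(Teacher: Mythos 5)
Your reduction of \eqref{stimaMT4bis} to \eqref{stimaMT4} via Lemma \ref{lemmafund4} and your lower bound (concentrating $f$ near a point so that $u\approx\frac{1}{\pi}\log\frac{\pi}{|\theta|}$, whence $\ve\int e^{(\pi-\ve)|u|}d\theta\ge C_1$) are exactly the paper's arguments. For the upper bound, your first argument --- decompose $G$ as in \eqref{Gdec}, absorb $H*f$ into a bounded constant, apply Jensen to the exponential of the logarithmic potential and conclude by Fubini --- is precisely the paper's proof (see \eqref{eqpfMT4}), and the ``technical subtlety'' you flag is not actually there: after Jensen with the probability measure $|f|\,d\theta'/\|f\|_{L^1}$ (legitimate since $\log(\pi/|\theta-\theta'|)\ge 0$ on $S^1$ and $\|f\|_{L^1}\le 1$) you never substitute $f=\delta_{\theta_0}$; you integrate the resulting pointwise bound in $\theta$, exchange the order of integration, and use $\sup_{\theta'}\int_{S^1}\left(\frac{\pi}{|\theta-\theta'|}\right)^{1-\ve/\pi}d\theta\le \frac{C}{\ve}$. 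That argument is already complete, and it is the one to keep.

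By contrast, the fallback you actually commit to --- a weak-type estimate for the distribution function of the potential obtained by ``splitting over $\{|\theta-\theta'|<\delta\}$ and optimizing $\delta$'' --- is not routine, and in the form it would come out it does not give the theorem. Chebyshev on the near part yields at best a decay of the form $|\{|v|>t\}|\le C(1+t)e^{-\pi t}$, and the extra polynomial factor is fatal precisely in the regime of small $\ve$: in the layer-cake integral it produces $\int_0^\infty (1+t)e^{-\ve t}dt\sim \ve^{-2}$, hence only $\ve\int_{S^1}e^{(\pi-\ve)|v|}d\theta\le C/\ve$, which is not uniformly bounded for $\ve\in(0,\pi)$. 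To run the distribution-function route one needs the clean bound $Ce^{-\pi t}$ with no polynomial correction (i.e. that a Dirac mass is extremal), which requires a genuinely sharper tool such as O'Neil's rearrangement inequality in the style of Adams --- exactly what the Jensen--Fubini computation lets you bypass. (There is also a normalization slip: with your $\phi$, which carries no factor $\frac1\pi$, the natural claim is $|\{\phi>t\}|\le Ce^{-t}$; the $e^{-\pi t}$ rate pertains to $|v|\le\frac{1}{\pi}\phi$.)
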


\noindent{\bf Proof of Theorems \ref{MT4}.}
Clearly the second inequality in \eqref{stimaMT4bis} follows from the second inequality in \eqref{stimaMT4} and \eqref{eqF}. Let us now prove \eqref{stimaMT4}. Given $f$ with $\|f\|_{L^1(S^1)}\le 1$
and setting $u=G*f$ we get
\[\begin{split}
|u(t)|&=\left|\frac{1}{\pi}\int_{t-\pi}^{t+\pi}\log\left(\frac{\pi}{|\theta-t|}\right) f(\theta)d\theta +\int_{t-\pi}^{t+\pi}H(\theta-t)f(\theta)d\theta\right|\\
&\le \frac{1}{\pi}\int_{t-\pi}^{t+\pi}\log\left(\frac{\pi}{|\theta-t|}\right)| f(\theta)|d\theta +C.
\end{split}
\]
With Jenses's inequality and Fubini's theorem, and using that $\|f\|_{L^1(S^1)}\le 1$, it follows
\begin{equation}\label{eqpfMT4}
\begin{split}
\int_{-\pi}^\pi e^{(\pi-\ve)|u(t)-\bar u|}dt&\le C\int_{-\pi}^\pi \exp \left(\frac{\pi-\ve}{\pi}\int_{t-\pi}^{t+\pi}\log\left(\frac{\pi}{|\theta-t|}\right)|f(\theta)|d\theta \right)dt\\
&\le C\int_{-\pi}^\pi \int_{t-\pi}^{t+\pi} \exp \left(\frac{\pi-\ve}{\pi}\log\left(\frac{\pi}{|\theta-t|}\right)\right) |f(\theta)|  d\theta dt\\
&= C \int_{t-\pi}^{t+\pi} |f(\theta)| \int_{-\pi}^\pi \left(\frac{\pi}{|\theta-t|}\right)^{1-\frac\ve\pi} dtd\theta\le \frac{C_2}{\ve}.
\end{split}
\end{equation}
This proves the second inequality in \eqref{stimaMT4}.

To prove the first inequalities in \eqref{stimaMT4} and in \eqref{stimaMT4bis} fix $\ve\in (0,\pi)$, choose $(f_k)\subset C^\infty(S^1)$ non-negative such that $f_k\to \delta_0$ weakly in the sense of measures,
 $\|f_k\|_{L^1(S^1)}=1$ and let $u_k$ solve
$$(-\Delta)^\frac12 u_k=f_k-\frac{1}{2\pi}\quad \text{in }S^1,\quad \bar u_k=0.$$
Such $u_k$ can be easily constructed using the Fourier formula for $(-\Delta)^\frac12$, see \eqref{fraclapl5}. Then by Lemma \ref{lemmafund4}
$$|u_k(t)|\ge\int_{S^1}G(t-\theta)f_k(\theta)d\theta \ge\frac{1}{\pi}\int_{t-\pi}^{t+\pi}\log\left(\frac{\pi}{|\theta-t|}\right) f_k(\theta)d\theta -C.$$
Multiplying by $\pi-\ve$, exponentiating, integrating on $S^1$ and taking the limit as $k\to\infty$ one gets
\[\begin{split}
\lim_{k\to\infty} \int_{S^1} e^{(\pi-\ve)|u_k(t)|}dt&\ge \lim_{k\to\infty}\frac{1}{C}\int_{-\pi}^\pi \exp\left( \frac{\pi-\ve}{\pi}\int_{t-\pi}^{t+\pi}\log \left(\frac{\pi}{|\theta-t|}\right)f_k(\theta)d\theta \right)dt\\
&=\frac{1}{C}\int_{-\pi}^\pi \exp\left( \frac{\pi-\ve}{\pi}\log \left(\frac{\pi}{|t|}\right)\right)dt\\
&=\frac{1}{C}\int_{-\pi}^{\pi} \left(\frac{\pi}{|t|}\right)^{1-\frac{\ve}{\pi}}dt= \frac{C_1}{\ve},
\end{split}\]
which proves \eqref{stimaMT4} and also \eqref{stimaMT4bis} since $\bar u_k=0$.
\hfill$\square$

   \begin{Lemma}[$\varepsilon$-regularity Lemma]\label{eLemma}
 Let $u\in L^1(S^1)$ be a solution of  
 \begin{equation}\label{liou6b}
 (-\Delta)^\frac{1}{2}u=\kappa e^{u}-1,
\end{equation}
with $\kappa\in L^\infty(S^1)$ and $e^{u}\in L^1(S^1)$ and $\Lambda:=\|\kappa e^u\|_{L^1}$.
Assume that for some arc $A\subset S^1$
\begin{equation}\label{condgammaexpb}
 \int_A |\kappa| e^u  d\theta  \le \pi-\ve\,, 
\end{equation}
for some  $\varepsilon>0$.
Then for every arc $A'\Subset A$ with $\dist (A^c, A')=\delta$
\begin{equation}\label{estinfty}
\|u-\bar u\|_{L^{\infty}(A')} \le C(\delta,\varepsilon,\Lambda).
\end{equation}
 
\end{Lemma}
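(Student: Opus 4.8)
The plan is to run the Br\'ezis--Merle scheme adapted to $S^1$, combining the representation formula of Lemma~\ref{lemmafund4} with the sharp exponential estimate of Theorem~\ref{MT4}; the role played by $2\pi$ in dimension two is played here by $\pi$.

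\emph{Step 1 (reduction and representation).} Set $w:=u-\bar u$ and $\tilde\kappa:=\kappa e^{\bar u}\in L^\infty(S^1)$, so that $(-\Delta)^\frac12 w=\tilde\kappa e^{w}-1$, $\|\tilde\kappa e^{w}\|_{L^1}=\Lambda$ and $\int_A|\tilde\kappa|e^{w}\,d\theta=\int_A|\kappa|e^{u}\,d\theta\le\pi-\varepsilon$. Since we only have to estimate $\|w\|_{L^\infty(A')}$, after relabelling we may assume $\bar u=0$, i.e.\ $w=u$. Because $(-\Delta)^\frac12 u=\kappa e^{u}-1\in L^1(S^1)$, formula \eqref{rapr} together with $\int_{S^1}G\,d\theta=0$ (hence $G*1\equiv 0$) yields
\[
w \;=\; G*\bigl((-\Delta)^\frac12 u\bigr)\;=\;G*(\kappa e^{u})\qquad\text{on }S^1 .
\]

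\emph{Step 2 (the main step: an exponential moment of $w$).} I would show $e^{q_0|w|}\in L^1(A'')$ for every arc $A''\Subset A$, with $q_0=q_0(\varepsilon)>1$ and a bound depending only on $\varepsilon,\Lambda$ and $\dist(A^c,A'')$. Split $\kappa e^{u}=g_1+g_2$, $g_1:=(\kappa e^{u})\mathbf{1}_A$, $g_2:=(\kappa e^{u})\mathbf{1}_{A^c}$, so $w=G*g_1+G*g_2$. For the far part: if $\theta\in A''$ and $\phi\in A^c$ then $|\theta-\phi|\ge\delta'':=\dist(A^c,A'')$, so \eqref{Gdec} gives $|G(\theta-\phi)|\le\frac{1}{\pi}\log\frac{\pi}{\delta''}+\|H\|_{C^0}$, hence $\|G*g_2\|_{L^\infty(A'')}\le C(\delta'')\Lambda$. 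For the close part, $\|g_1\|_{L^1}\le\pi-\varepsilon<\pi$ (we may assume $\varepsilon<\pi$), so applying Theorem~\ref{MT4} to $g_1/\|g_1\|_{L^1}$ with the free parameter equal to $\varepsilon/2$ gives $\int_{S^1}e^{(\pi-\varepsilon/2)|G*g_1|/\|g_1\|_{L^1}}\,d\theta\le 2C_2/\varepsilon$, and $(\pi-\varepsilon/2)/\|g_1\|_{L^1}\ge(\pi-\varepsilon/2)/(\pi-\varepsilon)=:q_0>1$. Since $e^{q_0 t}\le e^{(\pi-\varepsilon/2)t/\|g_1\|_{L^1}}$ for $t\ge0$, multiplying by $e^{q_0\|G*g_2\|_{L^\infty(A'')}}$ yields $\int_{A''}e^{q_0|w|}\,d\theta\le C_3(\delta'',\varepsilon,\Lambda)$, with $q_0>1$ depending only on $\varepsilon$.

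\emph{Step 3 (bootstrap to $L^\infty$).} Choose arcs $A'\Subset A''\Subset A$, e.g.\ with $A''$ the enlargement of $A'$ by $\delta/2$ on either side ($\delta=\dist(A^c,A')$). By Step~2, $e^{u}=e^{w}\in L^{q_0}(A'')$, hence $\kappa e^{u}\in L^{q_0}(A'')$; here the hypothesis $\kappa\in L^\infty$ is used, and the resulting bound also involves $\|\kappa\|_{L^\infty}$ (in the applications of the lemma this extra dependence is harmless, since $\|\kappa_k\|_{L^\infty}$ is uniformly bounded). Writing $w=G*\bigl((\kappa e^{u})\mathbf{1}_{A''}\bigr)+G*\bigl((\kappa e^{u})\mathbf{1}_{(A'')^c}\bigr)$, the second term is bounded on $A'$ by $C(\dist(A',(A'')^c))\Lambda$ exactly as in Step~2, while for the first term $(\kappa e^{u})\mathbf{1}_{A''}\in L^{q_0}(S^1)$ and $G\in L^{q_0'}(S^1)$ (as $q_0'<\infty$, by \eqref{Gdec}), so Young's inequality gives $\|G*((\kappa e^{u})\mathbf{1}_{A''})\|_{L^\infty(S^1)}\le\|G\|_{L^{q_0'}}\|\kappa e^{u}\|_{L^{q_0}(A'')}<\infty$. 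Adding the two contributions gives $\|w\|_{L^\infty(A')}\le C(\delta,\varepsilon,\Lambda)$, which is \eqref{estinfty}.

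The main obstacle is Step~2: one must produce an exponential moment with exponent \emph{strictly} above $1$, which is possible precisely because the mass $\int_A|\kappa|e^{u}$ lies below the critical value $\pi$ --- this is the content of Theorem~\ref{MT4}, the $S^1$ analogue of the Br\'ezis--Merle inequality, and the whole argument would break down under the weaker assumption $\int_A|\kappa|e^{u}<\infty$. The remaining bootstrap is routine, and is especially clean in dimension one because $G$ has only a logarithmic singularity (so $G\in L^p(S^1)$ for all $p<\infty$) and one-dimensional convolution/Sobolev estimates land directly in $L^\infty$.
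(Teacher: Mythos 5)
Your proposal is correct and takes essentially the same route as the paper: the same splitting of $\kappa e^{u}$ into the parts supported on $A$ and on $A^{c}$, the representation $u-\bar u=G*(\kappa e^{u})$ from Lemma \ref{lemmafund4}, Theorem \ref{MT4} to obtain $e^{|G*g_1|}\in L^{q_0}$ with an exponent $q_0>1$ precisely because $\|g_1\|_{L^1}\le\pi-\varepsilon<\pi$, and then a H\"older/Young bootstrap to $L^{\infty}$ on a slightly smaller arc. The only caveat, which the paper's own proof shares (it too invokes $\|\kappa\|_{L^\infty}$ and a bound on $e^{\bar u}$), is that your normalization $\bar u=0$ replaces $\kappa$ by $\kappa e^{\bar u}$, so the constant produced in Step 3 actually depends on $\|\kappa\|_{L^\infty}$ and on $e^{\bar u}$ (controlled via Jensen by $\|e^{u}\|_{L^1}$) in addition to $(\delta,\varepsilon,\Lambda)$; as you observe, this extra dependence is harmless in the applications, where curvature and length are uniformly bounded.
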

 
\begin{proof} Set $f:=(-\Delta)^\frac{1}{2}u$.
We split  $f=f_1+f_2$ where
$$f_1=\kappa e^{u}\chi_A,\quad f_2=\kappa e^{u}\chi_{A^c}.$$
Let us now define 
$$
u_i(t):=G*f_i(t)=\int_{S^1}G(t-\theta)f_i(\theta)d\theta,\quad i=1,2,
$$
where $G$ is as in Lemma \ref{lemmafund4}. From  \eqref{eqF} and \eqref{rapr} it follows that 
$$u-\bar u=G*(\kappa e^u-1)=G*(\kappa e^u)=u_1 +u_2.$$
 
Choose now an arc $A''$ with $A'\Subset A''\Subset A$ and $\dist(A'', A^c)=\dist(A',(A'')^c)=\frac{\delta}{2}$. With \eqref{Gdec} we easily bound
\begin{equation}\label{estu2}
\|u_2\|_{L^\infty(A'')} \le C_1= C_1(\Lambda,\delta).
\end{equation}
It follows from \eqref{condgammaexpb} and Theorem \ref{MT4} that $\|e^{|u_1|}\|_{L^p(S^1)}\le C_{p,\ve}$ for some $p>1$, and consequently also $e^{\bar u}\le C$. Then for $t\in A'$ we have 
\[\begin{split}
u_1(t)&\le\int_{A}G(t-\theta)(|\kappa| e^{u_1(\theta)}e^{u_2(\theta)+\bar u}-1)d\theta\\
&\le \|\kappa\|_{L^\infty}\bigg(e^{C_1+\bar u}\underbrace{\int_{A''}G(t-\theta)e^{u_1(\theta)}d\theta}_{(1)}+\underbrace{\int_{A\setminus A''}G(t-\theta)e^{u(\theta)}d\theta}_{(2)}+C\bigg)\\
&\le C,
\end{split}\]
where in $(1)$ we use that   $G\in L^q(S^1)$ for $q\in [1,\infty)$ and in $(2)$ we use that $G\in L^{\infty}(A'\times (A\setminus A''))\,.$
\end{proof}

\begin{Lemma}\label{L2infty} Let $\lambda: S^1\to S^1$ satisfy $(-\Delta)^\frac12 \lambda \in L^1(S^1)$ and let $\tilde \lambda$ be the harmonic extension of $\lambda$ to $D^2$. Then
\begin{equation}\label{stimaLo1}
\|\nabla \tilde \lambda\|_{L^{(2,\infty)}(D^2)}\le C\|(-\Delta)^\frac12 \lambda\|_{L^1(S^1)},
\end{equation}
and for any ball $B_r(x_0)$
\begin{equation}\label{stimaLo2}
\frac{1}{r}\int_{B_r(x_0)\cap D^2}|\nabla \tilde \lambda|dx \le C\|\nabla \tilde \lambda\|_{L^{(2,\infty)}(B_r(x_0)\cap D^2)}.
\end{equation}
\end{Lemma}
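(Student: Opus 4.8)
The plan is to combine the representation formula \eqref{rapr} with the weak-type endpoint estimate for the Riesz potential $|x|^{-1}$ in the plane, and then to deduce \eqref{stimaLo2} from an elementary distribution-function computation.

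First I would reduce \eqref{stimaLo1} to a pointwise bound on $|\nabla\tilde\lambda|$. Set $f:=(-\Delta)^\frac12\lambda\in L^1(S^1)$; by Lemma \ref{lemmafund4} one has $\lambda=\bar\lambda+G*f$, hence, by linearity of the harmonic extension, $\tilde\lambda=\bar\lambda+\widetilde{G*f}$ and $\nabla\tilde\lambda=\nabla\widetilde{G*f}$. By Fubini (valid for each fixed $z\in D^2$) and the rotation-covariance of the Poisson kernel, $\widetilde{G*f}(z)=\int_{S^1}h(ze^{-i\theta})f(\theta)\,d\theta$, where $h$ is the harmonic extension of $G$ to $D^2$. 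Since $2(1-\cos\theta)=|1-e^{i\theta}|^2$, we have $G(\theta)=-\frac1\pi\log|1-e^{i\theta}|$, and because $\log(1-w)$ admits a holomorphic branch on the simply connected domain $D^2$, it follows that $h(w)=-\frac1\pi\log|1-w|$ is the real part of a holomorphic function, so $|\nabla h(w)|=\frac1{\pi|1-w|}$. Differentiating under the integral sign (legitimate on $D^2$, the $z$-derivative of the integrand being bounded on compact subsets of $D^2$ uniformly in $\theta$), using $|1-ze^{-i\theta}|=|z-e^{i\theta}|$ and the rotation-invariance of the Euclidean norm, one gets the key pointwise bound
$$|\nabla\tilde\lambda(z)|\ \le\ \frac1\pi\int_{S^1}\frac{|f(\theta)|}{|z-e^{i\theta}|}\,d\theta\ =:\ \frac1\pi\,V(z),\qquad z\in D^2.$$

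Next I would prove the weak-$L^2$ bound $\|V\|_{L^{(2,\infty)}(\R^2)}\le C\|f\|_{L^1(S^1)}$, reading $V$ as the Riesz potential (of order $1$) of the finite positive measure $\mu:=|f|\,d\theta$ on $S^1\subset\R^2$. By homogeneity assume $\mu(S^1)=\|f\|_{L^1}=1$. Fix $t>0$, set $R:=2/t$, and split $V=V_R+V^R$ according to whether the running point $w$ satisfies $|z-w|<R$ or $|z-w|\ge R$. Then $V^R(z)\le R^{-1}\mu(S^1)=t/2$ for every $z$, so $\{V>t\}\subset\{V_R>t/2\}$; on the other hand Fubini gives $\|V_R\|_{L^1(\R^2)}=\mu(S^1)\int_{\{|y|<R\}}|y|^{-1}\,dy=2\pi R$. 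Chebyshev's inequality then yields $|\{V>t\}|\le|\{V_R>t/2\}|\le\frac2t\|V_R\|_{L^1}=\frac{8\pi}{t^2}$, i.e.\ $\|V\|_{L^{(2,\infty)}(\R^2)}\le\sqrt{8\pi}$. Restricting to $D^2$ and using $\pi|\nabla\tilde\lambda|\le V$ there, this proves \eqref{stimaLo1}.

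Finally, for \eqref{stimaLo2} I would use the layer-cake formula. Set $\Omega:=B_r(x_0)\cap D^2$, so $|\Omega|\le\pi r^2$, and $M:=\|\nabla\tilde\lambda\|_{L^{(2,\infty)}(\Omega)}$, so that $|\{x\in\Omega:|\nabla\tilde\lambda(x)|>t\}|\le\min(|\Omega|,M^2t^{-2})$ for all $t>0$. Then
$$\int_{\Omega}|\nabla\tilde\lambda|\,dx=\int_0^\infty\bigl|\{x\in\Omega:|\nabla\tilde\lambda(x)|>t\}\bigr|\,dt\le\int_0^\infty\min\!\Bigl(|\Omega|,\frac{M^2}{t^2}\Bigr)\,dt=2M|\Omega|^{1/2}\le2\sqrt\pi\,M\,r,$$
and dividing by $r$ gives \eqref{stimaLo2}. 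The whole argument is bookkeeping with standard estimates; the one point needing some care is the explicit identification of the harmonic extension of $G$ together with the justification of differentiating the representation of $\widetilde{G*f}$ under the integral sign — the bound $\pi|\nabla\tilde\lambda(z)|\le V(z)$ degenerating logarithmically as $z\to\partial D^2$ — and this is where I expect to spend the most effort.
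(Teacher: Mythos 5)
Your argument is correct, and it follows the same overall strategy as the paper (potential representation of $\tilde\lambda$ in terms of $(-\Delta)^\frac12\lambda$, a weak-$L^2$ kernel estimate, then a Lorentz--H\"older step for \eqref{stimaLo2}), but it implements each ingredient differently and more explicitly. The paper writes $\tilde\lambda(x)=\int_{S^1}G(x,y)(-\Delta)^\frac12\lambda(y)\,dy$ with $G$ the Neumann Green function of the disk and simply cites \cite{CK} for $\nabla_xG(\cdot,y)\in L^{(2,\infty)}$, and then invokes O'Neil's inequality \cite{Oneil} together with $\|\chi_A\|_{L^{(2,1)}}=\sqrt{|A|}$ to get \eqref{stimaLo2}. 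You instead start from the representation $\lambda-\bar\lambda=G*(-\Delta)^\frac12\lambda$ of Lemma \ref{lemmafund4}, identify the harmonic extension of the circle's fundamental solution explicitly as $-\frac1\pi\log|1-w|$ (which is, up to an additive constant, exactly the boundary restriction of the Neumann Green function the paper uses), obtain the pointwise bound $\pi|\nabla\tilde\lambda(z)|\le\int_{S^1}|f(\theta)|\,|z-e^{i\theta}|^{-1}d\theta$, and then prove by hand both the weak-$(1,2)$ bound for this Riesz-type potential of the measure $|f|\,d\theta$ (near/far splitting plus Chebyshev) and the O'Neil-type step (layer-cake with $|\{|\nabla\tilde\lambda|>t\}\cap\Omega|\le\min(|\Omega|,M^2t^{-2})$). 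What your route buys is a self-contained proof with explicit constants and no appeal to the Green-function literature or to O'Neil's inequality; what the paper's route buys is brevity. The only point where a referee might ask for a line more is your identification of $-\frac1\pi\log|1-w|$ as the Poisson extension of $G$ (you flag this yourself); it is settled in one stroke by the Fourier expansion $G(\theta)=\sum_{n\ne0}\frac{1}{2\pi|n|}e^{in\theta}$, whose harmonic extension is $\frac1\pi\,\mathrm{Re}\sum_{n\ge1}\frac{w^n}{n}=-\frac1\pi\log|1-w|$, after which the rest of your argument goes through verbatim.
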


\begin{proof}
 Let $\lambda: S^1\to S^1$ satisfy $(-\Delta)^\frac12 \lambda \in L^1(S^1)$ and let $\tilde \lambda$ be the harmonic extension of $\lambda$ to $D^2$. Then
 we can write
 \begin{equation}\label{lambdal2infty}
\tilde\lambda(x)=\int_{S^1}G(x,y)\frac{\partial\tilde\lambda}{\partial \nu}(y)dy=\int_{S^1}G(x,y)(-\Delta)^\frac12 \lambda(y)dy\end{equation}
where $G$ is the Green function associated to the Neumann problem.
It is know that $\nabla_x(G(x,y))\in L^{(2,\infty)}(S^1)$ (see e.g. \cite{CK}). Therefore $\nabla \tilde\lambda(x)\in L^{(2,\infty)}(D^2)\,$ as well and
 \rec{stimaLo1} holds\,.\par
  
The proof of \eqref{stimaLo2} follows from O'Neil's inequality \cite{Oneil}
$$\int_{A}|\nabla \tilde\lambda|dx \le \|\chi_A\|_{L^{(2,1)}(A)} \|\nabla \tilde \lambda\|_{L^{(2,\infty)}(A)}=\sqrt{|A|}\|\nabla \tilde \lambda\|_{L^{(2,\infty)}(A)}$$
for any $A\subset D^2$.
\end{proof}

\begin{Theorem}\label{convergencephi} 
Let $(\lambda_k)$ be a sequence as in Theorem \ref{mainth}, and let $(\Phi_k)\subset C^1(\bar {D}^2, \C)$ be holomorphic immersions with $\lambda_k(z)=\log\left|\Phi_k'(z)\right|$ for $z\in S^1$ and $\Phi_k(1)=0$ (compare to Theorem \ref{propcostrlambdaintr}) 
Then, up to extracting a subsequence, the following set is finite
\begin{equation}\label{defB}
B:=\left\{a\in S^1:\lim_{r\to 0^+}\limsup_{k\to\infty} \int_{B(a,r)\cap S^1} |\kappa_k| e^{\lambda_k}d\theta\ge \pi \right\}=\{a_1,\ldots,a_N\}\,,
\end{equation}
and for functions $v_\infty\in L^1(S^1,\R)$ and $\Phi_\infty\in W^{1,2}(D^2,\C)$ we have for $1\le p <\infty$
\begin{equation}\label{convlambdak}
\lambda_k-\bar\lambda_k \rightharpoonup v_\infty \quad\text{in }W^{1,p}_{\loc}(S^1\setminus B)\,,\quad \bar\lambda_k:=\frac{1}{2\pi}\int_{S^1}\lambda_k d\theta\,,
\end{equation}
and
\begin{equation}\label{convPhik}
\Phi_k \rightharpoonup \Phi_\infty \quad\text{in }W^{2,p}_{\loc}(\bar D^2\setminus B,\C)\text{ and in }W^{1,2}(D^2,\C).
\end{equation}
Moreover, one of the following alternatives holds:\par
1. The sequence $(\lambda_k)\subset\R$ is bounded and $\Phi_\infty$ is a holomorphic immersion of $\bar D^2\setminus B$ (i.e. it is holomorphic in $D^2$ and $\de_z\Phi_\infty\ne 0$ for $z\in \bar D^2\setminus B$).\par
2. $\lambda_k\to -\infty$ locally uniformly as $k\to +\infty,$ and $\Phi_\infty\equiv Q$ for some constant $Q\in \C$.
\end{Theorem}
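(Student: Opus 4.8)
The plan is to combine the $\varepsilon$-regularity Lemma \ref{eLemma} with the Hardy-space / Wente-type estimate of Lemma \ref{L2infty} to obtain first the finiteness of $B$ and the $W^{1,p}_{\loc}$-convergence of $\lambda_k-\bar\lambda_k$ away from $B$, and then to transfer this to convergence of the $\Phi_k$ and to split into the two alternatives according to whether $\bar\lambda_k$ stays bounded or diverges to $-\infty$. First I would observe that $\|\kappa_k e^{\lambda_k}\|_{L^1(S^1)}=2\pi$ for every $k$ (integrate \eqref{liouvfrack} over $S^1$ and use $\int_{S^1}(-\Delta)^{1/2}\lambda_k=0$), so $f_k:=\kappa_k e^{\lambda_k}-1$ is bounded in $L^1(S^1)$ uniformly. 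Hence the set $B$ defined by \eqref{defB} is a finite set of at most two points, since each point of $B$ absorbs at least $\pi$ of a total mass $2\pi$ of $|\kappa_k|e^{\lambda_k}$; more carefully, by a standard Vitali/covering argument the number of such concentration points is at most $\lfloor 2\pi/\pi\rfloor = 2$. (This also uses $\|\kappa_k\|_\infty\le\bar\kappa$ only to bound $\||\kappa_k|e^{\lambda_k}\|_{L^1}\le \bar\kappa L_k\le\bar\kappa\bar L$; either bound suffices.) Around any point $a\in S^1\setminus B$ there is an arc $A$ with $\int_A|\kappa_k|e^{\lambda_k}\,d\theta\le\pi-\varepsilon$ for all large $k$, so Lemma \ref{eLemma} gives $\|\lambda_k-\bar\lambda_k\|_{L^\infty(A')}\le C$ on smaller arcs $A'\Subset A$. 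Then \eqref{liouvfrack} rewrites as $(-\Delta)^{1/2}(\lambda_k-\bar\lambda_k)=\kappa_k e^{\lambda_k}-1$ with right-hand side bounded in $L^\infty_{\loc}(S^1\setminus B)$ (using the $L^\infty$ bound just obtained together with $\|\kappa_k\|_\infty\le\bar\kappa$), and elliptic/Calderón–Zygmund regularity for $(-\Delta)^{1/2}$ on the circle upgrades this to a uniform $W^{1,p}_{\loc}(S^1\setminus B)$ bound for every $p<\infty$; weak compactness then yields $\lambda_k-\bar\lambda_k\rightharpoonup v_\infty$ in $W^{1,p}_{\loc}(S^1\setminus B)$ after passing to a subsequence, with $v_\infty\in L^1(S^1)$ (the $L^1$ bound on $S^1$ comes from Lemma \ref{lemmafund4}, writing $\lambda_k-\bar\lambda_k=G*f_k$ and using $\|G\|_{L^1}\|f_k\|_{L^1}\le C$).

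Next I would pass to the immersions. Write $\Phi_k'=e^{\lambda_k+i\mathcal H(\lambda_k)}=e^{\bar\lambda_k}e^{(\lambda_k-\bar\lambda_k)+i\mathcal H(\lambda_k-\bar\lambda_k)}$ on $S^1$ (Hilbert transform annihilates constants up to an additive constant, absorb the phase), and let $\tilde\Phi_k$ be the holomorphic extension with $\Phi_k(1)=0$, so $\Phi_k(z)=\int_{\Sigma_{0,z}}\Phi_k'(w)\,dw$ as in \eqref{defphi}. The uniform $W^{1,p}_{\loc}(S^1\setminus B)$ bound on $\lambda_k-\bar\lambda_k$, hence on $\mathcal H(\lambda_k-\bar\lambda_k)$ (boundedness of $\mathcal H$ on $W^{1,p}$), gives a uniform $W^{1,p}_{\loc}(S^1\setminus B)$ bound on $\Phi_k'/e^{\bar\lambda_k}$; combined with the normalisation $\Phi_k(1)=0$ this controls $\Phi_k$ in $W^{2,p}_{\loc}(\bar D^2\setminus B,\C)$ \emph{provided} $e^{\bar\lambda_k}$ stays bounded. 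Here Lemma \ref{L2infty} enters to control the bubbling: applying \eqref{stimaLo1}–\eqref{stimaLo2} to $\tilde\lambda_k$ (harmonic extension of $\lambda_k$, or rather of $\lambda_k-\bar\lambda_k$) and using $\|(-\Delta)^{1/2}\lambda_k\|_{L^1}\le C$, one gets $\|\nabla\tilde\lambda_k\|_{L^{(2,\infty)}(D^2)}\le C$ and small-energy $\frac1r\int_{B_r(x_0)\cap D^2}|\nabla\tilde\lambda_k|\le C$ on balls away from $B$; this is exactly what is needed to obtain $\Phi_k\rightharpoonup\Phi_\infty$ in $W^{1,2}(D^2,\C)$ (the harmonic extension of $\Phi_k'$ has gradient controlled in $L^2$ after the conformal-factor normalisation), and the traces pass to the limit, giving \eqref{convPhik} with a limit $\Phi_\infty\in W^{1,2}(D^2,\C)$ that is weakly holomorphic, hence holomorphic.

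Finally I would dichotomize on $\bar\lambda_k$. Up to a subsequence either $\bar\lambda_k$ is bounded or $\bar\lambda_k\to\pm\infty$; since $\int_{S^1}e^{\lambda_k}=L_k\le\bar L$ and $\int_{S^1}e^{\lambda_k-\bar\lambda_k}$ is bounded below away from $0$ by Jensen ($\frac{1}{2\pi}\int e^{\lambda_k-\bar\lambda_k}\ge e^{\frac1{2\pi}\int(\lambda_k-\bar\lambda_k)}=1$), we cannot have $\bar\lambda_k\to+\infty$; so the only options are $\bar\lambda_k$ bounded or $\bar\lambda_k\to-\infty$. In the bounded case, $\lambda_k$ is bounded in $L^\infty_{\loc}(S^1\setminus B)$ plus bounded mean, so $\Phi_\infty'=e^{\lambda_\infty+i\mathcal H\lambda_\infty}\ne0$ on $\bar D^2\setminus B$, and Lemma \ref{regbd} promotes this to $\partial_z\Phi_\infty\ne 0$ up to the boundary away from $B$, i.e. $\Phi_\infty$ is a holomorphic immersion of $\bar D^2\setminus B$: alternative 1. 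In the case $\bar\lambda_k\to-\infty$: on any arc $A'$ away from $B$ we have $\lambda_k=\bar\lambda_k+(\lambda_k-\bar\lambda_k)$ with the second term bounded, so $\lambda_k\to-\infty$ locally uniformly on $S^1\setminus B$, hence $\Phi_k'=e^{\lambda_k+i\mathcal H\lambda_k}\to 0$ locally uniformly on $\bar D^2\setminus B$ (by the maximum principle / harmonicity of $\log|\Phi_k'|$, the interior values are even smaller than the boundary ones), so $\Phi_\infty$ is constant on $D^2$ and by continuity $\Phi_\infty\equiv Q$ for some $Q\in\C$; since $\Phi_k(1)=0$ but $1$ may lie in $B$, one argues on a fixed point of $S^1\setminus B$ (or uses $W^{1,2}$-convergence) to identify $Q$, and in fact $Q=0$ if $1\notin B$: alternative 2.

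The main obstacle is the bubbling analysis in the second alternative, i.e. controlling the length (the integral $\int e^{\lambda_k}$) near the concentration points $B$ well enough to guarantee that $\Phi_k$ converges \emph{strongly} in $W^{1,2}(D^2)$ and not merely weakly, and that no spurious mass of $\Phi_k'$ survives in the neck regions — this is precisely where the Lorentz-space estimate of Lemma \ref{L2infty} is essential and where the quantization value $\pi$ (rather than an arbitrary threshold) will eventually be forced; at the level of this theorem, though, it suffices to extract the weak limits and postpone the precise identification of $\mu$ at the $a_i$'s to the proof of Theorem \ref{mainth}.
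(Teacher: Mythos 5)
Your treatment of the boundary part is essentially the paper's argument: finiteness of $B$ from the total-variation bound $\bar\kappa\bar L$ on $|\kappa_k|e^{\lambda_k}$, the $\varepsilon$-regularity Lemma \ref{eLemma} plus the Green-function bootstrap to get \eqref{convlambdak}, upper bounds on $\bar\lambda_k$ via Jensen, and the dichotomy according to whether $\bar\lambda_k$ stays bounded or tends to $-\infty$. The genuine gap is in \eqref{convPhik}: you never establish a uniform bound for $\Phi_k$ in $W^{1,2}(D^2)$, and the mechanism you invoke cannot give it. Lemma \ref{L2infty} controls $\nabla\tilde\lambda_k=\nabla\log|\Phi_k'|$ in $L^{(2,\infty)}(D^2)$; this says nothing about $\int_{D^2}|\Phi_k'|^2=\int_{D^2}e^{2\tilde\lambda_k}\,dx$ near the concentration points, where $\tilde\lambda_k$ may blow up logarithmically with a coefficient controlled only through the $L^1$ norm of $(-\Delta)^\frac12\lambda_k$, so uniform $L^2$-integrability of $e^{\tilde\lambda_k}$ does not follow from that estimate alone. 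The missing (and indispensable) input in the paper is geometric: since $\Phi_k$ is harmonic and conformal, $\int_{D^2}|\nabla\Phi_k|^2$ is twice the area of the image counted with multiplicity and is bounded by $\tfrac12 L_k^2$ (estimate \eqref{estDirichEnergy}); together with $\Phi_k(1)=0$ this yields the uniform $W^{1,2}(D^2)$ bound and the weak limit $\Phi_\infty$. Likewise, your claim that the $W^{1,p}_{\loc}(S^1\setminus B)$ bound on $\Phi_k'|_{S^1}e^{-\bar\lambda_k}$ ``combined with $\Phi_k(1)=0$'' controls $\Phi_k$ in $W^{2,p}_{\loc}(\bar D^2\setminus B)$ is not justified: partial boundary data plus a point normalisation do not control a holomorphic function up to the boundary. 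What the paper actually uses is the identity $|\Phi_k'|=e^{\tilde\lambda_k}$ on $\bar D^2$ (\eqref{lambdaPhi}) together with the interior bound \eqref{lambda1} on $\tilde\lambda_k-\bar\lambda_k$, obtained from \eqref{condb} and Lemma \ref{L2infty} by controlling $\tilde\lambda_k-\bar\lambda_k$ on the circles $\partial\bigl(D^2\setminus\cup_{i}B(a_i,\delta)\bigr)$; that is the correct role of Lemma \ref{L2infty}, not the production of an $L^2$ bound on $\nabla\Phi_k$.

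Two smaller points. First, your assertion that $B$ contains at most two points confuses the signed identity $\int_{S^1}\kappa_k e^{\lambda_k}d\theta=2\pi$ with a bound on the total variation: the mass of $|\kappa_k|e^{\lambda_k}$ is only bounded by $\bar\kappa\bar L$, which may well exceed $2\pi$ when $\kappa_k$ changes sign, so at this stage one only gets $N\le \bar\kappa\bar L/\pi$ --- finiteness, which is all the theorem asserts (the restriction $N\le 2$ is proved later, in alternative 2, via Proposition \ref{limconst}). Second, in alternative 2 your maximum-principle argument (``interior values are smaller than boundary ones'') fails as stated, because the boundary values of $\tilde\lambda_k$ near the points of $B$ are not uniformly bounded above; the local uniform convergence $\nabla\Phi_k\to 0$ on $\bar D^2\setminus B$ again comes from \eqref{lambda1} combined with $\bar\lambda_k\to-\infty$. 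Once the energy bound \eqref{estDirichEnergy} and the interior estimate \eqref{lambda1} are inserted, your proof closes along the same lines as the paper's.
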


\noindent {\bf Proof.}
The sequence of measures $|\kappa_k|e^{\lambda_k}d\theta$ on $S^1$ is bounded (for the total variation norm), hence up to extracting a subsequence we have $|\kappa_k|e^{\lambda_k}dx\stackrel{*}{\rightharpoonup} \mu$ weakly in the sense of measures for a Radon measure $\mu\in \mathcal{M}(S^1)$. Let $B:=\{a\in S^1:\mu(\{a\})\ge \pi\}$. Then $B$ is clearly finite, say $B=\{a_1,\dots, a_N\}$, and is characterised by the first identity in \eqref{defB}. Indeed if $\mu(\{a\})\ge \pi$, for every $r>0$ and $\varphi\in C^0(S^1)$ supported in $B(a,r)\cap S^1$ such that $0\le \varphi\le 1=\varphi(a)$ one has
$$\limsup_{k\to\infty}\int_{B(a,r)\cap S^1}|\kappa_k|e^{\lambda_k}d\theta\ge \limsup_{k\to\infty}\int_{S^1}|\kappa_k|e^{\lambda_k}\varphi d\theta =\int_{S^1}\varphi d\mu\ge \pi \varphi(a)=\pi,$$
and conversely if $\mu(\{a\})<\pi$, then $\mu(B(a,r_0)\cap S^1)<\pi$ for some $r_0>0$, hence  taking $\varphi\in C^0(S^1)$ supported in $B(a,r_0)\cap S^1$, with $0\le \varphi\le 1$ and $\varphi\equiv 1$ on $B(a,r_0/2)\cap S^1$, one gets
$$\limsup_{k\to\infty}\int_{B(a,r_0/2)\cap S^1}|\kappa_k|e^{\lambda_k}d\theta\le \limsup_{k\to\infty}\int_{S^1}|\kappa_k|e^{\lambda_k}\varphi d\theta =\int_{S^1}\varphi d\mu\le \mu(B(a,r_0))<\pi.$$

We now show that for every compact $K\subset S^1\setminus B$ there exits a constant $c_K$ depending on $\bar L$ and $\bar \kappa$ in \eqref{boundlength}-\eqref{boundcurv} 
such that
\begin{equation}\label{conda}
\|e^{\lambda_k}\|_{L^{\infty}(K)}\le c_K\,.
\end{equation}
and
\begin{equation}\label{condb}
\|{\lambda_k-\bar \lambda_k}\|_{L^{\infty}(K)}\le c_K.
\end{equation}
Indeed cover $K$ with finitely many arcs $A_i\cap S_1$ so that
$$\int_{A_i\cap S^1}|\kappa_k|e^{\lambda_k}d\theta< \pi .$$
From Lemma \ref{eLemma} it follows that $\lambda_k-\bar \lambda_k$ is bounded in each $A_i$, and \eqref{condb} follows. Moreover, considering that $\|e^{\lambda_k}\|_{L^1(S^1)}=L_k \le \bar L$, it follows that $\bar \lambda_k$ and $\lambda_k$ are upper bounded, and this proves \eqref{conda}. Now writing $\lambda_k-\bar \lambda_k= G*(\kappa_k e^{\lambda_k}-1)$ as in \eqref{rapr} of Lemma \ref{lemmafund4} we can bootstrap regularity and obtain that $\lambda_k-\bar \lambda_k$ is bounded in $W^{1,p}(K)$ for every $p<\infty$, and \eqref{convlambdak} follows from weak compactness.

Let $\tilde\lambda_k$ be the harmonic extension of $\lambda_k$.
From  \eqref{condb}, 
 \eqref{stimaLo1} and \eqref{stimaLo2} we get
\begin{equation*}
\|\tilde\lambda_k -\bar\lambda_k\|_{L^\infty(\de (D^2\setminus \cup_{i=1}^N B(a_i,\delta)))}\le C_\delta\quad \text{for every }\delta>0,
\end{equation*}
hence
\begin{equation}\label{lambda1}
(\tilde \lambda_k-\bar \lambda_k)\quad \text{ is bounded in }W^{1,p}_{\loc}(\bar D^2\setminus B).
\end{equation}
Since  $\Phi_k$   is harmonic and conformal,  the following estimate holds
   \begin{equation}\label{estDirichEnergy}
   \int_{D^2}|\nabla \Phi_{k}(z)|^2\le \frac{1}{2}L_k^2\,.
   \end{equation}
Since $\Phi_k(1)=0$ it follows that the sequence $(\Phi_{k})$ is bounded in $W^{1,2}(D^2)$ and, up to a subsequence, $\Phi_{k} \rightharpoonup \Phi_\infty$  weakly in $W^{1,2}(D^2)$, where $\Phi_\infty$ is holomorphic.
%

From \eqref{lambdaPhi} it follows that $|\nabla \Phi_k|$ is bounded in $W^{1,p}_{\loc}(S^1\setminus B)$, hence $\Phi_k$ is bounded in $W^{2,p}_{\loc}(S^1\setminus B)$
and up to a subsequence one gets $\Phi_k\rightharpoonup \Phi_\infty$ in $W^{2,p}_{\loc}(D^1\setminus B)$, as wished.

Further, if $\bar\lambda_k\to -\infty$, then \eqref{lambda1} yields $\nabla\Phi_k\to 0$ uniformly locally in $\bar D^2\setminus B$, hence $\Phi_\infty$ is constant. Similarly, if $\lambda_k \ge -C$, then $|\nabla\Phi_k|$ is locally uniformly lower bounded on $D^2\setminus B$, hence $\nabla\Phi_\infty\ne 0$ in $D^2\setminus B$.
\hfill$\Box$

 \subsection{Blow-up  Analysis}
In this section we associate to a sequence $(\lambda_k)$ satisfying \eqref{boundlength}-\eqref{liouvfrack}-\eqref{boundcurv} a sequence of curves $(\gamma_k)\subset W^{2,\infty}(S^1,\C)$ with bounded lengths $L_k\le\bar L$, curvatures bounded by $\bar \kappa$, $|\dot \gamma_k|\equiv \frac{L_k}{2\pi}$, a sequence $(\Phi_k)\subset C^1(\bar D^2,\C)$ of holomorphic immersions so that $|(\Phi_k')|_{S^1}|=e^{\lambda_k}$ and a sequence of diffeomorphisms $\sigma_k:S^1\to S^1$ such that $\Phi_k\circ \sigma_k=\gamma_k$. Up to a translation we can assume that $\Phi_k(1)=0$, and by Arzel\`a-Ascoli's theorem $\gamma_k\to \gamma_\infty$ in $C^1(S^1,\C)$ for a curve $\gamma_\infty\in W^{2,\infty}(S^1,\C)$.

Notice that $(\Phi_k)$ and $(\lambda_k)$ satisfy the hypothesis of Theorem \ref{convergencephi}, and up to a subsequence we can assume that \eqref{convlambdak} and \eqref{convPhik} hold for a finite set $B=\{a_1,\dots, a_N\}$ and functions $v_\infty\in L^1(S^1,\R)$ and $\Phi_\infty\in W^{1,2}(D^2,\C)$. Moreover, either 1. or 2. in Theorem \ref{convergencephi} holds.

We introduce the following distance function $D_k\colon S^1\times S^1 \to \R^+$.
 \begin{eqnarray}\label{dk}
 D_{k}(q,q')&=&\inf\bigg\{\left(\int_0^1 |\Phi'_k(\Delta_k(t))|^2|\Delta_k'(t)|^2dt\right)^\frac12,\nonumber\\
 &&~~~ \Delta_k\in W^{1,2}([0,1],\bar D^2), ~\Delta_k(0)=\sigma_k(q), ~\Delta_k(1)=\sigma_k(q')\bigg\},
 \end{eqnarray}
  It is well-known that the infimum in (\ref{dk}) is attained by a path $\Delta_k$ such that $|\Phi'_k(\Delta_k(t))||\Delta_k'(t)|=const\,.$ For such path we then have
$$\left(\int_0^1 |\Phi'_k(\Delta_k(t))|^2|\Delta'(t)|^2dt\right)^\frac12=\int_0^1 |\Phi'_k(\Delta_k(t))| |\Delta_k'(t)| dt=:\int_{\Delta_k} |\Phi'_k(z)||dz|\,.$$
In the sequel we  sometimes identify the parametrization of a curve $\Delta$ with its image\,.\par
 \begin{Proposition}\label{PropDK}
 1) The function $D_k$ is Lipschitz continuous with
 $\|\nabla D_k\|_{L^{\infty}}\le 1$ 
and it converges uniformly.

 2) The infimum in \rec{dk} is attained by a curve $\Delta_k$ in normal parametrization such that the curvature of 
 $\Phi_k\circ\Delta_k$ is bounded by $\|\kappa_k\|_{L^\infty}$.
 \end{Proposition}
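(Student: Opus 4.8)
The plan is to analyze the variational problem \eqref{dk} as a geodesic-distance problem for the (degenerate) conformal metric $g_k := |\Phi_k'(z)|^2\,|dz|^2$ on $\bar D^2$, pulled back to $S^1\times S^1$ via $\sigma_k$. For part 1), the Lipschitz bound $\|\nabla D_k\|_{L^\infty}\le 1$ should follow from the triangle inequality for $D_k$ together with the fact that $D_k$ is the infimum of the $g_k$-lengths of connecting paths; moving the endpoint $\sigma_k(q')$ along $S^1$ by an arclength amount $ds$ (in the $\gamma_k$-parametrization, where $|\dot\gamma_k|\equiv L_k/2\pi$, so in conformal parametrization the speed is exactly $|\Phi_k'|$) changes the optimal length by at most the $g_k$-length of that short boundary arc, which is $\le ds$ in the distance $D_k$ itself — this is precisely the statement that $D_k$ is $1$-Lipschitz with respect to the metric it induces. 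For uniform convergence of $D_k$, I would first record that each $D_k$ is bounded (by the total length $L_k\le \bar L$, going around the boundary), uniformly equicontinuous (by the $1$-Lipschitz bound just established, with respect to a fixed reference metric on $S^1$, using that $\sigma_k$ together with $|\Phi_k'|$ have uniformly controlled behaviour away from $B$ by Theorem \ref{convergencephi}), and hence Arzel\`a--Ascoli gives a uniformly convergent subsequence; one then identifies the limit using the $W^{2,p}_{\loc}(\bar D^2\setminus B)$ and $W^{1,2}(D^2)$ convergence $\Phi_k\rightharpoonup\Phi_\infty$ of \eqref{convPhik}.

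For part 2), the existence of a minimizer is standard: take a minimizing sequence of paths $\Delta_k^{(j)}$, which after reparametrization by constant $g_k$-speed has uniformly bounded $W^{1,2}([0,1],\bar D^2)$ norm (the $g_k$-length is bounded, and constant-speed parametrization converts a length bound into an $H^1$ bound), extract a weak $W^{1,2}$ limit, and use weak lower semicontinuity of the length functional (equivalently, of the energy under the constant-speed normalization, invoking the lower semicontinuity argument exactly as in the classical geodesic existence proof). The reparametrization to constant $g_k$-speed, i.e. $|\Phi_k'(\Delta_k)||\Delta_k'|\equiv \mathrm{const}$, is already noted in the text. The key structural point is then the curvature bound: writing the first variation of the $g_k$-length among paths with fixed endpoints, the minimizer $\Delta_k$ satisfies the geodesic equation for the conformal metric $g_k=e^{2\mu_k}|dz|^2$ with $\mu_k=\log|\Phi_k'|$, whose geodesic curvature relative to the Euclidean background is $\partial_n\mu_k$ (normal derivative of the conformal factor). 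Pushing forward by $\Phi_k$, which is conformal, the image curve $\Phi_k\circ\Delta_k$ is a Euclidean geodesic of the flat metric $|\Phi_k(\,\cdot\,)|$-image — but one must be careful at the boundary $S^1$, where the minimizer may run along $\partial D^2$: on such boundary arcs the image is a piece of $\gamma_k=\Phi_k\circ\sigma_k$, whose curvature is $\kappa_k$ with $|\kappa_k|\le\bar\kappa$ by \eqref{boundcurv}, while on the interior portions the image is a straight segment (curvature $0$). Hence the curvature of $\Phi_k\circ\Delta_k$ is bounded by $\|\kappa_k\|_{L^\infty}$ everywhere, which is the assertion. I would make this precise by invoking the obstacle-problem structure (the constraint $\Delta_k\in\bar D^2$), using the standard fact that a length-minimizer in a convex domain with smooth boundary consists of interior geodesic arcs and boundary arcs, the latter being followed only where the boundary is "concave towards the domain" in the relevant sense.

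The main obstacle I anticipate is the interplay between the degeneracy of the metric $g_k$ near $B$ (where, in alternative 2 of Theorem \ref{convergencephi}, $|\Phi_k'|\to 0$, and in general $\lambda_k$ is not uniformly controlled at $B$) and the regularity theory needed to justify the geodesic/obstacle analysis that yields the curvature bound. Away from $B$ this is classical, so the real work is to show that minimizing paths either avoid small neighbourhoods of the points of $B$ or that their contribution there is negligible for the curvature statement — here one uses that, by Theorem \ref{convergencephi} and Lemma \ref{eLemma}, $\lambda_k-\bar\lambda_k$ is uniformly bounded in $W^{1,p}_{\loc}(S^1\setminus B)$ and that $\Phi_k$ converges in $W^{2,p}_{\loc}(\bar D^2\setminus B)$, so the geodesic curvature computation is uniformly valid on compact subsets of the complement of $B$, and one passes to the limit. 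A secondary technical point is verifying that the constant-speed reparametrization does not lose the $W^{1,2}$ bound when the path touches $B$; this is handled by the a priori length bound together with the fact that $B$ is finite, so a minimizer spends controlled "conformal length" near $B$.
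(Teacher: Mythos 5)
Your proposal is correct and follows essentially the paper's own route: part 1 is exactly the triangle inequality obtained by sliding endpoints along boundary arcs of $\gamma_k$ (giving the equi-Lipschitz bound, hence uniform convergence via Arzel\`a--Ascoli), and part 2 is the same constrained-geodesic structure argument, namely that $\Phi_k\circ\Delta_k$ is a union of straight interior segments and arcs of $\gamma_k$ met tangentially, so its curvature is bounded by $\|\kappa_k\|_{L^\infty}$. The only remark worth making is that your anticipated difficulty about degeneracy of $g_k$ near $B$ is not an issue here, since for each fixed $k$ the map $\Phi_k$ is an immersion up to the boundary and the statement is a fixed-$k$ assertion (the set $B$ only matters in the limit $k\to\infty$).
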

 {\bf Proof.} 
 1. Let $q,q',\tilde q,\tilde q'\in S^1$. The following estimate holds \begin{eqnarray*}
  D_{k}(q,q')&\le&  D_{k}(\tilde q,\tilde q')+\mbox{arc}{(\gamma_k(q),\gamma_k(\tilde q))}+\mbox{arc}{(\gamma_k(q'),\gamma_k(\tilde q'))}\\
  &\le&  D_{k}(\tilde q,\tilde q')+{|q-\tilde q|+ |q'-\tilde q'|}.
 \end{eqnarray*}
By exchanging $(q,q')$ and $(\tilde q,\tilde q')$ we get
  that
$$ |D_k(q,q')-   D_{k}(\tilde q,\tilde q')|\le |q-\tilde q|+| q'-\tilde q'|\,,$$
and we conclude\,.\par
2. For a geodesic $\Delta$ with respect to $D_k$, the curve $\Phi_k\circ \Delta$ is a geodesic in $\mathbb{C}$ under the constraint that $\Phi_k\circ\Delta\subset \Phi_k(\bar D^2)$. This must be a union of segments (contained in $\Phi_k(D^2)$) and arcs of the curve $\gamma_k$, where the segments touch the curve $\gamma_k$ tangentially. Hence the curvature of $\Phi_k\circ \Delta$ is bounded by $\|\kappa_k\|_{L^\infty}$.  \par

 This completes the proof of Proposition \ref{PropDK}\,.
~\hfill$\Box$
  \par\medskip
  
  We give next the definition of a {\em pinched point} for the curve $\gamma_{\infty}$\,.\par

\begin{Definition}[Pinched point]\label{pinched}
A point $p\in S^1$ is called {\em pinched point} for the sequence $(\gamma_k)$ if there exists $p^{\prime}\in S^1$, $p\neq p^{\prime}$  such that 
$\lim_{k\to+\infty} D_k(p,p')=0$\,. We call $p'$ the  {``dual"} of $p$. We denote by  $\mathcal{P}$ the sets of the pinched points of $\gamma_{\infty}\,.$
    \end{Definition}
    \begin{Remark}
    The definition of pinched point is independent of $\Phi_k$ and $\sigma_k$ in the sense that if  $\tilde\Phi_k=\Phi_k\circ f_k$ where $f_k\colon\bar D^2\to \bar D^2$ is a M\"obius transformation
    and if $\tilde\sigma_k=f_k^{-1}\circ \sigma_k   $,    then 
    $$
    \lim_{k\to +\infty}\int_0^1 |\Phi'_k(\Delta(t))||\Delta'(t)|dt=0, ~~\mbox{if and only if} ~\lim_{k\to +\infty} \int_0^1 |\tilde\Phi'_k(\tilde\Delta(t))| |{\tilde\Delta}'(t)|dt=0\,.$$
    \end{Remark}

 \begin{Proposition}\label{limconst} 
Assume 
that we are in case 2 of Theorem \ref{convergencephi}, i.e.
 $\Phi_k\to Q$ in $C^{1,\alpha}_{\loc}( \bar D^2\setminus\{a_1,\ldots,a_N\})$ for a constant $Q\in\C$. Then $N \in \{1,2\}$. If $N=2$, let $\mathcal{C}_+$ and $\mathcal{C}_-$ be the connected components of $S^1\setminus\{a_1,a_2\}$. Then $\sigma_k^{-1}\to p^{\pm}$ locally uniformly on $\mathcal{C}_\pm$, where $p^+,p^-\in \mathcal{P}$ are dual. Moreover $Q=\gamma_{\infty}(p^+)=\gamma_\infty(p^-)$ and $\dot\gamma_\infty(p^+)=-\dot\gamma_\infty(p^-)$, $\kappa_k e^{\lambda_k}\weak \pi(\delta_{a_1}+\delta_{a_2})$ and $v_k:=\lambda_k-\bar\lambda_k \rightharpoonup  v_\infty$ in $W^{1,p}_{\loc}(S^1\setminus \{a_1,a_2\})$, where $v_\infty$ solves \eqref{liouvlimit3}. If $N=1$ then $v_k\to v_\infty$ where $v_\infty$ solves \eqref{liouvlimit2}.
 \end{Proposition}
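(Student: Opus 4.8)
The plan is to analyze the degeneration of the holomorphic immersions $\Phi_k$ near the blow-up set $B=\{a_1,\dots,a_N\}$ by extracting the mass of curvature concentrating at each $a_i$. First I would show $N\le 2$. The key tool is Lemma \ref{la:degree}, which gives $\deg\Phi_k=1$, equivalently $\int_{S^1}\kappa_k e^{\lambda_k}\,d\theta=2\pi$ (see the Remark after Lemma \ref{la:degree}); moreover $\int_{S^1}|\kappa_k|e^{\lambda_k}\,d\theta\le\bar\kappa\bar L$ is bounded, so $|\kappa_k|e^{\lambda_k}d\theta\weak\mu$ for a finite positive measure. Since $\Phi_k\to Q$ locally uniformly away from $B$, by \eqref{convlambdak} and \eqref{conda} we have $e^{\lambda_k}\to 0$ locally uniformly on $S^1\setminus B$, hence $\kappa_k e^{\lambda_k}d\theta$ concentrates entirely on $B$, i.e. $\kappa_k e^{\lambda_k}d\theta\weak\sum_{i=1}^N\beta_i\delta_{a_i}$ with $\sum\beta_i=2\pi$. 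The core quantization claim is that each $\beta_i=\pi$ (in particular $\beta_i\ge\pi$ forces $N\le 2$, and then $N=2\Rightarrow\beta_1=\beta_2=\pi$, $N=1\Rightarrow\beta_1=2\pi$). To get the lower bound $\beta_i\ge\pi$: by definition of $B$ in \eqref{defB} we already have $\mu(\{a_i\})\ge\pi$ for the measure $|\kappa_k|e^{\lambda_k}d\theta$; combined with the constraint $\sum_i|\beta_i|\le\liminf\int|\kappa_k|e^{\lambda_k}$, and the fact that the total signed mass is exactly $2\pi$, one needs to rule out cancellation. Here the geometric picture is decisive: near $a_i$ the rescaled immersions $\Phi_k\circ f_k^i$ converge (by the same argument as in Theorem \ref{convergencephi} applied after a M\"obius rescaling that ``zooms in'' at $a_i$) to a nonconstant holomorphic immersion $\tilde\Phi_\infty^i$ of $\bar D^2$ minus finitely many points, whose boundary curve is a closed curve; by Lemma \ref{la:degree} this limiting immersion has degree $1$ as well, so carries curvature mass exactly $2\pi$ on its own — but a rescaling argument shows the mass split off at $a_i$ is what feeds the boundary of the limiting bubble, and the pinching forces half of the full circle's worth on each side. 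Concretely: the half-harmonic/conformal structure means the segments of the geodesic $\Phi_k\circ\Delta_k$ in Proposition \ref{PropDK} touch $\gamma_k$ tangentially, so the limiting bubble's boundary is a closed curve that, together with the complementary arc, reconstructs a full degree-$1$ curve; counting the turning, the angle deficit at a pinched point is $\pi$, which is exactly the statement $\dot\gamma_\infty(p^+)=-\dot\gamma_\infty(p^-)$ and $\beta_i=\pi$.

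Second I would identify the pinched points and the behavior of $\sigma_k^{-1}$. Since $\Phi_k\to Q$ away from $B$ and $\gamma_k=\Phi_k\circ\sigma_k\to\gamma_\infty$ in $C^1$, for any $q\in S^1$ with $\sigma_k^{-1}(q)$ staying in a compact subset of $S^1\setminus B$ we get $\gamma_\infty(q)=Q$. When $N=2$, $S^1\setminus\{a_1,a_2\}=\mathcal C_+\sqcup\mathcal C_-$; I claim $\sigma_k^{-1}$ maps each $\mathcal C_\pm$ essentially onto a neighborhood of one of the $a_i$, so that $\sigma_k^{-1}\to p^\pm$ locally uniformly on $\mathcal C_\pm$ for some points $p^\pm\in S^1$ (otherwise a positive-length piece of $\gamma_\infty$ would be constant $\equiv Q$, contradicting $|\dot\gamma_\infty|\equiv\bar L/2\pi>0$ — note in case 2, $\bar\lambda_k\to-\infty$ so $L_k$ could go to $0$; but here one uses instead that $\gamma_k$ has unit-speed-type parametrization $|\dot\gamma_k|\equiv L_k/2\pi$ and the length-bound argument via $D_k$, or one rescales so that lengths are normalized). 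Then $Q=\gamma_\infty(p^+)=\gamma_\infty(p^-)$, and since $p^+\ne p^-$ are the images of the two ``ends'' of $\mathcal C_+$ and $\mathcal C_-$ collapsing to opposite points, $D_k(p^+,p^-)\to 0$, so $p^\pm\in\mathcal P$ are dual. The tangency/angle statement $\dot\gamma_\infty(p^+)=-\dot\gamma_\infty(p^-)$ follows from Proposition \ref{PropDK}(2): the collapsing geodesic between $\sigma_k(p^+)$ and $\sigma_k(p^-)$ has curvature bounded by $\bar\kappa$ and length $\to0$, hence its endpoint tangent directions in $\mathbb C$ converge to opposite vectors, and these are (by conformality up to the boundary, \eqref{halfharmonic}) the tangents of $\gamma_\infty$ at $p^\pm$ up to sign, forcing them to be antipodal.

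Third, I would derive the limiting equation. In either case, away from $B$, $v_k=\lambda_k-\bar\lambda_k$ is bounded in $W^{1,p}_{\loc}$ by Theorem \ref{convergencephi}, so $v_k\rightharpoonup v_\infty$; passing to the limit in \eqref{liouvfrack} using $\kappa_k e^{\lambda_k}d\theta\weak\sum\pi\delta_{a_i}$ and $(-\Delta)^{1/2}(\bar\lambda_k)=0$ (constants are killed) and the $-1$ term surviving, gives $(-\Delta)^{1/2}v_\infty=-1+\sum_{i=1}^N\pi\delta_{a_i}$ in $S^1$, which is \eqref{liouvlimit2} for $N=1$ and \eqref{liouvlimit3} for $N=2$. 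The explicit form of $v_\infty$ then follows from the fundamental solution $G$ of Lemma \ref{lemmafund4}: $v_\infty-\overline{v_\infty}=G*(-1+\sum\pi\delta_{a_i})$, and since the mean of $v_\infty$ is determined (it is $0$ because $\overline{v_k}=0$ by construction and convergence is in $W^{1,p}_{\loc}$ — more carefully one normalizes using $\int G=0$), one reads off $v_\infty(e^{i\theta})=\sum_i\frac{\beta_i}{2\pi}\cdot(-\log(2(1-\cos(\theta-\theta_i))))\cdot(2\pi/\pi)\cdot\tfrac12$ — i.e. the coefficient $-\tfrac12$ for $N=2$ and $-1$ for $N=1$, matching the statement. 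For $N=1$ the convergence is actually strong ($v_k\to v_\infty$): this is because with only one blow-up point the bootstrap in Lemma \ref{eLemma} combined with the explicit control of $v_\infty$ and the $L^1$-bound on $e^{\lambda_k}$ upgrades weak to strong convergence in $W^{1,p}_{\loc}$; I would cite the $\varepsilon$-regularity estimate \eqref{estinfty} on annuli around $a_1$, shrinking to the puncture.

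The main obstacle is the quantization $\beta_i=\pi$ exactly — specifically ruling out that $\beta_i$ could be $2\pi$ at a single point with $N=1$ while the curve still ``pinches'' versus the genuinely two-point degeneration, and more delicately, proving the angle at the pinched point is exactly $\pi$ rather than some other value. This is where the geometric interpretation (Theorem \ref{propcostrlambdaintr}, Theorem \ref{GRMT}, Lemma \ref{la:degree}) must be used in an essential way: the key is that the limiting bubble immersion $\tilde\Phi_\infty^i$ has a closed boundary curve of integer degree, and the ``neck'' collapses along a geodesic of $D_k$ that meets $\gamma_k$ tangentially (Proposition \ref{PropDK}), so that the full degree-$1$ curve $\gamma_\infty$ decomposes at $p^\pm$ into pieces whose turning numbers must add up correctly — forcing exactly a $\pi$ angle deficit (a ``cusp-free'' pinch). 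Making this turning-number bookkeeping rigorous, and showing no curvature is lost in the neck (so that $\sum\beta_i$ is exactly $2\pi$ with $\beta_i\ge\pi$, hence $N\le2$ and equality), is the crux of the argument.
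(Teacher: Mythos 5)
There is a genuine gap at exactly the point you yourself flag as ``the crux'': you never actually prove that each concentration coefficient equals $\pi$ (equivalently that the angle at the pinch is exactly $\pi$). The soft parts of your plan (concentration of $\kappa_k e^{\lambda_k}$ on $B$, total mass $2\pi$ from $\deg\Phi_k=1$, passage to the limit equation via Lemma \ref{lemmafund4}, identification of $p^\pm$ as dual pinched points) are fine and agree with the paper, but the quantization step is replaced by a heuristic that contains two incorrect assertions. First, the ``bubble'' obtained by M\"obius rescaling at $a_i$ is \emph{not} a degree-one holomorphic immersion of $\bar D^2$ carrying curvature mass $2\pi$: after the correct normalization $V_k:=e^{-\bar\lambda_k}(\Phi_k-\Phi_k(0))$ the limit $V_\infty$ is a Schwarz--Christoffel map onto an \emph{unbounded} region (two parallel lines if $\alpha=\pi$, a wedge of angle $\pi-\alpha$ otherwise), so its boundary is not a closed curve, there is no degree-$1$ total-turning identity for it, and indeed the length is not quantized in this problem; Lemma \ref{la:degree} cannot be applied to the bubble. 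Second, your argument for $\dot\gamma_\infty(p^+)=-\dot\gamma_\infty(p^-)$ is backwards: a geodesic of $D_k$ of length tending to $0$ with curvature bounded by $\bar\kappa$ has endpoint tangents that are asymptotically \emph{equal}, not opposite, and in any case its endpoint tangents are not a priori the tangents of $\gamma_\infty$ at $p^\pm$; the antipodality of the tangents is precisely the statement $\alpha=\pi$ that has to be proved, not an input.

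What the paper does to close this gap, and what is missing from your proposal, is the following mechanism: normalize to $B=\{i,-i\}$ by M\"obius maps, write $(-\Delta)^\frac12 v_\infty=\alpha\delta_i+(2\pi-\alpha)\delta_{-i}-1$ with an \emph{unknown} $\alpha$, compute $v_\infty$ and hence $V_\infty$ explicitly (Schwarz--Christoffel), and observe via \eqref{gammalim}--\eqref{anglealpha} that $\alpha$ is the limit of the angle between $\dot\gamma_k(p_k^+)$ and $\dot\gamma_k(p_k^-)$. Then $\alpha\ne\pi$ is excluded by a case analysis: for $0<\alpha<\pi$ the two boundary arcs cross in short time, producing a closed curve $\Phi_k\circ\Delta_k$ of length $\to0$ whose total curvature must exceed $\pi$ (Lemma \ref{closed}) while Proposition \ref{PropDK} bounds its curvature by $\bar\kappa$ --- contradiction; $\alpha=0$ is handled similarly with Lemma \ref{closed2}; and $\alpha<0$ is excluded by a total-turning estimate along the image of the diameter $[-1,1]$ under $V_\infty$, again against the $\bar\kappa$-bound on $D_k$-geodesics and the pinching. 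None of this case analysis, nor the explicit Schwarz--Christoffel identification that makes $\alpha$ geometrically visible, appears in your proposal, so the central claim $\beta_i=\pi$ (and with it $N\le 2$ in the form stated, and the antipodality of the tangents) remains unproved.
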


\noindent {\bf Proof.} By Theorem \ref{convergencephi} we have $\bar \lambda_k\to -\infty$ and $\lambda_k\to -\infty$ uniformly locally in $S^1\setminus B=\{a_1,\dots, a_N\}$. In particular, since the signed radon measures $\kappa_k e^{\lambda_k}dx$ are uniformly bounded, we have $\mu_k\weak \mu$ for a Radon measure supported in $B$,  which then we can write as
$\mu =\sum_{i=1}^N \alpha_i\delta_{a_i}.$ Moreover, since
$$\int_{S_1}\kappa_k e^{\lambda_k}\,d\theta=2\pi\,,$$
we infer that $\sum_{i=1}^N \alpha_i =2\pi$.

Let us assume that $N\ge 2$. We want to prove that $\alpha_i=\pi$ for every $i$, hence necessarily $N=2$. In order to prove that $\alpha_i=\pi$, up to a rotation we can reduce to proving that $\alpha_1=\pi$ and assume that $a_1=i$. We can also assume that $N=2$ and $a_2=-i$. If this is not the case, it suffices to compose $\Phi_k$ with M\"obius diffeomorphisms $f_k(z)=\frac{z-it_k}{1+it_k z}$ with $t_k\uparrow 1$ slowly enough so that $\tilde\Phi_k:=\Phi_k\circ f_k$ is still as in case 2 of Theorem \ref{convergencephi}, with $B=\{a_1=i, a_2=-i\}$.

Then let $\Phi_k$ be as above, with $\Phi_k\rightharpoonup Q$ in $W^{2,p}_{\loc} (\bar D^2\setminus \{i,-i\})$.
Set
$$V_k(z)=e^{-\bar \lambda_k}(\Phi_k(z)-\Phi_k(0))\,,\quad v_k=\log |V_k'|_{S^1}|=\lambda_k-\bar\lambda_k\,.$$
By Theorem \ref{convergencephi} we have
$$v_k \rightharpoonup v_\infty\quad \text{in }W^{1,p}_{\loc}(S^1\setminus \{i,-i\})\text{ and in }\mathcal{D}'(S^1),$$
where $v_\infty$ solves
\begin{equation}\label{eqhatlambda}
(-\Delta)^\frac12 v_\infty=\alpha\delta_i+(2\pi-\alpha)\delta_{-i}-1,
\end{equation}
for some $\alpha\in \R$.
Similarly $V_k\rightharpoonup V_\infty$ in $W^{2,p}_{\loc}(\bar D^2\setminus\{i,-i\})$. Solutions to \eqref{eqhatlambda} can be computed explicitly using Lemma \ref{lemmafund4}, so that
$$v_\infty(e^{i\theta})= -\frac{\alpha}{2\pi}\log(2(1-\sin\theta)) -\frac{2\pi -\alpha}{2\pi}\log(2(1+\sin\theta)).$$
Notice that writing $z=x+iy$, for $z=e^{i\theta}\in S^1$ we have
$$2(1-\sin\theta)= x^2+y^2-2y+1=|z-i|^2,$$
and similarly  $2(1+\sin\theta)=|z+i|^2$. In particular the $v_\infty$ can be extended to a holomorphic function
\begin{equation}\label{lambdatilde}
\tilde v_\infty(z):=-\frac{\alpha}{2\pi}\log(|z-i|^2) -\frac{2\pi -\alpha}{2\pi}\log(|z+i|^2),\quad z\in \bar D^2\setminus\{i,-i\}.
\end{equation}
The estimate \rec{lambda1} together with \eqref{lambdaPhi} implies that
$$c_{\delta}^{-1}\le|V'_k|\le c_{\delta}\quad \text{on }\bar D^2\setminus(B(i,\delta)\cup B(-i,\delta))\,\quad \text{for every }\delta>0.$$
Therefore
$V_k\rightharpoonup V_{\infty}$ as  $k\to +\infty$ in $W_{\loc}^{2,p}( \bar D^2\setminus\{i,-i\})$, where $V_{\infty}$ is a conformal immersion of $\bar D^2\setminus\{i,-i\}$\,. Moreover, still using \eqref{lambdaPhi}, from \eqref{lambdatilde} we obtain
$$|V_\infty'(z)|=\frac{1}{|z-i|^\frac{\alpha}{\pi}|z+i|^{2-\frac{\alpha}{\pi}}}.$$
Since $V_\infty'$ is holomorphic in $D^2$, up to a rotation (i.e. multiplication by a constant $e^{i\theta_0}$) we obtain
$$V_\infty'(z)=\frac{1}{(z-i)^\frac{\alpha}{\pi}(z+i)^{2-\frac{\alpha}{\pi}}},\quad V_\infty(z)=\int_0^z\frac{dz}{(z-i)^\frac{\alpha}{\pi}(z+i)^{2-\frac{\alpha}{\pi}}}.$$
Up to possibly switching $i$ with $-i$ we may assume that $\alpha\le \pi$.
The function $V_\infty$ is also known as Schwarz-Christoffel mapping\footnote{up to composition with a conformal transformation, since Schwarz-Christoffel maps are usually defined on the half plane $\{z\in \mathbb{C}:\Re z>0\}$ instead of the unit disk.}
and sends the two arcs of $\mathcal{C}_+,\mathcal{C}_-\subset S^1$ joining $i$ and $-i$ (chosen so that $\pm1 \in \mathcal{C}_{\pm}$) into two parallel straight lines if $\alpha=\pi$ and into  two half-lines meeting at $V_\infty(i)$, forming there an angle of $\pi-\alpha$ if $\alpha<\pi$.

\par

\noindent{\bf Claim 1.} As $k\to +\infty$ we have $\sigma_k^{-1}\to p^\pm$ in $L^\infty_{\loc}(\mathcal{C}_\pm)$, where $p^+,p^-\in S^1$,  with $p^+\ne p^-\,.$\par

\noindent{\bf Proof of Claim 1.}
Notice that $\Phi_k\rightharpoonup Q$ in $W^{2,p}_{\loc}( \bar D^2\setminus\{i,-i\})$ implies that
$$\frac{\partial\sigma_k^{-1}}{\partial{\theta}} \to 0\quad \text{uniformly locally in }S^1\setminus\{i,-i\} \text{ as }k\to +\infty\,.$$
This proves the first part of the claim.
Assume by contradiction that $p^+=p^-$.
Set $p_k^\pm=\sigma_k^{-1}(\pm 1)\to p^\pm$. By assumption $|\mathrm{arc}(p^+_k, p^-_k)|\to 0$ (here $\mathrm{arc}(p_k^+,p_k^-)$ denotes the shortest arc connecting $p^+_k$ to $p^-_k$).
Since $\sigma_k$ is a diffeomorphism, for small $\delta>0$, $\sigma_k(\mathrm{arc}(p_k^+,p_k^-))$ contains either $S^1\cap B(i,\delta)$ or $S^1\cap B(-i,\delta)$\,. Suppose it contains
 $S^1\cap B(i,\delta)$\,.
Then
\begin{equation}\label{estlambdak}
\begin{split}
\int_{S^1\cap B(i,\delta)}e^{\lambda_k} d\theta&= \int_{S^1\cap B(i,\delta)}|\Phi_k^{\prime}(e^{i\theta})| d\theta\\
&\le \int_{\mathrm{arc}(p_k^+,p_k^-)}| \dot \gamma_k| d\theta\\
&= \frac{L_k}{2\pi}|\mathrm{arc}(p^+_k, p^-_k)|\to 0\,,
\end{split}
\end{equation}
as $k\to \infty$. This contradicts that $i\in B$, and concludes the proof of the claim 1.\hfill$\square$\par
 
 \noindent{\bf Claim 2.} $p^+$ is  a pinched point and $p^-$ is dual to it.

 \noindent{\bf Proof of Claim 2.} Let $p_k^\pm=\sigma_k^{-1}(\pm 1)$ be as above. 
Consider the path
$$\Delta_k=\mathrm{arc}(\sigma_k(p^+),1)\cup \mathrm{arc}(\sigma_k(p^-),-1)\cup [-1,1],$$ where $[-1,1]$ is the segment in $\bar D^2$ joining $-1$ to $1$.
Since as $k\to \infty$ we have
\begin{equation}\label{arco0}
\int_{\mathrm{arc}(\sigma_k(p^\pm),\pm 1)} |\Phi_k'(e^{i\theta})|d\theta =\int_{\mathrm{arc}(p_k^\pm,p^\pm)}|\dot\gamma_k|d\theta=\frac{L_k |\mathrm{arc}(p_k^\pm,p^\pm)|}{2\pi}\to 0
\end{equation}
and
$$\int_{[-1,1]} |\Phi_k'||dz| \le 2 \sup_{[-1,1]} |\Phi_k'||dz|\to 0,$$
we immediately infer that
$$\int_{\Delta_k}|\Phi_k'||dz|\to 0,$$
hence $p^+$ is dual to $p^-$.
This proves claim 2. \hfill$\square$

Now
\begin{equation}\label{gammalim}
\frac{2\pi}{L_k}\dot \gamma_k (p^\pm_k)=\frac{\frac{\de\Phi_k(\pm1)}{\de\theta}}{|\frac{\de\Phi_k(\pm1)}{\de\theta}|}= \frac{\frac{\de\Phi_k(\pm 1)}{\de\theta}}{e^{\bar\lambda_k}e^{\lambda_k(\pm1)-\bar\lambda_k}}=\frac{\de V_\infty(\pm 1)}{\de\theta} e^{\bar\lambda_k -\lambda_k(\pm1)}+o(1)\quad \text{as }k\to\infty.
\end{equation}
In particular, denoting by $(v,w)^\wedge$ the angle between two vectors, we have
\begin{equation}\label{anglealpha}
(\dot\gamma_k(p^+_k),\dot\gamma_k(p^-_k))^\wedge\to \left(\frac{\de V_\infty(1)}{\de\theta},\frac{\de V_\infty(-1)}{\de\theta}\right)^\wedge=\alpha.
\end{equation}

We consider now different cases.

\noindent{\bf Case 1: $0<\alpha<\pi$}. Since $p_k^\pm\to p^\pm$ and $p^+$ is pinched to $p^-$, and since
\[\begin{split}
|\gamma_k(p_k^+)-\gamma_k(p_k^-)|&\le D_k(p_k^+,p_k^-)\\
&\le D_k(p^+,p^-)+\frac{L_k}{2\pi}(|\mathrm{arc}(p^+,p_k^+)|+|\mathrm{arc}(p^-,p_k^-)|)\\
&\to 0\quad \text{as }k\to\infty
\end{split}\]
and taking \eqref{anglealpha} and the bound $\bar \kappa$ on the curvature of $\gamma_k$ into account, we see that for positive numbers $\delta_k^{\pm}\to 0$ as $k\to\infty$ we have
\begin{equation}\label{alpha1}
\gamma_k(p_k^+e^{i\delta_k^+})=\gamma_k(p_k^- e^{-i \delta_k^-}),
\end{equation}
i.e. the two curves $t\mapsto \gamma_{k}(p_k^\pm e^{\pm i t})$ cross in short time (see Figure \ref{fig2}).
Because $\delta_k^\pm\to 0$ we have
\begin{equation}\label{alpha2}
D_k(p_k^+e^{i\delta_k^+}, p_k^- e^{-i \delta_k^-})\le D_k(p_k^+,p_k^-)+\frac{L_k(\delta_k^++\delta_k^-)}{2\pi}\to 0,\quad \text{as }k\to\infty.
\end{equation}
Let now $\Delta_k:[0,1]\to \bar D^2$ be a geodesic realising the distance on the left-hand side of \eqref{alpha2}. Then \eqref{alpha1} implies that $\Phi_k\circ \Delta_k$ is a closed curve (non-constant, since $p_k^+e^{i\delta_k^+}\ne p_k^-e^{-i\delta_k^-}$ for $k$ large) so that the integral of its curvature is at least $\pi$ (see Lemma \ref{closed} below). On the other hand  Proposition \ref{PropDK} implies that the curvature of $\Phi_k\circ\Delta_k$ is bounded by $\bar\kappa$, and since the length of such geodesic is going to 0 according to \eqref{alpha2}, we get a contradiction.

\begin{figure}\label{f:curve}
\begin{center}
\begin{small}
\setlength{\unitlength}{1cm}
\begin{picture}(11.5,9.3)%
    \put(0,0){\includegraphics[width=12cm]{case1.eps}}%
    \put(1,5){$\sigma_k$}
    \put(-0.3,2.7){$p_k^-$}
    \put(-0.4,2.2){$p^-$}
    \put(-0.7,3.2){$p_k^-e^{-i\delta_k^-}$}
    \put(4,2){$p^+$}
    \put(3.8,2.7){$p_k^+$}
    \put(3.6,3.2){$p_k^+e^{i\delta_k^+}$}
    \put(6,2){$\gamma_k$}
    \put(7.8,1.3){$\gamma_k(p_k^+)$}
    \put(7.7,3.6){$\gamma_k(p_k^-)$}
    \put(4.5,5.3){$\Phi_k$}
    \put(2,8.5){$\Delta_k$}
    \put(7.5,8){$\pi-\alpha$}
    \put(4.5,6.8){$V_\infty$}
    \put(10.4,5.3){$\Phi_k(\Delta_k)$}
    \put(9,0.5){$\gamma_k(p_k^-e^{-i\delta_k^-})=\gamma_k(p_k^+e^{i\delta_k^+})$}
    \put(-1.6,9.3){$\sigma_k(p_k^- e^{-i\delta_k^-})$}
    \put(3.3,9.4){$\sigma_k(p_k^+ e^{i\delta_k^+})$}
    \put(-2.2,7.9){$\sigma_k(p_k^-)=-1$}
    \put(4,7.9){$1=\sigma_k(p_k^+)$}
  \end{picture}
\end{small}
\end{center}
\caption{Case 1}\label{fig2}
\end{figure}\par

\noindent{\bf Case 2: $\alpha =0$.} Similar to case 1, if the curves $\gamma_k(p_k^\pm e^{\pm it})$ cross for small times $\delta_k^\pm\to 0$, we conclude as before. If not, we can at least say that up to a rotation of the axis
\begin{equation}\label{Vinf}
V_\infty(D^2)=\{x+iy:y < 0\}
\end{equation}
and that for small times $\delta_k^{\pm}\to 0$ we have
\begin{equation}\label{alpha3}
\Re (\gamma_k(p_k^+e^{i\delta_k^+}))=\Re (\gamma_k(p_k^- e^{-i \delta_k^-}))
\end{equation}
and without loss of generality
\begin{equation}\label{alpha4}
\Im (\gamma_k(p_k^+e^{i\delta_k^+}))>\Im (\gamma_k(p_k^- e^{-i \delta_k^-})),
\end{equation}
where for $x,y\in \R$ we used the notation $\Re(x+iy)=x$, $\Im(x+iy)=y$ (see Figure \ref{fig3}).
Moreover since the curvature of $\gamma_k$ is uniformly bounded and $\delta_k^\pm\to 0$, using \eqref{gammalim} and \eqref{Vinf} we infer
\begin{equation}\label{alpha5} 
\frac{\dot \gamma_k(p_k^\pm e^{\pm i \delta_k^\pm})}{|\dot \gamma_k(p_k^\pm e^{\pm i \delta_k^\pm})|}=\frac{\dot \gamma_k(p_k^\pm )}{|\dot \gamma_k(p_k^\pm)|}+o(1)=- 1 +o(1),\footnote{the symbol $\dot \gamma_k(p_k^\pm e^{\pm i \delta_k^\pm})$ denotes the derivative of the curve $t\mapsto \gamma_k({e^{it}})$ evaluated for $e^{it}=p_k^{\pm}e^{\pm i\delta_k^{\pm}}$, and \emph{not} the derivative of the curve $t\mapsto \gamma_k(p_k^\pm e^{\pm i t})$ evaluated for $t=\delta_k^\pm$.}
\end{equation}
i.e. the curves $t\mapsto \gamma_k(p_k^\pm e^{\pm i t})$ at the time $t=\delta_k^\pm$ are almost horizontal and pointing into opposite directions (notice that change of orientation between the curves $t\mapsto \gamma_k(e^{it})$ and $t\mapsto \gamma_k(p_k^- e^{-it})$).
As before \eqref{alpha2} holds, so let $\Delta_k:[0,1]\to \bar D^2$ be geodesic realising the distance in \eqref{alpha2}, with $\Delta_k(0)=\gamma_k(p_k^+e^{i\delta_k^+})$ and $\Delta_k(1)=\gamma_k(p_k^- e^{-i\delta_k^-})$. Up to a reparametrization we can assume that $\tilde \Delta_k:=\Phi_k\circ \Delta_k:[0,L]\to \C$ satisfies $|\dot{\tilde \Delta}_k(t)|\equiv 1$. Since the map $\Phi_k$ preserves the orientation, from \eqref{alpha5} we infer
$$\Im(\dot {\tilde\Delta}_k(0))\le 0+o(1),\quad \Im(\dot {\tilde\Delta}_k(1))\ge 0+o(1),$$
i.e. up to a $o(1)\to 0$ as $k\to \infty$ we have that $\dot {\tilde\Delta}_k(0)$ points downwards, while $\dot {\tilde\Delta}_k(1)$ points upwards. Now using \eqref{alpha3} we see that the curve $\tilde \Delta_k$ has total curvature at least $\frac{\pi}{2}-o(1)$ (see Lemma \ref{closed2} below) again contradicting Proposition \ref{PropDK} and \eqref{alpha2}.

\begin{figure}\label{f:curve}
\begin{center}
\begin{small}
\setlength{\unitlength}{1cm}
\begin{picture}(9.5,10)%
    \put(0,0){\includegraphics[width=12cm]{case2.eps}}%
    \put(1.1,6){$\sigma_k$}
    \put(-0.3,3.8){$p_k^-$}
    \put(-0.4,3.3){$p^-$}
    \put(-0.7,4.2){$p_k^-e^{-i\delta_k^-}$}
    \put(3.8,3.2){$p^+$}
    \put(3.7,3.7){$p_k^+$}
    \put(3.5,4.2){$p_k^+e^{i\delta_k^+}$}
    \put(5,0.7){$\gamma_k$}
    \put(10.5,3.7){$\gamma_k(p_k^+)$}
    \put(6.7,2.9){$\gamma_k(p_k^-)$}
    \put(4.5,5.3){$\Phi_k$}
    \put(1.9,9.5){$\Delta_k$}
       \put(9.5,0.1){$\Phi_k(\Delta_k)$}
    \put(8.5,3.8){$\gamma_k(p_k^+e^{i\delta_k^+})$}
    \put(9.2,2.7){$\gamma_k(p_k^-e^{-i\delta_k^-})$}
    \put(-1.7,10){$\sigma_k(p_k^- e^{-i\delta_k^-})$}
    \put(3.4,10){$\sigma_k(p_k^+ e^{i\delta_k^+})$}
    \put(-2.2,8.9){$\sigma_k(p_k^-)=-1$}
    \put(3.8,8.9){$1=\sigma_k(p_k^+)$}
  \end{picture}
\end{small}
\end{center}
\caption{Case 2}\label{fig3}
\end{figure}\par

\medskip

\noindent{\bf Case 3: $\alpha <0$}. Let $\Delta$ be the straight segment in $\bar D^2$ (seen as a smooth path) joining $-1$ to $1$. Since $\Delta\subset \bar D^2\setminus \{i,-i\}$ we have that $V_k\circ \Delta \to V_\infty\circ \Delta$, and by the explicit form of $V_\infty$ we deduce that the unit tangent vector of the curve $V_\infty\circ \Delta$ describes an arc in $S^1$ of lenght at least $|\alpha|+\pi$ (we are using that $\Delta$ touches $S^1$ perpendicularly, and $V_\infty$ is conformal). This implies that for $k$ large enough, any $C^1$-curve of the form $\Phi_k\circ \tilde\Delta$ for a curve $\tilde \Delta\in C^1([0,1],\bar D^2)$ with $\tilde \Delta(0)=-1$, $\tilde \Delta(1)=1$ has a unit tangent vector describing an arc of length no less than $|\alpha|-o(1)$. If such a curve is minimizing $D_k$, since by Proposition \ref{PropDK} its curvature is bounded by $\bar \kappa$, its length cannot go to zero as $k\to \infty$. But this contradicts that $p^+$ and $p^-$ are pinched points , since if $\Delta_k$ is a geodesic minimizing $D_k(\sigma_k(p^+),\sigma_k(p^-))$ (with lenght going to $0$ since $p^+$ and $p^-$ are pinched), then joining $\Delta_k$ with the two arcs $\mathrm{arc}(\sigma_k(p^\pm),\pm 1)$ and using \eqref{arco0} one would obtain paths joining $-1$ to $1$ of $D_k$-length going to 0.

\medskip

The only case left is $\alpha=\pi$, and this completes the proof.
~\hfill$\Box$
\par
\medskip

In the proof of Proposition \ref{limconst} we have used the following.

\begin{Lemma}\label{closed}
Let $\Delta\in W^{2,\infty}([0,L],\C)$ be a curve satisfying $|\dot\Delta(t)|=1$ for every $t\in [0,L]$ and $\Delta(0)=\Delta(L)$. Then
$$\int_0^L  |\kappa (t)|dt>\pi,$$
where $\kappa$ is the curvature of $\Delta$.
\end{Lemma}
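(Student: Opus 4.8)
The plan is to reduce the statement to an elementary fact about the turning angle of $\Delta$. Since $\Delta\in W^{2,\infty}$ and $|\dot\Delta|\equiv 1$, the tangent indicatrix $\dot\Delta\colon[0,L]\to S^1$ is Lipschitz, and since the interval is simply connected it lifts to a function $\theta\in W^{1,\infty}([0,L],\R)$ with $\dot\Delta(t)=e^{i\theta(t)}$; concretely one takes $\theta(t)=\theta(0)+\int_0^t\langle i\dot\Delta(s),\ddot\Delta(s)\rangle\,ds$ with $\theta(0):=\arg\dot\Delta(0)$, and checks that $\dot\Delta\,e^{-i\theta}$ has a.e.-zero derivative, hence is $\equiv 1$. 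Differentiating the relation $\dot\Delta=e^{i\theta}$ gives $\ddot\Delta=i\dot\theta\,e^{i\theta}$, so $|\kappa(t)|=|\ddot\Delta(t)|=|\dot\theta(t)|$ and therefore
\[
\int_0^L|\kappa(t)|\,dt=\int_0^L|\dot\theta(t)|\,dt\ \ge\ \max_{[0,L]}\theta-\min_{[0,L]}\theta .
\]

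I would then argue by contradiction, assuming $\int_0^L|\kappa|\,dt\le\pi$. By the above, $M-m\le\pi$ where $m:=\min\theta$ and $M:=\max\theta$. Put $\phi:=\tfrac{m+M}{2}$ and $v:=e^{i\phi}$; then $\theta(t)-\phi\in[-\tfrac\pi2,\tfrac\pi2]$ for every $t$, so $\langle\dot\Delta(t),v\rangle=\cos(\theta(t)-\phi)\ge 0$ on $[0,L]$. Since $\Delta$ is closed,
\[
0=\big\langle\Delta(L)-\Delta(0),v\big\rangle=\int_0^L\langle\dot\Delta(t),v\rangle\,dt=\int_0^L\cos(\theta(t)-\phi)\,dt ,
\]
and as the integrand is continuous and nonnegative it must vanish identically. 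Thus $\theta(t)-\phi\equiv\pm\tfrac\pi2$, and by continuity of $\theta$ the sign is constant on $[0,L]$; hence $\dot\Delta$ is a constant unit vector, $\Delta$ is affine, and $\Delta(L)-\Delta(0)=L\,\dot\Delta(0)\ne 0$ because $L>0$ — contradicting $\Delta(0)=\Delta(L)$.

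The only delicate point is the borderline case $\int_0^L|\kappa|\,dt=\pi$: the elementary estimate then only yields $\int_0^L\cos(\theta-\phi)\,dt\ge 0$ rather than a strict inequality, and one must rule out equality. This is precisely what the last step does — equality forces $\Delta$ to be a straight segment, which cannot close up — and it is the continuity of $\theta$ (equivalently, of the unit tangent $\dot\Delta$) that prevents $\theta-\phi$ from switching between $+\tfrac\pi2$ and $-\tfrac\pi2$. A minor point worth flagging is that the statement tacitly presupposes $L>0$, which is exactly what delivers the final contradiction.
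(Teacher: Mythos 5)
Your proof is correct and follows essentially the same route as the paper's: lift the unit tangent to a continuous angle $\theta$ with $|\dot\theta|=|\kappa|$, bound the integral below by the oscillation of $\theta$, and use the midpoint direction $v=e^{i\phi}$ to show $\langle\dot\Delta,v\rangle\ge 0$, which is incompatible with $\Delta(0)=\Delta(L)$. Your handling of the borderline case $\int_0^L|\kappa|\,dt=\pi$ (equality forces $\theta-\phi\equiv\pm\tfrac\pi2$, hence an affine curve that cannot close) is a cleaner version of the paper's brief remark that equality can hold only on a proper subset, so nothing is missing.
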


\begin{proof} Let $\theta:[0,L]\to \R$ be a continuous function such that $\dot \Delta(t)=e^{i\theta(t)}$ for $t\in [0,L]$. Then it is easy to see that $\dot \theta =\kappa$. We have
$\theta([0,L])=[\theta_-,\theta_+]\subset\R$ for some $\theta_-,\theta_+\in \R$.
Assume now that
\begin{equation}\label{thetabound}
\theta_+-\theta_-\le \pi,
\end{equation}
and set
$$\bar \theta :=\frac{\theta_+-\theta_-}{2},\quad v:= e^{i\bar\theta}.$$
Then since $|\theta(t)-\bar \theta|\le \frac{\pi}{2}$ for every $t\in [0,L]$, we have
$$\frac{d}{dt}\langle \Delta(t),v\rangle=\langle \dot\Delta(t),v\rangle =\langle e^{i\theta(t)}, e^{i\bar\theta}\rangle\ge 0,$$
with identity possible only for a proper subset of $[0,L]$,  where $|\theta(t)-\bar\theta|= \frac{\pi}{2}$. But this contradicts that $\Delta(0)=\Delta(L)$. In particular \eqref{thetabound} cannot hold, and we get
$$\int_0^L |\kappa(t)|dt =\int_0^L|\dot \theta(t)|dt\ge \mathrm{osc}(\theta)=\theta_+-\theta_->\pi.$$
\end{proof}

\begin{Lemma}\label{closed2}
Let $\Delta\in W^{2,\infty}([0,L],\C)$ be a curve satisfying $\dot\Delta(t)= 1$ for every $t\in [0,L]$. Assume that
\begin{equation}\label{condgamma1}
\Re(\Delta(0))=\Re(\Delta(L)),
\end{equation}
and  that for some (small) $\ve>0$ one has
\begin{equation}\label{condgamma2}
\Im(\dot\Delta(0))<\ve,\quad \Im(\dot\Delta(L))>-\ve.
\end{equation}
Then
$$\int_0^L  |\kappa (t)|dt>\frac{\pi}{2}-C\ve,$$
where $\kappa$ is the curvature of $\Delta$ and $C$ is a universal constant.
\end{Lemma}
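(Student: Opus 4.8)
The plan is to pass, as in the proof of Lemma~\ref{closed}, to the tangent angle function. Since $|\dot\Delta|\equiv1$ and $\dot\Delta\in W^{1,\infty}([0,L],S^1)$, we write $\dot\Delta(t)=e^{i\theta(t)}$ for a Lipschitz lift $\theta\colon[0,L]\to\R$; then $\dot\theta=\kappa$ a.e., hence
$$\int_0^L|\kappa(t)|\,dt=\int_0^L|\dot\theta(t)|\,dt\ \ge\ \mathrm{osc}_{[0,L]}\theta\ =\ \theta_+-\theta_-,$$
where $[\theta_-,\theta_+]:=\theta([0,L])$ is the compact connected range of $\theta$. So it suffices to prove $\theta_+-\theta_->\frac\pi2-C\ve$ for a suitable universal constant $C$, and I argue by contradiction, assuming $\theta_+-\theta_-\le\frac\pi2-C\ve$ (we may assume $\frac\pi2-C\ve\ge0$, since otherwise the assumption is vacuous; then $C\ve\le\frac\pi2$ and $\theta_+-\theta_-<\frac\pi2$).

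Next I read off the hypotheses. Condition~\eqref{condgamma1} gives $\int_0^L\cos\theta(t)\,dt=\Re\,\Delta(L)-\Re\,\Delta(0)=0$, while~\eqref{condgamma2} gives $\sin\theta(0)<\ve$ and $\sin\theta(L)>-\ve$. Being continuous with vanishing integral, $\cos\theta$ is either identically $0$ or takes both a strictly positive and a strictly negative value. In the first case $\theta$ is a constant of the form $\frac\pi2+k\pi$ (a continuous map into $\frac\pi2+\pi\Z$ is constant), so $\dot\Delta\equiv\pm i$ and one of $\sin\theta(0)=1$, $\sin\theta(L)=-1$ contradicts~\eqref{condgamma2} (recall $C\ve\le\frac\pi2$, so $\ve\le1$ once $C=\frac\pi2$). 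Hence $\cos\theta$ changes sign, and by the intermediate value theorem the interval $[\theta_-,\theta_+]$ contains in its interior a zero $\zeta=\frac\pi2+k\pi$ of $\cos$; since the interval has length $<\frac\pi2<\pi$, such a $\zeta$ is unique.

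From $\theta_-<\zeta<\theta_+$ and $\theta_+-\theta_-\le\frac\pi2-C\ve$ we get $|\theta(t)-\zeta|<\frac\pi2-C\ve$ for \emph{every} $t\in[0,L]$, and I finish by splitting on the parity of $k$. If $k$ is even, then modulo $2\pi$ each $\theta(t)$ lies in $(C\ve,\pi-C\ve)$, so $\sin\theta(t)\ge\sin(C\ve)\ge\frac2\pi C\ve$ for all $t$ (using $\sin x\ge\frac2\pi x$ on $[0,\frac\pi2]$); evaluating at $t=0$ gives $\ve>\sin\theta(0)\ge\frac2\pi C\ve$, i.e.\ $C<\frac\pi2$. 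If $k$ is odd, then modulo $2\pi$ each $\theta(t)$ lies in $(-\pi+C\ve,-C\ve)$, so $\sin\theta(t)\le-\sin(C\ve)\le-\frac2\pi C\ve$ for all $t$; evaluating instead at $t=L$ gives $-\ve<\sin\theta(L)\le-\frac2\pi C\ve$, again $C<\frac\pi2$. Therefore the choice $C=\frac\pi2$ yields a contradiction in both cases, and the lemma holds with the universal constant $C=\frac\pi2$.

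The argument is elementary; the step needing care is the final parity split, where one must notice that the even case is ruled out by the hypothesis at $t=0$ and the odd case by the hypothesis at $t=L$, so that both one-sided tangent bounds in~\eqref{condgamma2} are genuinely used. This is also the reason one only recovers $\frac\pi2-C\ve$ here, rather than the bound $\pi$ of Lemma~\ref{closed}: the closure-type condition concerns only real parts, and the endpoint tangent conditions are one-sided and hold only up to the error $\ve$.
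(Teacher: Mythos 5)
Your proof is correct and takes essentially the same route as the paper: lift $\dot\Delta=e^{i\theta}$, bound $\int_0^L|\kappa|\,dt$ below by the oscillation of $\theta$, and exploit the sign change of $\cos\theta$ forced by $\Re(\Delta(0))=\Re(\Delta(L))$ together with the endpoint conditions on $\sin\theta$. You simply make explicit (via the contradiction argument, the parity split and the constant $C=\pi/2$) the step the paper compresses into ``we immediately infer that the oscillation of $\theta$ is at least $\frac{\pi}{2}-C\ve$''.
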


\begin{proof} Let $\theta\in W^{1,\infty}([0,L],\R)$ be as in the proof of Lemma \ref{closed}. Then \eqref{condgamma1} implies that for some $t_1,t_2\in [0,L]$ one has $\Re(e^{i\theta(t_1)})\le 0$, $\Re(e^{i\theta(t_2)})\ge 0$ (otherwise $\dot\Delta$ would be pointing always right or always left). Condition \eqref{condgamma2} implies that $\Im(e^{i\theta(0)})\le \ve$, $\Im(e^{i\theta(L)})>-\ve$. Then we immediately infer that the oscillation of $\theta$ is at least $\frac{\pi}{2}-C\ve$, and we conclude as in the proof of Lemma \ref{closed}, using that $\kappa=\dot \theta$.
\end{proof}

Next we prove some properties concerning the set $\mathcal{P}$\,.\par

\begin{Lemma}\label{uniquepinched}
Let $p^+$, $p^-$ be dual pinched points, and assume that $\sigma_k(p^\pm)=\pm 1$. Then $\Phi_k$ is as in case 2 of Theorem \ref{convergencephi}, $B=\{a_1,a_2\}$ and $\pm 1\not\in B$.
Moreover every pinched point $p$ has only one dual $p'$ and
$|\mathrm{arc}(p,p')|\ge \frac{C}{\bar \kappa}$.
\end{Lemma}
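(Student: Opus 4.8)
The plan is to combine the dichotomy of Theorem \ref{convergencephi} with the quantization established in Proposition \ref{limconst}. First I would argue that if $p^+,p^-$ are dual pinched points then we cannot be in case 1 of Theorem \ref{convergencephi}: in case 1 the sequence $(\bar\lambda_k)$ is bounded and $\Phi_\infty$ is a genuine holomorphic immersion of $\bar D^2\setminus B$, so $\Phi_k$ converges in $W^{2,p}_{\loc}(\bar D^2\setminus B)$ to an immersion; in particular $D_k(q,q')\to D_\infty(q,q')$ where $D_\infty$ is the intrinsic distance on $\Phi_\infty(\bar D^2\setminus B)$, which is strictly positive for $q\ne q'$ away from $B$ (and near a point of $B$ a short-arc estimate as in \eqref{estlambdak} rules out pinching against the boundary). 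Hence $\lim_k D_k(p^+,p^-)=0$ forces $p^+=p^-$, a contradiction. Therefore $\Phi_k$ is as in case 2, and Proposition \ref{limconst} applies: $N\in\{1,2\}$, and if $N=1$ there is no pinching at all (the argument of Claim 1 in Proposition \ref{limconst} shows the single concentration point cannot produce two distinct pinched points, since $\sigma_k^{-1}$ would have to collapse the whole circle to one point), so necessarily $N=2$, $B=\{a_1,a_2\}$. That $\pm1\notin B$ follows because $\sigma_k(p^\pm)=\pm1$ and $\sigma_k^{-1}\to p^\pm$ locally uniformly on $\mathcal C_\pm$ by Proposition \ref{limconst}, while points of $B$ are where $\sigma_k^{-1}$ blows up the derivative; more directly, if say $1\in B$ then a short-arc estimate of the type \eqref{estlambdak} around $\sigma_k(p^+)=1$ together with the pinching $D_k(p^+,p^-)\to0$ would give $\int_{S^1\cap B(1,\delta)}e^{\lambda_k}\,d\theta\to0$, contradicting $1\in B$.

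Next I would prove uniqueness of the dual. Suppose $p$ had two distinct duals $p'$ and $p''$. Applying the first part of the lemma to the dual pair $(p,p')$ we get that, after composing $\Phi_k$ with the M\"obius maps normalizing $\sigma_k(p)=1$, $\sigma_k(p')=-1$, we are in case 2 with exactly two concentration points $\{a_1,a_2\}$ and $\sigma_k^{-1}\to p$ on one component $\mathcal C_+$ of $S^1\setminus\{a_1,a_2\}$ and $\sigma_k^{-1}\to p'$ on the other, $\mathcal C_-$. But then, for $k$ large, $\sigma_k^{-1}$ takes values in a tiny neighbourhood of $p$ on all of $\mathcal C_+$ except near $a_1,a_2$, and similarly near $p'$ on $\mathcal C_-$; since $p''\ne p,p'$ this means $\sigma_k(p'')$ lies in an arbitrarily small neighbourhood of $\{a_1,a_2\}$, say of $a_1$. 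Now repeat the same normalization for the pair $(p,p'')$: we would again land in case 2 with a two-point blow-up set $B'$ and $\sigma_k^{-1}$ collapsing $\mathcal C'_\pm$ to $p$ and $p''$. Comparing the two pictures — the blow-up set is intrinsic to the sequence $(\lambda_k)$ up to M\"obius action, and so is the pinching relation by the Remark after Definition \ref{pinched} — leads to a contradiction, because $p'$ and $p''$ cannot both be "the other" limit of $\sigma_k^{-1}$. Concretely: $\sigma_k^{-1}$ restricted to $S^1\setminus(B(a_1,\delta)\cup B(a_2,\delta))$ converges to a locally constant function with exactly two values (the two limits on the two components), so $p$ has at most one dual.

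Finally, the quantitative bound $|\mathrm{arc}(p,p')|\ge C/\bar\kappa$. The idea is that $\gamma_\infty(p)=\gamma_\infty(p')$ and $\dot\gamma_\infty(p)=-\dot\gamma_\infty(p')$ (from Proposition \ref{limconst}), while $\gamma_\infty\in W^{2,\infty}(S^1,\C)$ has $|\dot\gamma_\infty|\equiv\frac{\bar L}{2\pi}$ — wait, $|\dot\gamma_\infty|\equiv L_\infty/2\pi$ — and curvature bounded by $\bar\kappa$. Normalize arclength so $|\dot\gamma_\infty|\equiv1$; then along the shorter arc from $p$ to $p'$ the unit tangent $\dot\gamma_\infty$ turns by a total angle at most $\bar\kappa\cdot\ell$, where $\ell$ is the arclength of that arc. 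But the tangent directions at the endpoints are antipodal, so the total turning is at least $\pi$ (mod $2\pi$); since $\dot\gamma_\infty$ is continuous the turning along the arc is at least $\pi$, giving $\bar\kappa\,\ell\ge\pi$, i.e. $\ell\ge\pi/\bar\kappa$. Translating back to the parameter $\theta$ on $S^1$ via $|\dot\gamma_k|\equiv L_k/2\pi$ and $L_k\le\bar L$ gives $|\mathrm{arc}(p,p')|\ge \frac{2\pi}{L_k}\cdot\frac{\pi}{\bar\kappa}\ge\frac{2\pi^2}{\bar L\,\bar\kappa}=:\frac{C}{\bar\kappa}$. I expect the main obstacle to be the uniqueness-of-dual step: making rigorous the comparison between the two "pictures" obtained from two different M\"obius normalizations requires carefully tracking that the blow-up set and the collapsing behaviour of $\sigma_k^{-1}$ are compatible, i.e. invoking the M\"obius-invariance Remark after Definition \ref{pinched} in a way that pins down $\sigma_k^{-1}$ on the complement of $B$ as a two-valued limit; the turning-angle estimate and the case-2 reduction are comparatively routine given Theorem \ref{convergencephi} and Proposition \ref{limconst}.
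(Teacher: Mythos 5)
There are two genuine gaps. First, your proof that $\pm 1\not\in B$ does not work. The ``more direct'' argument misreads what pinching gives: $D_k(p^+,p^-)\to 0$ concerns the conformal distance between the points $\sigma_k(p^\pm)=\pm 1$, not the parameter arc $\mathrm{arc}(p^+,p^-)$, which is a \emph{fixed} arc of positive length since $p^+\ne p^-$ are fixed; hence no estimate of the type \eqref{estlambdak} yields $\int_{S^1\cap B(1,\delta)}e^{\lambda_k}d\theta\to 0$, and indeed when $1\in B$ the preimage $\sigma_k^{-1}(S^1\cap B(1,\delta))$ typically has length bounded below (a whole bubble of $\gamma_k$ sits there). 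The other justification (``$\sigma_k^{-1}\to p^\pm$ locally uniformly on $\mathcal{C}_\pm$ by Proposition \ref{limconst}'') is circular: that convergence holds only away from $B$, so it says nothing about the point $1$ precisely in the case to be excluded, $1\in B$. The paper's proof of this step is genuinely more delicate: assuming $a_1=1$, it composes with M\"obius maps $f_k(z)=\frac{z-t_k}{1-t_kz}$, $t_k\uparrow 1$, normalized by \eqref{sttildePhi} so that $\tilde\Phi_k=\Phi_k\circ f_k$ carries length $\frac{\pi}{2\bar\kappa}$ (hence curvature mass $<\pi$) near $1$, is forced into case 1 of Theorem \ref{convergencephi}, and still satisfies $\tilde\sigma_k(p^\pm)=\pm 1$ since $f_k$ fixes $\pm 1$; then $|\tilde\Phi_k'|\ge C$ near $1$ gives \eqref{stimaphi1} for $\tilde\Phi_k$, contradicting the M\"obius-invariant pinching.

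Second, the uniqueness of the dual, which you yourself flag as the main obstacle, is not established by comparing two normalizations. In the normalization adapted to $(p,p')$ the limit of $\sigma_k^{-1}$ away from $\{a_1,a_2\}$ does take only the two values $p,p'$, but this does not exclude a further dual $p''$: it only shows $\sigma_k(p'')\to a_1$ (say), i.e.\ $p''$ hides inside a blow-up point, and this is perfectly consistent --- entire equivalence classes collapse into blow-up points (this is the content of Theorem \ref{mainth2}) --- so the two pictures obtained from the two normalizations are not incompatible and no contradiction arises. The paper closes the argument differently: renormalizing an arbitrary dual pair to $\pm 1$ and applying Proposition \ref{limconst}, it obtains $\dot\gamma_\infty(p^+)=-\dot\gamma_\infty(p^-)$ for \emph{every} dual pair; if $p$ had two duals $p',p''$, the triangle inequality for $D_k$ makes $p'$ and $p''$ dual to each other, and the three antipodality relations would force $\dot\gamma_\infty(p)=0$, impossible since $|\dot\gamma_\infty|$ is a positive constant. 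Your turning-angle derivation of $|\mathrm{arc}(p,p')|\ge C/\bar\kappa$ is a fine alternative to the paper's argument (each of the two arcs joining $\pm1$ carries curvature mass at least $\pi-o(1)$), but it relies precisely on the antipodal-tangent fact that your argument never establishes. Note also that your exclusion of $N=1$ (collapse of $\sigma_k^{-1}$ on $S^1\setminus\{a_1\}$) only yields $p^+=p^-$ once you already know $\pm1\ne a_1$, so the paper's order of steps --- first $\pm1\notin B$, then $N=2$ --- is not optional.
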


\begin{proof}
Let us start from the first claim. If $\Phi_k$ is as in case $1$ of Theorem \ref{convergencephi}, then
\begin{equation}\label{stimaphi1}
\int_{\Delta_k} |\Phi_k'(z)| |dz|\ge C\quad \text{for every }\Delta_k\text{ with }\Delta_k(0)=-1,\, \Delta_k(1)=1,
\end{equation}
in contrast with the fact that $p^+$ and $p^-$ are pinched. Then we are in case 2 of Theorem \ref{convergencephi} and by Proposition \ref{limconst} we have $N\in \{1,2\}$. Assume now that $a_1=1=\sigma_k(p^+)$ (the reasoning is similar if $a_1=-1$). Then we compose $\Phi_k$ with M\"obius diffeomorphism $f_k(z)=\frac{z-t_k}{1-t_kz}$ where $t_k\uparrow 1$ is chosen so that for a fixed small $\delta>0$ we have for $k$ large enough
\begin{equation}\label{sttildePhi}
\int_{S^1\cap B_\delta(1)} |(\Phi_k\circ f_k)'(z)| |dz| =\frac{\pi}{2\bar \kappa}.
\end{equation}
In other words the effect of $f_k$ is to stretch the disk to remove the concentration at the point $a_1=1$, concentrating the disk towards $-1$. Then $\tilde\Phi_k:=\Phi_k\circ f_k$ is necessarily as in case 1 of Theorem \ref{convergencephi}. Moreover the corresponding $\tilde \sigma_k:= f_k^{-1}\circ \sigma_k$ still satisfies $\tilde \sigma_k(p^\pm)=\pm 1$, since $f_k$ leaves $\pm 1$ fixed. This together with \eqref{sttildePhi} contradicts that $p^+$ and $p^-$ are pinched, since by conformality and convergence of $\tilde\Phi_k$, in a neighborhood $B_{\delta/2}(1)$ we have $|\tilde\Phi_k'|\ge C$, hence  \eqref{stimaphi1} holds with $\tilde \Phi_k$ instead of $\Phi_k$. Therefore, going back to the original maps $\Phi_k$ we have proven that $\pm 1\not\in B$.

To rule out the case $N=1$ it suffices to observe that in this case $\sigma_k(p^+)$ and $\sigma_k(p^-)$ would belong to the same connected component of $S^1\setminus B$, hence, since $\Phi_k$ is as in case 2 of Theorem \ref{convergencephi}, we would get $|\mathrm{arc}(\sigma_k^{-1}(1),\sigma_k^{-1}(1))|\to 0$, which is absurd, since $\sigma_k^{-1}(\pm 1)=p^{\pm}$ and $p^+\ne p^-$.

\medskip

Let us now prove that every pinched point $p$ has a unique dual $p'$. In order to do that, it suffices to prove that given any 2 pinched points $p^+$, $p^-$ dual to each other, then $\dot \gamma_\infty(p^+)=-\dot\gamma_\infty(p^-)$. Let us therefore consider two pinched points $p^+$, $p^-$ dual to each other. Up to considering $\tilde \Phi_k:=\Phi_k\circ f_k$ and $\tilde \sigma_k=f_k^{-1}\circ \sigma_k$ for suitable M\"obius transformations $f_k$, we can assume that $\tilde \sigma_k(p^\pm)=\pm 1$. Then, by the previous part of the lemma, $\tilde \Phi_k$  blows up at two points $a_1,a_2$ different from $\pm1$. To such $\tilde \Phi_k$ we can then apply Proposition \ref{limconst} with $\mathcal{C}_\pm$ being the connected component of $S^1\setminus \{a_1,a_2\}$ containing $\pm1$. We then infer that $\dot \gamma_\infty(p^+)=-\dot\gamma_\infty(p^-)$. 

The last  claim follows from the fact that both arcs $\mathcal{A}_1$, $\mathcal{A}_1$ joining $\tilde \sigma_k(p^\pm)=\pm1$ contain a blow up point $a_1$ or $a_2$, for which
$$\int_{\mathcal{A}_i} |\tilde \kappa_k|e^{\tilde \lambda_k}|dz| = \int_{f_k(\mathcal{A}_i)} |\kappa_k|e^{\lambda_k}|dz| \ge \pi-o(1).$$
\end{proof}

\begin{Lemma}\label{close}
The set $\mathcal{P}$  is closed.
\end{Lemma}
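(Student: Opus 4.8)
The plan is to exploit the fact, recorded in Proposition \ref{PropDK}, that the distances $D_k$ converge uniformly on $S^1\times S^1$ to a limit function $D_\infty$, which is again $1$-Lipschitz and hence (jointly) continuous. In these terms a point $p\in S^1$ is pinched precisely when there exists $p'\neq p$ with $D_\infty(p,p')=0$, so closedness of $\mathcal P$ becomes a continuity statement.

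First I would take a sequence $(p_n)\subset\mathcal P$ with $p_n\to p\in S^1$ (if $\mathcal P=\emptyset$ there is nothing to prove). By Lemma \ref{uniquepinched} each $p_n$ has a unique dual $p_n'\in\mathcal P$, and moreover $|\mathrm{arc}(p_n,p_n')|\ge C/\bar\kappa$. Passing to a subsequence we may assume $p_n'\to p'$ for some $p'\in S^1$; by continuity of the arc-length function, $|\mathrm{arc}(p,p')|\ge C/\bar\kappa>0$, so in particular $p\neq p'$.

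It then remains to identify $p'$ as a dual of $p$. Since $p_n'$ is dual to $p_n$, we have $D_\infty(p_n,p_n')=\lim_{k\to\infty}D_k(p_n,p_n')=0$ for every $n$. Using the joint continuity of $D_\infty$ (a uniform limit of $1$-Lipschitz functions is $1$-Lipschitz), we get $D_\infty(p,p')=\lim_{n\to\infty}D_\infty(p_n,p_n')=0$, i.e. $\lim_{k\to\infty}D_k(p,p')=0$. Hence $p$ is a pinched point with dual $p'$, so $p\in\mathcal P$ and $\mathcal P$ is closed.

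The only delicate point is ensuring that the limit $p'$ of the duals does not collapse onto $p$: a priori $p_n'$ could approach $p_n$ (equivalently $p$), which would destroy the requirement $p'\neq p$ in Definition \ref{pinched}. This is exactly ruled out by the uniform lower bound $|\mathrm{arc}(p,p')|\ge C/\bar\kappa$ between a pinched point and its dual established in Lemma \ref{uniquepinched}; everything else is a routine continuity/compactness argument based on the uniform convergence $D_k\to D_\infty$ from Proposition \ref{PropDK}.
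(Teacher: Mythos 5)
Your proposal is correct and follows essentially the same strategy as the paper: you secure $p\neq p'$ via the uniform lower bound on the arc distance between a pinched point and its dual, and you pass to the limit in the pinching condition using the equi-Lipschitz property of $D_k$ from Proposition \ref{PropDK}. The only cosmetic difference is that you package the limit passage through the uniform limit $D_\infty$, whereas the paper redoes the underlying mechanism explicitly, concatenating the near-minimizing paths $\Delta_{n,k}$ with the boundary arcs $\mathrm{arc}(\sigma_k(p_n),\sigma_k(p_\infty))$ and $\mathrm{arc}(\sigma_k(p'_n),\sigma_k(p'_\infty))$ and using $\gamma_k\to\gamma_\infty$ in $C^1$ to control their lengths; this concatenation is precisely what proves the Lipschitz bound you invoke, so the two arguments coincide in substance.
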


\begin{proof}
Let $\{p_n\}$ and $\{p_n^{\prime}\}$ be respectively a sequence of pinched points and their duals, with $p_n\to p_{\infty}$ and $p_n^{\prime}\to p^{\prime}_{\infty}$ as $k\to +\infty\,.$
    \par
    We first observe that 
    $|p_n-p^{\prime}_n|\ge C>0$ for all $n\ge 0$, hence $p_{\infty}\ne p^{\prime}_{\infty}\,.$\par
    For all $p_n$ there exists curves $\Delta_{n,k}\subseteq \bar{D}^2$ with
    $\partial \Delta_{n,k}=\{\sigma_k(p_n),\sigma(p^{\prime}_n)\}$ and
$$\lim_{k\to+\infty}\int_{\Delta_{n,k}}|\Phi_k^{\prime}(z)||dz|=0\,.$$
    Since $\gamma_k\to\gamma_{\infty}$ in $C^1(S^1)$ as $k\to+\infty\,,$ 
    we have
    \begin{equation}\begin{split}\label{limgamma}
 & \lim_{k\to +\infty} \lim_{ n\to+\infty} \int_{\mathrm{arc}(p_n,p_\infty)}|  \dot\gamma_k(t)|dt =0\\
 &    \lim_{k\to +\infty} \lim_{ n\to+\infty}\int_{\mathrm{arc}(p'_n,p'_\infty)}| \dot\gamma_k(t)|dt =0\,.
\end{split}
 \end{equation}
    We set
    $$
    \tilde\Delta_{n,k}:=\Delta_{n,k}\cup\mbox{arc$(\sigma_k(p_n),\sigma_k(p_\infty))$}\cup\mbox{arc$(\sigma_k(p_n),\sigma_k(p_\infty))$}\,.$$
    For all $k$, we have $ \tilde\Delta_{n,k}\to \tilde \Delta_{\infty,k}$  as $n\to +\infty$ with
     $\partial \tilde\Delta_{k,\infty}=\{\sigma_k(p_\infty),\sigma_k(p'_\infty)\}$ and since $\Phi_k\circ \sigma_k=\gamma_k$ on $S^1$ from \rec{limgamma} we have
   $$
 \lim_{ k\to+\infty }\int_{\tilde\Delta_{k,\infty}}|\Phi_k^{\prime}(z)||dz|=  \lim_{k\to +\infty} \lim_{ n\to+\infty}\int_{\tilde \Delta_{n,k}}|\Phi_k^{\prime}(z)| |dz|=0\,.$$
 Hence $p_\infty$ is by definition a pinched point and $p'_\infty$ is its dual.
\end{proof}

We introduce now the following equivalence relation on the set $S^{1}\setminus\{{\mathcal{P}}\}\,.$
\begin{Definition}\label{equiv}
 Given $p,q\in S^{1}\setminus\{{\mathcal{P}}\}$ we say that $p\sim q$  if and only if
 there exists a sequence of paths $\Delta_k\colon[0,1]\to \bar D^2$ with $\Delta_k(0)=\sigma_k(p),
\Delta_k(1)= \sigma_k(q)$ such that
\begin{equation}\label{Dk>C}
\liminf_{k\to +\infty} d_k(\Delta_k,\sigma_k(\mathcal{P}))>0\,,
\end{equation}
where $d_k:\bar D^2\times \bar D^2\to \R^+$ is the distance defined as
\[\begin{split}
d_{k}(z,w)=&\inf\bigg\{\left(\int_0^1 |\Phi'_k(\Delta(t))|^2|\dot \Delta(t)|^2dt\right)^\frac12, \\
&\qquad \Delta\in W^{1,2}([0,1],\bar D^2), ~\Delta(0)=z, ~\Delta(1)=w\bigg\}.
\end{split}\]
\end{Definition}

 \begin{Proposition}\label{eqcl}
 Let $q\in S^{1}\setminus\{{\mathcal{P}}\}$, $\mathcal{A}_q$ and $\mathcal{B}_q$ be respectively the    equivalence class and  the connected component   containing $q\,.$ Then $\mathcal{B}_q\subseteq\mathcal{A}_q$\,.
 \end{Proposition}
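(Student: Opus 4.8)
The plan is to exhibit, for an arbitrary point $p\in\mathcal B_q$, an explicit sequence of admissible paths witnessing $p\sim q$. First I would record a topological preliminary: by Lemma \ref{close} the set $\mathcal P$ is closed, so $S^1\setminus\mathcal P$ is open and its connected components are open arcs; in particular $\mathcal B_q$ is an open arc disjoint from $\mathcal P$. Consequently, for $p\in\mathcal B_q$ the closed sub-arc $I\subset\mathcal B_q$ joining $p$ to $q$ is a \emph{compact} subset of $S^1\setminus\mathcal P$, so that $I\cap\mathcal P=\emptyset$.

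Next I would choose $\Delta_k$ to be (a $W^{1,2}$-parametrization of) the image $\sigma_k(I)\subset S^1\subset\bar D^2$, which is legitimate since each $\sigma_k$ is a diffeomorphism of $S^1$, and which joins $\sigma_k(p)$ to $\sigma_k(q)$. The key observation is that, comparing the distance $d_k$ of Definition \ref{equiv} with \eqref{dk}, one has $d_k(\sigma_k(z),\sigma_k(w))=D_k(z,w)$ for all $z,w\in S^1$, whence
$$d_k\big(\Delta_k,\sigma_k(\mathcal P)\big)=\inf_{z\in I,\ w\in\mathcal P}D_k(z,w),$$
and it only remains to show that this quantity stays bounded away from $0$ as $k\to\infty$.

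For this I would argue by contradiction. If $\inf_{z\in I,\,w\in\mathcal P}D_k(z,w)\to 0$ along a subsequence, pick $z_k\in I$ and $w_k\in\mathcal P$ with $D_k(z_k,w_k)\to 0$; by compactness of $I$ and of $\mathcal P$ pass to a further subsequence with $z_k\to z_\infty\in I$ and $w_k\to w_\infty\in\mathcal P$. Since the $D_k$ are equi-$1$-Lipschitz (Proposition \ref{PropDK}), $|D_k(z_k,w_k)-D_k(z_\infty,w_\infty)|\le|z_k-z_\infty|+|w_k-w_\infty|\to 0$, so $D_k(z_\infty,w_\infty)\to 0$. If $z_\infty\ne w_\infty$ this makes $z_\infty$ a pinched point, with dual $w_\infty$, by Definition \ref{pinched}; if $z_\infty=w_\infty$ then $z_\infty\in\mathcal P$ directly. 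Either way $z_\infty\in I\cap\mathcal P$, contradicting $I\cap\mathcal P=\emptyset$. Hence $\liminf_k d_k(\Delta_k,\sigma_k(\mathcal P))>0$, i.e. $p\sim q$; as $p\in\mathcal B_q$ was arbitrary, $\mathcal B_q\subseteq\mathcal A_q$.

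The only genuinely delicate point is this last step: upgrading the pointwise, for-fixed-pair notion of ``pinched'' to a \emph{uniform} positive lower bound for $D_k(z,w)$ over \emph{all} $z\in I$ and $w\in\mathcal P$. This is exactly where the closedness of $\mathcal P$ (Lemma \ref{close}), the compactness of $I\Subset\mathcal B_q$, and the equi-Lipschitz bound on the $D_k$ from Proposition \ref{PropDK} all come into play; everything else — the choice $\Delta_k=\sigma_k(I)$ and the identity $d_k(\sigma_k(\cdot),\sigma_k(\cdot))=D_k(\cdot,\cdot)$ — is essentially bookkeeping.
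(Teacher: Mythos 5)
Your proof is correct and reaches the statement by a genuinely more direct route than the paper. The paper argues that $\mathcal{A}_q\cap\mathcal{B}_q$ is open and closed in the connected set $\mathcal{B}_q$: the openness half is the substantial part, requiring a three-point M\"obius renormalization $\tilde\Phi_k=\Phi_k\circ f_k$, Theorem \ref{convergencephi} and Proposition \ref{limconst} to exclude collapsing and to produce, near $q$, interior segments satisfying \eqref{Dk>C}; the closedness half is a compactness/equi-Lipschitz argument on short boundary arcs. You bypass the open/closed dichotomy entirely by taking as competitor in Definition \ref{equiv} the boundary arc $\Delta_k=\sigma_k(I)$ itself, after which the required bound $\liminf_k\inf_{z\in I,\,w\in\mathcal{P}}D_k(z,w)>0$ is essentially the paper's closedness argument run once on the whole compact arc $I$ rather than on shrinking arcs: it uses only Lemma \ref{close} (so that $I$ is a compact subset of $S^1\setminus\mathcal{P}$ and $\mathcal{P}$ is compact), the identity $D_k(z,w)=d_k(\sigma_k(z),\sigma_k(w))$ coming from comparing \eqref{dk} with Definition \ref{equiv}, and the uniform Lipschitz bound of Proposition \ref{PropDK}. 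What the paper's heavier route buys is the renormalized limits $\tilde\Phi_\infty$, which are needed immediately afterwards in Proposition \ref{eqclbis}, so that machinery is not wasted; your route buys economy for the present proposition while relying only on ingredients the paper has already established. One caveat, which you share with the paper rather than introduce: in the contradiction step you conclude that $z_\infty$ is pinched from $D_k(z_\infty,w_\infty)\to 0$ along a subsequence, whereas Definition \ref{pinched} formally demands the full limit; the paper's own closedness step makes the identical inference (passing to subsequences and then asserting $\lim_k D_k(q_\infty,p_\infty)=0$ contradicts $q_\infty\notin\mathcal{P}$), so you are at the same level of rigor, but a scrupulous version of either argument would either state Definition \ref{pinched} with a liminf or track explicitly which subsequence $\mathcal{P}$ refers to, since further extractions can in principle enlarge $\mathcal{P}$.
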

 {\bf Proof of Proposition \ref{eqcl}\,.}   Let $q\in S^{1}\setminus\{{\mathcal{P}}\}$.
 We show that $\mathcal{A}_q\cap \mathcal{B}_q$ is open and closed in $\mathcal{B}_q$\,. 
 
 \par
{\bf  1. $\mathcal{A}_q\cap \mathcal{B}_q$ is open in $\mathcal{B}_q$}.
 Let $\delta>0$ small enough so that $e^{it}q \in  S^{1}\setminus\{{\mathcal{P}}\}$ for $t\in [-2\delta,2\delta]$ and
\begin{equation}\label{epscond}
\int_{\sigma_k(\mathrm{arc}(e^{-2\delta i}q,e^{2\delta i}q))}|\Phi_k'(z)| |dz| <\frac{\pi}{2\bar\kappa}\,.
\end{equation}
 Now set $q_0=e^{-i\delta}q$, $q_1=q$ and $q_2=e^{i\delta}q$. Let $f_k$ be the sequence of M\"obis transformations of $\bar D^2$  such that
 $\tilde \sigma_k(q_0)=1,\tilde \sigma_k(q_1)=e^{\frac{2\pi i}{3}},\tilde \sigma_k(q_2)=e^{\frac{4\pi i}{3}}\,.$  We apply Theorem \ref{convergencephi} to $\tilde\Phi_k:=\Phi_k\circ f_k$ and notice that if we are in case 2 of Theorem \ref{convergencephi}, then there are one or two blow-up points. In the latter case away from the blow-up points $\{a_1,a_2\}$ we have that $\sigma_k^{-1}$ locally converges to two pinched points, which implies that one of the $q_i$'s lies in $\mathcal{P}$, contradiction. In the former case for one couple of points, say $q_1$ and $q_2$ one has
$$\int_{\mathrm{arc}(q_1,q_2)}|\dot \gamma(t)|dt=\int_{\mathrm{arc}(\tilde \sigma_k(q_1),\tilde\sigma_k(q_2))}|\tilde\Phi_k'(z)||dz|\to 0,$$
contradicting that $|\dot\gamma_k|$ is bounded away from $0$ and $|\mathrm{arc}(q_1,q_2)|=\delta$.
 
Therefore we are in case 1 of Theorem \ref{convergencephi} and $\tilde\Phi_k \rightharpoonup\tilde\Phi_\infty$ in $W^{1,2}(\bar D^2)$  and in $W^{2,p}_{\loc}(\bar D^2\setminus B)$, where $\tilde \Phi_{\infty}$ is a holomorphic immersion in $\bar D^2\setminus B$, $B=\{a_1,\dots,a_N\}$ and $e^{\frac{j2\pi i}{3}}\not \in B$ for $i=0,1,2$.
Since $|\tilde \Phi_{\infty}'|>C_\delta>0$ in $\bar D^2\setminus \cup_{i=1}^N B_\delta(a_i)$, for every $p\in \mathrm{arc}(q_0,q_2)$, choosing as $\Delta_k$ the segment joining $\sigma_k(p)$ to $\sigma_k(q)$ satisfies \eqref{Dk>C}, showing that $B_\delta(q)\cap S^1\subset \mathcal{A}_q$.

\medskip


{\bf  2. $\mathcal{A}_q\cap \mathcal{B}_q$ is closed in $\mathcal{B}_q$\,}. 
Let $q_n\in \mathcal{A}_q\cap \mathcal{B}_q$ be such that
 $q_n\to q_{\infty}\in \mathcal{B}_q$.
 For every $n$ there exists $\Delta_n^k$ with 
  $ \Delta_n^k(0)= \sigma_k(q_n )$ and $ \Delta_n^k(1)=\sigma_k(q)$.
and
\begin{equation}\label{liminfnzero}
\liminf_{k\to+\infty}d_k(\Delta^k_n, \sigma_k(\mathcal{P}))>0\,.\end{equation}
Consider now the path
$\Sigma_n^k=\mathrm{arc}(\sigma_k(q_\infty),\sigma_k(q_n ))\cup \Delta_n^k$, joining
$\sigma_k(q_\infty)$ to $\sigma_k(q )\,.$ 
We claim that
$$\liminf_{k\to+\infty}d_k(\Sigma^k_n, \sigma_k(\mathcal{P}))>0\,.$$
Indeed, considering \eqref{liminfnzero}, it suffices to prove that for $n$ sufficiently large
\begin{equation}\label{liminfnzerobis}
\liminf_{k\to+\infty}d_k(\mathrm{arc}(\sigma_k(q_\infty),\sigma_k(q_n )), \sigma_k(\mathcal{P}))>0\,.
\end{equation}
Assume by contradiction that the liminf in \eqref{liminfnzerobis} is zero. 

For every $k$ and $n$, let  $q_n^k \in \mathrm{arc}(q_\infty,q_n)$ and  $p_n^k\in \mathcal{P}$ such that
$$\liminf_{k\to+\infty} D_k(q_n^k,p_n^k)=0.$$
Up to subsequence $q_n^k\to    q_{\infty}$ and $p_n^k\to p_\infty\in  {{\mathcal{P}}}$ as $n,k\to\infty$ and
 $$\lim_{k\to+\infty}\lim_{n\to +\infty}D_k(q_n^k,p_n^k)=\lim_{k\to+\infty}D_k(q_\infty,p_\infty)=0\,,$$
but   this contradicts that $q_\infty\notin  {{\mathcal{P}}} \,.$  This contradiction proves that $q_\infty\in \mathcal{A}_q\cap \mathcal{B}_q\,,$ hence
 $\mathcal{A}_q\cap \mathcal{B}_q$ is closed in $\mathcal{B}_q$\,.
~\hfill $\Box$
 
  \begin{Proposition}\label{eqclbis}
 Let $\mathcal{A}$ be an equivalence class in $S^{1}\setminus\{{\mathcal{P}}\}$\,.
 Then there exists a sequence $f_k\colon \bar D^2\to \bar D^2$  of M\"obius transformations such that
$\tilde \Phi_k:=\Phi_k\circ f_k\rightharpoonup \tilde\Phi_\infty$ in $W^{2,p}_{\loc}(\bar D^2\setminus B$), $B=\{a_1,\ldots,a_N\}$, and letting as usual $\tilde \sigma_k$ be such that $\gamma_k=\tilde\Phi_k\circ\tilde\sigma_k$, one has $\tilde\sigma_k^{-1}\rightharpoonup\psi_\infty$ in $W^{2,p}_{\loc}(S^1\setminus B)$,
\begin{equation}\label{Acond}
\psi_\infty(S^1\setminus B)=\mathcal{A}
\end{equation}
and $\gamma_{\infty}(\mathcal{A})=\tilde\Phi_{\infty}(S^1\setminus B)\,.$ In fact $(\gamma_\infty)_*[\mathcal{A}]=(\tilde \Phi_\infty)_*[S^1\setminus B].$
\end{Proposition}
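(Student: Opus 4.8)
The plan is to fix an equivalence class $\mathcal{A}$, choose three points $q_0,q_1,q_2$ inside $\mathcal{A}$ that are pairwise non-equivalent-to-nothing-outside and apply a sequence of M\"obius transformations $f_k$ normalizing $\tilde\sigma_k(q_i)$ to the three cube roots of unity, then invoke Theorem \ref{convergencephi} to extract the limit $\tilde\Phi_\infty$ and argue that we must be in case 1 (a genuine immersion), so that $\tilde\sigma_k^{-1}$ converges in $W^{2,p}_{\loc}(S^1\setminus B)$ to a map $\psi_\infty$ whose image is exactly $\mathcal{A}$. First I would pick $q\in\mathcal{A}$ and a small $\delta>0$ with $e^{it}q\in S^1\setminus\mathcal{P}$ for $|t|\le 2\delta$ and set $q_0=e^{-i\delta}q$, $q_1=q$, $q_2=e^{i\delta}q$; by the openness part of Proposition \ref{eqcl} all three lie in $\mathcal{A}$. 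Then I would take the unique M\"obius $f_k:\bar D^2\to\bar D^2$ with $\tilde\sigma_k(q_j)=e^{2\pi ij/3}$ (where $\tilde\sigma_k:=f_k^{-1}\circ\sigma_k$), and apply Theorem \ref{convergencephi} to $\tilde\Phi_k:=\Phi_k\circ f_k$ (which still satisfies the hypotheses by Corollary \ref{cormob}), extracting the blow-up set $B=\{a_1,\dots,a_N\}$ and the limit $\tilde\Phi_\infty$.

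The core of the argument is to rule out case 2 of Theorem \ref{convergencephi} for this particular normalization, exactly as in Part 1 of the proof of Proposition \ref{eqcl}: if $\tilde\Phi_k\to Q$ constant, then by Proposition \ref{limconst} there are $N\in\{1,2\}$ blow-up points; if $N=2$ then $\tilde\sigma_k^{-1}$ converges locally on each of the two arcs of $S^1\setminus\{a_1,a_2\}$ to a pair of dual pinched points, forcing one of the $q_j$ to lie in $\mathcal{P}$, a contradiction; if $N\le 1$ then for some pair $q_i,q_j$ one has $\int_{\mathrm{arc}(\tilde\sigma_k(q_i),\tilde\sigma_k(q_j))}|\tilde\Phi_k'|\,|dz|\to0$, contradicting $|\dot\gamma_k|$ bounded below together with $|\mathrm{arc}(q_i,q_j)|\ge\delta$. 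Hence we are in case 1, $\tilde\Phi_\infty$ is a holomorphic immersion on $\bar D^2\setminus B$, $e^{2\pi ij/3}\notin B$, and by conformality $|\tilde\Phi_\infty'|\ge C_r>0$ away from $\cup B_r(a_i)$; therefore $\tilde\sigma_k^{-1}=\tilde\Phi_k^{-1}\circ\gamma_k$ (where $\tilde\Phi_k$ is locally invertible with uniformly bounded inverse away from $B$) is bounded in $W^{2,p}_{\loc}(S^1\setminus B)$, and up to a further subsequence $\tilde\sigma_k^{-1}\rightharpoonup\psi_\infty$ there.

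It remains to identify $\psi_\infty(S^1\setminus B)$ with $\mathcal{A}$. For the inclusion $\psi_\infty(S^1\setminus B)\subset\overline{\mathcal{A}}\cap(S^1\setminus\mathcal{P})$: given $p=\psi_\infty(\zeta)$ for $\zeta\in S^1\setminus B$, since $\tilde\Phi_\infty$ is an immersion near $\zeta$ the geodesic distance $d_k$ from $\sigma_k(p)$ to the nearest point of $\sigma_k(\mathcal{P})$ stays bounded below (otherwise $\mathcal{P}$ would accumulate at $\psi_\infty(\zeta)$, but $\mathcal{P}$ is closed by Lemma \ref{close} and a short-path argument as in Part 2 of Proposition \ref{eqcl} excludes this), and the segment joining $\sigma_k(p)$ to $\sigma_k(q_1)$ stays in the region where $|\tilde\Phi_k'|\ge C$, hence avoids $\sigma_k(\mathcal{P})$ with a definite margin, so $p\sim q$, i.e. $p\in\mathcal{A}$. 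For the reverse inclusion $\mathcal{A}\subset\psi_\infty(S^1\setminus B)$: if $p\in\mathcal{A}$ but $p\notin\psi_\infty(S^1\setminus B)$, then since $\psi_\infty$ maps $S^1\setminus B$ onto a union of open arcs, $p$ is separated from $q$ in $S^1\setminus\mathcal{P}$ by a point of $\mathcal{P}$ (each $a_i$ being the common limit of a dual pair of pinched points), so any path $\Delta_k$ from $\sigma_k(p)$ to $\sigma_k(q)$ must cross a short geodesic loop near $\sigma_k(\mathcal{P})$, forcing $d_k(\Delta_k,\sigma_k(\mathcal{P}))\to0$ and contradicting $p\sim q$. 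Finally, $\gamma_\infty(\mathcal{A})=\tilde\Phi_\infty(S^1\setminus B)$ follows from $\gamma_k=\tilde\Phi_k\circ\tilde\sigma_k$ by passing to the limit, $\gamma_\infty=\tilde\Phi_\infty\circ\psi_\infty^{-1}$ on the relevant arcs and \eqref{Acond}, and the current identity $(\gamma_\infty)_*[\mathcal{A}]=(\tilde\Phi_\infty)_*[S^1\setminus B]$ then follows because $\psi_\infty$ is an orientation-preserving reparametrization (limit of the orientation-preserving diffeomorphisms $\tilde\sigma_k^{-1}$).

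The main obstacle I anticipate is the bookkeeping in the second inclusion $\mathcal{A}\subset\psi_\infty(S^1\setminus B)$: one must carefully show that the blow-up points $a_i$ of $\tilde\Phi_\infty$ correspond precisely to the pinched points of $\mathcal{P}$ lying "at the boundary" of $\mathcal{A}$, and that no point of $\mathcal{A}$ gets "lost" in the neck regions. This requires combining the local convergence of $\tilde\sigma_k^{-1}$ with the quantitative lower bound $|\mathrm{arc}(p,p')|\ge C/\bar\kappa$ from Lemma \ref{uniquepinched} to control how the arcs of $S^1\setminus\mathcal{P}$ are distributed, and an argument (as in Part 1 of Proposition \ref{limconst}) that near each $a_i$ exactly one pinched pair collapses.
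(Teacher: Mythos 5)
Your first half coincides with the paper's proof: the same three-point M\"obius normalization taken from the proof of Proposition \ref{eqcl}, the same exclusion of case 2 of Theorem \ref{convergencephi}, the extraction of $\psi_\infty=\lim \tilde\sigma_k^{-1}$ in $W^{2,p}_{\loc}(S^1\setminus B)$, and the easy inclusion $\psi_\infty(S^1\setminus B)\subset\mathcal{A}$. The genuine gap is the reverse inclusion $\mathcal{A}\subset\psi_\infty(S^1\setminus B)$. You reduce it to two assertions --- that each $a_i$ is the common limit under $\tilde\sigma_k$ of a dual pinched pair, and that any path from $\tilde\sigma_k(p)$ to $\tilde\sigma_k(q)$ ``must cross a short geodesic loop near $\sigma_k(\mathcal{P})$'' --- but you give no mechanism for either, and the tools you propose (the arc lower bound of Lemma \ref{uniquepinched}, an argument as in Proposition \ref{limconst}, which lives in the degenerate case $\Phi_k\to Q$, not in case 1) do not produce them. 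Note also that $\sim$ is defined through paths in $\bar D^2$, so ``$p$ separated from $q$ on $S^1$ by a point of $\mathcal{P}$'' is not by itself an obstruction: one must show that \emph{every} disk path joining $\tilde\sigma_k(q)$ to $\tilde\sigma_k(p)$ comes $d_k$-close to $\tilde\sigma_k(\mathcal{P})$.

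The paper closes this with a Courant--Lebesgue/Fubini device absent from your proposal. Since $\nabla\tilde\Phi_\infty\in L^2(D^2)$, one chooses radii $\delta^i_n\to 0$ with $\int_{\partial B(a_i,\delta^i_n)\cap\bar D^2}|\nabla\tilde\Phi_\infty|^2\,|dz|\to 0$, so these circular cross-cuts have $\tilde\Phi_k$-length tending to zero; their endpoints on $S^1$ pull back under $\tilde\sigma_k^{-1}$ to points $p^{i,\pm}_{k,n}$ which remain a definite arc-distance apart (the arc of $S^1$ inside $B(a_i,\delta^i_n)$ carries a fixed amount of length because $a_i$ is a blow-up point and the curvature is bounded by $\bar\kappa$) and converge to a dual pinched pair $p^{i,\pm}_\infty$; finally, since $p\notin\psi_\infty(S^1\setminus B)$ forces $\tilde\sigma_k(p)\to a_i$, every competitor path from $\tilde\sigma_k(q)$ to $\tilde\sigma_k(p)$ must cross such a cross-cut and is therefore within vanishing $d_k$-distance of $\tilde\sigma_k(\mathcal{P})$, contradicting $p\sim q$. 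Without this (or an equivalent substitute) the central claim \eqref{Acond} is unproved. The final identity $(\gamma_\infty)_*[\mathcal{A}]=(\tilde\Phi_\infty)_*[S^1\setminus B]$ is also only asserted in your proposal; the paper derives it by exhausting with $S^1\setminus\cup_i B(a_i,\delta)$, using $\gamma_k=\tilde\Phi_k\circ\tilde\sigma_k$ and the established convergences, and letting $k\to\infty$ and then $\delta\to 0$ --- short, but it should be written.
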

\begin{proof} Given $q\in \mathcal{A}$ take $f_k$ as in the proof of Proposition 
\ref{eqcl} and set $\tilde\Phi_k:=\Phi_k\circ f_k$. 
We have shown that $\tilde \Phi_k \rightharpoonup\tilde\Phi_\infty$ in $W^{1,2}(\bar D^2)$ and in $W^{2,p}_{\loc}(\bar D^2\setminus B)$ for a finite set $B=\{a_1,\ldots,a_N\}$, where $\tilde \Phi_{\infty}$ is a holomorphic immersion (Theorem \ref{convergencephi}, case 1). In particular this implies that $\psi_k:=\tilde \sigma_k^{-1}$ is bounded in $W^{2,p}_{\loc}(S^1\setminus B)$ and up to a subsequence $\psi_k\rightharpoonup \psi_\infty$ in $W^{2,p}_{\loc}(S^1\setminus B)$. Clearly
$$\psi_\infty(S^1\setminus B)\subset \mathcal{A}.$$
Conversely, given $p\not\in \psi_\infty(S^1\setminus B)$, we want to show that $p\not\in \mathcal{A}$. Given such $p$ we have $\tilde \sigma_k(p)\to a_i$ for some $a_i\in B$, since otherwise we would have $p=\psi_k\circ\tilde \sigma_k(p)\to \psi_\infty(p_*)$ for $p_*\in S^1\setminus B$.
Since 
$\nabla \tilde\Phi_\infty\in L^2(D^2)$, from Fubini's Theorem we can find a sequence $\delta^i_n\to 0$ such that
\begin{equation}\label{conai}
\lim_{n\to +\infty}\int_{\partial B(a_i,{\delta^i_n})\cap \bar D^2}|\nabla \tilde\Phi_\infty(z)|^2|dz|=0\,.
\end{equation}
For every $a_i$, set $\{p_{k,n}^{i,-},p_{k,n}^{i,+}\}={\tilde\sigma}^{-1}_k(\partial B(a_i,{\delta^i_n})\cap S^1)\,.$
We have $|p_{k,n}^{i,-}-p_{k,n}^{i,+}|>C_0$ for any $n$ and $k$ large enough, since by definition of the blow-up points one has for $k$ large enough

$$\int_{\mathrm{arc}(p_{k,n}^{i,-},p_{k,n}^{i,+})}|\dot \gamma_k(t)|dt=\int_{B(a_i,{\delta^i_n})\cap S^1} e^{\lambda_k(z)}|dz| >\frac{\pi}{ 2}\,.$$ Therefore up to subsequence
$p_{k,n}^{i,-}\to p^{i,-}_\infty$ and $p_{k,n}^{i,+}\to p^{i,+}_\infty$ with $p^{i,+}_\infty\ne  p^{i,-}_\infty$ and 
$$\lim_{k\to\infty}D_k(\tilde\sigma_k(p^{i,-}_\infty),  \tilde \sigma_k(p^{i,+}_\infty))=0$$ In particular $p^{i,-}_\infty$ and $p^{i,+}_\infty$ are pinched. 
Then condition \rec{conai} implies that any path $\Delta_k$ joining $\tilde \sigma_k(q)$ and $\tilde \sigma_k(p)$ for $k$ large enough it close to $\tilde \sigma_k(p^{i,-}_{\infty})\in \tilde \sigma_k(\mathcal{P})$, hence $p\in S^1\setminus \mathcal{A}$.

Finally
\[\begin{split}
(\gamma_\infty)_*[\mathcal{A}]&=\lim_{\delta\to 0}(\gamma_\infty)_*[\psi_\infty(S^1\setminus \cup_{a_i\in B} B(a_i,\delta))]\\
&=\lim_{\delta\to 0}\lim_{k\to\infty}(\gamma_k)_*[\tilde\sigma_k^{-1}(S^1\setminus \cup_{a_i\in B} B(a_i,\delta))] \\
&=\lim_{\delta\to 0}\lim_{k\to\infty}(\tilde\Phi_k)_*[S^1\setminus \cup_{a_i\in B} B(a_i,\delta)]\\
&=\lim_{\delta\to 0}(\tilde\Phi_\infty)_*[S^1\setminus \cup_{a_i\in B} B(a_i,\delta)]\\
&=(\tilde\Phi_\infty)_*[S^1\setminus B].
\end{split}
\]
\end{proof}

 \subsection{Quantization result: Proof of Theorems \ref{mainth} and \ref{mainth2}}
 In this section we prove Theorems \ref{mainth} and \ref{mainth2}. In Theorem \ref{mainth} we
will show that under the hypothesis of Theorem \ref{convergencephi}
 $\kappa_k e^{\lambda_k}\rightharpoonup\mu$ weakly in the sense of Radon measures  where $\mu$ is a Radon measure which is the sum of a locally bounded (possibly vanishing) function and a (possibly empty) sum of Dirac masses.
 We also give precise estimates on the coefficients of the Dirac masses.
 In the Theorem \ref{mainth2} we show that up to a suitable choice of M\"obius transformations we can ``detect" all the connected components arising in the limit.
 
 \medskip
 
\noindent   {\bf Proof of Theorem \ref{mainth}\,.} 
From Theorem \ref{convergencephi} there is a (possibly empty) set 
 $B=\{a_1,\ldots, a_N\}\subset S^1$ such that \eqref{convlambdak} holds. 
Moreover from \eqref{boundlength} and \eqref{boundcurv} it follows that
 $\|(-\Delta)^\frac12\lambda_k\|_{L^1(S^1)}\le C$.  Therefore \eqref{rapr} implies
$$\|\lambda_k-\bar \lambda_k\|_{L^q(S^1)}\le C\quad \text{for every }q<+\infty.$$
Up to extracting a further subsequence we have $v_k:=\lambda_k-\bar \lambda_k \rightharpoonup v_{\infty}$ in $L^q(S^1)$ and
\begin{equation}\label{radon}
\kappa_k e^{\lambda_k} \weak \mu, \quad (-\Delta)^\frac12 v_k\weak (-\Delta)^\frac12 v_\infty =\mu- 1\quad \text{ in }\mathcal{M}(S^1),
\end{equation}
where $\mathcal{M}(S^1)$ denotes the space of finite signed measures on $S^1$.
Up to a subsequence we also have $\kappa_k\stackrel{*}{\rightharpoonup}\kappa_{\infty}$ in $L^{\infty}(S^1)\,.$ We now distinguish three cases.

\noindent{\bf Case 1.} Suppose that we are in case 2 of Theorem \ref{convergencephi}  and $N=1$, i.e. $\lambda_k\to -\infty$ locally uniformly in $S^1\setminus\{a_1\}$. Then $\mu=c_1\delta_{a_1}$, and since
$$\int_{S^1}\kappa_k e^{\lambda_k}d\theta=2\pi,$$
it follows at once that $c_1=2\pi$. The explicit form of $v_\infty$ follows from Lemma \ref{lemmafund4}.

\noindent{\bf Case 2.} Suppose that we are in case 2 of Theorem \ref{convergencephi}  and $N>1$. Then we conclude applying Proposition \ref{limconst}, which in particular implies that $N=2$ and $\mu=\pi \delta_{a_1}+\pi\delta_{a_2}$. Again the explicit form of $v_\infty$ follows from Lemma \ref{lemmafund4}.

\noindent{\bf Case 3.} Suppose that we are in case 1 of Theorem \ref{convergencephi}, i.e. $\lambda_k\ge -C$. Then $\lambda_k\rightharpoonup \lambda_\infty$ weakly in $W^{1,p}_{\loc}(S^1\setminus B)$ and for every $\varphi\in C^\infty_c(S^1\setminus B)$ we have
$$0=\lim_{k\to\infty}\int_{S^1}(\lambda_k(-\Delta)^\frac12\varphi -(\kappa_ke^{\lambda_k}-1)\varphi)d\theta=\lim_{k\to\infty}\int_{S^1}(\lambda_\infty(-\Delta)^\frac12\varphi -(\mu-1)\varphi)d\theta.$$
In particular the distribution
$$T_\infty:=(-\Delta)^\frac12 \lambda_\infty - \mu+1$$
is supported in $B$, and since by \eqref{radon} $T_\infty\in \mathcal{M}(S^1)$, the order of $T_\infty$ (as distribution) is $0$, hence
$$T_\infty=\sum_{j=1}^N c_j\delta_{a_j}.$$
In order to compute the coefficients $c_j$ let $\chi_\delta:S^1\to\R$ be $1$ on $S^1\cap \cup_{j=1}^n B(a_j,\delta)$ and $0$ otherwise.
We rewrite the equation \rec{liouvfrack} as follows:
\begin{equation} \label{liouvfrac2}
(-\Delta)^\frac12 \lambda_k=(1-\chi_{\delta})\kappa_k e^{\lambda_k}+\chi_{\delta}\kappa_k e^{\lambda_k}-1\,.
\end{equation}
Since
$$\lim_{k\to\infty}(1-\chi_{\delta})\kappa_k e^{\lambda_k}= (1-\chi_{\delta})\kappa_\infty e^{\lambda_\infty}\quad \text{in }\mathcal{D}'(S^1)\,,$$
testing \eqref{liouvfrac2} with $\varphi\in C^\infty(S^1)$ and letting $k\to \infty$ we get
\begin{equation*}
\int_{S^1}(\lambda_\infty(-\Delta)^\frac12 \varphi -(1-\chi_\delta)\kappa_\infty e^\lambda_\infty \varphi +\varphi)d\theta =\lim_{k\to\infty}\int_{S^1}\chi_\delta \kappa_k e^{\lambda_k}\varphi d\theta\,,
\end{equation*}
and letting $\delta\to 0$ we infer
$$ \langle T_\infty,\varphi\rangle =\lim_{\delta\to 0} \lim_{k\to\infty}\int_{S^1}\chi_\delta \kappa_k e^{\lambda_k}\varphi d\theta.$$
By choosing $\varphi =1$ in a neighborhood of $a_j$ for a fixed $j$, and $\varphi=0$ in a neighborhood of $B\setminus\{a_j\}$ we get
$$c_j=\lim_{\delta\to 0} \lim_{k\to\infty}\int_{S^1\cap B(a_j,\delta)} \kappa_k e^{\lambda_k} d\theta.$$
We now want to compute $c_j$ for a fixed $j\in \{1,\dots,N\}$. Consider the M\"obius transformation $f_k(z)=\frac{z-t_k a_j}{1-t_k\bar a_jz}$, and $\tilde \Phi_k:=\Phi_k\circ f_k$, for a sequence $t_k\uparrow 1$ to be chosen. By Corollary \ref{cormob} we have
$$\tilde \lambda_k := \log |\tilde \Phi_k'|= \lambda_k\circ f_k +\log|f_k'|,\quad \tilde\kappa_k:=\kappa_k\circ f_k,$$
and
$$(-\Delta)^\frac12 \tilde\lambda_ k=\tilde \kappa_k e^{\tilde \lambda_k}-1.$$
Since since $\log|f_k'| \to -\infty$ locally uniformly in $\bar D^2\setminus\{a_j\}$, and $\log|f_k'(a_k)|\to\infty$ it is not difficult to see that if $t_k\uparrow 1$ slowly enough, then $\tilde\lambda_k \to -\infty$ uniformly locally in $\bar D^2\setminus\{a_1,-a_1\}$ and we can apply Proposition \ref{limconst} to $\tilde \Phi_k$, and obtain that
$$\tilde\kappa_ke^{\tilde\lambda_k}\weak \pi(\delta_{a_j}+\delta_{-a_j}).$$
With a change of variable we then get
$$\pi=\lim_{\delta\to 0}\lim_{k\to\infty}\int_{S^1\cap B(a_j,\delta)}\tilde \kappa e^{\tilde\lambda_k}d\theta =\lim_{\delta\to 0}\lim_{k\to\infty}\int_{f_k(S^1\cap B(a_j,\delta))} \kappa e^{\lambda_k}d\theta =c_j,$$
where the last identity holds up to having $t_k\uparrow 1$ slowly enough.
\hfill$\square$

\medskip

\noindent {\bf Proof of Theorem \ref{mainth2}\,.}  From Proposition \ref{eqcl}     it follows that $S^1\setminus\{\mathcal{P}\}=\cup_{j\in J}\mathcal{A}_i$
where $J$ is an at most countable set and $\mathcal{A}_j$ is an equivalence class generated by the relation in Definition \ref{equiv}\,.
From Proposition \ref{eqclbis} it follows that for every class $\mathcal{A}_j$  there is a sequence of M\"obius transformations $f_k^j(z)$ such that
$$\tilde \Phi_k^j:=\Phi_k\circ f_k^j \rightharpoonup\tilde\Phi_{\infty}^j,~~\mbox{in $W^{2,p}_{\loc}(\bar D^2\setminus B_j)$},\quad B_j=\{b^j_1,\ldots b^j_{N_j}\}\,,$$
where $\tilde\Phi_{\infty}^j\colon \bar D^2 \setminus B_j \to\R^2$ is a conformal  immersion and $\gamma_\infty(\mathcal{A}_j)=\tilde \Phi_\infty^j(S^1\setminus B_j) $\,.
Moreover we have
$$(\gamma_\infty)_*[S^1\setminus \mathcal{P}]=\sum_{j\in J}(\tilde\Phi_\infty^j)_*[S^1\setminus B_j].$$
We have
$$\sum_{j\in J}(\gamma_\infty)_* [\mathcal{A}_j]= \sum_{j\in J}(\tilde \Phi_\infty^j)_*[S^1\setminus B_j],$$
and it remains to prove that
$$(\gamma_\infty)_*[\mathcal{P}]=0.$$
In order to do that 
let $\tau:\mathcal{P} \to \mathcal{P}$ be the bijection which to a pinched point $p$ associates its dual. For a differential form $\phi:\C \to  L(\C,\C) $ we have
\begin{equation}\label{intP}
(\gamma_\infty)_*[\mathcal{P}](\phi)=\int_{\mathcal{P}}\phi(\gamma_\infty(t))( \dot\gamma_\infty(t))dt.
\end{equation}
Now recall that
\begin{equation}\label{proptau}
\gamma_\infty(t)=\gamma_\infty(\tau(t)),\quad \dot\gamma_\infty(t)=-\dot\gamma_\infty(\tau(t)).
\end{equation}
For a sequence $t_n\in\mathcal{P}^+$ with $t_n\to t\in \mathcal{P}^+$ as $n\to \infty$ we have
\begin{equation}\label{proptau2}
\begin{split}
\gamma_\infty(t_n)&=\gamma_\infty(t)+\dot \gamma_\infty(t)(t_n-t)+o(t_n-t),\\
\gamma_\infty(\tau(t_n))&=\gamma_\infty(\tau(t))+\dot \gamma_\infty(\tau(t))(\tau(t_n)-\tau(t))+o(\tau(t_n)-\tau(t)),
\end{split}
\end{equation}
where for simplicity of notation we identified $S^1$ with the interval $[0,2\pi]$, with zero corresponding to a point in $S^1\setminus\mathcal{P}.$
Using \eqref{proptau} and \eqref{proptau2} we infer that
$$\lim_{n\to\infty}\frac{\tau(t_n)-\tau(t)}{t_n-t}=-1.$$
Then at a density point of $\mathcal{P}$ we have $\frac{d\tau}{dt}=-1$ in the sense of approximate differentials (if the density of $\mathcal P$ is everywhere $0$ then $|\mathcal P|=0$ and we are done). Therefore
\[\begin{split}
\int_{\mathcal{P}}\phi(\gamma_\infty(t))( \dot\gamma_\infty(t))dt &=- \int_{\mathcal{P}}\phi(\gamma_\infty(\tau(t)))(\dot\gamma_\infty(\tau(t)))dt\\
&=-\int_{\tau(\mathcal{P})=\mathcal{P}}\phi(\gamma_\infty(t))( \dot\gamma_\infty(t))dt,
\end{split}\]
where in the first identity we used \eqref{proptau} and in the second identity we made a change of variable. This proves that the integral in \eqref{intP} vanished for every differential form $\phi$, hence $(\gamma_\infty)_*[\mathcal{P}]=0.$

Since for every $j\in J$ the sequence $(\tilde \Phi_k^j)$ is as in case 1 of Theorem \ref{convergencephi}, i.e. setting $\lambda_k^j:= \log|(\tilde \Phi_k^j)'|_{S^1}|$ we have $|\bar \lambda_k^j|\le C$, we can apply Theorem \ref{mainth}, part iii, and it follows at once that the blow-up set of $\lambda_k^j$ is $B_j$.
~\hfill$\Box$


 \section{Relation between the Liouville equations in  $\R$ and $S^1$}

Consider the conformal map $G:D^2\to \R^2$ given by
$$G(z)=\frac{iz+1}{z+i}=\frac{z+\bar z +i(|z|^2-1)}{1+|z|^2+i(\bar z-z)}.$$
 We will use  on the domain $D^2$ the coordinate $z=\xi+i\eta$ and on the target $\R^2$ the coordinates $(x,y)$ or $x+iy$.
Writing $G$ in components,
$$G^1(z)=\Re G(z)=\frac{2\xi}{(1+\eta)^2+\xi^2},\quad G^2(z)=\Im G(z)=\frac{\xi^2+\eta^2-1}{(1+\eta)^2+\xi^2}$$
and using the polar coordinates $(r,\theta)$ on $D^2$ one easily verifies
$$
\frac{\partial G^1}{\partial r}\bigg|_{r=1}=0,\quad \frac{\partial G^2}{\partial r}\bigg|_{r=1} =\frac{1}{1+\eta},\quad 
\frac{\partial G^1}{\partial \theta}\bigg|_{r=1}= -\frac{1}{1+\eta},\quad\frac{\partial G^2}{\partial \theta }\bigg|_{r=1}=0\,.$$
Notice that $G|_{S^1}(\xi+i\eta)=\frac{\xi}{1+\eta}$, i.e. $\Pi:= G^1|_{S^1}$ is the classical stereographic projection from $S^1\setminus \{-i\}$ onto $\R$. Its inverse is
\begin{equation}\label{formulaPi}
\Pi^{-1}(x)=\frac{2x}{1+x^2}+i\left(-1+\frac{2}{1+x^2}\right).
\end{equation}
If we write $\Pi^{-1}(x)=e^{i\theta(x)}$ we get the following useful relation 
\begin{equation}\label{sintheta}
1+\sin(\theta(x))=\frac{2}{1+x^2},\quad \frac{2}{1+\Pi(\theta)^2}=1+\sin\theta,
\end{equation}
which follows easily from $\sin(\theta(x))=\Im(\Pi^{-1}(x))=\frac{1-x^2}{1+x^2}$.

\begin{Proposition}\label{proppull} Given $u:\R\to\R$ set $v:=u\circ \Pi:S^1\to\R$, where $\Pi:=G^1|_{S^1}$. Then $u\in L_\frac{1}{2}(\R)$ if and only if $v\in L^1(S^1)$. In this case
\begin{equation}\label{eqlapv1}
\lapfr v(e^{i\theta})=\frac{((-\Delta)^\frac12u)(\Pi(e^{i\theta}))}{1+\sin\theta}  ~~\mbox{in $\mathcal{D}'(S^1\setminus\{-i\})$},
\end{equation}
i.e.
$$\langle \lapfr v,\varphi\rangle=\langle \lapfr u, \varphi\circ\Pi^{-1}\rangle\quad \text{for every }\varphi\in C^\infty_0(S^1\setminus\{-i\}).$$

Further if $\lapfr u\in L^1(\R)$, or equivalently  $\lapfr v|_{S^1\setminus\{-i\}}\in L^1(S^1)$, then
\begin{equation}\label{eqlapv2}
\lapfr v(e^{i\theta})=\frac{((-\Delta)^\frac12 u)(\Pi(e^{i\theta}))}{1+\sin\theta}-\gamma \delta_{-i} \quad \text{in }\mathcal{D}'(S^1),\quad \gamma=\int_{\R}\lapfr u dx\,.
\end{equation}
\end{Proposition}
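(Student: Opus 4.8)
The plan is to use the fact that the fractional Laplacian $(-\Delta)^{1/2}$ on $S^1$ is the Dirichlet-to-Neumann operator of harmonic extension to $D^2$, and that the same holds for $(-\Delta)^{1/2}$ on $\R$ via harmonic extension to the upper half-plane, together with the conformal invariance of harmonic functions under the M\"obius map $G:D^2\to\R^2_+$. First I would record the change-of-variables facts: if $u\in L_{1/2}(\R)$ then $v=u\circ\Pi\in L^1(S^1)$, which follows directly from \eqref{formulaPi} since the Jacobian $|(\Pi^{-1})'(x)|=\frac{2}{1+x^2}$ is bounded and integrable on $\R$, so $\int_{S^1}|v|\,d\theta=\int_\R |u(x)|\frac{2}{1+x^2}\,dx<\infty$; conversely the pushforward computation gives $u\in L_{1/2}(\R)$. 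Then let $\tilde u$ be the harmonic extension of $u$ to $\R^2_+=\{y>0\}$ and let $\tilde v:=\tilde u\circ G$ on $D^2$. Since $G$ is holomorphic (hence conformal) and harmonicity is conformally invariant in two dimensions, $\tilde v$ is harmonic on $D^2$ with boundary trace $v$, so $\tilde v$ is \emph{the} harmonic extension of $v$. The identity \eqref{eqlapv1} then reduces to comparing normal derivatives on the boundary under the conformal change of coordinates.

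The key computation is the following. For $z=e^{i\theta}\in S^1\setminus\{-i\}$ we have $\lapfr v(e^{i\theta})=\partial_r\tilde v(e^{i\theta})$, and by the chain rule $\partial_r\tilde v = \nabla\tilde u(G(z))\cdot \partial_r G(z)$. Using the boundary derivatives of $G$ computed in the excerpt, namely $\partial_r G^1|_{r=1}=0$, $\partial_r G^2|_{r=1}=\frac{1}{1+\eta}$, $\partial_\theta G^1|_{r=1}=-\frac{1}{1+\eta}$, $\partial_\theta G^2|_{r=1}=0$, we see that $\partial_r G$ at the boundary is purely in the $y$-direction with length $\frac{1}{1+\eta}=\frac{1}{1+\sin\theta}$, hence $\partial_r\tilde v(e^{i\theta})=\frac{1}{1+\sin\theta}\,\partial_y\tilde u(G(z))$. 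Since $G(z)=\Pi(e^{i\theta})+i\,0$ on the boundary and $\partial_y\tilde u|_{y=0}=-\lapfr u$ up to the sign convention for the half-plane Dirichlet-to-Neumann operator (one must be careful: with the convention that $\lapfr u = -\partial_y \tilde u|_{y=0}$ for the \emph{upper} half-plane, or equivalently $+\partial_\nu$ with $\nu$ the inner normal; I will fix this so that the signs match $(-\Delta)^{1/2}$), we obtain $\lapfr v(e^{i\theta})=\frac{(\lapfr u)(\Pi(e^{i\theta}))}{1+\sin\theta}$ pointwise away from $-i$, and the distributional statement \eqref{eqlapv1} follows by testing against $\varphi\in C^\infty_0(S^1\setminus\{-i\})$ and changing variables via $\Pi$, the Jacobian factor $\frac{2}{1+x^2}=1+\sin\theta$ from \eqref{sintheta} cancelling the $\frac{1}{1+\sin\theta}$.

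For \eqref{eqlapv2}, assume $\lapfr u\in L^1(\R)$, so that by \eqref{eqlapv1} the function $\lapfr v$ restricted to $S^1\setminus\{-i\}$ lies in $L^1(S^1)$ (again the Jacobian is bounded). The difference between the distribution $\lapfr v$ on $S^1$ and its $L^1$ part $\frac{(\lapfr u)\circ\Pi}{1+\sin\theta}$ is a distribution supported at $\{-i\}$. To identify it, I would test \eqref{eqlapv1} against a general $\varphi\in C^\infty(S^1)$, not necessarily vanishing at $-i$: writing $\varphi=\varphi(-i)+(\varphi-\varphi(-i))$ and using that $\int_{S^1}\lapfr v\,d\theta=0$ (since $\widehat{\lapfr v}(0)=0$ by the Fourier formula \eqref{lapHilb}) while $\int_{S^1}\frac{(\lapfr u)(\Pi(e^{i\theta}))}{1+\sin\theta}d\theta=\int_\R \lapfr u\,dx=\gamma$ by the same change of variables, one finds $\langle\lapfr v,\varphi\rangle - \int_{S^1}\frac{(\lapfr u)\circ\Pi}{1+\sin\theta}\varphi\,d\theta = -\gamma\,\varphi(-i)$, which is exactly $\langle -\gamma\,\delta_{-i},\varphi\rangle$. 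The main obstacle I anticipate is bookkeeping the sign conventions for the Dirichlet-to-Neumann operator on the half-plane versus the disk, and justifying rigorously that the distribution $\lapfr v-(\lapfr v)\lfloor_{S^1\setminus\{-i\}}$ has order zero, i.e.\ is a multiple of $\delta_{-i}$ rather than a derivative of it; this follows because $v\in L^1$ and $(-\Delta)^{1/2}v$ has no worse than a first-order singularity, but should be stated carefully (for instance, since $v\in W^{1,p}_{loc}$ away from $-i$ by \eqref{eqlapv1} and the regularity of $u$, one can test with $\varphi$ vanishing to first order at $-i$ to see the singular part pairs trivially with such $\varphi$, forcing order zero).
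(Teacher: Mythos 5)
Your outline rests on the same two computations the paper uses (conformal invariance of harmonic extensions plus the boundary derivatives of $G$, with the Jacobian $1+\sin\theta$ cancelling; and a ``distribution supported at a point'' argument for \eqref{eqlapv2}), but the way you implement the first one has a genuine gap: you apply the conformal change to the harmonic extension $\tilde u$ of $u$ and argue pointwise on the boundary. For a general $u\in L_\frac12(\R)$ the statement \eqref{eqlapv1} \emph{is} the duality $\langle \lapfr v,\varphi\rangle=\langle\lapfr u,\varphi\circ\Pi^{-1}\rangle$; $\lapfr u$ is only a tempered distribution and $\tilde u$ has no pointwise normal derivative at the boundary, so the chain-rule identity $\de_r\tilde v=\frac{1}{1+\sin\theta}\,\de_y\tilde u$ on $S^1$, and the plan to ``integrate it against $\varphi$'', have no meaning in the stated generality. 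The fix is to move the whole computation onto the test function: set $\psi=\varphi\circ\Pi^{-1}\in C^\infty_c(\R)$, take its (bounded) harmonic extension $\tilde\psi$, identify $\tilde\varphi=\tilde\psi\circ G$ by the maximum principle, compute $\de_r\tilde\varphi|_{r=1}\circ\Pi^{-1}=-\frac{1+x^2}{2}\,\de_y\tilde\psi|_{y=0}$, and then pair with $v$ resp.\ $u$ using the extension definitions of $\lapfr$ on $S^1$ and $\R$ (Propositions \ref{lapeq2} and \ref{lapeq}); this is exactly the paper's proof, and it is the dual form of your computation. Note also that your justification that $\tilde u\circ G$ is ``the'' harmonic extension of $v$ (harmonic with boundary trace $v$, hence unique) is not valid: the Poisson kernel itself is harmonic with radial boundary values $0$ a.e.\ yet nonzero, so uniqueness needs boundedness (available for $\tilde\psi$, not for $\tilde u$) or conformal invariance of harmonic measure. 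A smaller point: $G(0)=-i$, so $G$ maps $D^2$ onto the \emph{lower} half-plane, which is precisely where the sign you were worried about enters.

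For \eqref{eqlapv2} you correctly identify the crux — showing that $T:=\lapfr v - f$, supported at $\{-i\}$, has order zero — but your proposed justification does not work: testing with $\varphi$ vanishing at $-i$ does not kill $\langle D^k\delta_{-i},\varphi\rangle$ for $k\ge1$, and \eqref{eqlapv1} is only available for $\varphi$ supported \emph{away} from $-i$, so the splitting $\varphi=\varphi(-i)+(\varphi-\varphi(-i))$ already requires the control near $-i$ you are trying to establish; as stated the argument is circular. The paper closes this by Fourier analysis: writing $T=\sum_{k\le N}c_kD^k\delta$, the coefficients of $\lapfr v$ are $|n|\hat v(n)=o(|n|)$ (Riemann--Lebesgue, since $v\in L^1$), those of $f\in L^1$ are $o(1)$, while $D^k\delta$ contributes terms of size $|n|^k$; matching forces $c_k=0$ for all $k\ge1$. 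Once $T=c_0\delta_{-i}$ is known, testing with $\varphi\equiv1$ gives $0=\gamma+c_0$, i.e.\ $c_0=-\gamma$, which is your final step. With the first part recast in dual form and this growth argument (or an equivalent) supplied, your proof would be complete and essentially coincide with the paper's.
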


\noindent {\em Proof of Proposition \ref{proppull}\,.} Since
$$\int_{S^1}|v|d\theta= \int_{\R}\frac{2|v(\Pi^{-1}(x))|}{1+x^2}dx$$
it is clear that $v\in L^1(S^1)$ if and only if $u\in L_\frac{1}{2}(\R)$.


Given now $\varphi\in C^\infty_c(S^1\setminus \{-1\})$ set $\psi:=\varphi\circ \Pi^{-1}\in C^\infty_c(\R)$ and let $\tilde \varphi \in C^\infty (\bar D^2)$ and $\tilde \psi\in C^\infty\cap L^\infty(\bar R^2_+)$ be the harmonic extensions of $\varphi$ and $\psi$ given by the Poisson formulas \eqref{Poisson1} and \eqref{Poisson2} respectively. It is not difficult to see that $\tilde \psi\circ G|_{\bar D^2}$ is continuous, harmonic in $D^2$ and it coincides with $\tilde\varphi$ on $S^1$. Then by the maximum principle $\tilde \varphi =\tilde\psi \circ G$ in $\bar D^2\setminus \{-i\}$.

Using polar coordinates we compute
\begin{eqnarray*}
\frac{\partial \tilde \varphi}{\partial r}\bigg|_{r=1}\circ \Pi^{-1} 
&=&-\frac{\partial (\tilde \varphi \circ G^{-1})}{\partial x}\,\frac{\partial G^1}{\partial r}\bigg|_{r=1}-\frac{\partial (\tilde \varphi \circ G^{-1})}{\partial y}\,\frac{\partial G^2}{\partial r}\bigg|_{r=1}\\
&=& -\frac{\partial \tilde \psi}{\partial y}\bigg|_{y=0} \frac{1+x^2}{2}.
\end{eqnarray*}
Then using Propositions \ref{lapeq2} and \eqref{lapeq} we get
\begin{equation*}
\begin{split}
\langle (-\Delta)^\frac12 v,\varphi\rangle&=\int_{S^1} v\,\frac{\de\tilde\varphi}{\de r}\bigg|_{r=1} d\theta\\
&=\int_{\R} (v\circ \Pi^{-1}(x))\, \bigg(\frac{\de\tilde\varphi}{\de r}\bigg|_{r=1}\circ \Pi^{-1}(x)\bigg)\frac{2}{1+x^2}dx\\
&=-\int_{\R} u\, \frac{\partial \tilde \psi}{\partial y}\bigg|_{y=0} dx\\
&=\langle (-\Delta)^\frac12 u, \psi\rangle,
\end{split}
\end{equation*}
so that \eqref{eqlapv1} is proven.

In order to prove \eqref{eqlapv2} set $f:=(\lapfr v)|_{S^1\setminus\{-i\}}\in \mathcal{D}'(S^1\setminus\{-i\})$ 
and notice that
$$\|f\|_{L^1(S^1)}=\| \lapfr u\|_{L^1(\R)}=\gamma.$$
Since $f \in L^1(S^1)\subset\mathcal{D}'(S^1)$, we have
\begin{equation}\label{defT}
T:=\lapfr v- f\in \mathcal{D}'(S^1)
\end{equation}
and $\mathrm{supp}(T)\subset \{-i\}$. We claim that $T=c\delta_{-i}$ for some constant $c$.
Up to a rotation of $S^1$, it is convenient to assume that $T$ is supported at $\{1\}$. In this case we can write
$$T=\sum_{k=0}^N c_k D^k\delta_0,$$
for some $N\in \mathbb{N}$ and $c_0,\dots, c_N\in \mathbb{C}$, which leads to
\begin{equation}\label{formulaT}
\langle T,\varphi\rangle =\sum_{k=0}^N c_k(-1)^k D^k\varphi_0=\sum_{k=0}^Nc_k\sum_{n\in \mathbb{Z}}(-in)^k\overline{\hat\varphi(n)},\quad \text{for }\varphi\in \mathcal{D}(S^1).
\end{equation}
 On the other hand according to \eqref{fraclapl6} we have for $\varphi\in \mathcal{D}(S^1)$
\begin{equation}\label{formulalapv}
\begin{split}
\langle \lapfr v,\varphi\rangle&=\int_{S^1}v(\theta)\sum_{n\in\mathbb{N}}|n|\overline{\hat\varphi(n)}e^{-in\theta}\, d\theta\\
&=\sum_{n\in \mathbb{N}}|n|\overline{\hat\varphi(n)}\int_{S^1}v(\theta)e^{-in\theta}d\theta\\
&=2\pi \sum_{n\in \mathbb{N}}|n|\hat v(n)\overline{\hat\varphi(n)},
\end{split}
\end{equation}
where the sum can be moved outside the integral because $\sum_{n\in\mathbb{N}}|n||\hat\varphi(n)|<\infty$.
Similarly
\begin{equation}\label{formulaf}
\langle f,\varphi\rangle= 2\pi\sum_{n\in\mathbb{N}}\hat f(n)\overline{\hat\varphi(n)},\quad\text{for }\varphi \in \mathcal{D}(S^1).
\end{equation}
Clearly \eqref{defT}, \eqref{formulaT}, \eqref{formulalapv} and \eqref{formulaf} are compatible only if $c_k=0$ for $k=1,\dots, N$, hence proving (up to rotating back) that $T=c_0\delta_{-i }$, as claimed.
Finally, testing with $\varphi=1$ we obtain
$$0=\langle \lapfr v,1\rangle=\langle f,1\rangle+\langle T,1\rangle=\|\lapfr u\|_{L^1}+c_0,$$
which implies that $c_0=-\|\lapfr u\|_{L^1}.$~\hfill$\Box$
\par
\medskip

Given now $u\in L_{\frac{1}{2}}(\R)$ we want to define a function $\lambda\in L^1(S^1)$ such that
$$\Pi^*(e^{2u}|dx|^2)= e^{2\lambda}|d\theta|^2,$$
where 
$\Pi^*$ denotes the pull-back of the stereographic projection, while $|dx|^2$ and $|d\theta|^2$ are the standard metrics on $\R{}$ and $S^1$ respectively.
Since $$\Pi^*(e^{2u}|dx|^2) =\left(\frac{\de \Pi}{\de \theta}\right)^2 e^{2u(\Pi(\theta))}|d\theta|^2$$
we find
\begin{equation}\label{deflambda2}
\lambda(\theta)=u(\Pi(\theta))+\log\left|\frac{\de \Pi}{\de \theta}\right| =u(\Pi(\theta))- \log\left(1+\sin\theta\right),
\end{equation}
or equivalently and using \eqref{sintheta}
\begin{equation}\label{deflambda3}
u(x)=\lambda(\Pi^{-1}(x))+\log\left(\frac{2}{1+x^2}\right).
\end{equation}
Using Proposition \ref{proppull} we can now easily relate $\lapfr u$ and $\lapfr \lambda$.

\begin{Proposition}\label{proppull2} Given $u:\R\to\R$ set $\lambda$ as in \eqref{deflambda2}. Then $u\in L_\frac{1}{2}(\R)$ if and only if $\lambda\in L^1(S^1)$, and $(-\Delta)^\frac12 u\in L^1(\R)$ if and only if $(-\Delta)^\frac{1}{2} \lambda\in L^1(S^1\setminus\{-i\})$. In this case $u$ solves \eqref{liou5int} if and only if $\lambda$ solves
\begin{equation}\label{liou6}
(-\Delta)^\frac{1}{2}\lambda=\kappa\, e^{\lambda}-1 +\left(2\pi -c\right)\delta_{-i}\quad \text{in }S^1.
\end{equation}
with $\kappa= V\circ\Pi$ and $c=\|\lapfr u\|_{L^1(\R)}$.
\end{Proposition}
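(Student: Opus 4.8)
The statement is a corollary of Proposition~\ref{proppull}, once one records how $\lapfr$ acts on the conformal factor relating the two metrics. The plan is to set $v:=u\circ\Pi$, so that by \eqref{deflambda2} one has $\lambda=v-h$ with $h(\theta):=\log(1+\sin\theta)$, and to reduce everything to the single auxiliary identity
\[
\lapfr h=1-2\pi\,\delta_{-i}\qquad\text{in }\mathcal{D}'(S^1).
\]
To prove it, I would use \eqref{sintheta} and the elementary identity $|e^{i\theta}+i|^2=2(1+\sin\theta)$ to write $h(\theta)=2\log\bigl|e^{i\theta}-(-i)\bigr|-\log 2$. Translating the fundamental solution of Lemma~\ref{lemmafund4} so that its Dirac mass sits at $-i=e^{-i\pi/2}$ (and using $2(1-\cos\phi)=|e^{i\phi}-1|^2$), the function $\theta\mapsto-\frac1\pi\log|e^{i\theta}-(-i)|$ solves $\lapfr(\cdot)=\delta_{-i}-\frac1{2\pi}$; hence $\lapfr\bigl(\log|e^{i\theta}+i|\bigr)=-\pi\delta_{-i}+\frac12$, and multiplying by $2$ (the additive constant being annihilated by $\lapfr$) gives the auxiliary identity.

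\emph{The $L^1$ equivalences.} By Proposition~\ref{proppull}, $u\in L_\frac12(\R)$ if and only if $v\in L^1(S^1)$; since $1+\sin\theta$ vanishes only at $-i$, and only quadratically, $h\in L^1(S^1)$, so $\lambda=v-h\in L^1(S^1)$ if and only if $v\in L^1(S^1)$. For the second equivalence, on $S^1\setminus\{-i\}$ the auxiliary identity reads $\lapfr\lambda=\lapfr v-1$, so $\lapfr\lambda|_{S^1\setminus\{-i\}}\in L^1(S^1)$ if and only if $\lapfr v|_{S^1\setminus\{-i\}}\in L^1(S^1)$; and, as in the proof of Proposition~\ref{proppull}, the change of variables $x=\Pi(\theta)$ (with $|d\Pi/d\theta|=(1+\sin\theta)^{-1}$, cf.\ \eqref{sintheta}) turns \eqref{eqlapv1} into $\int_{S^1}|\lapfr v|\,d\theta=\int_\R|\lapfr u|\,dx$, so $\lapfr v|_{S^1\setminus\{-i\}}\in L^1(S^1)$ if and only if $\lapfr u\in L^1(\R)$.

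\emph{The equation.} Suppose $e^u\in L^1(\R)$, $K\in L^\infty(\R)$ and $u$ solves \eqref{liou5int}; then $\lapfr u=Ke^u\in L^1(\R)$, so \eqref{eqlapv2} applies and gives $\lapfr v=\frac{(Ke^u)\circ\Pi}{1+\sin\theta}-c\,\delta_{-i}$, with $c:=\int_\R\lapfr u\,dx$ (the $\gamma$ of \eqref{eqlapv2}). By \eqref{deflambda2}, $e^{u(\Pi(\theta))}=e^{\lambda(\theta)}(1+\sin\theta)$, hence $\frac{(Ke^u)\circ\Pi}{1+\sin\theta}=(K\circ\Pi)\,e^{\lambda}$; subtracting $\lapfr h$ via the auxiliary identity then yields
\[
\lapfr\lambda=\lapfr v-\lapfr h=(K\circ\Pi)\,e^{\lambda}-1+(2\pi-c)\,\delta_{-i},
\]
which is \eqref{liou6}. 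The converse runs the same chain in reverse: if $\lambda$ solves \eqref{liou6} then $e^\lambda\in L^1(S^1)$ forces $e^u\in L^1(\R)$ by the same change of variables, $\lapfr\lambda|_{S^1\setminus\{-i\}}\in L^1$ gives $\lapfr u\in L^1(\R)$ by the previous step, and reading \eqref{eqlapv2} together with the auxiliary identity backwards recovers $\lapfr u=Ke^u$ in $\mathcal{D}'(\R)$.

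The only subtle point is the bookkeeping of the Dirac mass at $-i$: one must invoke \eqref{eqlapv2} rather than \eqref{eqlapv1}, so as not to discard the $\delta_{-i}$ produced by the stereographic projection, and then combine it correctly with the $2\pi\delta_{-i}$ coming from $\lapfr h$ so that the coefficients add up to $2\pi-c$. Everything else is a direct substitution into the identities of Proposition~\ref{proppull}.
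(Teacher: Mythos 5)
Your proposal is correct, and its skeleton is the same as the paper's: the paper proves this proposition in one line by combining Proposition \ref{proppull} with the identity $\lapfr\log(1+\sin\theta)=1-2\pi\delta_{-i}$ (Lemma \ref{lemmaG}), and your substitution argument is exactly the content of that deduction, worked out in full. Where you genuinely diverge is in the proof of the auxiliary identity itself: you obtain it by translating the fundamental solution of Lemma \ref{lemmafund4} so that its Dirac mass sits at $-i$, using $2(1+\sin\theta)=\left|e^{i\theta}+i\right|^2$, whereas the paper derives it by recognizing $\log(1+\sin\theta)=u_{1,0}\circ\Pi$ with $u_{1,0}(x)=\log\frac{2}{1+x^2}$ and invoking Proposition \ref{sollog} (the explicit principal-value computation showing $\lapfr u_{1,0}=e^{u_{1,0}}$) together with Proposition \ref{proppull} again. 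Your route is the more self-contained one for this step, since it needs nothing beyond the $S^1$ Green function already established; the paper's route recycles Proposition \ref{sollog}, which it needs anyway for Theorem \ref{trm1}. One small remark: you take $c=\int_\R\lapfr u\,dx$ (the $\gamma$ appearing in \eqref{eqlapv2}), while the statement writes $c=\|\lapfr u\|_{L^1(\R)}$; the signed integral is in fact what the argument with the test function $\varphi\equiv1$ produces, and the two coincide when $Ke^u\ge0$ — this is an imprecision already present in the paper's formulation, not a flaw in your proof.
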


\begin{proof} This follows at once from Proposition \ref{proppull2} and Lemma \ref{lemmaG} below.
\end{proof}

\begin{Lemma}\label{lemmaG}
We have
$$(-\Delta)^\frac{1}{2}\log(1+\sin\theta)=1-2\pi \delta_{-i}.$$
\end{Lemma}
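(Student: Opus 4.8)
The plan is to prove the identity $(-\Delta)^{1/2}\log(1+\sin\theta)=1-2\pi\delta_{-i}$ by recognizing $\log(1+\sin\theta)$, up to an additive constant, as a multiple of the fundamental solution $G$ from Lemma \ref{lemmafund4}, translated so that its singularity sits at $-i$ rather than at $1$. Recall $G(\theta)=-\frac{1}{2\pi}\log(2(1-\cos\theta))$ with $(-\Delta)^{1/2}G=\delta_1-\frac{1}{2\pi}$. Since $-i=e^{i\theta}$ corresponds to $\theta=-\pi/2$, I would first verify the trigonometric identity
\begin{equation*}
1+\sin\theta = 1-\cos\left(\theta+\tfrac{\pi}{2}\right)=\tfrac12\cdot 2\left(1-\cos\left(\theta+\tfrac{\pi}{2}\right)\right),
\end{equation*}
which holds by the addition formula $\cos(\theta+\pi/2)=-\sin\theta$. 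Taking logarithms,
\begin{equation*}
\log(1+\sin\theta)=\log 2\left(1-\cos\left(\theta+\tfrac{\pi}{2}\right)\right)-\log 2=-2\pi\, G\!\left(\theta+\tfrac{\pi}{2}\right)-\log 2.
\end{equation*}

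Next I would apply $(-\Delta)^{1/2}$ to both sides. The operator annihilates the constant $-\log 2$ (its zeroth Fourier coefficient vanishes, cf. \eqref{lapHilb}), and it commutes with the rotation $\theta\mapsto\theta+\pi/2$, so that $(-\Delta)^{1/2}\big[G(\cdot+\tfrac{\pi}{2})\big](\theta)=\big((-\Delta)^{1/2}G\big)(\theta+\tfrac{\pi}{2})$. By Lemma \ref{lemmafund4} the latter equals $\delta_1(\theta+\tfrac{\pi}{2})-\frac{1}{2\pi}$, and $\delta_1$ evaluated at $\theta+\tfrac\pi2$ is precisely the Dirac mass at the point $\theta=-\pi/2$, i.e. at $e^{-i\pi/2}=-i$ on $S^1$. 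Hence
\begin{equation*}
(-\Delta)^{1/2}\log(1+\sin\theta)=-2\pi\left(\delta_{-i}-\tfrac{1}{2\pi}\right)=1-2\pi\,\delta_{-i},
\end{equation*}
which is the claim. Throughout one should note that $\log(1+\sin\theta)\in L^1(S^1)$ (the singularity at $\theta=-\pi/2$ is logarithmic, hence integrable), so all the manipulations are justified in $\mathcal{D}'(S^1)$, and the rotation-covariance of $(-\Delta)^{1/2}$ is immediate from its Fourier multiplier representation $\sum_n |n|\hat\phi(n)e^{in\theta}$.

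The only point requiring a little care — and the main (mild) obstacle — is bookkeeping the location of the Dirac mass under the rotation and the identification of $S^1$ with $[-\pi,\pi]/\!\sim$: one must check that translating the singularity of $G$ from $\theta=0$ to $\theta=-\pi/2$ indeed lands it at the point $-i\in S^1$ and not at $+i$, which is settled by $e^{-i\pi/2}=-i$. An alternative, entirely self-contained route avoids Lemma \ref{lemmafund4} altogether: one checks directly via Lemma \ref{LemmaHilbert} that $\log(1+\sin\theta)+i\,\mathcal{H}(\log(1+\sin\theta))$ extends holomorphically, by exhibiting the holomorphic function $w(z)=\log\frac{(z+i)^2}{2iz}$ (or a suitable branch thereof) on $D^2$ whose boundary real part is $\log(1+\sin\theta)$ — indeed $|{(e^{i\theta}+i)^2}/{(2ie^{i\theta})}| = |e^{i\theta}+i|^2/2 = (2+2\sin\theta)/2 = 1+\sin\theta$ — and then uses $(-\Delta)^{1/2}\phi=\partial_\theta\mathcal{H}(\phi)$ from \eqref{lapHilb} together with the boundary behaviour of $w'$ to read off both the smooth part $+1$ and the $-2\pi\delta_{-i}$ coming from the zero of $2iz$ inside the disk; but I would favour the first, shorter argument built on Lemma \ref{lemmafund4}.
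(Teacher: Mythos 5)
Your rotation computation is sound as far as it goes: indeed $1+\sin\theta=1-\cos(\theta+\tfrac\pi2)$, hence $\log(1+\sin\theta)=-2\pi G(\theta+\tfrac\pi2)-\log 2$, and $(-\Delta)^\frac12$ kills constants and commutes with rotations by its Fourier multiplier representation. The problem is that, inside this paper, the input you rely on is proved \emph{from} the statement you are proving: the proof of Lemma \ref{lemmafund4} establishes identity \eqref{eqF}, $(-\Delta)^\frac12 G=\delta_1-\frac{1}{2\pi}$, with the single sentence that it ``follows at once from Lemma \ref{lemmaG}'', i.e.\ by exactly the rotation relation you write down, run in the opposite direction. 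So your first argument establishes the (easy) equivalence of Lemma \ref{lemmaG} with \eqref{eqF}, not an independent proof of either, and as it stands it is circular relative to the paper's logical structure. The paper's own proof avoids this by going through $\R$: by \eqref{sintheta} one has $\log(1+\sin\theta)=u_{1,0}(\Pi(\theta))$ with $u_{1,0}(x)=\log\frac{2}{1+x^2}$; Proposition \ref{sollog} (a direct principal-value computation) gives $(-\Delta)^\frac12 u_{1,0}=e^{u_{1,0}}$ on $\R$ with $\|(-\Delta)^\frac12 u_{1,0}\|_{L^1(\R)}=2\pi$, and the transfer formula \eqref{eqlapv2} of Proposition \ref{proppull} then produces simultaneously the smooth part $1$ and the atom $-2\pi\delta_{-i}$.

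The gap is easily repaired, and once repaired your route is arguably shorter and more self-contained than the paper's: \eqref{eqF} has a direct proof by Fourier series, since $\log(2(1-\cos\theta))=2\log|1-e^{i\theta}|=-2\sum_{n\ge 1}\frac{\cos n\theta}{n}$ in $L^1(S^1)$, so $G=\frac1\pi\sum_{n\ge1}\frac{\cos n\theta}{n}$ and, testing against smooth $\varphi$ as in \eqref{fraclapl6}, $(-\Delta)^\frac12 G=\frac1\pi\sum_{n\ge1}\cos n\theta=\delta_1-\frac{1}{2\pi}$; equivalently you can work directly with $\log(1+\sin\theta)=2\log|1-e^{i(\theta+\pi/2)}|-\log 2$ and never mention $G$. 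With such an argument supplied, your first proof is complete and genuinely different from the paper's (purely one-dimensional Fourier analysis versus pulling back an explicit solution on $\R$ through the stereographic projection). Your sketched second route, however, is not yet a proof: the proposed $w(z)=\log\frac{(z+i)^2}{2iz}$ is not holomorphic in $D^2$ (the factor $\frac1z$ has a pole at $0$ and a branch of the logarithm must be chosen), which is precisely where the $-2\pi\delta_{-i}$ would have to come from, so extracting the atom along those lines would require essentially the same amount of work as the Fourier argument above.
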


\begin{proof}
Notice that by \eqref{sintheta} we can write
$$\log(1+\sin\theta)=u_{1,0}(\Pi(\theta)),\quad u_{1,0}(x)=\log\left(\frac{2}{1+x^2}\right).$$
Then Propositions \ref{sollog} and \ref{proppull} imply
\begin{equation*}
\begin{split}
\lapfr \log(1+\sin\theta)&=\frac{\lapfr u(\Pi(\theta))}{1+\sin\theta}- \|\lapfr u\|_{L^1}\delta_{-i}\\
&= \frac{e^{u_{1,0}(\Pi(\theta))}}{1+\sin\theta}- \delta_i\int_{\R}e^{u_{1,0}(x)}dx\\
&=1-2\pi\delta_{-i}.
\end{split}
\end{equation*}
\end{proof}


 

 

\section{Proof of Theorem \ref{trm1} and Proposition \ref{nonex}}\label{sec:proofs}
  



Before proving Theorem \ref{trm1} we show that the functions defined in \eqref{specsol} are indeed solutions of \eqref{liou}-\eqref{area}.

\begin{Proposition}\label{sollog} For every $\mu>0$ and $x_0\in\R{}$ the function $u_{\mu,x_0}$ defined in \eqref{specsol}
belongs to $L_\frac12(\R)$ satisfies \eqref{area} with $L=2\pi$ and solves \eqref{liou}.
\end{Proposition}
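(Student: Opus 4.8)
The plan is to separate the three assertions. The integrability \eqref{area} and the membership $u_{\mu,x_0}\in L_\frac12(\R)$ are elementary and can be checked for all $\mu>0$, $x_0\in\R$ directly: one has $\int_\R e^{u_{\mu,x_0}}\,dx=\int_\R\frac{2\mu}{1+\mu^2(x-x_0)^2}\,dx=2\pi$ by the substitution $t=\mu(x-x_0)$, which gives \eqref{area} with $L=2\pi$; and since $|u_{\mu,x_0}(x)|\le C_{\mu,x_0}\log(2+|x|)$, the function $x\mapsto \frac{|u_{\mu,x_0}(x)|}{1+x^2}$ is integrable, i.e. $u_{\mu,x_0}\in L_\frac12(\R)$. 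For the PDE $\lapfr u_{\mu,x_0}=e^{u_{\mu,x_0}}$ it suffices, by the scaling and translation invariance of \eqref{liou}, to treat $\mu=1$, $x_0=0$: if $u$ solves $\lapfr u=e^u$ then $u_\mu(x):=u(\mu(x-x_0))+\log\mu$ satisfies $\lapfr u_\mu(x)=\mu(\lapfr u)(\mu(x-x_0))=\mu\,e^{u(\mu(x-x_0))}=e^{u_\mu(x)}$, and applying this to $u=u_{1,0}$ produces exactly $u_{\mu,x_0}$. So the heart of the matter is the identity $\lapfr u_{1,0}=e^{u_{1,0}}=\frac{2}{1+x^2}$ for $u_{1,0}(x)=\log\frac{2}{1+x^2}$.

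I would prove this via the harmonic-extension characterization of $\lapfr$ recalled in the Appendix and already exploited in the proof of Proposition \ref{proppull}: if $\tilde u$ denotes the harmonic extension (Poisson integral) of $u_{1,0}$ to the upper half-plane $\R^2_+=\{(x,y):y>0\}$, then $\lapfr u_{1,0}=-\partial_y\tilde u|_{y=0}$ in $\mathcal D'(\R)$. The key step is to exhibit $\tilde u$ explicitly; I claim
\[
\tilde u(x,y)=\log 2-\log\big(x^2+(1+y)^2\big)=\log\frac{2}{|z+i|^2},\qquad z=x+iy.
\]
This function is harmonic on $\R^2_+$ because $\log\big(x^2+(1+y)^2\big)=2\,\Re\log(z+i)$ and $z+i\ne 0$ there; it is continuous on $\overline{\R^2_+}$ with boundary trace $\log\frac{2}{1+x^2}=u_{1,0}$ on $\{y=0\}$; and it has at most logarithmic growth. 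To see that it actually is the harmonic extension, I would compare it with the Poisson extension $w$ of $u_{1,0}$, which is also continuous up to the boundary and of at most logarithmic growth (the Poisson kernel on $\R^2_+$ preserves such growth); then $h:=\tilde u-w$ is harmonic on $\R^2_+$, vanishes on $\{y=0\}$, and is $o(|z|)$ at infinity, so its odd reflection across $\{y=0\}$ is harmonic on all of $\R^2$ with sublinear growth, hence constant by the interior gradient estimate, so $h\equiv0$. It then follows that
\[
\lapfr u_{1,0}(x)=-\partial_y\tilde u(x,0)=\left.\frac{2(1+y)}{x^2+(1+y)^2}\right|_{y=0}=\frac{2}{1+x^2}=e^{u_{1,0}(x)},
\]
and undoing the scaling completes the proof.

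The only genuine obstacle I foresee is the rigorous identification of the explicit function $\log\frac{2}{|z+i|^2}$ with the harmonic extension appearing in the Dirichlet-to-Neumann formula; the reflection-plus-Liouville argument above handles it, but an alternative that avoids the extension altogether is to evaluate directly the singular integral $\lapfr u_{1,0}(x)=\frac1\pi\,\mathrm{P.V.}\!\int_\R\frac{\log(1+t^2)-\log(1+x^2)}{(x-t)^2}\,dt$ — for instance by differentiating in $x$ and computing the resulting convergent integral by residues — which again yields $\frac{2}{1+x^2}$.
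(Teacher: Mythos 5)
Your proof is correct, and the integrability/$L_\frac12$ checks and the scaling--translation reduction to $u_{1,0}$ are exactly as in the paper. Where you genuinely diverge is the key identity $\lapfr u_{1,0}=e^{u_{1,0}}$: the paper evaluates the principal-value integral $\frac1\pi\,\mathrm{P.V.}\int_{\R}\frac{\log\frac{1+y^2}{1+x^2}}{(x-y)^2}\,dy$ directly, integrating by parts and using explicit antiderivatives ($\arctan$ and logarithms), whereas you use the Dirichlet-to-Neumann characterization of Proposition \ref{lapeq}: you exhibit the explicit harmonic extension $\tilde u(x,y)=\log\frac{2}{|z+i|^2}$, identify it with the Poisson integral by odd reflection plus a Liouville-type gradient estimate (legitimate here, since the Poisson integral of a function of logarithmic growth has logarithmic growth and $u_{1,0}$ is smooth, so Proposition \ref{lapeq} applies), and read off $-\partial_y\tilde u|_{y=0}=\frac{2}{1+x^2}$. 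What each buys: the paper's computation is entirely self-contained one-variable calculus with no identification lemma needed; yours replaces the somewhat delicate PV manipulation by a one-line normal-derivative computation, at the price of the extension-identification step, and it has the merit of making visible the conformal structure behind the solution (your $\tilde u$ is, up to the factor $2$, $-2\log|z+i|$, i.e.\ the logarithm of the modulus of the derivative of the Cayley-type map used in Section 4), without invoking Lemma \ref{lemmaG} or Proposition \ref{proppull} — which would be circular, since those results cite this Proposition. Your closing alternative (computing the PV integral directly) is essentially the paper's own route.
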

{\bf Proof\,.} That $u_{\lambda,x_0}\in L_{\frac{1}{2}}(\R)$ and
$\int_{\R}e^{u_{\lambda,x_0}}dx=2\pi$
is elementary.
The equation is  invariant under translations and  dilations in the sense that for all $x_0\in\R$ and $\lambda >0$ if $u$ is a solution of \eqref{liou} then $u(\lambda (x+x_0))+\log(\lambda)$ is a solution of \eqref{liou} as well, hence it suffices to prove that $u_{1,0}(x)=\log\big(\frac{2}{1+x^2}\big)$ is a solution.
From Proposition \ref{lapeq} we get with integration by parts 
\[
\begin{split}
\pi(-\Delta)^\frac{1}{2}u_{1,0}(x)&=\lim_{\ve\to 0}\int_{\R{}\setminus[x-\ve,x+\ve]}\frac{\log\Big(\frac{1+y^2}{1+x^2}\Big)}{(x-y)^2}dy\\
&=\lim_{\ve\to 0}\bigg\{-\frac{\log\Big(\frac{1+y^2}{1+x^2}\Big)}{y-x}\bigg|_{-\infty}^{x-\ve}-\frac{\log\Big(\frac{1+y^2}{1+x^2}\Big)}{y-x}\bigg|_{x+\ve}^{\infty}\\
&\qquad +\int_{\R{}\setminus[x-\ve,x+\ve]}\frac{2y}{(y-x)(1+y^2)}dy\bigg\}\\
&=\lim_{\ve\to 0}\bigg\{\frac{2\arctan(y)+x\log\Big(\frac{(y-x)^2}{1+y^2} \Big)}{1+x^2}\bigg|_{-\infty}^{x-\ve}\\
&\qquad+\frac{2\arctan(y)+x\log\Big(\frac{(y-x)^2}{1+y^2} \Big)}{1+x^2}\bigg|_{x+\ve}^{\infty}\bigg\}\\
&=\frac{2\pi}{1+x^2}=\pi e^{u_{1,0}(x)}.
\end{split}
\]

\begin{Theorem}\label{lemmaBM} There exist constants $C_1, C_2>0$ such that for any $\ve\in (0,\pi)$ one has
\begin{equation}\label{stimaMT5}
C_1\le  \sup_{u\in \tilde H^{1,1}_\Delta (I),\, \|(-\Delta)^\frac12 u\|_{L^1(I)}\le 1 }\frac{\ve}{|I|} \int_{I} e^{(\pi-\ve)|u|}  d\theta \le C_2,
\end{equation}
where $\tilde H^{1,1}_\Delta(I):=\{u\in L^1(\mathbb{R}):\mathrm{supp}(u)\subset \bar I,\,(-\Delta)^\frac12 u\in L^1(\mathbb{R})\}$.
\end{Theorem}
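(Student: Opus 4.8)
\textbf{Proof proposal for Theorem \ref{lemmaBM}.}

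The plan is to reduce the statement on an interval $I\subset\R$ to the already-established estimate on $S^1$ from Theorem \ref{MT4}, exploiting scaling invariance. First I would observe that both sides of \eqref{stimaMT5} are invariant under translations and dilations of $I$: if $u\in\tilde H^{1,1}_\Delta(I)$ and $v(x):=u(rx+x_0)$, then $(-\Delta)^\frac12 v(x)=r\,((-\Delta)^\frac12 u)(rx+x_0)$, so $\|(-\Delta)^\frac12 v\|_{L^1}=\|(-\Delta)^\frac12 u\|_{L^1}$, while $\frac{1}{|I|}\int_I e^{(\pi-\ve)|u|}$ transforms into $\frac{1}{|rI|}\int_{rI+x_0}e^{(\pi-\ve)|v|}$ after the change of variables. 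Hence it suffices to prove \eqref{stimaMT5} for one fixed interval, say $I=(-1,1)$, or indeed for an interval of length $2\pi$ whichever is more convenient for comparison with $S^1$.

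For the upper bound I would use the explicit Green representation on $\R$. Since $u\in\tilde H^{1,1}_\Delta(I)$ has $f:=(-\Delta)^\frac12 u\in L^1(\R)$ supported in $\bar I$, and since the fundamental solution of $(-\Delta)^\frac12$ on $\R$ is $-\frac{1}{\pi}\log|x|$ (up to an additive constant that is immaterial because $\int f\,dx$ can be bounded), one writes $u(x)=-\frac1\pi\int_I \log|x-y| f(y)\,dy + c$ for $x$ in a bounded neighbourhood of $I$, with $|c|\le C\|f\|_{L^1}$. Then for $x\in I$,
\[
|u(x)|\le \frac1\pi\int_I \log\frac{C}{|x-y|}\,|f(y)|\,dy + C,
\]
and the argument is identical to the proof of the upper bound in \eqref{stimaMT4}: apply Jensen's inequality with the probability measure $|f(y)|\,dy$ (recall $\|f\|_{L^1}\le 1$), then Fubini, to get
\[
\int_I e^{(\pi-\ve)|u(x)|}\,dx \le C\int_I |f(y)|\int_I \left(\frac{C}{|x-y|}\right)^{1-\frac\ve\pi}dx\,dy\le \frac{C_2}{\ve}\,|I|,
\]
where the inner integral is uniformly bounded by $C|I|^{1-\frac\ve\pi}\cdot\frac{1}{\ve/\pi}\cdot|I|^{\ve/\pi}\le C|I|/\ve$ after computing $\int_{-2|I|}^{2|I|}|t|^{-1+\ve/\pi}dt$; dividing by $|I|$ gives the claim.

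For the lower bound I would, as in the proof of Theorem \ref{MT4}, test with an approximate identity: pick $(f_k)\subset C^\infty_c(I)$ non-negative with $f_k\rightharpoonup\delta_{x_0}$ weakly for $x_0$ the midpoint of $I$, and $\|f_k\|_{L^1}=1$, let $u_k$ solve $(-\Delta)^\frac12 u_k=f_k$ with the corresponding integral representation; then $|u_k(x)|\ge \frac1\pi\int_I\log\frac{c}{|x-y|}f_k(y)\,dy - C$, and exponentiating, integrating over $I$, and passing to the limit $k\to\infty$ yields $\liminf_k \int_I e^{(\pi-\ve)|u_k|}dx\ge \frac1C\int_I\left(\frac{c}{|x-x_0|}\right)^{1-\ve/\pi}dx\ge \frac{C_1}{\ve}|I|$ by the same elementary integral computation. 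The only point requiring a little care — and the main (minor) obstacle — is bookkeeping the additive constants in the $\R$-fundamental solution and checking that the supports stay inside a fixed bounded set so that $\log|x-y|$ stays bounded above; this is handled exactly as the decomposition $G=\frac1\pi\log\frac{\pi}{|\theta|}+H$ with $H\in C^0$ was used on $S^1$, replacing $H$ here by the (bounded on the relevant compact set) remainder of $-\frac1\pi\log|x-y|$.
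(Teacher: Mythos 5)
Your scaling reduction and the Jensen--Fubini computation are fine and match the paper's strategy, but the starting point of your upper bound contains a genuine gap. You assert that $f:=(-\Delta)^\frac12 u$ is supported in $\bar I$ and that $u(x)=-\frac1\pi\int_I\log|x-y|\,f(y)\,dy+c$ with $|c|\le C\|f\|_{L^1}$. Neither holds. Since $(-\Delta)^\frac12$ is nonlocal, $\mathrm{supp}(u)\subset\bar I$ does \emph{not} imply $\mathrm{supp}\big((-\Delta)^\frac12 u\big)\subset\bar I$: for $x\notin\bar I$ one has $(-\Delta)^\frac12 u(x)=-\frac1\pi\int_I\frac{u(y)}{(x-y)^2}\,dy$, which is in general nonzero, and the hypothesis of the theorem only bounds $\|(-\Delta)^\frac12 u\|_{L^1(I)}$, i.e.\ the part of $f$ \emph{inside} $I$. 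Consequently the whole-line fundamental solution convolved with $f|_I$ does not reproduce $u$: the missing contribution coming from $f$ on $\R\setminus I$ (which is exactly what enforces the exterior condition $u\equiv 0$ outside $I$) is neither a constant nor a priori bounded. Even if one tried to estimate the exterior mass of $f$, its $L^1$ norm need not be $\le 1$, and since the exponent $\pi-\ve$ is sharp, any loss in the total mass fed into Jensen's inequality ruins the estimate for small $\ve$.

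The correct substitute, and the route the paper takes, is the Dirichlet Green function of $(-\Delta)^\frac12$ on the interval: for $u$ supported in $\bar I$ with $(-\Delta)^\frac12 u=f$ in $I$ one has $u(x)=\int_I G(x,y)f(y)\,dy$, and by the explicit Blumenthal--Getoor--Ray formula (Lemma \ref{lemmagreen4}) $G(x,y)=\frac1\pi\log\frac{1}{|x-y|}+H(x,y)$ with $H$ bounded above on $I\times I$, hence $G(x,y)\le\frac1\pi\log\frac{2}{|x-y|}+C$. This representation uses only $f|_I$ with $\|f\|_{L^1(I)}\le 1$, after which your Jensen/Fubini step goes through verbatim and yields the sharp bound $C_2/\ve$. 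The same issue affects your lower bound: competitors $u_k$ solving $(-\Delta)^\frac12 u_k=f_k$ on all of $\R$ do not belong to $\tilde H^{1,1}_\Delta(I)$, since they do not vanish outside $I$; they should instead be defined by $u_k:=\int_I G(\cdot,y)f_k(y)\,dy$, and the needed lower bound on $G$ near the midpoint again comes from the explicit formula.
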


\begin{Lemma}\label{lemmagreen4} 
The Green function of $(-\Delta)^\frac{1}{2}$ on the interval $I=(-1,1)$ 
can be decomposed as 
$$G_\frac12 (x,y)= F_\frac12(|x-y|)+H_\frac12(x,y),$$
where $F_\frac12(x):=\frac{1}{\pi}\log\frac{1}{|x|}$
and $H_\frac12$ is upper bounded.
\end{Lemma}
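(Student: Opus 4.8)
The plan is to obtain an explicit closed form for $G_{1/2}$ from the classical Boggio--Riesz representation of the Dirichlet Green function of $(-\Delta)^s$ on a ball, and then to read off the decomposition and the upper bound on $H_{1/2}$ by inspection.

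First I would recall that, for the ball $B_1\subset\R^n$ and $s\in(0,1)$, the Green function of $(-\Delta)^s$ with exterior condition $G\equiv 0$ on $\R^n\setminus B_1$ is
\[
G^{B_1}_s(x,y)=\frac{\Gamma(n/2)}{4^{s}\pi^{n/2}\Gamma(s)^2}\,|x-y|^{2s-n}\int_0^{r_0(x,y)}\frac{t^{s-1}}{(1+t)^{n/2}}\,dt,\qquad r_0(x,y):=\frac{(1-|x|^2)(1-|y|^2)}{|x-y|^2}.
\]
Specialising to $n=1$, $s=\tfrac12$ and $B_1=I=(-1,1)$ one has $2s-n=0$, the constant becomes $\tfrac{1}{2\pi}$, and the remaining integral is $\int_0^{r_0}t^{-1/2}(1+t)^{-1/2}\,dt$, which after the substitution $t=\sinh^2 u$ equals $2\log\!\big(\sqrt{r_0}+\sqrt{1+r_0}\big)$. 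If one prefers to avoid quoting Boggio's formula, the same closed form can instead be obtained by verifying directly that the resulting expression vanishes off $I$ and satisfies $\lapfr G_{1/2}(\cdot\,,y)=\delta_y$ in $\mathcal D'(I)$, or --- since $s=\tfrac12$ --- by reducing, via the harmonic extension to $\R^2_+$, the mixed Dirichlet--Neumann problem on $\R^2_+$ to a pure Dirichlet problem through a conformal map sending $\R\setminus I$ onto a half-line.

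Next I would simplify the radicands using the one-line algebraic identity $1+r_0(x,y)=\dfrac{(1-xy)^2}{(x-y)^2}$, so that, since $1-xy>0$ for $x,y\in I$, one has $\sqrt{r_0}=\tfrac{\sqrt{(1-x^2)(1-y^2)}}{|x-y|}$ and $\sqrt{1+r_0}=\tfrac{1-xy}{|x-y|}$. This yields
\[
G_{1/2}(x,y)=\frac1\pi\log\!\left(\frac{1-xy+\sqrt{(1-x^2)(1-y^2)}}{|x-y|}\right)=\frac1\pi\log\frac{1}{|x-y|}+\frac1\pi\log\!\Big(1-xy+\sqrt{(1-x^2)(1-y^2)}\Big),
\]
which is exactly the asserted decomposition with $F_{1/2}(x)=\tfrac1\pi\log\tfrac1{|x|}$ and $H_{1/2}(x,y)=\tfrac1\pi\log\!\big(1-xy+\sqrt{(1-x^2)(1-y^2)}\big)$.

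Finally, the upper bound on $H_{1/2}$ is elementary: writing $x=\cos\alpha$, $y=\cos\beta$ with $\alpha,\beta\in[0,\pi]$ gives $1-xy+\sqrt{(1-x^2)(1-y^2)}=1-\cos(\alpha+\beta)\le 2$, hence $H_{1/2}(x,y)\le\tfrac{1}{\pi}\log 2$ on $I\times I$; alternatively one may just use $1-xy<2$ and $(1-x^2)(1-y^2)\le1$. Note that $H_{1/2}$ is not bounded below, since it tends to $-\infty$ as $x,y\to1$, consistently with $G_{1/2}$ vanishing on $\partial I$. The only genuinely non-trivial point is the first step --- pinning down the normalisation in the Green-function formula, or equivalently carrying out the direct computation of $\lapfr$ of the logarithmic expression; everything after the closed form is a routine check.
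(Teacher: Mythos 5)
Your proposal is correct and takes essentially the same route as the paper, which also rests on the explicit closed form $G(x,y)=\frac{1}{\pi}\log\bigl(\sqrt{r_0(x,y)}+\sqrt{r_0(x,y)+1}\bigr)$ with $r_0(x,y)=\frac{(1-|x|^2)(1-|y|^2)}{|x-y|^2}$ (quoted from \cite{BGR}, \cite{Buc}) and then reads off the decomposition. Your extra steps---deriving the formula from Boggio's representation, the identity $1+r_0=(1-xy)^2/(x-y)^2$, and the bound $H_{1/2}\le\frac{1}{\pi}\log 2$---simply make explicit the computation the paper leaves to the reader.
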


\begin{proof} 
This follows from the explicit expression of $G(x,y)$ (see e.g \cite{BGR} or \cite{Buc}), namely
$$G(x,y)=\frac{1}{2\pi}\int_0^{r_0(x,y)}\frac{1}{\sqrt{r(r+1)}}dr=\frac{1}{\pi} \log(\sqrt{r_0(x,y)}+\sqrt{r_0(x,y)+1}),$$
where
$$r_0(x,y):=\frac{(1-|x|^2)(1-|y|^2)}{|x-y|^2}.$$
\end{proof}

\noindent\emph{Proof of Theorem \ref{lemmaBM}.}
Up to a translation and dilation we can assume that $I=(-1,1)$. With Lemma \ref{lemmagreen4}  we write for $u\in \tilde H^{1,1}_\Delta(I)$ and $f:=(-\Delta)^\frac12 u$
$$|u(x)|=\left|\int_I G(x,y) f(y)dy\right|,$$
and  we bound
$$G(x,y)\le \frac{1}{\pi}\log\left(\frac{2}{|x-y|}\right)+C,\quad x,y,\in I,$$
hence
\begin{equation}\label{stimaimp}
|u(x)|\le \frac{1}{\pi}\int_I \log\left(\frac{2}{|x-y|}\right)|f(y)|dy + C,
\end{equation}
and exactly as in \eqref{eqpfMT4} one gets
$$\int_I e^{(\pi-\ve) |u(x)|}dx\le C \int_I |f(y)|\int_I \left(\frac{2}{|x-y|}\right)^{1-\frac\ve\pi} dxdy\le\frac{C}{\ve}.$$
The rest of the proof is also similar to the proof of Theorem \ref{MT4}.
\hfill$\square$

\begin{Remark} A slight modification of \eqref{stimaMT5} is
\begin{equation}\label{stimaMT6}
C_1\le  \sup_{u=F_\frac12*f,\, \mathrm{supp}(f)\subset \bar I, \,\|f\|_{L^1(I)}\le 1 }\frac{\ve}{|I|} \int_{I} e^{(\pi-\ve)|u|}  d\theta \le C_2,
\end{equation}
where $F_\frac12$ is as in Lemma \ref{lemmagreen4}. The proof of \eqref{stimaMT6} is similar to the proof of \eqref{stimaMT5}, since $u=F_\frac12*f$ obviously satisfies \eqref{stimaimp}. An alternative proof of a non-sharp version of \eqref{stimaMT6}, namely
$$\sup_{u=F_\frac12*f,\, \mathrm{supp}(f)\subset \bar I, \,\|f\|_{L^1(I)}\le 1 } \int_{I} e^{\delta |u-\bar u|}  d\theta \le C_2,\quad \text{for some }\delta>0,\quad \bar u:=\Intm_I udx,$$
can be obtained noticing that for $u=F_\frac12*f$ one has $[u]_{BMO(I)}\le C[F_\frac{1}{2}]_{BMO(\R{})}\|f\|_{L^1(I)}$, and one can apply the John-Niremberg inequality.
\end{Remark}

\begin{Proposition}\label{trmrappr} Let $u\in L_{\frac{1}{2}}(\R{})$ satisfy \eqref{liou}-\eqref{area}. Then there is a constant $C_0\in \R$ such that
\begin{equation}\label{liouint}
u(x)=\frac{1}{\pi}\int_{\R}\log\left(\frac{1+|y|}{|x-y|}\right)e^{u(y)}dy+C_0.
\end{equation}
\end{Proposition}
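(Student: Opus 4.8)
The plan is to realise the right-hand side of \eqref{liouint} as a particular solution of $(-\Delta)^{1/2}w=e^{u}$, and then to argue that any two $L_\frac12(\R)$-solutions of this linear equation differ by a constant. So set
\[
w(x):=\frac{1}{\pi}\int_{\R}\log\left(\frac{1+|y|}{|x-y|}\right)e^{u(y)}\,dy ,\qquad k(x,y):=\log\frac{1+|y|}{|x-y|}.
\]
The first step is to check that $w$ is well defined and lies in $L_\frac12(\R)$. The kernel $k$ has only a logarithmic singularity on the diagonal, which is locally integrable, and $e^{u}$ is locally bounded since $u$ is continuous by the regularity theory already invoked for \eqref{liouvfracn1}; at infinity $k$ decays, since for $|x-y|\ge 1$ one has $\tfrac{1}{1+|x|}\le \tfrac{1+|y|}{|x-y|}\le 1+|y|$, while for $|y|\ge 2|x|$ one has $\tfrac{1+|y|}{|x-y|}\le 4$. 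Splitting the integral over $\{|x-y|<1\}$, $\{|x-y|\ge1,\ |y|<2|x|\}$ and $\{|y|\ge 2|x|\}$ and using $e^{u}\in L^{1}(\R)$ gives absolute convergence for every $x$, together with the lower bound $w(x)\ge -\frac{L}{\pi}\log(1+|x|)-C$ and a matching logarithmic upper bound. Hence $w\in L^{1}_{\loc}(\R)$ has at most logarithmic growth, so $w\in L_\frac12(\R)\subset\mathcal S'(\R)$.

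The second step is to show $(-\Delta)^{1/2}w=e^{u}$ in $\mathcal D'(\R)$. For $\varphi\in C_c^\infty(\R)$ the function $(-\Delta)^{1/2}\varphi$ is bounded and $O(|x|^{-2})$ at infinity, so $w\,(-\Delta)^{1/2}\varphi\in L^{1}(\R)$ and, using the same splitting (on $\{|y|\ge 2\diam(\operatorname{supp}\varphi)\}$ the kernel $k(\cdot,y)$ is bounded), Fubini's theorem yields
\[
\langle (-\Delta)^{1/2}w,\varphi\rangle=\frac1\pi\int_{\R}e^{u(y)}\Big(\int_{\R}k(x,y)\,(-\Delta)^{1/2}\varphi(x)\,dx\Big)dy .
\]
Writing $k(x,y)=\log(1+|y|)-\log|x-y|$, the first term contributes $\log(1+|y|)\int_{\R}(-\Delta)^{1/2}\varphi\,dx=\log(1+|y|)\,\widehat{(-\Delta)^{1/2}\varphi}(0)=0$, and the second gives $-\int_{\R}\log|x-y|\,(-\Delta)^{1/2}\varphi(x)\,dx=\pi\varphi(y)$ by the fundamental-solution identity $(-\Delta)^{1/2}\bigl(-\tfrac1\pi\log|\cdot|\bigr)=\delta_0$ on $\R$ (a standard fact, consistent with the Appendix and with Proposition \ref{sollog}). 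Hence $\langle (-\Delta)^{1/2}w,\varphi\rangle=\int_{\R}e^{u}\varphi\,dx$, as wanted.

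The third step is the conclusion: $h:=u-w$ belongs to $L_\frac12(\R)$ and satisfies $(-\Delta)^{1/2}h=e^{u}-e^{u}=0$ in $\mathcal D'(\R)$, so by the Liouville theorem for the half-Laplacian — a function in $L_\frac12(\R)$ that is $\tfrac12$-harmonic on $\R$ is constant, equivalently its harmonic extension to the half-plane is affine and the weight defining $L_\frac12$ excludes the linear part — we get $h\equiv C_0$ for some $C_0\in\R$, which is \eqref{liouint}. The main obstacle is precisely this last step: one must rule out a nonconstant $\tfrac12$-harmonic remainder, which relies both on the growth bounds of the first step (to get $u-w\in L_\frac12(\R)$) and on the classification of $\tfrac12$-harmonic functions of at most logarithmic growth; the Fubini and $L^{1}$ bookkeeping of the first two steps is routine but not automatic, since the ``good'' kernel $\log\frac{1+|y|}{|x-y|}$ is unbounded and one must use its cancellation at infinity at each step. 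As an alternative one could instead transport the representation \eqref{rapr} through the stereographic projection: by Proposition \ref{proppull2} the pull-back $\lambda$ solves $(-\Delta)^{1/2}\lambda=e^{\lambda}-1+(2\pi-L)\delta_{-i}$ on $S^1$, and inserting $2(1-\cos(\theta(x)-\theta(y)))=\tfrac{4|x-y|^2}{(1+x^2)(1+y^2)}$ and $1+\sin\theta(x)=\tfrac{2}{1+x^2}$ into the (measure-valued) formula \eqref{rapr} makes all $\log(1+x^2)$ terms cancel and recovers \eqref{liouint}.
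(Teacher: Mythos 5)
Your overall strategy is exactly the paper's: you introduce the logarithmic potential $w=\frac1\pi\int_\R\log\frac{1+|y|}{|x-y|}e^{u(y)}dy$ (the paper's Lemma \ref{green}), show $(-\Delta)^{1/2}w=e^u$ in the sense of distributions, and conclude via the Liouville-type theorem for $\tfrac12$-harmonic functions in $L_{\frac12}(\R)$ (the paper's Lemma \ref{lemmaliou}, which the paper itself quotes from \cite{JMMX}), so the skeleton and the two key lemmas coincide. The only methodological difference is in the middle step: you verify $(-\Delta)^{1/2}w=e^u$ by Fubini against a test function together with the fundamental-solution identity for $-\frac1\pi\log|\cdot|$ (quoted as a standard fact), whereas the paper proves the same statement by computing the Fourier transform of the log kernel for smooth compactly supported data and then approximating in $L^1$; these are equivalent in substance, since the identity you quote is precisely what that Fourier computation establishes.

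One point needs repair. In Step 1 you justify absolute convergence of the integral defining $w$ for every $x$ by claiming that $e^u$ is locally bounded because ``$u$ is continuous by the regularity theory already invoked for \eqref{liouvfracn1}''. In the paper's logical order this is circular: continuity (indeed smoothness) of $u$ is Proposition \ref{trmreg}, which is proved \emph{after} and \emph{by means of} the representation formula \eqref{liouint} together with the estimate \eqref{stimaMT6}; and the regularity used for the $S^1$ equation does not transfer to $u$ at this stage, since after stereographic projection the right-hand side is merely an $L^1$ function plus a Dirac mass (Proposition \ref{proppull2}). Fortunately continuity is not needed: since $e^u\in L^1(\R)$ and $\log\frac1{|\cdot|}\in L^1_{\loc}(\R)$, Tonelli gives that $w$ is finite a.e.\ and lies in $L^1_{\loc}$, and the uniform bound $\sup_{y\in\R}\int_\R \big|\log\tfrac{1+|y|}{|x-y|}\big|\,\tfrac{dx}{1+x^2}<\infty$ (which your own splitting already yields, thanks to the cancellation built into the kernel at infinity) gives both $w\in L_{\frac12}(\R)$ and the integrability needed for your Fubini step; this is exactly why the paper's Lemma \ref{green} is stated for arbitrary $f\in L^1(\R)$. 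With that adjustment, and with \eqref{liouint} read as an a.e.\ identity at this stage, your argument is correct; the closing appeal to the Liouville theorem is at the same level of detail as the paper's, which likewise defers its proof to \cite{JMMX}.
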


In the proof of Proposition \ref{trmrappr} we use two lemmata.

\begin{Lemma}\label{green} For any $f\in L^1(\R)$ the function
\begin{equation}\label{defw}
w(x):=\mathcal{I}[f](x):=\frac{1}{\pi}\int_{\R}\log\left(\frac{1+|y|}{|x-y|}\right) f(y)dy
\end{equation}
is well defined, belongs to $L_\frac12(\R)$ and satisfies 
\begin{equation}\label{deltaw}
(-\Delta)^\frac{1}{2}w=f \text{ in }\mathcal{S}'.
\end{equation}
\end{Lemma}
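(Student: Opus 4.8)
The plan is to verify the three claims — well-definedness, membership in $L_\frac12(\R)$, and the distributional identity — in that order, treating the logarithmic kernel
$$N(x,y):=\frac{1}{\pi}\log\left(\frac{1+|y|}{|x-y|}\right)$$
as a renormalized fundamental solution of $(-\Delta)^\frac12$ on $\R$. First I would establish the pointwise bound that makes the integral in \eqref{defw} absolutely convergent: split $N(x,y)$ as $\frac1\pi\log\frac{1}{|x-y|}+\frac1\pi\log(1+|y|)$, observe that near the diagonal $\log\frac{1}{|x-y|}$ is locally integrable (in $y$, uniformly for $x$ in compact sets), while for $|y|$ large one has $\log\frac{1+|y|}{|x-y|}=O(1)$ locally uniformly in $x$, so $|N(x,\cdot)|\le C_x\in L^\infty_{y}+L^1_{y}$ near the diagonal plus a bounded tail; since $f\in L^1(\R)$ this gives $|w(x)|\le C(1+|x|)$-type control, and in particular $w(x)$ is finite for a.e.\ $x$ and measurable by Fubini.

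Next, to show $w\in L_\frac12(\R)$ — recall this means $\int_\R \frac{|w(x)|}{1+x^2}\,dx<\infty$ — I would integrate the bound $|w(x)|\le \frac1\pi\int_\R \big|\log\frac{1}{|x-y|}\big|\,|f(y)|\,dy+\frac1\pi\log(1+|y|)\|f\|_{L^1}$-type estimate against $\frac{1}{1+x^2}$ and use Fubini, together with the elementary fact that $x\mapsto \int_\R \frac{1}{1+x^2}\big|\log\frac{1}{|x-y|}\big|\,dx$ is bounded uniformly in $y$ (the singularity at $x=y$ is integrable and the kernel $\frac{1}{1+x^2}$ kills the growth of $\log$ at infinity). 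This reduces the whole estimate to $\|f\|_{L^1(\R)}$ times a finite constant.

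Finally, for the identity \eqref{deltaw} I would argue by duality against Schwartz test functions $\varphi\in\mathcal S(\R)$: compute $\langle (-\Delta)^\frac12 w,\varphi\rangle=\langle w,(-\Delta)^\frac12\varphi\rangle=\int_\R f(y)\big(\int_\R N(x,y)(-\Delta)^\frac12\varphi(x)\,dx\big)\,dy$ using Fubini (justified by the integrability just established, since $(-\Delta)^\frac12\varphi$ decays fast enough), and then show the inner integral equals $\varphi(y)$ for each fixed $y$. The inner identity is exactly the statement that $N(\cdot,y)$ is a fundamental solution of $(-\Delta)^\frac12$ up to an additive constant: indeed $\frac1\pi\log\frac{1}{|x-y|}$ is the (normalized) fundamental solution of $(-\Delta)^\frac12$ on $\R$ — this is essentially Proposition \ref{lapeq} applied to $u_{1,0}$ in the proof of Proposition \ref{sollog}, where it was shown that $(-\Delta)^\frac12 u_{1,0}=e^{u_{1,0}}$ and hence, by scaling, that the half-Laplacian of $-\frac1\pi\log|x-y|$ is $\delta_y$ — and the extra term $\frac1\pi\log(1+|y|)$ is a constant in $x$, killed by $(-\Delta)^\frac12$; since $\int_\R(-\Delta)^\frac12\varphi\,dx=0$ for $\varphi\in\mathcal S$ (the zeroth Fourier mode of $|\xi|\hat\varphi(\xi)$ vanishes), this additive constant contributes nothing.

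The main obstacle is the rigorous justification of the two Fubini interchanges and of the statement ``$N(\cdot,y)$ is a fundamental solution'': one must be careful that the principal-value nature of $(-\Delta)^\frac12$ on the logarithm (it is only conditionally a fundamental solution, with the constant $\frac1\pi\log(1+|y|)$ needed precisely to make the $y$-integral converge at infinity) is handled correctly — this is why the kernel is written with $\frac{1+|y|}{|x-y|}$ rather than just $\frac{1}{|x-y|}$, and the computation must exploit $\int_\R(-\Delta)^\frac12\varphi=0$ at exactly the right moment. Everything else is routine estimation with the $L^1$ bound on $f$ and the explicit kernel.
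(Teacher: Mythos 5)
Your plan is correct in outline, but it takes a genuinely different route from the paper. The paper first proves \eqref{deltaw} for $f\in C^\infty_c(\R)$ by a Fourier computation, using the distributional Fourier transform of $\frac1\pi\log\frac1{|\cdot|}$ (namely $\mathcal P\frac1{|\xi|}+C\delta_0$, cited from \cite{vla}, with an approximation argument for the non-smooth multiplier $|\xi|\hat\varphi$), and then extends to general $f\in L^1(\R)$ by density, estimating $\int_\R \mathcal I[f_k-f]\,\lapfr\varphi\,dx$ via the decay $|\lapfr\varphi(x)|\le C(1+x^2)^{-1}$. You instead work directly with $f\in L^1$: one Fubini exchange against $\lapfr\varphi$, then the single identity $\int_\R N(x,y)\lapfr\varphi(x)\,dx=\varphi(y)$, which you split into the fundamental-solution property of $\frac1\pi\log\frac1{|x-y|}$ plus $\int_\R\lapfr\varphi\,dx=0$, and you propose to obtain the fundamental-solution property from Proposition \ref{sollog} rather than from Fourier analysis. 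This avoids the Fourier transform of the logarithm and the density step, at the price of needing the uniform bound $\sup_y\int_\R\frac{|N(x,y)|}{1+x^2}\,dx<\infty$ to justify Fubini (true, and true precisely because of the normalization $\frac{1+|y|}{|x-y|}$, as you observe) and a short limiting argument to make ``by scaling'' rigorous. Both routes are legitimate; the paper's continuity estimates at the end are of the same flavor as your Fubini bound.

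Two caveats. First, the estimate in your second paragraph is wrong as literally written: splitting the kernel termwise into $|\log\frac1{|x-y|}|$ and $\log(1+|y|)$ loses the cancellation at $|y|\to\infty$; the quantity $\int_\R\log(1+|y|)|f(y)|\,dy$ can be infinite for $f\in L^1$ (e.g. $f(y)=(1+|y|)^{-1}\log^{-2}(2+|y|)$), and $\int_\R\frac{|\log|x-y||}{1+x^2}\,dx$ is \emph{not} bounded uniformly in $y$ (it grows like $\log(1+|y|)$). The repair is the region-splitting you already used in your first paragraph: $|N(x,y)|\le C$ for $|y|\ge 2(1+|x|)$, while $|N(x,y)|\le\frac1\pi\big(|\log|x-y||+\log(3+2|x|)\big)$ otherwise; this gives $|w(x)|\le C\|f\|_{L^1}\log(2+|x|)+\big(|\log|\cdot||\chi_{B_1}\big)*|f|(x)$, which lies in $L_\frac12(\R)$, and the same regionwise bound yields the uniform estimate $\sup_y\int_\R\frac{|N(x,y)|}{1+x^2}\,dx<\infty$ needed for Fubini. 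Second, ``by scaling'' hides a limit: from \eqref{specsol} one has $u_{\mu,y}-\log\frac{2}{\mu}\to-2\log|\cdot-y|$ in $L_\frac12(\R)$ as $\mu\to\infty$, while $\lapfr u_{\mu,y}=e^{u_{\mu,y}}\to 2\pi\delta_y$ in $\mathcal S'$; passing $\lapfr$ through this limit uses the pairing \eqref{fraclapl2} and the decay $|\lapfr\varphi(x)|\le C(1+x^2)^{-1}$ (alternatively, one can verify $\lapfr\log\frac1{|x|}=\pi\delta_0$ directly from the Poisson-extension definition \eqref{fraclapl3}). With these two points made explicit, your argument is complete.
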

{\bf Proof of Lemma \ref{green}\,.}
Let us first assume that $f$ belongs to the Schwartz space $\mathcal{S}$. 
Remember that for $F(x):=\frac{1}{\pi}\log\left(\frac{1}{|x|}\right)$ we have (see e.g. \cite[page 132]{vla})
\begin{equation}\label{fourierlog0}
\hat F(\xi)= \mathcal{P}\frac{1}{|\xi|}+C\delta_0\quad \text{in }\mathcal{S}',
\end{equation}
where $\mathcal{P}\frac{1}{|\xi|}\in \mathcal{S}'$ is the tempered distribution defined by
\begin{equation}\label{fourierlog}
\left\langle \mathcal{P}\frac{1}{|\xi|},\varphi\right\rangle = \int_{|\xi|\le 1}\frac{\varphi(\xi)-\varphi(0)}{|\xi|}d\xi+\int_{|\xi|>1}\frac{\varphi(\xi)}{|\xi|}d\xi,\quad \varphi\in\mathcal{S}.
\end{equation}
For every $f \in C^\infty_c(\R{})$ one easily sees that $F*f\in C^\infty(\R)$ and $F*f\in L_\frac12(\R)$.
Then
\begin{equation}\label{K*f}
\begin{split}
\langle (-\Delta)^\frac12(F*f),\varphi\rangle &:=\int_{\R} (F*f)\,\mathcal{F}^{-1}(|\xi|\hat \varphi) dx\\
& = \int_{\R} F \, (\tilde f*\mathcal{F}^{-1}(|\xi|\hat \varphi)) dx \\
&=  \int_{\R} F \,\mathcal{F} (\mathcal{F}^{-1}(\tilde f*\mathcal{F}^{-1}(|\xi|^{2\sigma}\hat \varphi))) dx \\
&= \frac{1}{2\pi} \int_{\R} F \,\mathcal{F} (\hat f |\xi|\hat{\tilde \varphi} ) dx \\
&= \frac{1}{2\pi} \int_{\R} \hat f \hat{\tilde \varphi} d \xi =\int_{\R}f\varphi dx,
\end{split}
\end{equation}
where in order to apply \eqref{fourierlog} in the fifth identity can approximate the function $\psi(\xi)=\hat f |\xi|\hat\varphi$ by a sequence of functions $\psi_\ve = \hat f \eta_\ve \hat{\tilde \varphi} \in \mathcal{S}(\R)$ with $\eta_\ve\in C^\infty(\R{})$ suitably chosen (see for instance \cite{JMMX}). 
Hence
$\lapfr(F*f)=f$ in $\mathcal{D}'(\R),$
and since $f\in \mathcal{D}(\R)$ the indentity also holds in a strong sense.
Moreover, since obviously
$$\lapfr \left(\frac{1}{\pi}\int_{\R}\log(1+|y|) f(y)dy\right)=0$$
we see that \eqref{deltaw} is satisfied when $f\in \mathcal{D}(\R)$.

For a general function $f\in L^1(\R)$ we can find a sequence $(f_k)\subset \mathcal{D}(\R)$ with $f_k\to f$ in $L^1(\R)$ and take $\varphi\in \mathcal{S}(\R)$.
Then
$$(I)_k:=\left\langle \lapfr \mathcal{I}[f_k],\varphi \right\rangle =\langle f_k,\varphi\rangle \to \langle f,\varphi\rangle,$$
as $k\to\infty$, while
\begin{equation*}
(I)_k=\left\langle  \mathcal{I}[f_k],\lapfr\varphi \right\rangle = \int_{\R}\mathcal{I}[f_k](x)\, \psi(x)dx
\end{equation*}
where $\psi:=\lapfr\varphi$ satisfies
\begin{equation}\label{stimapsi}
|\psi(x)|\le C(1+|x|^2).
\end{equation}
It remains to show that
$$\int_{\R} \mathcal{I}[f_k-f](x)\, \psi(x)dx\to 0\quad \text{as }k\to\infty.$$
Define $g_k:=f_k-f\to 0$ in $L^1(\R)$.
Then from $\|h_1*h_2\|_{L^1}\le \|h_1\|_{L^1}\,\|h_2\|_{L^1}$ we get
$$\left|\int_{B(x,1)}\log\left(\frac{1+|y|}{|x-y|}\right)g_k(y)dy\right|\le \log(2+|x|)\|g_k\|_{L^1(\R)}+C\|g_k\|_{L^1},$$
and using that for $|x-y|\ge 1$ we have $\log\left(\frac{1+|y|}{|x-y|}\right)\le C(1+\log(|x|))$
$$\left|\int_{\R\setminus B(x,1)}\log\left(\frac{1+|y|}{|x-y|}\right)g_k(y)dy\right|\le C(1+\log|x|)\|g_k\|_{L^1}.$$
Therefore, taking \eqref{stimapsi} into account, we see that
$$(I)_k\to \left\langle  \mathcal{I}[f],\lapfr\varphi \right\rangle\quad \text{as }k\to\infty,$$
hence conclude that $\lapfr w= f$ in $\mathcal{S}'(\R)$.
\hfill $\square$

\begin{Lemma}\label{lemmaliou} Let $f\in L_{\frac{1}{2}}(\R)$ satisfy $(-\Delta)^\frac{1}{2} f=0$. Then $f$ is constant.
\end{Lemma}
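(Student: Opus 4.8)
The plan is to reduce the statement to a Liouville-type theorem on the real line for the half-Laplacian. We must show that if $f \in L_{\frac12}(\R)$ and $(-\Delta)^\frac12 f = 0$ in the distributional sense, then $f$ is constant. The natural strategy is to pass to the harmonic extension: let $\tilde f$ be the Poisson extension of $f$ to the upper half-plane $\R^2_+$. Then $\tilde f$ is harmonic in $\R^2_+$, and the condition $(-\Delta)^\frac12 f = 0$ means precisely that $\partial_y \tilde f = 0$ on $\partial \R^2_+ = \R$ (in the distributional sense, using $\partial_y \tilde f|_{y=0} = -(-\Delta)^\frac12 f$). Hence $\tilde f$ extends by even reflection across $\{y=0\}$ to a function harmonic on all of $\R^2$. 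The $L_{\frac12}(\R)$ integrability of the trace translates into a growth bound on $\tilde f$: one checks from the Poisson kernel $P_y(x) = \frac{1}{\pi}\frac{y}{x^2+y^2}$ that $|\tilde f(x,y)| \le C(1 + \log(1+|x|^2+y^2))$, i.e. $\tilde f$ grows at most logarithmically at infinity. A harmonic function on $\R^2$ with sublinear (in particular logarithmic) growth is constant, by the standard gradient estimate for harmonic functions (Cauchy estimates: $|\nabla \tilde f(z)| \le \frac{C}{R}\sup_{B_R(z)}|\tilde f| \to 0$ as $R\to\infty$). Therefore $\tilde f$ is constant, and so is its trace $f$.

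The key steps, in order: (1) Define $\tilde f = P_y * f$ and verify it is well-defined and harmonic in $\R^2_+$ given $f \in L_{\frac12}(\R)$ — this is the condition under which the Poisson integral converges. (2) Show that the hypothesis $(-\Delta)^\frac12 f = 0$ yields $\partial_y \tilde f \equiv 0$ on $\{y=0\}$; combined with harmonicity this allows the even reflection $\tilde f(x,-y) := \tilde f(x,y)$ to produce a function harmonic on all of $\R^2$ (one should check the reflected function is genuinely harmonic across the axis, e.g. via the weak formulation or Schwarz reflection since the normal derivative vanishes). (3) Establish the logarithmic growth bound $|\tilde f(x,y)| \lesssim 1 + \log(1+|x|^2+y^2)$ directly from the Poisson representation and the weight $\frac{1}{1+|x|^2}$ defining $L_{\frac12}(\R)$. (4) Conclude via the Liouville theorem for harmonic functions of logarithmic growth on $\R^2$.

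An alternative, more hands-on route avoids the extension: from $(-\Delta)^\frac12 f = 0$ one has $\mathcal{H}(f') = 0$ on the Fourier side (where $\mathcal H$ is the Hilbert transform and $\widehat{(-\Delta)^\frac12 f}(\xi) = |\xi|\hat f(\xi)$), so $|\xi|\hat f(\xi) = 0$ as a tempered distribution, forcing $\hat f$ to be supported at the origin, hence $f$ a polynomial; the growth restriction from $f \in L_{\frac12}(\R)$ then forces $f$ to be constant. However, making this rigorous requires care since $f$ need not itself be a tempered distribution a priori — $L_{\frac12}(\R)$ is weighted $L^1$ with weight $(1+x^2)^{-1}$, so $f(1+x^2)^{-1} \in L^1$ and $f$ is tempered, which suffices. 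I would present the harmonic-extension argument as the main proof since it parallels the conformal/harmonic-extension viewpoint used throughout the paper (cf. the discussion of harmonic extensions and Theorem \ref{mindisk}).

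The main obstacle I anticipate is step (2)–(3): justifying rigorously that $\partial_y \tilde f = 0$ distributionally on the boundary implies a clean Schwarz reflection when $f$ is merely in $L_{\frac12}(\R)$ (not continuous), and carefully deriving the logarithmic growth bound uniformly in both $x$ and $y$ from the Poisson kernel against a weighted-$L^1$ density. These are technical but standard; once they are in place the Liouville conclusion is immediate.
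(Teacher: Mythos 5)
Your main route (harmonic extension, even reflection, growth bound, Liouville) has a genuine gap at step (3): the logarithmic bound $|\tilde f(x,y)|\le C\bigl(1+\log(1+x^2+y^2)\bigr)$ cannot be derived from $f\in L_{\frac12}(\R)$ alone, and it is in fact false. Membership in $L_{\frac12}(\R)$ permits almost linear growth: for $f(t)=(1+|t|)\log^{-2}(2+|t|)$ one has $f\in L_{\frac12}(\R)$, while its Poisson extension satisfies $\tilde f(0,y)\ge c\,y\log^{-2}y$ for large $y$, which is far from logarithmic. Since your derivation of the bound uses only the weight $(1+t^2)^{-1}$, it cannot yield what you claim. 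Moreover, even a linear or sublinear bound obtained from the Poisson kernel is only available away from the real axis: the pointwise kernel estimate degenerates like $(1+x^2)/y$ as $y\to 0$, so the Poisson representation by itself gives no control of $\sup_{B_R}|\tilde F|$ near the axis (where $\tilde F$ denotes the reflected function), which is exactly what the gradient-estimate Liouville argument needs. The proof can be repaired: integrating the Poisson formula gives $\int_{B_R}|\tilde F|\,dx\,dy\le C R^3\|f\|_{L_{\frac12}}$, hence $\sup_{B_R}|\tilde F|\le CR$ by the mean value property applied on balls of radius $R$; a harmonic function on $\R^2$ of at most linear growth is affine, the evenness in $y$ kills the $y$-part, and $f\in L_{\frac12}(\R)$ kills the $x$-part, so $\tilde F$ is constant. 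But as written, steps (3)--(4) do not go through, and step (2) (Schwarz reflection with only a distributional Neumann condition and an $L_{\frac12}$ trace) also needs the limiting integration by parts you allude to.

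For comparison: the paper does not prove the lemma directly, it invokes Lemma 14 of \cite{JMMX}, whose argument is essentially your ``alternative'' Fourier route. There the delicate point is not the temperedness of $f$ (which, as you note, follows from the weighted integrability), but making sense of ``$|\xi|\hat f=0$'': since $|\xi|$ is not smooth, one cannot multiply $\hat f$ by it directly; instead one tests against $\varphi\in\mathcal{S}$ whose Fourier transform is compactly supported away from the origin, writes $\varphi=(-\Delta)^\frac12\psi$ with $\psi\in\mathcal{S}$, and uses the hypothesis to get $\langle f,\varphi\rangle=0$, whence $\mathrm{supp}\,\hat f\subset\{0\}$, $f$ is a polynomial, and the $L_{\frac12}$ condition forces it to be constant. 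If you follow that route, this is the step to spell out.
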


\begin{proof} This is identical to the proof of Lemma 14 in \cite{JMMX}.
\end{proof}

\noindent \emph{Proof of Proposition \ref{trmrappr}.} Set $w(x)$ as in \eqref{defw} with $f(y):=e^{u(y)}.$
Then $(-\Delta)^{\frac{1}{2}}(u-w)=0$ by Lemma \ref{green}, hence by Lemma \ref{lemmaliou} $u-w\equiv C_0$ for some $C_0\in\R$.
\hfill $\square$

\begin{Proposition}\label{trmreg} Let $u\in L_{\frac{1}{2}}(\R{})$ satisfy \eqref{liou}-\eqref{area}. Then $u\in C^\infty(\R)$.
\end{Proposition}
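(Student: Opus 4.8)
The plan is to run a bootstrap argument, starting from the integral representation of Proposition~\ref{trmrappr} together with the sub-critical exponential integrability furnished by Theorem~\ref{lemmaBM}, in the localized form \eqref{stimaMT6}. Note that this route does not use Theorem~\ref{trm1}, so there is no circularity: Proposition~\ref{trmreg} is a prerequisite for, not a consequence of, the classification.

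First I would upgrade $e^u\in L^1(\R)$ to $e^u\in L^p_{\loc}(\R)$ for some $p>1$. Fix $x_0\in\R$. Since $e^u\,dx$ is an absolutely continuous finite measure, choose a bounded open interval $I\ni x_0$ with $\int_I e^u\,dy<\pi$, and a fixed interval $I'\Subset I$. By Proposition~\ref{trmrappr}, $u=\frac1\pi\int_{\R}\log\frac{1+|y|}{|x-y|}e^{u(y)}\,dy+C_0$; splitting the integral over $I$ and over $\R\setminus I$ and using $\log\frac{1+|y|}{|x-y|}=\log\frac1{|x-y|}+\log(1+|y|)$, one can write, for $x\in I'$,
\[
u(x)=u_1(x)+u_2(x)+\mathrm{const},\qquad u_1:=F_\frac12*(e^u\chi_I),
\]
with $F_\frac12$ as in Lemma~\ref{lemmagreen4}, where $u_2$ collects the contribution of $\R\setminus I$ and of the (bounded on $I$) factor $\log(1+|y|)$. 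For $x\in I'$ and $y\in\R\setminus I$ the kernel $\log\frac{1+|y|}{|x-y|}$ is bounded, so $\|u_2\|_{L^\infty(I')}\le C\|e^u\|_{L^1(\R)}$. Applying \eqref{stimaMT6} to $(e^u\chi_I)/\|e^u\chi_I\|_{L^1(I)}$ and rescaling then gives $e^{q|u_1|}\in L^1(I)$ for every $q<\pi/\|e^u\chi_I\|_{L^1(I)}$; since $\|e^u\chi_I\|_{L^1(I)}<\pi$, this range contains some $p>1$. Hence $e^u=e^{C_0}e^{u_2}e^{u_1}\le Ce^{|u_1|}$ on $I'$ yields $e^u\in L^p(I')$, and since $x_0$ was arbitrary, $e^u\in L^p_{\loc}(\R)$.

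With this in hand the bootstrap is routine. From $\lapfr u=e^u\in L^p_{\loc}$, $p>1$, together with $u\in L_{\frac12}(\R)$ (which controls the nonlocal tail), interior $L^p$-regularity for $\lapfr$ gives $u\in W^{1,p}_{\loc}(\R)$, hence $u\in C^{0,1-1/p}_{\loc}(\R)$ by the one-dimensional Sobolev embedding, so $e^u\in L^\infty_{\loc}(\R)$. Feeding this back, $\lapfr u\in L^q_{\loc}$ for all $q<\infty$, whence $u\in C^{0,\alpha}_{\loc}(\R)$ for all $\alpha\in(0,1)$ and thus $e^u\in C^{0,\alpha}_{\loc}(\R)$. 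Schauder estimates for $\lapfr$ then give $u\in C^{1,\alpha}_{\loc}(\R)$, hence $e^u\in C^{1,\alpha}_{\loc}(\R)$, hence $u\in C^{2,\alpha}_{\loc}(\R)$; iterating, $u\in C^{k,\alpha}_{\loc}(\R)$ for every $k\in\mathbb{N}$, i.e.\ $u\in C^\infty(\R)$.

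The only genuinely delicate point is the first step: one must exploit the sub-critical threshold $\pi$ in \eqref{stimaMT6} and the fact that $\int_I e^u$ can be made smaller than $\pi$ by shrinking $I$ — it is precisely this threshold (rather than the $2\pi$ appearing for the two-sided $\R$-Liouville equation in \cite{BM}) that makes an exponent $p>1$ attainable. The remaining steps use only the standard $L^p$- and Schauder-type mapping properties of $\lapfr$.
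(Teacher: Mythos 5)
Your proof is correct and follows essentially the same route as the paper: split the representation formula of Proposition \ref{trmrappr} into the local part $u_1=F_\frac12*(e^u\chi_I)$ and a bounded remainder, apply \eqref{stimaMT6} to get local $L^p$-integrability of $e^u$, then bootstrap. The only (harmless) difference is that the paper shrinks the interval so that $\int_I e^u<\ve$ and thus gets $e^u\in L^p_{\loc}$ for every $p<\infty$ in one stroke, whereas you stop at $\int_I e^u<\pi$ and recover higher integrability afterwards through the Sobolev embedding step of your bootstrap.
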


\begin{proof} Up to scaling, assume that
$$\int_{-1}^1e^{u(x)}dx<\ve,$$
where $\ve$ will be fixed later.

Let us split $u=u_1+u_2$, where
\begin{equation}\label{liouint1}
u_1(x)=\frac{1}{\pi}\int_{-1}^1\log\left(\frac{1+|y|}{|x-y|}\right) e^{u(y)}dy+C_0=\frac{1}{\pi}\int_{-1}^1\log\left(\frac{1}{|x-y|}\right) e^{u(y)}dy+C_1.
\end{equation}
Then \eqref{liouint} implies that $u_2$ is defined by the same formula, integrating over $\R{}\setminus[-1,1]$ instead of $\R{}$.
It is easy to see that
$$\|u_2\|_{L^\infty([-1/2,1/2])}\le C\int_{\R{}} e^{u(x)}dx<\infty.$$
From \eqref{stimaMT6} if follows that 
given $p<\infty$, 
choosing $\ve>0$ small enough (depending on $p$) we have $e^{|u_1|}\in L^p([-1,1])$, hence $e^{u}\in L^p[-1/2,1/2].$

The same argument, together with translations and dilations, can be performed in a neighborhood of every point in $\R{}$, 
giving $e^u\in L^p_{\loc}(\R)$ for $1<p<\infty$.
Going back to \eqref{liouint} it is easy to bootstrap regularity and prove that $u$ is actually smooth.
\end{proof}

\begin{Corollary}\label{vreg} Every function $\lambda\in L^1(S^1)$ solving \eqref{eqlambda2} with $\lapfr  \lambda\in L^1(S^1)$ is smooth.
\end{Corollary}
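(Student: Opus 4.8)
The plan is to reduce the statement to the explicit classification of solutions provided by Corollary~\ref{lemmaS1}, after checking that its hypotheses are met. First I would observe that the assumption $\lapfr\lambda\in L^1(S^1)$, combined with equation \eqref{eqlambda2}, already forces $e^\lambda\in L^1(S^1)$: indeed $C_0 e^\lambda=\lapfr\lambda+1\in L^1(S^1)$ and $C_0>0$. Thus $\lambda\in L^1(S^1)$ with $e^\lambda\in L^1(S^1)$ is a solution of \eqref{eqlambda2}, which is precisely the setting of Corollary~\ref{lemmaS1}.

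Applying that corollary, $\lambda$ must be of the form
\[
\lambda(\theta)=\log\left(\left|\frac{\partial}{\partial\theta}\,\frac{z-a_1}{1-\bar a_1 z}\right|\right)-\log C_0,\qquad z=e^{i\theta},
\]
for some $a_1\in D^2$. Since $|a_1|<1$, the M\"obius map $z\mapsto(z-a_1)/(1-\bar a_1 z)$ extends holomorphically to a neighbourhood of $\bar D^2$ — its only pole, at $1/\bar a_1$, has modulus $1/|a_1|>1$ — and its derivative does not vanish on $\bar D^2$; hence $\theta\mapsto\log|\partial_\theta((z-a_1)/(1-\bar a_1 z))|$ is real-analytic on $S^1$, so in particular $\lambda\in C^\infty(S^1)$.

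There is no genuine obstacle here; the only step deserving a line of justification is the verification that one may invoke Corollary~\ref{lemmaS1}, i.e.\ that $e^\lambda\in L^1(S^1)$, which is immediate as above. If one prefers an argument not relying on the classification, one can instead bootstrap directly: since $\int_{S^1}C_0 e^\lambda\,d\theta=2\pi$ (integrate \eqref{eqlambda2} and use $\int_{S^1}\lapfr\lambda\,d\theta=0$) and $e^\lambda\in L^1(S^1)$, absolute continuity of the integral lets one cover $S^1$ by finitely many arcs $A$ with $\int_A C_0 e^\lambda\,d\theta<\pi$; Lemma~\ref{eLemma} then yields $\lambda-\bar\lambda\in L^\infty(S^1)$, hence $\lapfr\lambda=C_0 e^\lambda-1\in L^\infty(S^1)$, and the representation \eqref{rapr} together with the elliptic regularity theory for $\lapfr$ on $S^1$ (as used in the proof of Theorem~\ref{propcostrlambdaintr}) allows one to iterate $\lambda\in C^{k,\alpha}(S^1)$ for every $k$. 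The first route is shorter, and is the one I would write out in detail.
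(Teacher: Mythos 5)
Your argument is correct, but it follows a genuinely different route from the paper. The paper does not invoke the classification at all: it transfers the problem to the real line, observing via Proposition \ref{proppull2} that $u(x)=\lambda(\Pi^{-1}(x))+\log\frac{2}{1+x^2}$ lies in $L_\frac12(\R)$ and solves \eqref{liou}, then applies the regularity result on $\R$ (Proposition \ref{trmreg}, which rests on the representation formula \eqref{liouint} and the Brezis--Merle type estimate \eqref{stimaMT6}) to get $u\in C^\infty(\R)$, hence $\lambda\in C^\infty(S^1\setminus\{-i\})$, and finally uses the rotation invariance of \eqref{eqlambda2} to remove the point $-i$. Your primary route instead deduces $e^\lambda\in L^1$ directly from the equation (correct, since $C_0e^\lambda=\lapfr\lambda+1\in L^1$ and $C_0>0$), applies Corollary \ref{lemmaS1}, and reads off smoothness from the explicit M\"obius formula; this is legitimate because Corollary \ref{lemmaS1} is established earlier (via Theorem \ref{propcostrlambdaintr}) and its proof does not use the statement you are proving, though it is worth stating this non-circularity explicitly, since the regularity step inside the proof of Theorem \ref{propcostrlambdaintr} (``by regularity theory $\lambda\in W^{1,p}$'') is of the same nature as the present corollary and one should not silently re-import it. What each approach buys: yours is shorter and exploits the rigidity of constant curvature $C_0$, but it only works for \eqref{eqlambda2}; the paper's argument (and your alternative bootstrap via the integral identity $\int_{S^1}C_0e^\lambda d\theta=2\pi$, Lemma \ref{eLemma}, and \eqref{rapr}) is the more robust regularity mechanism, which would survive replacing the constant $C_0$ by a general smooth bounded curvature. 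Your sketch of the bootstrap is sound, with the small points that the covering by arcs with $\int_A C_0e^\lambda<\pi$ uses the absolute continuity of $e^\lambda d\theta$, and that passing from $\lapfr\lambda\in L^\infty$ to $\lambda\in W^{1,p}$ is cleanest via \eqref{lapHilb} and the $L^p$-boundedness of the Hilbert transform before iterating to higher regularity.
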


\begin{proof} By Proposition \ref{proppull2} the function $u:\R\to\R$ given by \eqref{deflambda3} is in $L_\frac{1}{2}(\R)$ and it solves \eqref{liou}. Then by Proposition \ref{trmreg} $u$ is smooth, hence $\lambda\in C^\infty(S^1\setminus\{-i\})$. Since \eqref{eqlambda2} is invariant under rotations we have that actually $\lambda\in C^\infty(S^1)$.
\end{proof}


\begin{Lemma}\label{lemmapoho} For $u\in L_\frac{1}{2}(\R)\cap C^1(\R)$ solving \eqref{liou}-\eqref{area} set
$$\alpha:=\int_{\R}e^{u(x)}dx.$$
Then $\alpha=2\pi$.
\end{Lemma}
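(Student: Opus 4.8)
The plan is to prove $\alpha=2\pi$ by a Pohozaev--Rellich identity on the half-plane, applied to the Poisson extension $\tilde u$ of $u$: in this dimension all interior terms cancel and the single surviving boundary term ``at infinity'' produces exactly the value $2\pi$. By Proposition~\ref{trmreg} the solution $u$ is smooth, and by Proposition~\ref{trmrappr} it satisfies the representation \eqref{liouint}. Define $\tilde u$ on $\R\times(0,\infty)$ by
\[
\tilde u(x,t):=C_0+\frac1\pi\int_{\R}\log\!\left(\frac{1+|s|}{|(x,t)-(s,0)|}\right)e^{u(s)}\,ds .
\]
Since $(x,t)\mapsto\log|(x,t)-(s,0)|$ is harmonic off $(s,0)$, $\tilde u$ is harmonic in $\R\times(0,\infty)$ and, $u$ being smooth, smooth up to $\{t=0\}$, where $\tilde u(\cdot,0)=u$ by \eqref{liouint}; moreover, using that the kernel $\frac1\pi\frac{t}{(x-s)^2+t^2}$ integrates to $1$, one checks $-\partial_t\tilde u(\cdot,0)=\lapfr u=e^{u}$. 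Differentiating under the integral sign, $\nabla\tilde u(z)=-\frac1\pi\int_{\R}\frac{z-(s,0)}{|z-(s,0)|^2}e^{u(s)}\,ds$ for $z=(x,t)\in\R\times(0,\infty)$.

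Next I would record the behaviour at infinity. From \eqref{liouint} together with $e^u\in L^1(\R)$ one gets $u(x)=-\tfrac{\alpha}{\pi}\log|x|+O(1)$ as $|x|\to\infty$; this forces $\alpha>\pi$ (the condition making $e^u$ integrable), hence $|x|e^{u(x)}\to0$, and, by dominated convergence in the formula for $\nabla\tilde u$, $\nabla\tilde u(z)=-\tfrac{\alpha}{\pi}\tfrac{z}{|z|^2}+o(|z|^{-1})$ as $|z|\to\infty$ in $\overline{\R\times(0,\infty)}$, so $\nabla\tilde u$ is asymptotically radial with $|\nabla\tilde u(z)|\sim\tfrac{\alpha}{\pi|z|}$. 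I expect this to be the main obstacle: the crude bound $u(x)=O(\log|x|)$ is immediate from \eqref{liouint}, but sharpening the constant to $\alpha/\pi$ (equivalently, obtaining quantitative decay of the tail $\int_{|s|>T}e^{u(s)}\,ds$), which is what the boundary terms below need, requires a bootstrap on the representation formula and can alternatively be quoted from \cite{JMMX}.

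Finally I would run the Pohozaev argument. For $R>0$ put $B_R^+=\{z=(x,t):|z|<R,\ t>0\}$, with flat part $\Gamma_R=\{(x,0):|x|<R\}$ and upper half-circle $S_R^+=\{|z|=R,\ t>0\}$. Multiplying $\Delta\tilde u=0$ by $z\cdot\nabla\tilde u$ and integrating by parts over $B_R^+$ --- the interior term $\tfrac{n-2}{2}\int_{B_R^+}|\nabla\tilde u|^2$ vanishing since $n=2$ --- gives $0=\int_{\partial B_R^+}\big[(z\cdot\nabla\tilde u)\partial_\nu\tilde u-\tfrac12|\nabla\tilde u|^2(z\cdot\nu)\big]d\sigma$. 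On $\Gamma_R$ the outward normal is $(0,-1)$, so $z\cdot\nu=0$, $\partial_\nu\tilde u=-\partial_t\tilde u=e^{u}$, $z\cdot\nabla\tilde u=x\,u'(x)$, and this part equals $\int_{-R}^{R}x\,\tfrac{d}{dx}e^{u(x)}\,dx=R\big(e^{u(R)}+e^{u(-R)}\big)-\int_{-R}^{R}e^{u}\,dx$; on $S_R^+$ one has $z=R\nu$, whence $z\cdot\nabla\tilde u=R\,\partial_\nu\tilde u$, $z\cdot\nu=R$, and the integrand becomes $\tfrac R2\big[(\partial_\nu\tilde u)^2-(\partial_\tau\tilde u)^2\big]$ ($\partial_\tau$ the tangential derivative). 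Thus
\[
0=R\big(e^{u(R)}+e^{u(-R)}\big)-\int_{-R}^{R}e^{u}\,dx+\frac R2\int_{S_R^+}\big[(\partial_\nu\tilde u)^2-(\partial_\tau\tilde u)^2\big]\,d\sigma .
\]
Letting $R\to\infty$, the first term tends to $0$, the integral of $e^u$ to $\alpha$, and, since $\partial_\nu\tilde u=-\tfrac{\alpha}{\pi R}+o(R^{-1})$, $\partial_\tau\tilde u=o(R^{-1})$ on $S_R^+$ and $|S_R^+|=\pi R$, the last term tends to $\tfrac R2\cdot\pi R\cdot\tfrac{\alpha^2}{\pi^2R^2}=\tfrac{\alpha^2}{2\pi}$. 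Hence $0=-\alpha+\tfrac{\alpha^2}{2\pi}$, and as $\alpha=\int_{\R}e^u>0$ we conclude $\alpha=2\pi$. (A naive one-dimensional computation --- testing the equation against $x\,u'$ --- would miss exactly this spherical contribution and give the false identity $0=-\alpha$; passing through the extension is what makes the boundary term at infinity visible.)
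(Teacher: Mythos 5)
Your half-plane Rellich--Pohozaev computation is algebraically sound and, as you note, reproduces exactly the relation $0=-\alpha+\frac{\alpha^2}{2\pi}$; this is a genuinely different route from the paper, which never passes to the extension. The paper (following \cite{Xu}) differentiates the representation formula \eqref{liouint}, multiplies by $e^{u}$, integrates over $[-R,R]$, and splits the kernel as $\frac{x}{x-y}=\frac12\bigl(1+\frac{x+y}{x-y}\bigr)$: the symmetric part produces $-\frac{\alpha^2}{2\pi}$ and the antisymmetric part is disposed of by cancellation, so the only decay input is $e^u\in L^1$ together with a sequence of radii along which $R\bigl(e^{u(R)}+e^{u(-R)}\bigr)\to 0$. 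In particular your parenthetical remark that a one-dimensional computation ``would miss the spherical contribution'' is not accurate: the paper's one-dimensional argument captures the $\frac{\alpha^2}{2\pi}$ term precisely through this symmetrization, and that is what lets it avoid any sharp asymptotics at infinity.

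The genuine gap in your write-up is the step you yourself flag, and it is larger than ``dominated convergence in the formula for $\nabla\tilde u$.'' To pass to the limit on $S_R^+$ you need $\partial_\nu\tilde u=-\frac{\alpha}{\pi R}+o(R^{-1})$ and $\partial_\tau\tilde u=o(R^{-1})$ \emph{uniformly on the closed half-circle, down to the corners} $(\pm R,0)$. Near the real axis the kernel $\frac{z-(s,0)}{|z-(s,0)|^2}$ is not dominated by $C/|z|$ (the horizontal component is only a principal value at $t=0$), so the soft argument that works for $|s|\le |z|/2$ breaks down there; at the corners the two conditions reduce to the one-dimensional statements $x\,u'(x)\to-\frac{\alpha}{\pi}$ and $|x|\,e^{u(x)}\to 0$, which in turn require pointwise decay $e^{u(x)}\le C|x|^{-\alpha/\pi+\ve}$ with $\alpha>\pi$. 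These facts are true, but they must be proved: one needs a Chen--Li-type asymptotic lemma for the representation formula \eqref{liouint}, whose upper-bound half uses the local exponential estimate \eqref{stimaMT6} to control $\int_{|y-x|\le 1}\log\frac{1}{|x-y|}e^{u(y)}dy$ uniformly, and whose differentiated version handles the principal value defining $u'$. Quoting \cite{JMMX} is not a substitute, since that paper treats $(-\Delta)^{3/2}$ in $\R^3$; at best it provides a template to adapt. So either supply this asymptotic lemma as a separate step (the lower bound $u(x)\ge-\frac{\alpha}{\pi}\log|x|+C_0$ for $|x|\ge1$ is immediate from \eqref{liouint} and already gives $\alpha>\pi$; the upper bound and the gradient asymptotics are the real work), or switch to the paper's direct identity, which needs none of it.
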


\begin{proof} This argument is taken from \cite{Xu} and is based on a Pohozaev-type identity. Differentiating \eqref{liouint} we obtain
$$x\frac{\de u}{\de x}=-\frac{1}{\pi}\int_{\R}\frac{x}{x-y}e^{u(y)}dy.$$
Multiplying by $e^{u(x)}$ and integrating with respect to $x$ on the interval $[-R,R]$ we get
$$(I):=\int_{-R}^R x\frac{\de u}{\de x}e^{u(x)}dx=-\int_{-R}^R\frac{1}{\pi}\int_{\R}\frac{x}{x-y}e^{u(y)}dy\,e^{u(x)}dx=:(II).$$
Integrating by parts we find
$$(I)=\int_{-R}^{R}x\frac{\de e^{u(x)}}{\de x}dx=R(e^{u(R)}+e^{u(-R)}) -\int_{-R}^R e^{u(x)}dx\to -\alpha,\quad \text{as }R\to\infty,$$
where we used that at least on a sequence $R(e^{u(R)}-e^{u(-R)})\to 0$ as $R\to\infty$, otherwise \eqref{area} would be violated.
As for $(II)$ we compute
\[
\begin{split}
(II)&=-\frac{1}{2\pi}\int_{-R}^R\int_{\R} e^{u(y)}dy\,e^{u(x)}dx-\frac{1}{2\pi}\int_{-R}^R\int_{\R} \frac{x+y}{x-y}e^{u(y)}dy\,e^{u(x)}dx\to -\frac{\alpha^2}{2\pi}+0,
\end{split}
\]
as $R\to\infty$. Therefore from $(I)=(II)$ we infer $\alpha=\frac{\alpha^2}{2\pi}$, i.e. $\alpha=2\pi$.
\end{proof}

\noindent{\bf Proof of Theorem \ref{trm1}.} Given $u\in L_\frac{1}{2}(\R)$ satisfying \eqref{liou}-\eqref{area}, by Proposition \ref{proppull2} the function
$\lambda(\theta):=u(\Pi(\theta))-\log(1+\sin\theta)$ solves
$$\lapfr \lambda=e^{\lambda} -1+(2\pi -\alpha)\delta_{-i}\quad \text{in }S^1.$$
and by Lemma \ref{lemmapoho} $\alpha=2\pi$, hence
$$\lapfr \lambda=e^{\lambda} -1 \quad \text{in }S^1.$$
By Corollary \ref{lemmaS1} $\lambda$ is of the form given by \eqref{solutions} for some $a\in D^2$.

To complete the proof write $a=\alpha e^{i\theta_0}=\alpha (t+is)$ with $\alpha,t,s\in\R$. We have
\[
u(x)=\lambda\circ\Pi^{-1}(x)+\log\left(\frac{2}{1+x^2}\right)=\log\left(\frac{2(1-\alpha^2)}{|1-\alpha (t+is)\Pi^{-1}(x)|^2(1+x^2)}\right).
\]
The right-hand side can be computed using \eqref{formulaPi}:
\[\begin{split}
u(x)&=\log\left(\frac{2(1-\alpha^2)}{\left|1+\alpha\frac{-2 t x+ s(1-x^2)}{1+x^2}-i\alpha\frac{2sx+t(1-x^2)}{1+x^2}\right|^2(1+x^2)}\right)\\
&=\log\left(\frac{2(1-\alpha^2)}{x^2(1-2\alpha s+\alpha^2)-4\alpha t x+1+2\alpha s+\alpha^2}\right).
\end{split}\]
Completing the square in the denominator on the right-hand side we get
\[
u(x)=\log\left(\frac{2(1-\alpha^2)}{(1-2\alpha s+\alpha^2)\left(x-\frac{2\alpha t}{1-2\alpha s+\alpha^2}\right)^2+\frac{(1-\alpha^2)^2}{1-2\alpha s+\alpha^2}}\right)=\log\left(\frac{2\mu}{1+\mu^2(x-x_0)^2}\right)\\
\]
with
$$x_0=\frac{2\alpha t}{1-2\alpha s+\alpha^2},\quad \mu=\frac{1-2\alpha s+\alpha^2}{1-\alpha^2}.$$
\hfill$\square$

\medskip
The following can been seen as a non-local version of the classical mean-value property of harmonic functions. It appears in \cite[Prop. 2.2.6]{sil} in a slightly different case, but with a proof which readily extends to the following case.

\begin{Proposition}\label{meanv} There exists a positive function $\gamma_1\in C^{1,1}(\R)$ with $\int_{\R}\gamma_1dx=1$ such that, setting $\gamma_\lambda(x):=\frac{1}{\lambda}\gamma_1\left(\frac{x}{\lambda}\right),$
we have
$$u(x_0)\ge u*\gamma_\lambda(x_0)$$
for every $\lambda>0$ and every $u\in L_\frac{1}{2}(\R)$ satisfying $(-\Delta)^\frac{1}{2}u \ge 0$.
\end{Proposition}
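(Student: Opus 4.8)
The plan is to build $\gamma_1$ explicitly as (a multiple of) the function whose $\frac12$-fractional Laplacian is $\delta_0$ minus a nice nonnegative bump, and then to integrate against $(-\Delta)^{\frac12}u\ge 0$. First I would recall that on $\R$ the fundamental solution of $(-\Delta)^{\frac12}$ is $F(x)=\frac1\pi\log\frac1{|x|}$, i.e.\ $(-\Delta)^{\frac12}F=\delta_0$ in $\mathcal{S}'$ (this is exactly \eqref{fourierlog0}--\eqref{K*f}, used already in Lemma \ref{green}). Fix a function $\psi\in C^\infty_c(\R)$ with $\psi\ge0$, $\mathrm{supp}\,\psi\subset(-1,1)$, $\int_\R\psi\,dx=1$, and let $w:=F*\psi$, so that $(-\Delta)^{\frac12}w=\psi$. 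Since $\psi$ is even one can take $w$ even; moreover $w\in C^{1,1}(\R)$, $w(x)=\frac1\pi\log\frac1{|x|}+O(1)$ as $|x|\to\infty$ (so $w\in L_{\frac12}(\R)$), and for $|x|>1$ one has $w(x)=\frac1\pi\int\log\frac1{|x-y|}\psi(y)\,dy<0$ once $|x|$ is large; adjusting $\psi$ (e.g.\ scaling) one arranges $w<0$ on $\{|x|\ge1\}$ — but we do not actually need a sign of $w$ there. Define
\[
\gamma_1(x):=\psi(x)-(-\Delta)^{\frac12}\big[(w-w(0))^{-}\big]\quad\text{(heuristic)},
\]
but the cleaner route, which is the one I would actually pursue, is the following.

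Set $g:=w(0)-w$, which is a $C^{1,1}$ function with $g(0)=0$, $g\ge$ its minimum, and $(-\Delta)^{\frac12}g=-\psi$ away from where we modify it. The key construction: let $\varphi$ be the solution of the obstacle-type problem — take $h:=\max(g,0)$ truncated so that $h\in C^{1,1}_c$ is supported in a fixed large ball, $h\ge0$, $h(0)=0$, $h$ even. Then I claim $\gamma_1:=(-\Delta)^{\frac12}h$ (interpreted distributionally, but in fact a genuine function since $h\in C^{1,1}$ with compact support, so $(-\Delta)^{\frac12}h\in C^{0,1}$ and decays like $|x|^{-2}$) can be normalized to be nonnegative with integral $1$. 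The reason $\gamma_1\ge0$: near $x=0$, $h$ touches its minimum $0$ from above, so $(-\Delta)^{\frac12}h(0)=c\int\frac{h(0)-h(y)}{|y|^{2}}\,dy\le 0$ — wait, this is the wrong sign, so instead one uses $h$ with a maximum, not a minimum, at $0$. The correct normalization is to let $\gamma_1$ be a nonnegative $C^{1,1}$ bump of unit mass chosen so that its \emph{potential} $u_{\gamma_1}:=F*\gamma_1$ satisfies $u_{\gamma_1}(x)\le u_{\gamma_1}(0)$ for all $x$; concretely, since $u_{\gamma_1}$ is continuous, $\to-\infty$ at infinity, and superharmonic-in-the-nonlocal-sense wherever $\gamma_1>0$, it is a standard fact (and is precisely the content of \cite[Prop.\ 2.2.6]{sil}) that for a radially decreasing $\gamma_1$ the potential $F*\gamma_1$ is radially decreasing, hence maximized at $0$.

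With such $\gamma_1$ in hand the proof concludes quickly. Write $\nu:=(-\Delta)^{\frac12}u\ge0$; since $u\in L_{\frac12}(\R)$ and (after the scaling $x\mapsto x/\lambda$, under which $\gamma_\lambda$ and the equation are covariant) it suffices to treat $\lambda=1$. Using the representation $u=F*\nu+(\text{affine})$ — more carefully, mollifying $u$ and passing to the limit as in Lemma \ref{green} to justify the pairing — we get
\[
u(0)-u*\gamma_1(0)=\int_\R\big(F(0-z)-F*\check\gamma_1(0-z)\big)\,d\nu(z)
=\int_\R\big(u_{\gamma_1}(0)-u_{\gamma_1}(z)\big)\,d\nu(z)\ge0,
\]
because $u_{\gamma_1}(0)\ge u_{\gamma_1}(z)$ for all $z$ and $\nu\ge0$; here I used $\int\gamma_1=1$ to cancel the affine part and Fubini, justified by the logarithmic growth bounds on $F*\gamma_1$ exactly as in the proof of Lemma \ref{green}. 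Translating from $0$ to an arbitrary $x_0$ gives the claim. The main obstacle is purely technical: making rigorous the manipulation $u(x_0)-u*\gamma_\lambda(x_0)=\int(u_{\gamma_\lambda}(0)-u_{\gamma_\lambda}(z))\,d\nu(z)$ when $u$ is only in $L_{\frac12}$ and $\nu$ is merely a nonnegative distribution/measure — this requires an approximation argument (mollify $u$, use that $(-\Delta)^{\frac12}$ commutes with mollification, control the tails via the $\log$-growth estimates already established for $\mathcal I[f]$ in Lemma \ref{green}) together with the construction of a radially-decreasing $\gamma_1\in C^{1,1}$ with unit mass whose Riesz potential is radially decreasing, which is the part borrowed essentially verbatim from \cite[Prop.\ 2.2.6]{sil}.
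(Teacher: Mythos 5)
The paper itself does not prove this proposition; it simply invokes \cite[Prop.\ 2.2.6]{sil}, so your attempt must stand on its own — and as written it contains a genuine error at the decisive step. In your final display the second equality is false: $F(0-z)=F(z)$, which is not $u_{\gamma_1}(0)=(F*\gamma_1)(0)$. The correct identity (for even $\gamma_1$, and after the scaling you correctly reduce to) is
$$u(x_0)-u*\gamma_\lambda(x_0)=\int_{\R}\bigl(F(x_0-z)-(F*\gamma_\lambda)(x_0-z)\bigr)\,d\nu(z),$$
so what you actually need is the pointwise inequality $F\ge F*\gamma_1$ on all of $\R$ (scale-invariant thanks to $\int\gamma_1=1$), \emph{not} that the potential $F*\gamma_1$ is maximized at the origin or radially decreasing. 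The criterion you reduce to is not only different, it is insufficient: in one dimension $-\log|x-y|$ is \emph{convex} in $y$ away from the singularity, so for any symmetric unit-mass bump one has, for large $|x|$, $(F*\gamma_1)(x)-F(x)\approx \frac{1}{2\pi x^{2}}\int y^{2}\gamma_1(y)\,dy>0$; hence $F<F*\gamma_1$ far from the origin and the integrand above has the wrong sign there, no matter how nicely radially decreasing $\gamma_1$ is.

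The repair is exactly the construction your first paragraph gestures at but then abandons: take $\Gamma_1\in C^{1,1}(\R)$ with $\Gamma_1\le F$, $\Gamma_1=F$ outside $B_1$, and set $\gamma_1:=\lapfr\Gamma_1$. Then $F*\gamma_1=\Gamma_1$, so $F-F*\gamma_1=F-\Gamma_1\ge 0$ by construction (and is compactly supported), and $\int\gamma_1\,dx=1$ since $F-\Gamma_1$ has compact support. The genuinely nontrivial point — which your sketch never verifies — is that the cap inside $B_1$ can be chosen so that $\gamma_1\ge 0$; this sign verification is precisely the content of \cite[Prop.\ 2.2.6]{sil} that the paper cites. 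Once that is in place, the cleaner conclusion avoids your representation $u=F*\nu+\mathrm{const}$ (problematic when $\nu=\lapfr u$ is an infinite measure): simply pair the nonnegative distribution $\lapfr u$ with the nonnegative, compactly supported function $(F-\Gamma_\lambda)(x_0-\cdot)$, using $\delta_{x_0}-\gamma_\lambda(x_0-\cdot)=\lapfr\bigl[(F-\Gamma_\lambda)(x_0-\cdot)\bigr]$, which gives $u(x_0)-u*\gamma_\lambda(x_0)\ge 0$ directly.
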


\noindent\textbf{Proof of Proposition  \ref{nonex}.} Since $(-\Delta)^\frac{1}{2}u\le 0$ we have by Proposition \ref{meanv} below
$$u(0)\le u*\gamma_\lambda(0)\quad \text{for every }\lambda>0,$$
where $\gamma_\lambda$ is as in Proposition \ref{meanv}. Since
$d\mu_\lambda(x):= \gamma_\lambda (-x)dx$ satisfies $\int_{\R} d\mu_\lambda=1$, from Jensen's inequality we get
$$\int_{\R} e^ud\mu_\lambda \ge \exp\left(\int_{\R}ud\mu_\lambda\right)=e^{u*\gamma_\lambda(0)}\ge e^{u(0)}.$$
On the other hand, since $d\mu_\lambda \le \frac{C}{\lambda}dx$, we estimate
$$\int_{\R}e^u dx\ge\frac{\lambda}{C}\int_{\R} e^u d\mu_\lambda \ge\frac{\lambda}{C}e^{u(0)}\to\infty\quad \text{as }\lambda\to\infty,$$
contradicting \eqref{area}.
\hfill $\square$

\subsection{Alternative proof of Theorem \ref{trm1}}

A more direct proof, which does not use a Pohozaev-type identity, can be given directly using an extension of Theorem \ref{propcostrlambdaintr}.

\begin{Proposition}\label{propcostrlambdabis} Let $\lambda\in L^1(S^1)$ with $L:=\|e^{\lambda}\|_{L^1(S^1)}<\infty$ satisfy 
\begin{equation} \label{liouvfracn1bis}
(-\Delta)^\frac12 \lambda=\kappa e^{\lambda}-1 +c\delta_{-i}\quad \text{in }S^1
\end{equation}
for some function $\kappa \in L^{\infty}(S^1,\R)$ and some constant $c\in\R$. Then there exists a holomorphic immersion $\Phi\in C^0(\bar D^2,\C)$ such that $\Phi|_{S^1}\in W^{2,p}_{\loc}(S^1\setminus\{-i\},\C)$ for $p<\infty$
\begin{equation}\label{cond1bis}
|\Phi'  (z)|=e^{\lambda(z)},~~z\in S^1\,,
\end{equation}
and the curvature of $\Phi|_{S^1\setminus\{-i\}}$ is $\kappa$.
\end{Proposition}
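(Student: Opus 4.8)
The plan is to adapt the proof of Theorem \ref{propcostrlambdaintr} (its second part, the ``converse'' direction) to the presence of the extra Dirac mass $c\delta_{-i}$. First I would observe that, since $(-\Delta)^\frac12\lambda = \kappa e^\lambda - 1 + c\delta_{-i}$ with $\kappa e^\lambda - 1 \in L^1(S^1)$, by elliptic regularity (the representation formula \eqref{rapr} of Lemma \ref{lemmafund4}) we get $\lambda \in W^{1,p}_{\loc}(S^1\setminus\{-i\})$ for every $p<\infty$, and near $-i$ the function $\lambda$ has a logarithmic-type behaviour coming from $c\,G(\cdot - \theta_0)$ (where $-i = e^{i\theta_0}$). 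More precisely, writing $G$ for the fundamental solution of Lemma \ref{lemmafund4}, we have $\lambda = \bar\lambda + G*(\kappa e^\lambda - 1) + c\,G(\cdot-\theta_0)$, so $\lambda$ is as regular as in Theorem \ref{propcostrlambdaintr} away from $-i$ and has a controlled singularity at $-i$.

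Next I would set $\rho := \mathcal{H}(\lambda)$, the Hilbert transform on $S^1$; since $\lambda \in L^1(S^1)$ this is well defined (Lemma \ref{LemmaHilbert}), and $\lambda + i\rho$ has a holomorphic extension to $D^2$, hence so does $e^{\lambda+i\rho}$. Because $\lambda \in W^{1,p}_{\loc}(S^1\setminus\{-i\})$ and has only a logarithmic singularity at $-i$, the boundary function $e^{\lambda+i\rho}$ lies in $L^q(S^1)$ for $q$ close to $1$ (using that $e^\lambda \in L^1$ and that $e^{i\rho}$ has modulus one), and in $W^{1,p}_{\loc}(S^1\setminus\{-i\})$. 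Define then
$$
\Phi(z) := \int_{\Sigma_{0,z}} e^{\tilde f(w)}\,dw, \qquad z\in\bar D^2,
$$
where $\tilde f$ is the holomorphic extension of $\lambda + i\rho$ and $\Sigma_{0,z}$ is any path in $\bar D^2$ from $0$ to $z$ (the integral being path-independent since the integrand is holomorphic in $D^2$ and integrable up to the boundary). Then $\Phi \in C^0(\bar D^2,\C)$, is holomorphic in $D^2$, satisfies $\Phi' = e^{\tilde f} \ne 0$ in $D^2$, and $|\Phi'(z)| = e^{\lambda(z)}$ on $S^1$, which is \eqref{cond1bis}. The regularity $\Phi|_{S^1}\in W^{2,p}_{\loc}(S^1\setminus\{-i\})$ follows from $\Phi'|_{S^1}\in W^{1,p}_{\loc}(S^1\setminus\{-i\})$.

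Finally, the curvature computation: away from $-i$, everything is exactly as in part 1 of the proof of Theorem \ref{propcostrlambdaintr}. Parametrizing $\Phi(S^1\setminus\{-i\})$ by arclength and using \eqref{lapHilb} together with $\rho = \mathcal H(\lambda)$, the curvature at a point $e^{i\theta}\ne -i$ equals $\big((-\Delta)^\frac12\lambda(e^{i\theta}) + 1\big)e^{-\lambda(e^{i\theta})}$; but on $S^1\setminus\{-i\}$ the Dirac term is absent, so $(-\Delta)^\frac12\lambda + 1 = \kappa e^\lambda$ there, giving curvature $\kappa$ as claimed. The main obstacle I anticipate is the analysis near the singular point $-i$: one must check that $e^{\lambda+i\rho}$ is genuinely integrable up to the boundary near $-i$ (so that $\Phi$ extends continuously to $\bar D^2$) and that no spurious behaviour of $\Phi$ at $-i$ spoils the immersion property in the interior — this requires pinning down the precise strength of the logarithmic singularity of $\lambda$ and hence the power-type behaviour $|\Phi'| \sim |z+i|^{-c/\pi}$ near $-i$, and noting that for $\Phi$ to be continuous at $-i$ one needs $c < \pi$ (which is why the statement only claims $\Phi\in C^0(\bar D^2)$ and restricts the curvature assertion to $S^1\setminus\{-i\}$). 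The uniqueness up to rotation and translation is then identical to the argument in Theorem \ref{propcostrlambdaintr}, via Lemma \ref{lemmahconst}.
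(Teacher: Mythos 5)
Your overall route coincides with the paper's: identify the logarithmic singularity of $\lambda$ at $-i$ (your Green-function representation is equivalent, via Lemma \ref{lemmaG}, to the paper's subtraction of $\frac{c}{2\pi}\log(1+\sin\theta)$, which removes the Dirac mass and gives $\lambda\in L^p(S^1)\cap W^{1,p}_{\loc}(S^1\setminus\{-i\})$ for all $p<\infty$), then set $\rho=\mathcal H(\lambda)$, define $\Phi$ as a primitive of the holomorphic extension of $e^{\lambda+i\rho}$, and repeat the curvature computation of Theorem \ref{propcostrlambdaintr} away from $-i$. All of that matches the paper and is fine.

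The gap is exactly at the step you flag but do not carry out: the continuity of $\Phi$ at $-i$, i.e. $\Phi\in C^0(\bar D^2,\C)$. Your proposed resolution --- pin down a power law $|\Phi'|\sim|z+i|^{-c/\pi}$ and require $c<\pi$ --- is both incomplete and misdirected. Incomplete, because $\lambda$ minus the explicit logarithmic term is only known to lie in $L^p(S^1)$, not $L^\infty$, so no clean pointwise asymptotic for $|\Phi'|$ near $-i$ is available without additional work. Misdirected, because no separate restriction on $c$ should be imposed: the standing hypothesis $\|e^{\lambda}\|_{L^1(S^1)}<\infty$ is precisely what is needed and already encodes the integrability of the singularity. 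The paper proves continuity at $-i$ softly: it builds closed contours around $-i$ from the boundary arc $\{-ie^{it}:|t|<\delta\}$, whose contribution is bounded by $\int_{-\delta}^{\delta}e^{\lambda(-ie^{it})}dt\to 0$ since $e^\lambda\in L^1(S^1)$, from the two radial segments, controlled for almost every $\delta$ by the a.e.\ nontangential limits of the holomorphic function $\phi=e^{\widetilde{\lambda+i\rho}}$ (\cite[Chapter III]{Kat}), and from an interior arc, small because $\phi$ is smooth inside $D^2$; this shows the path integrals defining $\Phi$ converge as $z\to -i$ in $\bar D^2$, with no asymptotic analysis of $\Phi'$ at all. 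To close your argument you need to supply a limiting/contour argument of this type (or an equivalent) in place of the heuristic power-law bound.
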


\begin{proof} The function $\lambda_1:=\lambda+\frac{c}{2\pi}\log(1+\sin(\theta))$ satisfies 
$$(-\Delta)^\frac12 \lambda_1=\kappa e^\lambda -1+\frac{c}{2\pi}\quad\text{in }S^1,$$
hence by regularity theory we have $\lambda_1\in L^p(S^1)$, and $\lambda\in L^p(S^1)\cap W^{1,p}_{\loc}(S^1\setminus \{-i\})$ for every $p<\infty$. Then $\rho:=\mathcal{H}(\lambda)\in L^p(S^1)\cap W^{1,p}_{\loc}(S^1\setminus\{-i\})$ and one can define the holomorphic extension $\widetilde{\lambda+i\rho}$ and $\phi:=e^{\widetilde{\lambda+i\rho}}$, and $\Phi$ as before as
$$\Phi(z):=\int_{\Sigma_{0,z}}\phi(w)dw$$
for any path $\Sigma_{0,z}$ in $\bar D^2$ connecting $0$ to $z$.
That $\Phi$ is well defined an continuous also at the point $-i$ depends on the following facts.
We have
$$\lim_{\delta\to 0} \bigg|\int_{-\delta}^\delta \phi (-ie^{it}) dt\bigg| \le \lim_{\delta\to 0} \int_{-\delta}^\delta e^{\lambda(-ie^{it})}  dt =0$$
since $e^\lambda\in L^1(S^1)$.
For almost every $\delta$ we have
$$\lim_{r\to 1^-} \phi(r(-ie^{\pm i\delta}))=\phi(-ie^{\pm i\delta}),$$
see e.g. \cite[Chapter III]{Kat}.
Finally $\phi$ is smooth in $D^2$, so that
$$\lim_{\delta\to 0} \bigg|\int_{-\delta}^\delta \phi ((1-\delta')(-ie^{it})) dt\bigg| =0,\quad \text{for every }\delta'>0.$$
Then one can construct closed oriented paths $\Sigma_{\delta,\delta'}$ by joining the $4$ paths
$$-ie^{it},\, t\in (-\delta,\delta),\quad (1-\delta')(-ie^{it}),\,t\in (-\delta,\delta),\quad (1-r)(-ie^{\pm \delta}),\, r\in (0,\delta')$$
around the singularity $-i$ with
$$\int_{\Sigma_{\delta_k,\delta_k'}}\phi(w)dw\to 0$$ for suitable sequences $\delta_k,\delta_k'\to 0$, as often done in complex function theory.

That $\kappa$ is the curvature of the curve $\Phi|_{S^1\setminus\{-i\}}$ follows as before.
\end{proof}

\noindent\textbf{Proof of Theorem \ref{trm1}} Given $u$ solving \eqref{liou} according to Proposition \ref{proppull2} the function $\lambda$ defined by \eqref{deflambda2} satisfies \eqref{liouvfracn1bis} with
$$c=2\pi-\int_{\R}(-\Delta)^\frac12 u dx\,.$$
By Proposition \ref{propcostrlambdabis} the function $\lambda$ determines a holomorphic immersion $\Phi:\bar D^2\setminus\{-i\}\to \C$ with the property that $\Phi_{S^1\setminus\{-i\}}$ is a curve of curvature $1$ which extends continuously to a closed curve $\Phi|{S^1}$. Then up to translations $\Phi|_{S^1}$ is a parametrization of the unit circle, possibly with degree $n$ different from $1$. But with Lemma \ref{lemmabla}, together with the fact that $\Phi$ is holomorphic and $\Phi'$ never vanishes in $D^2$, we immediately get that $n=1$, $c=0$ and $\Phi$ is a M\"obius diffeomorphism of $\bar D^2$. The explicit form of $\lambda$ and $u$ can be computed as in the previous proof of Theorem \ref{trm1}. \hfill $\square$

\appendix

\section{The fractional Laplacian}\label{sec:lapl}
\subsection{The half-Laplacian on $S^1$ }\label{fraclaps1}

Given $u\in L^1(S^1)$ we define its Fourier coefficients as
$$\hat{u}(n)=\frac{1}{2\pi} \int_{S^1}u(\theta)e^{-in\theta}d\theta,\quad n\in\mathbb{Z}.$$
If $u$ is smooth we can define
\begin{equation}\label{fraclapl5}
\lapfr u(\theta)=\sum_{n\in\mathbb{Z}} |n|\hat{u}(n) e^{in\theta}.
\end{equation}
For $u\in L^1(S^1)$ we can define $\lapfr u\in \mathcal{D}'(S^1)$ as distribution as
\begin{equation}\label{fraclapl6}
\langle \lapfr u,\varphi\rangle := \int_{S^1} u\lapfr\varphi d\theta,\quad \varphi\in C^\infty(S^1).
\end{equation}
Notice that $\varphi \in C^\infty(S^1)$ implies that $\lapfr \varphi \in C^\infty(S^1)$ (here $\lapfr \varphi$ is defined as in \eqref{fraclapl5}). In fact, given $\varphi\in L^1(S^1)$, we have $\varphi\in C^\infty(S^1)$ if and only if $\hat\varphi(n)=o(|n|^{-k})$ for every $k\ge 0$.

We can also give a definition of $\lapfr u$ in terms of harmonic extensions. If $u\in L^1(S^1)$, let $\tilde u(r,\theta)$ be its harmonic extension in $D^2$, explicitly given by the Poisson formula
\begin{equation}\label{Poisson1}
\tilde u(r,\theta)=\frac{1}{2\pi}\int_0^{2\pi}P(r,\theta-t)u(t)dt,\quad P(r,\theta)=\sum_{n\in\mathbb{Z}}r^{|n|}e^{in\theta}=\frac{1-r^2}{1-2r\cos\theta +r^2}
\end{equation}
Then one can define (using polar coordinates)
\begin{equation}\label{fraclapl7}
\lapfr u=\frac{\de \tilde u}{\de r}\bigg|_{r=1}\quad \text{in }\mathcal{D}'(S^1)
\end{equation}
where the distribution $\frac{\de \tilde u }{\de r}\big|_{r=1}$ is defined as
$$
\left\langle \frac{\de \tilde u}{\de r}\bigg|_{r=1},\varphi\right\rangle := \int_{S^1} u\,\frac{\de \tilde \varphi}{\de r}\bigg|_{r=1} d\theta,$$
where $\varphi\in C^{\infty}(S^1)$ and $\tilde\varphi$ is the harmonic extension of $\varphi\,$ in $D^2\,.$\par

Notice that if $u \in C^\infty(S^1)$ the equivalence of \eqref{fraclapl5}, \eqref{fraclapl6} and in fact \eqref{fraclapl7} is elementary, and \eqref{fraclapl7} holds pointwise. For instance the equivalence of \eqref{fraclapl5} and \eqref{fraclapl7} follows at once from
$$\tilde u(r,\theta)=\sum_{n\in\mathbb{Z}}\hat{u}(n) r^{|n|} e^{in\theta}.$$

\begin{Proposition}\label{lapeq2} The definitions \eqref{fraclapl6} and \eqref{fraclapl7} are equivalent.
\end{Proposition}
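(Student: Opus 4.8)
The plan is to show that the two definitions \eqref{fraclapl6} and \eqref{fraclapl7} of $\lapfr u$ for $u\in L^1(S^1)$ produce the same element of $\mathcal{D}'(S^1)$ by testing both against an arbitrary $\varphi\in C^\infty(S^1)$ and checking that the resulting pairings coincide. Both definitions pair $u$ against something built from $\varphi$: definition \eqref{fraclapl6} pairs $u$ against $\lapfr\varphi$ (with $\lapfr\varphi$ defined by the Fourier series \eqref{fraclapl5}, which makes sense pointwise since $\varphi$ is smooth), and definition \eqref{fraclapl7} pairs $u$ against $\frac{\de\tilde\varphi}{\de r}\big|_{r=1}$, where $\tilde\varphi$ is the harmonic extension of $\varphi$. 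So it suffices to prove the pointwise identity
\[
\lapfr\varphi = \frac{\de\tilde\varphi}{\de r}\bigg|_{r=1}\quad\text{on }S^1
\]
for every $\varphi\in C^\infty(S^1)$; once this is established, both definitions give $\langle \lapfr u,\varphi\rangle = \int_{S^1} u\,\lapfr\varphi\,d\theta$ for all test functions $\varphi$, hence agree as distributions.

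First I would record that, since $\varphi\in C^\infty(S^1)$, its Fourier coefficients satisfy $\hat\varphi(n)=o(|n|^{-k})$ for every $k$, so the Poisson extension has the absolutely convergent expansion
\[
\tilde\varphi(r,\theta)=\sum_{n\in\mathbb{Z}}\hat\varphi(n)\,r^{|n|}e^{in\theta},\qquad r\in[0,1],
\]
where term-by-term differentiation in $r$ is justified by the rapid decay of $\hat\varphi(n)$ (the differentiated series converges uniformly on $[0,1]\times S^1$). Differentiating in $r$ and evaluating at $r=1$ gives
\[
\frac{\de\tilde\varphi}{\de r}\bigg|_{r=1}(\theta)=\sum_{n\in\mathbb{Z}}|n|\,\hat\varphi(n)\,e^{in\theta},
\]
which is exactly the right-hand side of \eqref{fraclapl5}, i.e.\ $\lapfr\varphi(\theta)$. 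This is the elementary computation already alluded to in the paragraph preceding the statement, and it is the whole content of the identity needed.

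Then I would conclude: for $u\in L^1(S^1)$ and $\varphi\in C^\infty(S^1)$, the distribution from \eqref{fraclapl7} pairs as
\[
\left\langle \frac{\de\tilde u}{\de r}\bigg|_{r=1},\varphi\right\rangle
=\int_{S^1} u\,\frac{\de\tilde\varphi}{\de r}\bigg|_{r=1}\,d\theta
=\int_{S^1} u\,\lapfr\varphi\,d\theta
=\langle \lapfr u,\varphi\rangle,
\]
where the middle equality is the pointwise identity just proved and the last equality is definition \eqref{fraclapl6}. Since $\varphi$ was arbitrary, the two definitions coincide in $\mathcal{D}'(S^1)$. The only genuine point requiring care — the ``main obstacle,'' though it is mild — is the justification of differentiating the Poisson series term by term and the legitimacy of the symmetric pairing $\langle \de_r\tilde u|_{r=1},\varphi\rangle = \int u\,\de_r\tilde\varphi|_{r=1}\,d\theta$; the former follows from the $o(|n|^{-k})$ decay of $\hat\varphi(n)$, and the latter is precisely the definition of the normal-derivative distribution given just before the statement (so nothing extra is needed). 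No heavy machinery is involved beyond these observations.
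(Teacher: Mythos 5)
Your proof is correct and follows essentially the same route as the paper: both reduce the statement to the pointwise identity $(-\Delta)^{1/2}\varphi=\frac{\de\tilde\varphi}{\de r}\big|_{r=1}$ for smooth $\varphi$ (obtained from the term-by-term differentiated Poisson series $\tilde\varphi(r,\theta)=\sum_n\hat\varphi(n)r^{|n|}e^{in\theta}$) and then invoke the definitions of the two distributional pairings. Your write-up merely makes explicit the justification of the term-wise differentiation, which the paper treats as elementary.
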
 

\begin{proof}
Since \eqref{fraclapl7} holds pointwise for smooth functions, one has for $u\in L^1(S^1)$ and $\varphi\in C^\infty(S^1)$
$$\langle(-\Delta)^\frac12 u,\varphi\rangle :=\int_{S^1}u(-\Delta)^\frac12 \varphi dx=\int_{S^1} u\,\frac{\de \tilde \varphi}{\de \theta} d\theta=:\left\langle \frac{\de \tilde u}{\de r}\bigg|_{r=1},\varphi\right\rangle.$$
\end{proof}

For $u\in C^{1,\alpha}(S^1)$ there is also the following pointwise definition of $(-\Delta)^\frac12 u$:

\begin{Proposition} If $u\in C^{1,\alpha}(S^1)$ for some $\alpha\in (0,1]$, then $\lapfr u\in C^{0,\alpha}(S^1)$ and
\begin{equation}\label{fraclapl8}
\lapfr u(e^{i\theta})=\frac{1}{\pi}P.V.\int_0^{2\pi} \frac{u(e^{i\theta})-u(e^{it})}{2-2\cos(\theta-t)}dt,
\end{equation}
where the principal value is well-defined because $2-2r\cos(\theta-t)=(\theta-t)^2+O((\theta-t)^4)$ as $t\to\theta$.
\end{Proposition}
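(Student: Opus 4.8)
The statement to prove is the pointwise formula
\[
\lapfr u(e^{i\theta})=\frac{1}{\pi}\,\mathrm{P.V.}\int_0^{2\pi} \frac{u(e^{i\theta})-u(e^{it})}{2-2\cos(\theta-t)}\,dt
\]
for $u\in C^{1,\alpha}(S^1)$, together with the claim $\lapfr u\in C^{0,\alpha}(S^1)$. The plan is to reduce everything to the harmonic-extension characterization \eqref{fraclapl7}, which we are allowed to use, and to the Poisson kernel \eqref{Poisson1}. First I would observe that the principal value integral is well-defined: since $u\in C^{1,\alpha}$, the numerator $u(e^{i\theta})-u(e^{it})$ vanishes to first order as $t\to\theta$ with a $C^{0,\alpha}$ remainder, i.e. $u(e^{i\theta})-u(e^{it}) = u'(e^{i\theta})(\theta-t) + O(|\theta-t|^{1+\alpha})$, while the denominator $2-2\cos(\theta-t) = (\theta-t)^2 + O((\theta-t)^4)$; hence the integrand behaves like $\tfrac{u'}{\theta-t}+O(|\theta-t|^{\alpha-1})$ near $t=\theta$, so the symmetric limit exists (the odd singular part cancels) and the remaining part is absolutely integrable.

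Next I would compute the harmonic extension's normal derivative directly from the Poisson formula. Writing $\tilde u(r,\theta)=\frac{1}{2\pi}\int_0^{2\pi}P(r,\theta-t)u(t)\,dt$ with $P(r,\theta)=\frac{1-r^2}{1-2r\cos\theta+r^2}$, I would use the classical trick of subtracting the constant: since $\frac{1}{2\pi}\int_0^{2\pi}P(r,\theta-t)\,dt=1$ for all $r<1$, we have
\[
\tilde u(r,\theta)-u(e^{i\theta})=\frac{1}{2\pi}\int_0^{2\pi}P(r,\theta-t)\big(u(e^{it})-u(e^{i\theta})\big)\,dt.
\]
Differentiating in $r$ and letting $r\to 1^-$, the key computation is $\partial_r P(r,\psi)\big|_{r=1} = \frac{-2}{2-2\cos\psi}$ for $\psi\ne 0$ (a direct differentiation of the explicit kernel, valid away from $\psi=0$). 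The $C^{1,\alpha}$ regularity of $u$ is exactly what is needed to justify differentiating under the integral sign and passing to the limit $r\to1$: near $t=\theta$ the factor $u(e^{it})-u(e^{i\theta})$ kills the bad behaviour of $\partial_r P$, and one obtains
\[
\frac{\partial \tilde u}{\partial r}\Big|_{r=1}(e^{i\theta})=\frac{1}{2\pi}\int_0^{2\pi}\frac{-2\big(u(e^{it})-u(e^{i\theta})\big)}{2-2\cos(\theta-t)}\,dt=\frac{1}{\pi}\int_0^{2\pi}\frac{u(e^{i\theta})-u(e^{it})}{2-2\cos(\theta-t)}\,dt,
\]
which matches \eqref{fraclapl8} once we invoke \eqref{fraclapl7}.

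I expect the main obstacle to be the rigorous justification of the limit exchange, i.e. showing that $\lim_{r\to1^-}\partial_r\tilde u(r,\theta)$ equals the principal value integral and that the convergence is good enough. The clean way is to split the integral near $t=\theta$: on $|\theta-t|<\delta$ use the Taylor expansion $u(e^{it})-u(e^{i\theta}) = u'(e^{i\theta})(t-\theta)+O(|t-\theta|^{1+\alpha})$, note that $\int$ of the odd principal part against the (odd in $\psi$, up to the $r$-dependence) behaviour of $\partial_r P$ is controlled uniformly in $r$, and on $|\theta-t|\ge\delta$ the kernel $\partial_r P(r,\theta-t)$ converges uniformly to its boundary value, so dominated convergence applies. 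The Hölder bound $\lapfr u\in C^{0,\alpha}$ then follows by a standard estimate on the singular integral: splitting the difference $\lapfr u(e^{i\theta_1})-\lapfr u(e^{i\theta_2})$ into a near part (of size $|\theta_1-\theta_2|$) handled by the $C^{1,\alpha}$ modulus of continuity of $u'$, and a far part handled by the $\mathrm{Lip}$ bound on the kernel away from the diagonal — this is entirely analogous to the classical Hölder estimates for the Hilbert transform / Riesz potentials, so I would only sketch it. Alternatively, one can deduce the $C^{0,\alpha}$ regularity more softly from \eqref{lapHilb}, writing $\lapfr u=\mathcal H(\partial_\theta u)$ with $\partial_\theta u\in C^{0,\alpha}$ and citing boundedness of $\mathcal H$ on $C^{0,\alpha}(S^1)$.
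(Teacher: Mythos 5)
Your argument is essentially the paper's proof: both derive \eqref{fraclapl8} from the harmonic-extension definition \eqref{fraclapl7} by subtracting $u(e^{i\theta})$ via the normalization $\frac{1}{2\pi}\int_0^{2\pi}P(r,\theta-t)\,dt=1$ and passing to the limit $r\uparrow 1$ in the resulting kernel, the only cosmetic difference being that the paper works with the difference quotient $\frac{\tilde u(r,\theta)-u(e^{i\theta})}{r-1}$ (so the factor $\frac{1+r}{1-2r\cos(\theta-t)+r^2}$ appears directly) while you differentiate $P$ in $r$ first. Your extra care in justifying the limit (near/far splitting, odd-part cancellation using $C^{1,\alpha}$) and your sketch of the $C^{0,\alpha}$ bound (or, more softly, $\lapfr u=\mathcal{H}(\partial_\theta u)$ with $\mathcal{H}$ bounded on H\"older spaces) fill in details the paper's proof leaves implicit, and are correct.
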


\begin{proof}
Considering Proposition \ref{lapeq2} it suffices to show the equivalence of \eqref{fraclapl7} and \eqref{fraclapl8}. Set $\tilde u$ as in \eqref{Poisson1}. Then
\begin{equation*}
\begin{split}
\frac{\de \tilde u(r,\theta)}{\de r}\bigg|_{r=1}&=\lim_{r\uparrow 1}\frac{\tilde u(r,\theta)-u(e^{i\theta})}{r-1}\\
&=\lim_{r\uparrow 1}\frac{1}{2\pi(r-1)}\int_0^{2\pi}\frac{(1-r^2)(u(e^{i\theta})-u(e^{it}))}{1-2r\cos(\theta-t)+r^2}dt\\
&=\lim_{r\uparrow 1}\frac{1}{2\pi}\int_0^{2\pi}\frac{(1+r)(u(e^{i\theta})-u(e^{it}))}{1-2r\cos(\theta-t)+r^2}dt\\
&=\frac{1}{\pi}P.V.\int_0^{2\pi}\frac{u(e^{i\theta})-u(e^{it})}{2-2r\cos(\theta-t)}dt.
\end{split}
\end{equation*}
\end{proof}

\subsection{The half-Laplacian on $\R$}

For $u\in \mathcal{S}$ (the Schwarz space of rapidly decaying functions) we set
\begin{equation}\label{fraclapl0}
\widehat{(-\Delta)^\frac{1}{2}u}(\xi)=|\xi|\hat u(\xi),\quad \hat{f}(\xi):=\int_{\R}f(x)e^{-ix\xi}dx.
\end{equation}
One can prove that it holds (see e.g.)
\begin{equation}\label{fraclapl}
(-\Delta)^\frac{1}{2} u(x)=\frac{1}{\pi} P.V.\int_{\R{}}\frac{u(x)-u(y)}{(x-y)^2}dy:=\frac{1}{\pi} \lim_{\varepsilon\to 0}\int_{\R{}\setminus [-\ve,\ve]}\frac{u(x)-u(y)}{(x-y)^2}dy,
\end{equation}
from which it follows that 
$$\sup_{x\in \R}|(1+x^2)(-\Delta)^{\frac{1}{2}}\varphi(x)|<\infty,\quad\text{for every }\varphi\in \mathcal{S}\,.$$
Then one can set
\begin{equation}\label{L12}
L_\frac{1}{2}(\R):=\left\{u\in L^1_{\loc}(\R):\int_{\R}\frac{|u(x)|}{1+x^2}dx<\infty   \right\},
\end{equation}
and for every $u\in L_{\frac{1}{2}}(\R)$ one defines the tempered distribution $(-\Delta)^\frac{1}{2}u$ as
\begin{equation}\label{fraclapl2}
\langle (-\Delta)^\frac{1}{2}u,\varphi\rangle :=\int_{\R} u(-\Delta)^\frac{1}{2} \varphi dx =\int_{\R}u\,\mathcal{F}^{-1}(|\xi|\hat \varphi(\xi))\,dx,\quad\text{for every }\varphi \in\mathcal{S}.
\end{equation}
An alternative definition of $\lapfr$ can be given via the Poisson integral. For $u\in L_{\frac{1}{2}}(\R)$ define the Poisson integral
\begin{equation}\label{Poisson2}
\tilde u(x,y):=\frac{1}{\pi}\int_{\R}\frac{yu(y)}{(y^2+(x-\xi)^2)}d\xi, \quad y>0,
\end{equation}
which is harmonic in $\R\times(0,\infty)$ and whose trace on $\R\times\{0\}$ is $u$.
Then we have
\begin{equation}\label{fraclapl3}
\lapfr u =- \frac{\de \tilde u}{\partial y}\bigg|_{y=0},
\end{equation}
where the identity is pointwise if $u$ is regular enough (for instance $C^{1,\alpha}_{\loc}(\R)$), and has to be read in the sense of distributions in general, with
\begin{equation}\label{fraclapl3b}
\bigg\langle -\frac{\de \tilde u}{\partial y}\bigg|_{y=0},\varphi\bigg\rangle:=\bigg\langle u, -\frac{\de \tilde \varphi}{\partial y}\bigg|_{y=0} \bigg\rangle,\quad \varphi\in\mathcal{S},\quad\tilde\varphi\text{ as in \eqref{Poisson2}}.
\end{equation}

More precisely:

\begin{Proposition}\label{lapeq} If $u\in L_{\frac{1}{2}}(\R)\cap C^{1,\alpha}_{\loc}((a,b))$ for some interval $(a,b)\subset\R$ and some $\alpha\in (0,1)$, then the tempered distribution $\lapfr u$ defined in \eqref{fraclapl2} coincides on the interval $(a,b)$ with the functions given by \eqref{fraclapl} and \eqref{fraclapl3}. For general $u\in L_\frac12(\R)$ the definitions \eqref{fraclapl2} and \eqref{fraclapl3} are equivalent, where the right-hand side of \eqref{fraclapl3} is defined by \eqref{fraclapl3b}.
\end{Proposition}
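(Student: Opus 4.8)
The plan is to reduce everything to the Schwartz case, where the three candidate definitions of $\lapfr$ — the Fourier multiplier \eqref{fraclapl0}, the principal–value integral \eqref{fraclapl}, and the (negative) normal derivative of the Poisson extension \eqref{fraclapl3} — all agree pointwise and produce a smooth function decaying like $|x|^{-2}$. For $\varphi\in\mathcal S$ this is classical: the Poisson kernel satisfies $\widehat{P_y}(\xi)=e^{-y|\xi|}$, so the harmonic extension obeys $\widehat{\tilde\varphi}(\cdot,y)=e^{-y|\cdot|}\hat\varphi$ and hence $-\de_y\tilde\varphi|_{y=0}$ has Fourier transform $|\xi|\hat\varphi(\xi)$, which is \eqref{fraclapl0}; and the singular–integral formula \eqref{fraclapl} is the cited standard identity. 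By the estimate already recorded before \eqref{L12} one has $(1+x^2)\lapfr\varphi\in L^\infty(\R)$, so in particular $\lapfr\varphi\in L_{1/2}(\R)$ and every pairing below is legitimate.

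First I would settle the general equivalence of \eqref{fraclapl2} and \eqref{fraclapl3}. For arbitrary $u\in L_{1/2}(\R)$ the Poisson integral $\tilde u$ is well defined and harmonic on $\R\times(0,\infty)$, since $\int_\R\frac{y|u(\xi)|}{y^2+(x-\xi)^2}\,d\xi\le C(x,y)\int_\R\frac{|u(\xi)|}{1+\xi^2}\,d\xi<\infty$. Then for every $\varphi\in\mathcal S$, using definition \eqref{fraclapl3b} and then the Schwartz identity of the previous paragraph,
\[
\Big\langle -\tfrac{\de\tilde u}{\de y}\Big|_{y=0},\varphi\Big\rangle=\Big\langle u,-\tfrac{\de\tilde\varphi}{\de y}\Big|_{y=0}\Big\rangle=\int_\R u\,\lapfr\varphi\,dx=\langle\lapfr u,\varphi\rangle ,
\]
which is exactly \eqref{fraclapl2}. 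This proves the second assertion of the proposition.

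For the first assertion, let $u\in L_{1/2}(\R)\cap C^{1,\alpha}_{\loc}((a,b))$. I would first check that \eqref{fraclapl} and \eqref{fraclapl3} make sense pointwise on $(a,b)$: fixing $x\in(a,b)$ and $r>0$ with $[x-2r,x+2r]\subset(a,b)$, on $\{|y-x|<r\}$ write $u(x)-u(y)=u'(x)(x-y)+O(|x-y|^{1+\alpha})$ (uniformly, by $C^{1,\alpha}$); the principal value kills the odd linear term and the remainder is absolutely integrable since it is $O(|x-y|^{\alpha-1})$, while on $\{|y-x|\ge r\}$ one bounds $\int\frac{|u(y)|}{(x-y)^2}\,dy\le C_r\int_\R\frac{|u(y)|}{1+y^2}\,dy<\infty$ together with $|u(x)|\int_{|y-x|\ge r}(x-y)^{-2}\,dy<\infty$. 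Thus \eqref{fraclapl} defines a function $g\in C^{0,\alpha}_{\loc}((a,b))\subset L^1_{\loc}((a,b))$. To see $g$ represents $\lapfr u$ on $(a,b)$, take $\varphi\in C^\infty_c((a,b))$, choose $\eta\in C^\infty_c((a,b))$ with $\eta\equiv1$ on a neighbourhood $U$ of $\mathrm{supp}\,\varphi$, and split $u=v+w$ with $v:=\eta u\in C^{1,\alpha}_c(\R)$ and $w:=(1-\eta)u$, both in $L_{1/2}(\R)$ and with $w\equiv0$ on $U$; by absolute convergence near the diagonal and linearity away from it one has $g=g_v+g_w$ on $U$, where $g_v,g_w$ are the corresponding principal–value integrals. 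For $v$ I would mollify, $v_\ve:=v*\rho_\ve\in C^\infty_c(\R)$: the Schwartz case gives $\int_\R g_{v_\ve}\varphi\,dx=\int_\R v_\ve\,\lapfr\varphi\,dx$, and since $v_\ve\to v$ in $C^1$ with $[v_\ve']_{C^\alpha}\le[v']_{C^\alpha}$ and uniformly compactly supported, dominated convergence (with dominating function $\tfrac{[v']_{C^\alpha}}{\pi}|x-y|^{\alpha-1}$ on the near part) yields $g_{v_\ve}\to g_v$ locally uniformly, hence $\int_\R g_v\varphi\,dx=\int_\R v\,\lapfr\varphi\,dx$. For $w$, since $w$ vanishes on $U$ one has for $\zeta\in U$ that $\lapfr\psi(\zeta)$ (for test functions $\psi$ supported in $U$) reduces to $-\tfrac1\pi\int_U\frac{\psi(\xi)}{(\zeta-\xi)^2}d\xi$, and a Fubini interchange (all integrands absolutely convergent, as $w\in L_{1/2}$ and $\xi$ stays away from $\mathrm{supp}\,\psi$) together with the general equivalence already proved gives $\int_\R g_w\varphi\,dx=\big\langle-\de_y\tilde w|_{y=0},\varphi\big\rangle=\int_\R w\,\lapfr\varphi\,dx$ (equivalently, $\tilde w$ extends smoothly across $U\times\{0\}$ with $-\de_y\tilde w(\cdot,0)=g_w$ there, using $\de_y\big[\tfrac{y}{y^2+t^2}\big]\big|_{y=0}=t^{-2}$). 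Adding the two identities gives $\int_\R g\varphi\,dx=\int_\R u\,\lapfr\varphi\,dx=\langle\lapfr u,\varphi\rangle$ for all $\varphi\in C^\infty_c((a,b))$, so $\lapfr u$ is represented on $(a,b)$ by \eqref{fraclapl}; and the same splitting $u=v+w$ shows $-\de_y\tilde u|_{y=0}=g$ pointwise on $(a,b)$, giving \eqref{fraclapl3} as well.

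The main obstacle will be the mollification step for $v$: one must keep the $C^{1,\alpha}$-type bounds — and therefore the near-the-diagonal principal-value estimates — uniform in $\ve$ so as to legitimise $g_{v_\ve}\to g_v$ locally uniformly, while simultaneously handling the tail by plain $L^1$ convergence; and one must verify that the Poisson integral of $w$ genuinely extends with enough regularity across $U$ (or, equivalently, justify the Fubini interchange there). Both are routine once organised, but that is where the analysis actually lies; everything else is bookkeeping with Fubini, the elementary kernel computation $\de_y\big[\tfrac{y}{y^2+t^2}\big]\big|_{y=0}=t^{-2}$, and the decay $(1+x^2)\lapfr\varphi\in L^\infty$ for $\varphi\in\mathcal S$.
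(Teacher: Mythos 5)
Your proposal is correct in substance, and for the second assertion it is the same argument as the paper's: pair with $\varphi\in\mathcal S$, use the definition \eqref{fraclapl3b} and the pointwise identity $-\de_y\tilde\varphi|_{y=0}=\lapfr\varphi$ for Schwartz functions. For the first assertion you take a genuinely different route. The paper (i) proves the pointwise coincidence of \eqref{fraclapl3} and \eqref{fraclapl} on $(a,b)$ by the Caffarelli--Silvestre difference-quotient computation for the Poisson integral (dominated convergence outside a ball plus a Taylor expansion near $x$), and (ii) reduces the identity \eqref{fraclapl4} between \eqref{fraclapl2} and \eqref{fraclapl} to the Schwartz case by approximating $u$ globally by Schwartz functions converging in $L_\frac12(\R)$ and locally uniformly on $(a,b)$, quoting Prop.~2.1.4 of \cite{sil}. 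You instead localize: write $u=\eta u+(1-\eta)u$, mollify only the compactly supported $C^{1,\alpha}$ piece with uniform H\"older bounds, and treat the far piece by an absolutely convergent kernel/Fubini computation (for which, as you note, the elementary identity $\de_y\big[\tfrac{y}{y^2+t^2}\big]\big|_{y=0}=t^{-2}$ suffices, so the appeal to the ``general equivalence already proved'' is not even needed there). What your route buys is independence from the approximation lemma of \cite{sil}; what the paper's buys is brevity, since both the boundary-derivative computation and the approximation are outsourced to \cite{CS3} and \cite{sil}.

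One step you should make explicit: your splitting gives the pointwise statement $-\de_y\tilde w|_{y=0}=g_w$ on $U$ cleanly, but for the local piece $v=\eta u\in C^{1,\alpha}_c(\R)$ you only establish the distributional identity via mollification; to conclude that \eqref{fraclapl3} holds as a pointwise function on $(a,b)$ you still need $-\de_y\tilde v(x,y)\big|_{y=0}=\tfrac1\pi\,\mathrm{P.V.}\int_{\R}\tfrac{v(x)-v(\xi)}{(x-\xi)^2}\,d\xi$ for $x\in U$, i.e.\ exactly the difference-quotient computation the paper performs (or, equivalently, a passage to the limit in $\ve$ that is uniform up to the boundary). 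That computation is two lines given the $C^{1,\alpha}$ regularity, so inserting it closes the argument.
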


\begin{proof} Assume that $u\in L_{\frac{1}{2}}(\R)\cap C^{1,\alpha}_{\loc}((a,b))$. Following \cite{CS3} we have for $x\in (a,b)$
\begin{equation*}
\begin{split}
\frac{\de \tilde u(x,y)}{\partial y}\bigg|_{y=0}&=\lim_{y\to 0}\frac{\tilde u (x,y)-\tilde u(x,0)}{y}\\
&=\lim_{y\to 0}\frac{1}{\pi}\int_{\R}\frac{u(\xi)-u(x)}{y^2+(\xi-x)^2}d\xi\\
&=\frac{1}{\pi}P.V.\int_{\R}\frac{u(\xi)-u(x)}{(\xi-x)^2}d\xi,
\end{split}
\end{equation*}
where the last convergence follows from dominated convergence outside $B_1(x)$ and by a Taylor expansion in a neighborhood of $x$. This proves the equivalence of \eqref{fraclapl} and \eqref{fraclapl3}. The equivalence between \eqref{fraclapl} and \eqref{fraclapl2} amounts to showing that
\begin{equation}\label{fraclapl4}
\int_{\R}u\mathcal{F}^{-1}(|\xi|\hat\varphi(\xi))dx =\frac{1}{\pi}\int_{\R}PV \int_{\R}\frac{u(x)-u(y)}{(x-y)^2}dy \,\varphi(x)dx,
\end{equation}
whenever $\varphi\in \mathcal{S}$ is supported in $(a,b)$.
When $u\in \mathcal{S}$ then the equivalence is shown e.g. in \cite{CS3} (passing through the definition given in \eqref{fraclapl0}). In the general case one approximate $u$ with functions $u_k\in \mathcal{S}$ converging to $u$ uniformly locally in $(a,b)$ and in $L_{\frac{1}{2}}(\R)$, as shown in Proposition 2.1.4 of \cite{sil} (in order to have convergence in \eqref{fraclapl4} as $u_k\to u$, it is convenient to consider $\varphi$ compactly supported first, in case $(a,b)$ is not bounded).

The last statement follows at once by noticing that applying \eqref{fraclapl3} to $\varphi\in \mathcal{S}$, one gets
$$\bigg\langle u,-\frac{\de\tilde\varphi}{\de y}\bigg|_{y=0}\bigg\rangle=\langle u,(-\Delta)^\frac12 \varphi\rangle.$$
\end{proof}

 \section{Useful results from complex analysis}

\begin{Lemma}\label{lemmahconst} Let $h\in C^0(\bar D^2,\mathbb{C})$ be holomorphic in $D^2$ with $h(S^1)\subset S^1$ and $0\not\in h( D^2)$. Then $h$ is constant.
\end{Lemma}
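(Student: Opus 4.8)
The plan is to show that $h$ must be constant by ruling out the possibility that it is a non-constant holomorphic map. The key structural facts to exploit are: $h$ is holomorphic in $D^2$, continuous up to the boundary, sends $S^1$ into $S^1$, and omits the value $0$ on the open disk $D^2$ (hence, combined with $h(S^1)\subset S^1$, it omits $0$ on all of $\bar D^2$). First I would observe that by the maximum modulus principle applied to $h$, since $|h|\equiv 1$ on $S^1$, we have $|h(z)|\le 1$ for all $z\in\bar D^2$; and by the minimum modulus principle applied to $h$ (legitimate precisely because $h$ never vanishes on $D^2$), since $|h|\equiv 1$ on $S^1$ we also get $|h(z)|\ge 1$ for all $z\in\bar D^2$. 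Therefore $|h|\equiv 1$ on $\bar D^2$.

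Once $|h|$ is constant on an open set, the conclusion is immediate: a holomorphic function of constant modulus on a connected open set is constant. This can be seen either by the open mapping theorem (a non-constant holomorphic map is open, so its image cannot lie in the circle $|w|=1$, which has empty interior), or directly via the Cauchy--Riemann equations applied to $\log|h|^2 = \log(h\bar h)$, which is harmonic and constant, forcing $h'\equiv 0$. Hence $h$ is constant on $D^2$, and by continuity constant on $\bar D^2$.

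The only genuine point requiring care — and the step I would flag as the main (minor) obstacle — is the justification of the minimum modulus principle, i.e. that $0\notin h(D^2)$ is really needed and really suffices. If $h$ had a zero in $D^2$ one could not conclude $|h|\ge 1$ inside, and indeed e.g. $h(z)=z$ would be a counterexample to the statement without the hypothesis $0\notin h(D^2)$. Given the non-vanishing hypothesis, $1/h$ is holomorphic on $D^2$, continuous on $\bar D^2$, with modulus $1$ on $S^1$, so the maximum modulus principle applied to $1/h$ gives $|1/h|\le 1$ on $\bar D^2$, i.e. $|h|\ge 1$. Combining with the upper bound yields $|h|\equiv 1$, and the proof concludes as above. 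No deep input beyond the maximum modulus principle and the open mapping theorem is required.
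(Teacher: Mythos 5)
Your proof is correct and follows essentially the same route as the paper: the nonvanishing of $h$ is used to force $|h|\equiv 1$ on $\bar D^2$ (the paper does this by noting $\log|h|$ is harmonic and vanishes on $S^1$, you do it by applying the maximum modulus principle to $h$ and to $1/h$, which is the same idea), and then the constancy of the modulus forces $h$ to be constant via the open mapping theorem or the Cauchy--Riemann equations, just as in the paper's appeal to conformality.
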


\begin{proof} Since $h$ never vanishes, $\log|h|$ is well defined, harmonic and vanishes on $S^1$, hence everywhere. This implies that $|h|\equiv 1$ and from the conformality of $h$ it follows that $h$ is constant.
\end{proof}

The following is a generalization of Lemma \ref{lemmahconst}.

\begin{Lemma}[Burckel \cite{bur}]\label{lemmabla} Let $h\in C^0(\bar D^2,\mathbb{C})$ be holomorphic in $D^2$ with $h(S^1)\subset S^1$ and $\deg h|_{S^1}=n\ge 0$. Then $h$ is a Blaschke product of degree $n$, i.e.
$$h(z)=e^{i\theta_0}\prod_{k=1}^{n} \frac{z-a_k}{1-\bar a_k z},\quad a_1,\dots, a_n \in D^2,\;\theta_0\in\R.$$
\end{Lemma}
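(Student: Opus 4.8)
The plan is to reduce to Lemma \ref{lemmahconst} by dividing out the zeros of $h$ with a finite Blaschke product, and then to recover the degree via the argument principle. First I would record the basic localization of zeros: since $h$ is continuous on $\bar D^2$ and $|h|\equiv 1$ on $S^1$, there is $\delta>0$ with $|h(z)|\ge \tfrac12$ whenever $1-\delta\le |z|\le 1$. In particular every zero of $h$ lies in the compact set $\{|z|\le 1-\delta\}\subset D^2$, and since $h\not\equiv 0$ (again because $|h|\equiv 1$ on $S^1$) its zeros are isolated, hence finite in number; call them $a_1,\dots,a_m\in D^2$, listed according to multiplicity.

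Next I would form $B(z):=\prod_{k=1}^m \frac{z-a_k}{1-\bar a_k z}$ and set $g:=h/B$. Because $|B|\equiv 1$ on $S^1$, the function $B$ is continuous and nonvanishing on an annulus $\{1-\delta'\le |z|\le 1\}$, and near each $a_k$ the order of vanishing of $h$ matches that of $B$; therefore $g$ extends to a function holomorphic in $D^2$, continuous on $\bar D^2$, with $g(z)\ne 0$ for all $z\in D^2$ and $|g|\equiv 1$ on $S^1$. By Lemma \ref{lemmahconst}, $g$ is a unimodular constant $e^{i\theta_0}$, whence $h=e^{i\theta_0}B$ is a Blaschke product.

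It remains to show $m=n$. For $r\in[1-\delta,1]$ the curves $\theta\mapsto h(re^{i\theta})$ take values in $\C\setminus\{0\}$ and $(r,\theta)\mapsto h(re^{i\theta})$ is continuous, so all these loops have the same winding number about $0$. For $r<1$ this winding number equals $\frac{1}{2\pi i}\int_{|z|=r}\frac{h'(z)}{h(z)}\,dz$, which by the argument principle (legitimate since $h$ is holomorphic on a neighbourhood of $\{|z|\le r\}\subset D^2$) counts the zeros of $h$ in $\{|z|<r\}$, i.e. it is $m$; for $r=1$ this winding number is by definition $\deg h|_{S^1}=n$. Hence $m=n$. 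The case $n=0$ is covered as well: then $h$ has no zeros, $g=h$, and $h$ is a degree-zero Blaschke product, i.e. a unimodular constant.

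The only mildly delicate points are the continuity of $g=h/B$ up to $S^1$ and the limit $r\to 1$ in the winding-number computation; both are handled by the uniform bound $|h|\ge\tfrac12$ near $S^1$, so I do not anticipate a genuine obstacle.
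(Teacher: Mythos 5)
Your proof is correct. The paper itself gives no proof of this lemma (it is quoted from Burckel), but your argument is the classical one and is complete: you factor out the finitely many zeros by a finite Blaschke product $B$, apply Lemma \ref{lemmahconst} to the quotient $h/B$, and identify the number of zeros with $\deg h|_{S^1}$ via homotopy invariance of the winding number on circles $|z|=r$, $r\in[1-\delta,1]$, combined with the argument principle for $r<1$; the uniform bound $|h|\ge\tfrac12$ near $S^1$ does indeed take care of both the continuity of $h/B$ up to the boundary and the passage $r\to1$, so there is no gap.
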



     \end{document}